\documentclass[12pt,a4paper]{amsart}
\usepackage{amsfonts}
\usepackage{amsthm}
\usepackage{amsmath}
\usepackage{amscd}
\usepackage[latin2]{inputenc}
\usepackage{t1enc}
\usepackage[mathscr]{eucal}
\usepackage{indentfirst}
\usepackage{caption}
\usepackage{graphicx}
\usepackage{subfig}
\usepackage{float}
\usepackage{graphics}
\usepackage{pict2e}
\usepackage{epic}
\numberwithin{equation}{section}
\usepackage[margin=2.8cm, top=2.8cm,heightrounded]{geometry}
\usepackage{epstopdf} 
\usepackage{enumitem}

\usepackage{hyperref}
\hypersetup{
    colorlinks=true,
    linkcolor=blue,
    filecolor=magenta,      
    urlcolor=cyan,
    citecolor=ForestGreen,
    pdftitle={Surfaces in Montesinos Knots},
    pdfpagemode=FullScreen,
    }

\urlstyle{same}

\usepackage[dvipsnames]{xcolor}


\theoremstyle{plain}
\newtheorem{Th}{Theorem}[section]
\newtheorem{Lemma}[Th]{Lemma}
\newtheorem{Cor}[Th]{Corollary}
\newtheorem{Prop}[Th]{Proposition}

 \theoremstyle{definition}
\newtheorem{Def}[Th]{Definition}

\newcommand{\Z}{\mathbb{Z}}

\begin{document}

\title[Number of Closed Essential Surfaces in Montesinos Knots]{The Number of Closed Essential Surfaces in Montesinos Knots with Four Rational Tangles}

\author[B. Basilio]{Brannon Basilio}

\address{University of Illinois at Urbana-Champaign \\ Department of Mathematics \\
Champaign IL 61820} 

\email{basilio3@illinois.edu}


 \keywords{essential surfaces, Montesinos knot}

\begin{abstract}
	In the complement of a hyperbolic Montesinos knot with 4 rational tangles, we investigate the number of closed, connected, essential, orientable surfaces of a fixed genus $g$, up to isotopy. We show that there are exactly 12 genus 2 surfaces and $8\phi(g - 1)$ surfaces of genus greater than 2, where $\phi(g - 1)$ is the Euler totient function of $g - 1$. Observe that this count is independent of the number of crossings of the knot. Moreover, this class of knots form an infinite class of hyperbolic 3-manifolds and the result applies to all such knot complements.
\end{abstract}

\maketitle

\tableofcontents

%
%
\section{Motivation}
Closed essential surfaces are a widely used tool for studying the geometry and topology of 3-manifolds. In short, these are surfaces, $S$, without boundary that inject into the fundamental group of the 3-manifold $M$, i.e. the map $\pi_1 S \hookrightarrow \pi_1M$ is injective, and $S$ is not boundary parallel. The exact definitions and conventions used in this section can be found in Section 2. The existence of distinct closed essential surfaces is not guaranteed, for example \cite{Hatcher1985IncompressibleSI} shows there is only one closed incompressible surface in 2-bridge knot complements. However, Oertel \cite{oertel1984} showed that for Montesinos knots with $q_i \geq 3$ for each $i$ and more than four rational tangles, the complement contains closed essential surfaces of every genus greater than 2. Moreover, by Corollary~2.3 of \cite{JACO1984195}, irreducible and atoroidal manifolds contain a finite number of essential surfaces of a fixed topological type. In this paper we focus on hyperbolic Montesinos knots in $S^3$, so that the complement is irreducible and atoroidal by \cite{bams1183548782}, giving finitely many closed essential surfaces.

A natural problem is to investigate how many closed essential surfaces there are in a 3-manifold and in particular, which ones are connected. We present the main theorem of this paper which gives the exact number of closed, connected, essential, orientable surfaces in a class of Montesinos knot complements.

\begin{Th} \label{main} 
	The number of closed, connected, essential, orientable surfaces of genus $g$ in the complement of a hyperbolic Montesinos knot $K = K(p_1/q_1,\ldots,p_k/q_k)$ in $S^3$ with each $q_i\geq 3$ and $k=4$, is 
\[
	\begin{cases}
		12, & \text{if } g = 2\\
		8\phi(g - 1), & \text{if } g \geq 3 
	\end{cases}
\] where $\phi(g - 1)$ is the Euler totient function of $g - 1$.
\end{Th}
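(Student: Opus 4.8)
The plan is to reduce the enumeration of closed essential surfaces to a finite combinatorial count carried by a branched surface, in the spirit of Oertel's analysis of Montesinos links. First I would appeal to the classification developed above, under which every closed, connected, essential, orientable surface in the complement of $K$ is isotopic to one carried with positive weights by a branched surface, and is encoded by a \PPCD{} together with an admissible integer weight vector satisfying the switch conditions. The role of the hypothesis $q_i \ge 3$ is to exclude the tangles with $q_i \le 2$ whose diagrams would produce compressible, $\partial$-parallel, or meridionally compressible surfaces; granting this, I would record exactly which combinatorial types remain and verify that each carries genuinely essential surfaces.

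Second, for each surviving type I would express the Euler characteristic, and hence the genus, as an explicit affine function of the weight parameters. Since the surface decomposes into pieces lying in the four tangle balls and glued along the four-punctured spheres that separate them, $\chi(S)$ is a sum of local contributions, each linear in the weights, while the matching conditions across the separating spheres collapse these to a single effective parameter. I expect the outcome to be a relation expressing $g-1$ in terms of that parameter in which the numbers $p_i$ and $q_i$, and therefore the crossing number, cancel; this cancellation is precisely what makes the final count depend only on $k=4$ and on the inequalities $q_i \ge 3$, and not on the individual tangle slopes.

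Third, I would convert the requirements of connectedness and orientability into arithmetic conditions on the weight data. A surface whose weight vector is divisible by $d>1$ is $d$ disjoint parallel copies of a simpler surface, so the connected representatives are exactly those whose data is primitive, i.e.\ ``in lowest terms''; this is the origin of the Euler totient, because for a fixed genus the admissible reduced parameters are counted by $\phi(g-1)$. The orientability constraint together with the finite symmetries of the cyclic four-tangle configuration and the two sides of each separating sphere then contributes the multiplicative constant $8$. A necessary companion step is to check that inequivalent reduced data yield pairwise non-isotopic surfaces, so that the combinatorial count is faithful and nothing is over- or under-counted.

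Finally I would handle the minimal genus $g=2$ on its own. Here the affine relation sits at the bottom of its range ($g-1=1$, where $\phi(1)=1$ would naively return $8$), and additional essential surfaces appear that have no higher-genus analogue --- surfaces whose larger-weight versions degenerate into compressible or disconnected ones --- accounting for the surplus and yielding the total $12$. The step I expect to be the main obstacle is exactly this boundary bookkeeping combined with the non-isotopy argument: pinning down which low-complexity \PPCD{}s give genuinely essential, pairwise distinct surfaces at $g=2$, and confirming that the orientability condition is imposed uniformly across all eight symmetry classes so that the totient count is neither doubled nor halved.
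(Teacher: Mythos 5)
Your proposal has the right general flavor---reduce to a finite combinatorial count, get the totient from a gcd-type condition, treat $g=2$ separately, and verify non-isotopy at the end---but several of the mechanisms you propose are not the ones that work, and the hardest step is left entirely open. The structural input you are missing is Oertel's theorem that (since $\sum p_i/q_i\neq 0$ for a knot) every closed essential surface is a tubing of disjoint \emph{parallel} incompressible $4$-punctured spheres $S_C$, each of Euler characteristic $-2$; a genus~$g$ surface therefore uses exactly $g-1$ nested copies of one of the two non-isotopic $S_C$'s, and the tangle slopes $p_i,q_i$ never enter the Euler characteristic at all. Your picture of pieces lying in the four tangle balls, glued along separating spheres with the $p_i,q_i$ ``cancelling,'' describes a horizontal (transverse-to-fibers) surface, which does not occur in this setting. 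Relatedly, your mechanism for the totient---``connected iff the weight vector is primitive''---is not correct and is not what happens here: a primitive weight vector on a branched surface can still carry a disconnected surface. In the actual argument the $\phi(g-1)$ surfaces per symmetry class are $\phi(g-1)$ genuinely distinct combinatorial types, indexed by the position $i$ of the unique tube of maximal length among the $g-1$ nested spheres; the tubing pattern induces pairings on the set of $g-1$ spheres with parameters $u=\tfrac{g-1-i}{2}$ and $v=g-1-u$, and the surface is connected iff these pairings have a single orbit, i.e.\ iff $\gcd(u,v)=\gcd(u,g-1)=1$. The factor $8$ is simply $4\times 2$: four choices of which arc of $K$ carries the maximal tube, times the two non-isotopic $S_C$'s (there are $k(k-3)/2=2$ of them for $k=4$); orientability contributes nothing.

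Two further gaps. For $g=2$ the count $12$ is not a ``surplus over $8\phi(1)$ from degenerating surfaces''; it is a direct enumeration: one $S_C$, four punctures, two tubes, and exactly $6$ admissible punctured partitioned chord diagrams (the dual tree has two leaves rather than three, and no maximal-length chord need exist), times the $2$ choices of $S_C$. Finally, the non-isotopy verification, which you correctly flag but do not address, is the longest part of the proof and cannot be waved through: one must show that the $8\phi(g-1)$ (resp.\ $12$) diagrams give pairwise non-isotopic surfaces. This requires building the incompressible branched surface $\mathcal{B}_S$ from a diagram and checking that none of its complementary regions are product regions (so that distinct positive weights give non-isotopic carried surfaces, by Oertel's rigidity theorem) when two diagrams share the region of the maximal chord, and an application of Waldhausen's Proposition~5.4---ruling out product and pocket regions between two superimposed surfaces---when they do not. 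Without these steps the combinatorial upper bound is not known to be attained.
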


This class of knots include alternating Montesinos knots with arbitrary many crossings. In comparison to this result, Kahn and Markovic \cite{MR2916295} show that the number of essential immersed surfaces in a closed hyperbolic 3-manifold is bounded by $g^{2g}$. Dunfield, Garoufalidis, and Rubinstein \cite{dunfield2020counting} showed how to count closed essential surfaces in 3-manifolds, relying on work of Oertel \cite{oertel1984} and Tollefson \cite{ojm1200786486}, along with Ehrhart's method (see e.g. \cite{JADeLoera}) of counting lattice points in polytopes. Lee \cite{lee2021essential} proved the following conjecture of \cite{dunfield2020counting}: the knot $K13n586$ has exactly $\phi(g - 1)$ closed, connected, orientable, essential surfaces of genus $g$ up to isotopy, where $\phi(g - 1)$ is the Euler totient function. Hass, Thompson, and Tsvietkova \cite{MR3658176} found a polynomial bound, in terms of the number of crossings of the link, for the number of closed incompressible surfaces in an alternating link complement. Note that the bound found in \cite{MR3658176} is exponential in terms of the genus of the surface and dependent of the number of crossings of the knot. In contrast, the count in Theorem~\ref{main} is in terms of the Euler totient function of the genus of the surface and independent of the number of crossings of the knot. 

Sections~\ref{Background} and \ref{Convention} introduce background and conventions, respectively, for this paper. Section~\ref{Idea} gives the main ideas and tools used for the proof of Theorem~\ref{main}. Section~\ref{proof} gives definitions and an example of how to construct closed essential surfaces from punctured partitioned chord diagrams (see Section~\ref{proof}, Definition~\ref{DefPPCD}). Section \ref{DescriptionOfOrbifold} recalls a description, given in \cite{oertel1984}, of the complement of a Montesinos knot viewed as an orbifold fibration. Section~\ref{g2case} gives an upper bound on the number of for genus 2 surfaces and Section~\ref{generalcase} gives an upper bound for genus greater than 2. Section~\ref{DistinctnessOfPPCDs} shows that each of these surfaces are distinct, i.e. non-isotopic, giving Theorem~\ref{main}.

%
%
\section{Background} \label{Background}
We now review Montesinos knots and their complements. A \textit{rational tangle of slope $p/q \in \mathbb{Q}$} is constructed by taking lines of slope $p/q$ on the square ``pillowcase'' starting at the four corners (see Figure~\ref{Rational_Tangle}). A \textit{Montesinos link} $K=K(p_1/q_1,\ldots,p_k/q_k)$ is a link obtained by taking $k$ rational tangles $p_1/q_1,\ldots p_k/q_k$ arranged counterclockwise (equivalently arranged in a line) with two parallel arcs connecting each pair of consecutive tangles (see Figure~\ref{Montesinos_Knot}). One can construct a 4-punctured sphere $S_C$ in a Montesinos link complement by first projecting $K$ to the plane sphere as in Figure~\ref{Montesinos_Knot} (left), then taking a simple closed curve $c$ which intersects $K$ transversely at four points on the arcs connecting rational tangle regions (see Figure~\ref{S_Cs}), and then capping $c$ with two disks, one above the projection plane sphere and one below. As will be shown (Proposition~\ref{NonisotopicSCs}), Figure~\ref{S_Cs} shows the only two possible incompressible $S_C$'s, up to isotopy, in Montesinos link complements with four rational tangles.

\begin{figure}[h!]
\centering
	\begin{minipage}[b]{0.39\linewidth}
		\centering
		\includegraphics[viewport = -20 140 675 675, scale = 0.21, clip]{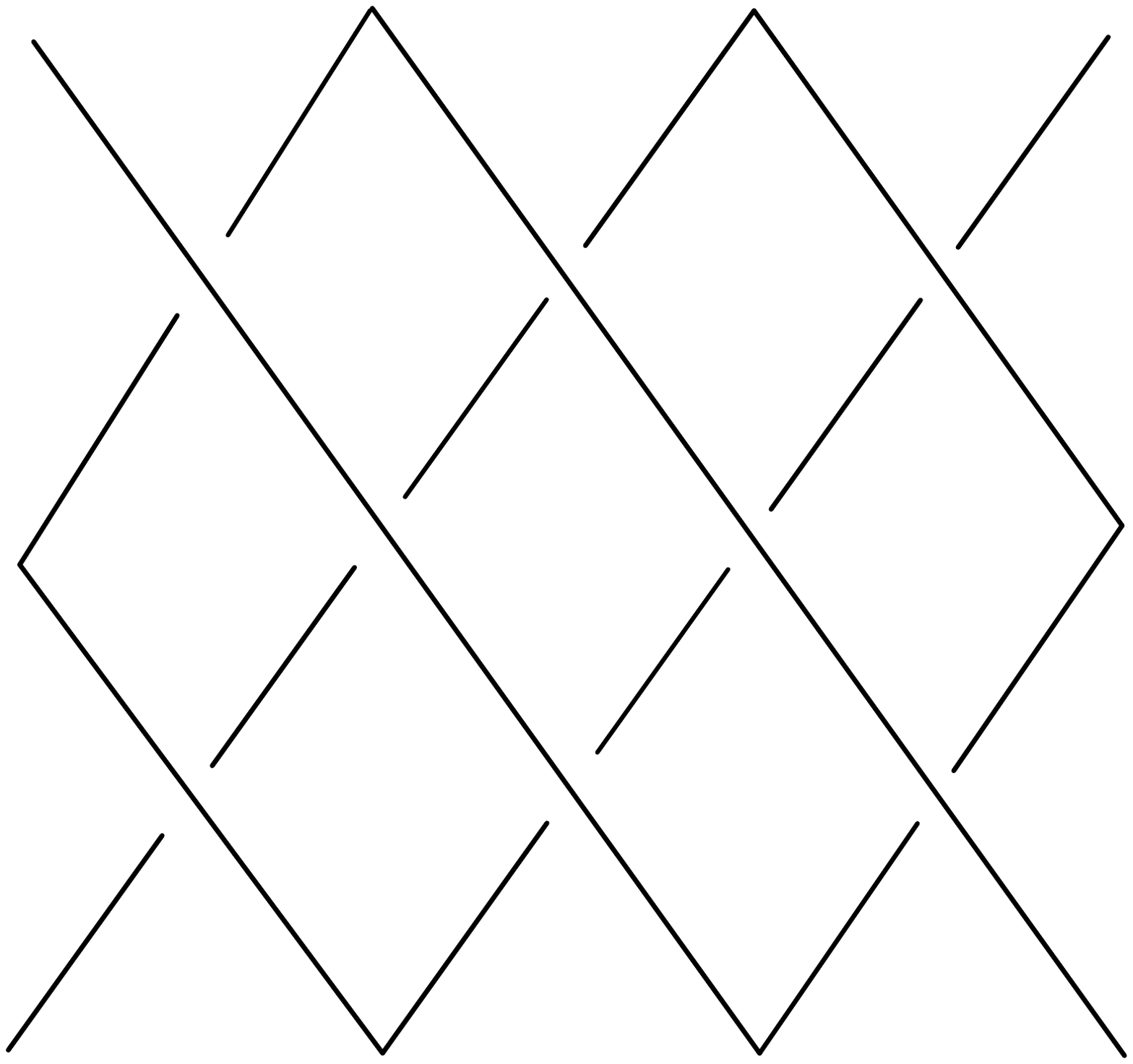}
		\caption{Rational tangle with slope $\frac{2}{3}$.}
		\label{Rational_Tangle}
	\end{minipage}
	\begin{minipage}[b]{0.6\linewidth}
		\begin{minipage}[b]{0.47\linewidth}
		\centering
		\includegraphics[viewport = -15 180 650 650, scale = 0.21, clip]{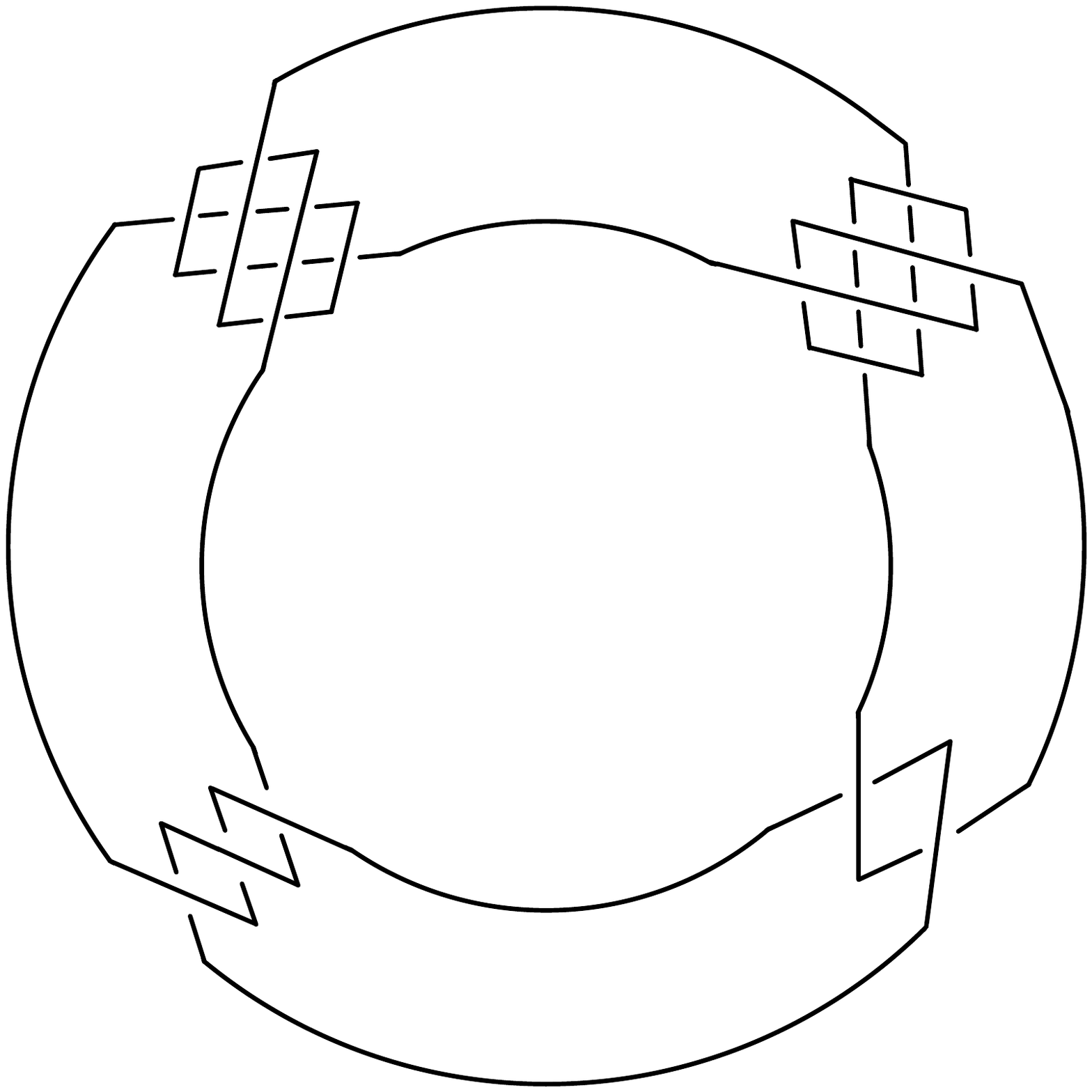}
		\end{minipage}
		\begin{minipage}[b]{0.43\linewidth}
		\centering
		\includegraphics[viewport = 20 290 750 750, scale = 0.225, clip]{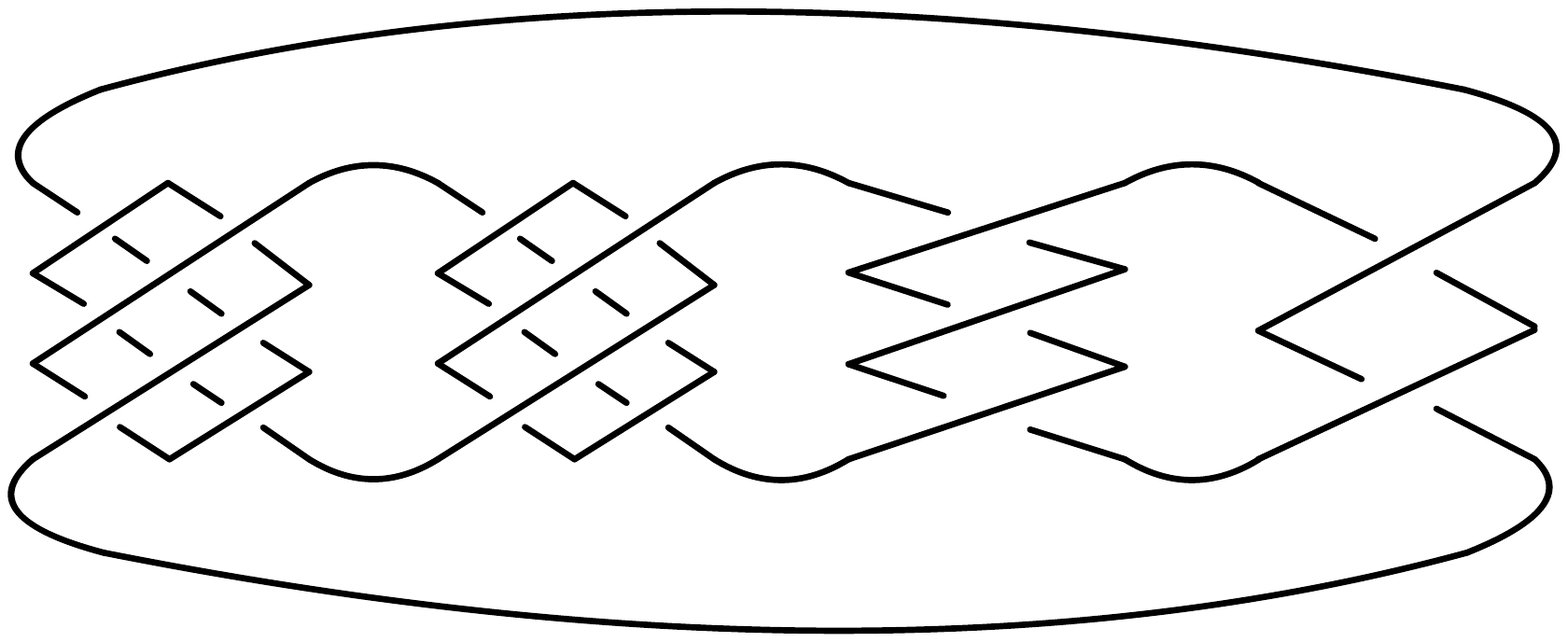}
		\end{minipage}
		\caption{Montesinos knot $K(\frac{2}{3}, \frac{1}{3}, \frac{1}{2}, \frac{2}{3})$ expressed in two equivalent ways.}
		\label{Montesinos_Knot}
	\end{minipage}
\end{figure}

\begin{figure}[h!]
\centering
	\includegraphics[viewport = 0 120 750 750, scale = 0.25, clip]{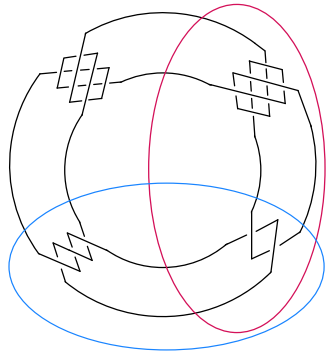}
	\caption{The only two possible 4-punctured spheres $S_C$'s represented by simple closed curves on the projection sphere.}
	\label{S_Cs}
\end{figure}

A \textit{compressing disk} for an embedded surface $S$ in a manifold $M$ is an embedded disk $D \subset M$ such that $D \cap S = \partial D$ and $D$ does not bound a disk in $S$. A \textit{closed incompressible surface} is a compact surface without boundary which does not have a compressing disk. Moreover, a \textit{closed essential surface} is a closed incompressible surface that is not parallel to the boundary. Note that the boundary of a knot complement is homeomorphic to a torus. A \textit{peripheral tubing operation} is the process of attaching an annulus between two punctures of $S_C$'s (see Figure~\ref{PeriphTubing}). A \textit{Seifert tangle} $(B, p_1/q_1, \ldots, p_k/q_k)$ consists of a ball $B$ containing two embedded arcs and possibly some embedded closed curves. These arcs and curves decompose as rational tangles $p_1/q_1,\ldots,p_k/q_k$ where attached each successive rational tangle are parallel strands, except for the first and last rational tangle (see Figure~\ref{SeifertTangle}).

\begin{figure}[h!]
\centering
	\begin{minipage}[b]{0.44\linewidth}
		\includegraphics[viewport = 20 250 700 700, scale = 0.32, clip]{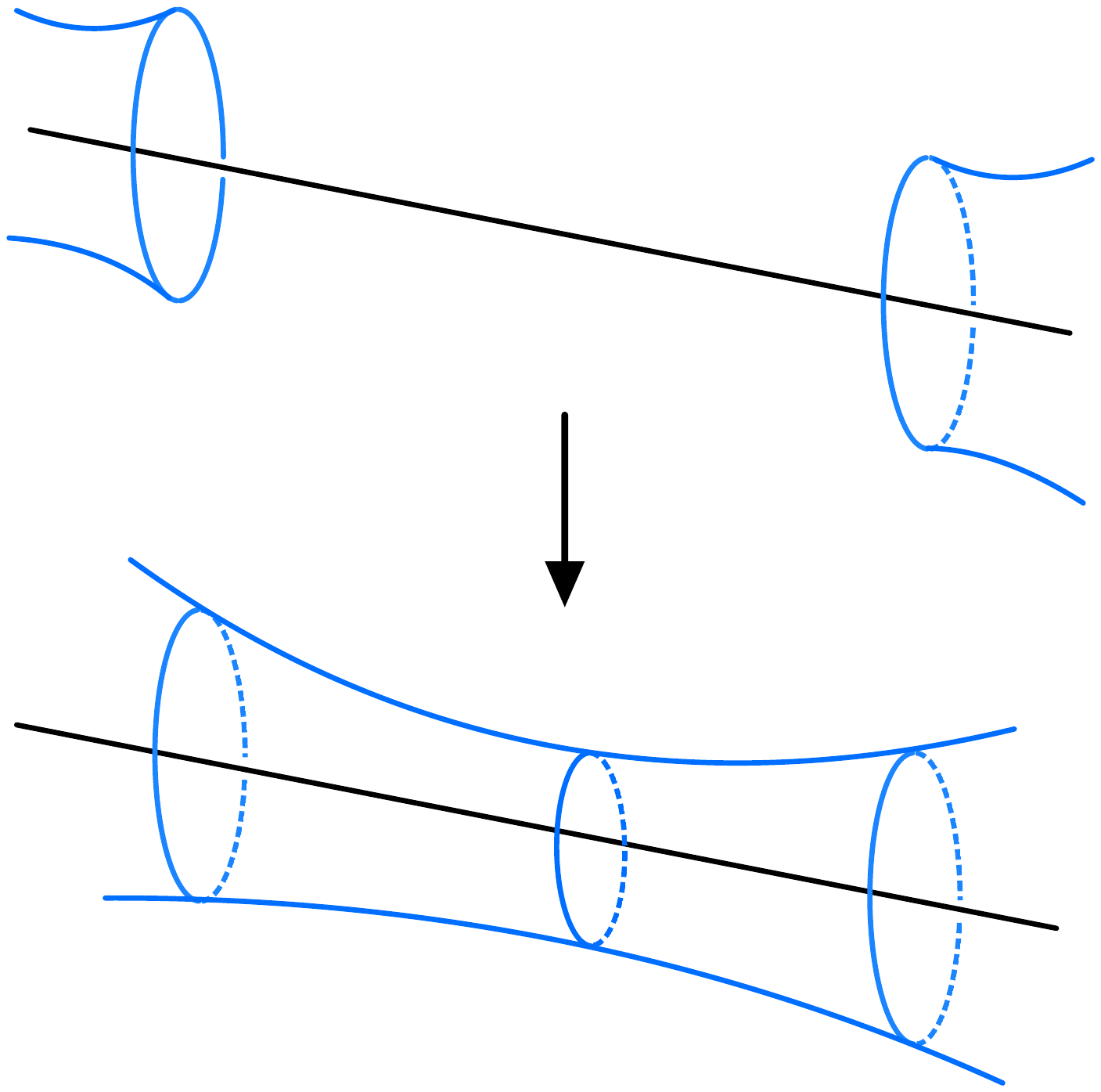}
		\caption{Peripheral tubing operation.}
		\label{PeriphTubing}
	\end{minipage}
	\begin{minipage}[b]{0.44\linewidth}
		\centering
		\includegraphics[viewport = 45 310 600 660, scale = 0.32, clip]{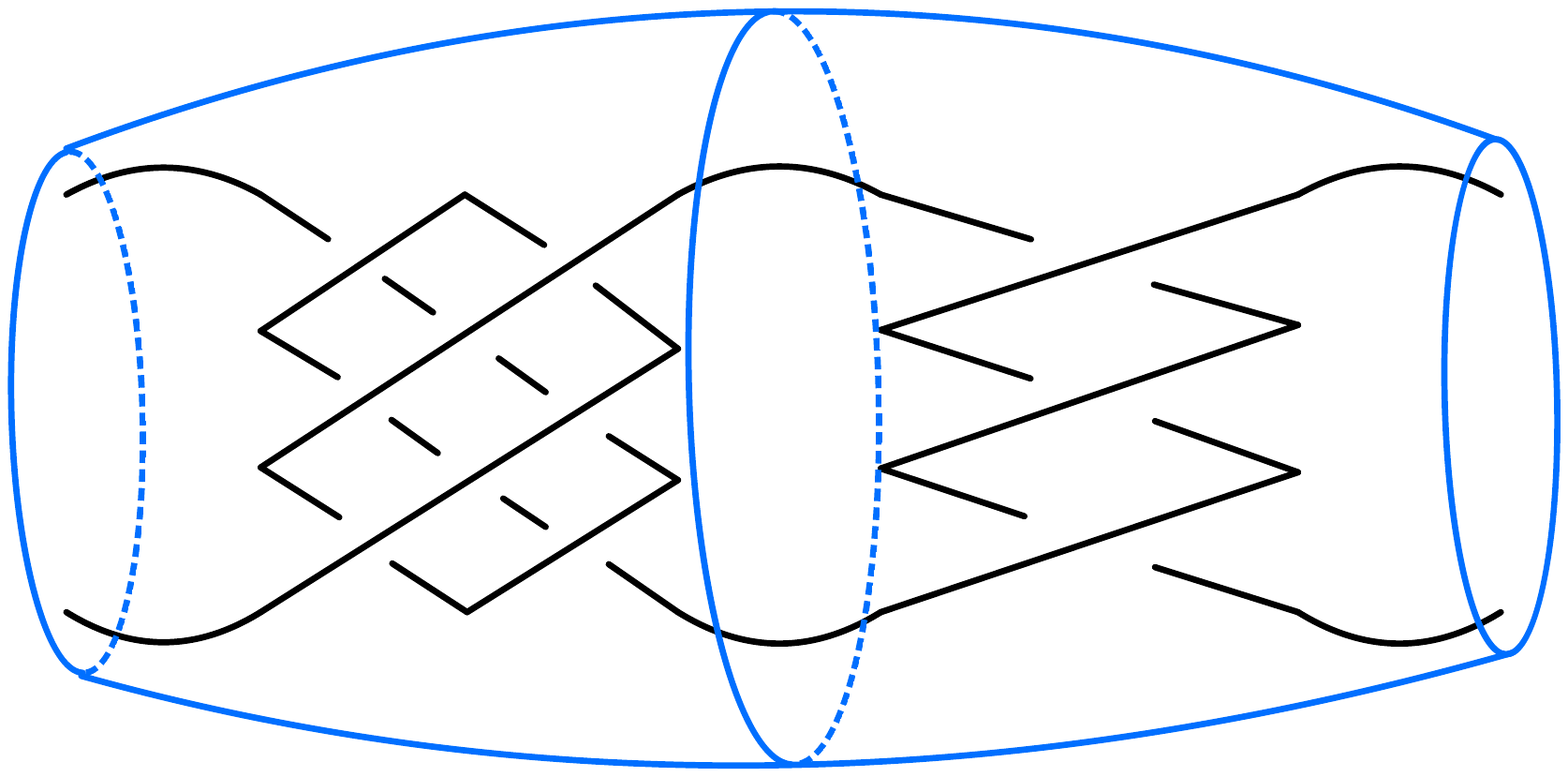}
		\caption{Seifert tangle $(B, \frac{2}{3}, \frac{1}{3})$.}
		\label{SeifertTangle}
	\end{minipage}
\end{figure}

Now, we turn to the definitions needed in order to count closed essential surfaces in Montesinos knot complements. A \textit{chord diagram} is an oriented circle, bounding a disk, containing the following: 

	\begin{enumerate}[label = \roman*), font = \itshape]
	\item finitely many \textit{base points} on the boundary circle,
	\item straight lines in between two base points, called \textit{chords}, contained in the interior of the disk with minimal intersection with other chords,
	\item any two chords have distinct base points and all base points belong to a unique chord.
	\end{enumerate}
See Figure~\ref{Chord_Diagram} for an example of a chord diagram. The \textit{length of a chord}, with base points say $b_1$ and $b_2$, in a chord diagram is the minimum number of base points between $b_1$ and $b_2$ augmented by 1. See for example the chord labelled $C$ in Figure~\ref{Chord_Diagram}.

\begin{figure}[h!]
\centering
\includegraphics[viewport = 0 150 650 650, scale = 0.3, clip]{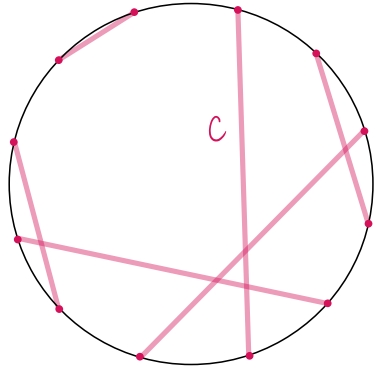}
\caption{An example of a chord diagram, with chord $C$ having length 5.}
\label{Chord_Diagram}
\end{figure}

We state three results of Oertel \cite{oertel1984} which are utilized throughout this paper:

\begin{Th}[Oertel \cite{oertel1984}, Theorem 1] \label{Oertel1}
	If $\sum_{i=1}^k p_i/q_i \neq 0$, in particular if $K$ is a knot, then every closed essential surface in $S^3 - K$ is isotopic to a surface obtained from a finite collection of disjoint incompressible spheres $S_C$ by a sequence of peripheral tubing operations. When $\sum_{i=1}^k p_i/q_i = 0$, there is, in addition, just one other isotopy class of closed essential surfaces in $S^3 - K$; surfaces in this class have Euler characteristic $l(2-k+\sum_{i=1}^k 1/q_i = 0)$ where $l=$l.c.m$(q_1,\ldots,q_k)$.
\end{Th}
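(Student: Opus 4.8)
\emph{Proof proposal.} The plan is to study a closed essential surface $S$ through its position relative to the family of $4$-punctured spheres $S_C$ and the orbifold fibration structure of the complement $M = S^3 - K$, and then to import the vertical/horizontal dichotomy for incompressible surfaces in fibered $3$-manifolds into this setting. First I would record the organizing structure: cutting $M$ along the $S_C$'s decomposes it into product regions $S_C \times I$ between consecutive tangles together with $k$ rational-tangle regions, and the orbifold fibration exhibits $M$ as fibered over the base orbifold $\mathcal{O} = S^2(q_1, \dots, q_k)$, in which a generic fiber passing through the $i$-th tangle is forced to wind $q_i$ times before closing up. A clean way to make the ensuing dichotomy rigorous is to pass to the double branched cover $\Sigma = \Sigma_2(S^3, K)$, which is genuinely Seifert fibered over $\mathcal{O}$: each rational tangle lifts to a fibered solid torus, each $S_C$ lifts to a vertical torus, and the Euler number of the fibration is $e = -\sum_{i=1}^k p_i/q_i$ up to sign. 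The hypothesis $\sum p_i/q_i \neq 0$ is then exactly $e \neq 0$.

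Given $S$, the first technical step is to place it in standard position: isotope it to meet every $S_C$ transversely and to minimize the number of intersection circles, using incompressibility and irreducibility of $M$ to discard trivial circles. I would then transport the problem to $\Sigma$ via the equivariant preimage $\tilde S$ (invoking the equivariant Dehn/loop theorem so that $\tilde S$ is again incompressible and essential) and apply the structure theorem for incompressible surfaces in Seifert fibered spaces: after isotopy $\tilde S$ is either \emph{vertical}, a union of fibers over a $1$-submanifold of $\mathcal{O}$, or \emph{horizontal}, everywhere transverse to the fibers.

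In the vertical case each component of $\tilde S$ lies over an essential simple closed curve of $\mathcal{O}$ and is a torus; recording how such a vertical torus meets the lifted $S_C$'s and descending to $M$, I would identify $S$ as a union of copies of the $S_C$'s joined by annuli that run through the tangle regions, which is precisely the outcome of a sequence of peripheral tubing operations. In the horizontal case, such a surface exists if and only if $e = 0$, i.e. if and only if $\sum_{i=1}^k p_i/q_i = 0$; when it does, it is unique up to vertical isotopy and is realized by the minimal manifold cover of $\mathcal{O}$, of degree $l = \operatorname{lcm}(q_1, \dots, q_k)$ (each cone point of order $q_i$ must be unwrapped, forcing $q_i \mid l$). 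Its Euler characteristic then follows from multiplicativity under the covering, $\chi = l \cdot \chi^{\mathrm{orb}}(\mathcal{O}) = l\bigl(2 - k + \sum_{i=1}^k 1/q_i\bigr)$, matching the stated value, and descending to $M$ gives the single additional isotopy class.

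I expect the main obstacle to be the two translation steps rather than the Seifert classification itself. The equivariant reduction must be set up carefully — ensuring an essential surface can be made invariant, stays incompressible and non-boundary-parallel, and that isotopy classes correspond bijectively up and down — with attention to the branch locus. More delicate still is verifying that vertical tori correspond \emph{exactly} to tubings of the $S_C$'s, with no accidental compressions or coincidental isotopies; this forces one to pin down the admissible slopes of the intersection curves on each $4$-punctured sphere and the requirement that they match across consecutive tangle regions. An alternative that stays entirely inside $M$ is to build an explicit branched surface from the $S_C$'s and the tangle fibrations and to show it fully carries every closed essential surface, but the bookkeeping of which surfaces are carried is of comparable difficulty.
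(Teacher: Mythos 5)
This statement is quoted from Oertel's paper and is not proved here, so the comparison is with Oertel's argument as reflected in the machinery this paper does develop (Lemma~\ref{S_CForm1}, which invokes Oertel's Corollary~2.8 and Lemma~2.9 for \emph{punctured} surfaces, and the annular-compression argument in the proof of Proposition~\ref{MaxPPCDs}). Measured against that, your proposal has a genuine gap at its central step. You propose to lift a closed essential surface $S \subset S^3 - K$ to the double branched cover $\Sigma$ and apply the vertical/horizontal classification of closed essential surfaces in Seifert fibered spaces. But the preimage $\tilde S$ of $S$ in the \emph{closed} manifold $\Sigma$ is in general compressible, even when $S$ is essential in the knot complement: compressions of $\tilde S$ can occur through the branch locus, which is invisible downstairs. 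This is not a technicality to be "set up carefully" --- it fails for exactly the surfaces the theorem is about. A tubed surface of genus $g \geq 2$ lifts to a closed surface of genus roughly $2g-1$ in $\Sigma$; since every closed essential surface in the large Seifert fibered space $\Sigma$ is a vertical torus or a horizontal surface of one fixed Euler characteristic, $\tilde S$ must be compressible there. Consequently the dichotomy you invoke can only ever output vertical tori (which descend to the capped-off $S_C$'s, i.e.\ spheres meeting $K$ in four points, not closed surfaces in $S^3-K$ of genus $\geq 2$) and the horizontal surface; it never produces the tubings, and your sentence identifying a union of vertical tori with "copies of the $S_C$'s joined by annuli" does not go through --- the tube annuli are meridional and are not unions of fibers, so a tubed surface is neither vertical nor horizontal in any fibration, upstairs or downstairs.

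The missing idea is the \emph{meridional (annular) compression} step, which is the inverse of peripheral tubing. One first compresses $S$ along essential annuli in $M \setminus\!\setminus S$ having one boundary component a meridian on $\partial N(K)$, repeating until no such annulus remains; this replaces the closed surface by a disjoint union of incompressible, \emph{peripherally} incompressible punctured surfaces. Only to these punctured pieces does the vertical/horizontal dichotomy in the orbifold fibration $O_K$ apply (this is exactly how Lemma~\ref{S_CForm1} uses Oertel's Corollary~2.8): the vertical pieces are the 4-punctured spheres $S_C$ lying over essential arcs in the base 2-orbifold, and horizontal pieces exist if and only if $\sum p_i/q_i = 0$, with Euler characteristic $l\bigl(2 - k + \sum 1/q_i\bigr)$ as you compute. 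Reversing the sequence of annular compressions then reconstructs $S$ as a sequence of peripheral tubings, which is the conclusion of the theorem. Your Euler characteristic computation for the horizontal piece and your identification of the hypothesis $\sum p_i/q_i \neq 0$ with nonvanishing Euler number are correct, but without the compression step the argument never reaches the surfaces the first sentence of the theorem describes.
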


\begin{Th}[Oertel \cite{oertel1984}, Theorem 2] \label{Oertel2}
	If $q_i \geq 3$ for each $i$, then a surface obtained from disjoint incompressible $S_C$'s by a sequence of tubing operations is incompressible if and only if each tube passes through at least one rational tangle.
\end{Th}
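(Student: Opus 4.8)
The plan is to prove the two implications by quite different means: the forward implication, that a ``wasted'' tube forces compressibility, by exhibiting an explicit compressing disk, and the reverse implication by a cut-and-paste argument organized along the decomposition of $S^3 - K$ into tangle regions. \textbf{Necessity.} Suppose some tube $T$ of the construction passes through no rational tangle. Then the annulus $T$ lies in one of the product regions $R$ between two consecutive $S_C$'s, where $K \cap R$ is two trivial parallel strands. There $T$ is an unknotted, unlinked tube, so its core circle $c$ bounds an embedded disk $D \subset R$ (the cocore of a solid-tube neighborhood of $T$) with $D \cap S = \partial D = c$. Since tubing strictly increases genus, compressing $S$ along $D$ merely deletes $T$ and recaps the two punctures it joined, so $c$ does not bound a disk on $S$; hence $D$ is a genuine compressing disk and $S$ is compressible. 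This shows that incompressibility forces every tube through a tangle.

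\textbf{Sufficiency.} Conversely, assume each tube passes through at least one rational tangle and, for contradiction, that $S$ admits a compressing disk $D$. Let $\mathcal{F} = \bigcup_C S_C$ be the defining collection of $4$-punctured spheres, incompressible by hypothesis, and make $D$ transverse to $\mathcal{F}$, so that $D \cap \mathcal{F}$ is a union of circles and arcs. Using that $S^3 - K$ is irreducible and that each $S_C$ is incompressible, an innermost circle of $D \cap \mathcal{F}$ bounds a disk on its $S_C$, and the standard irreducibility isotopy removes all circle components. I am then left with arcs; an outermost one cuts off a subdisk $D_0 \subset D$ whose boundary is the union of an arc on some $S_C$ and an arc $\beta \subset \partial D \subset S$, with $D_0$ lying in a single region $R$ of the tangle decomposition. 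Such a $D_0$ is precisely a boundary-compression of the piece $S \cap R$.

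The crux is to rule out these boundary-compressions, and this is exactly where $q_i \geq 3$ enters. In a product region the relevant piece of $S$ is boundary-incompressible because, by assumption, the tubes there thread into a tangle; in a rational tangle region with $q_i \geq 3$ the pieces of $S$ (sub-surfaces of the $S_C$'s together with the tube strands passing through the tangle) should be both incompressible and boundary-incompressible, whereas for $q_i = 1, 2$ the tangle is too simple and genuine boundary-compressions appear. Granting this local claim, each outermost $D_0$ is either inessential, so $\beta$ can be pushed across $S$ to reduce the arc count, or it contradicts boundary-incompressibility; inducting on the number of arcs drives $D \cap \mathcal{F}$ to the empty set, confining $D$ to a single region where the incompressibility of the pieces of $S$ forbids a compressing disk, a contradiction. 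I expect the main obstacle to be exactly this local statement, that the pieces of $S$ inside a $q_i \geq 3$ rational tangle complement are incompressible and boundary-incompressible; it is a claim about normal surfaces in rational tangle complements that must be established by a direct normal-form (or branched-surface) analysis, and it genuinely fails when some $q_i < 3$, which is what makes the hypothesis sharp.
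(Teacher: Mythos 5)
First, a point of reference: Theorem~\ref{Oertel2} is quoted in this paper as an external result (Theorem~2 of Oertel's \emph{Closed incompressible surfaces in complements of star links}); the paper gives no proof, so the only thing to measure your attempt against is Oertel's original argument, which runs through a careful analysis of how surfaces meet the tangle balls (equivalently, the Seifert-fibered/orbifold structure of the rational tangle complements).

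Your attempt has two genuine problems. In the necessity direction, the compressing disk you exhibit does not exist: a peripheral tube is an annulus running along $\partial N(K)$ between two meridian circles of $K$, so its core circle is itself a meridian of $K$ and bounds no disk in $S^3 - K$ (it bounds only a once-punctured disk). Likewise ``deleting $T$ and recapping the two punctures'' describes surgery along a meridian disk of a $1$-handle, which is exactly what a peripheral tube does not admit. The correct disk in the product region $R \cong S_C \times I$ between the two nested $S_C$'s is $E = \alpha \times I$, where $\alpha$ is an essential properly embedded arc in $S_C$ with both endpoints on the tubed puncture circle; then $\partial E$ lies on $S$ (two copies of $\alpha$ plus two spanning arcs of the tube) and is essential there, so $E$ compresses $S$. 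The conclusion of your necessity argument is right but the disk must be replaced. In the sufficiency direction the gap is the one you name yourself: the entire content of the theorem --- and the only place $q_i \geq 3$ enters --- is the local claim that the pieces of $S$ inside each rational tangle ball are incompressible and $\partial$-incompressible, and you leave this as a conjecture to be ``established by a direct normal-form analysis.'' The surrounding innermost-circle/outermost-arc reduction is standard and does not constitute a proof without that claim. Moreover, your transversality setup is ill-posed: you intersect the compressing disk $D$ with $\mathcal{F} = \bigcup_C S_C$, but $S$ is obtained from $\mathcal{F}$ by tubing, so $\mathcal{F}$ is (away from the tubed punctures) a subset of $S$ and $\partial D$ already lies on it; the correct decomposing surfaces are the boundary spheres of the tangle balls (or the vertical annuli of the orbifold fibration), not the $S_C$'s themselves.
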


\begin{Cor}[Oertel \cite{oertel1984}, Corollary 2.14]\label{Oertel3} A vertical 4-punctured sphere in $O_K$, the fibered orbifold with underlying space $S^3$ and $\mathbb{Z}_2$-singular set $K=K(p_1/q_1,\ldots,p_k/q_k),$ is incompressible as an orbifold if and only if it bounds a Seifert tangle on each side which is not a rational tangle.
\end{Cor}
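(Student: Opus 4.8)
The plan is to reduce the orbifold incompressibility of the vertical $4$-punctured sphere $S_C$ to a combinatorial statement about the Seifert fibration, and then translate that statement back into the language of tangles. Recall that $O_K$ is Seifert fibered, and its double branched cover $M \to S^3$ along $K$ is the Seifert fibered space of the Montesinos link, fibered over the base $2$-orbifold $S^2(q_1,\ldots,q_k)$ with one cone point of order $q_i$ coming from each rational tangle. Being vertical, $S_C$ is saturated by the fibration of $O_K$, so its preimage $\widetilde{S}_C \subset M$ is a vertical (saturated) torus: the $4$-punctured sphere is branched-covered by a torus over its four order-$2$ cone points, and $\widetilde{S}_C$ projects to a simple closed curve $\gamma$ in $S^2(q_1,\ldots,q_k)$ separating the $k$ cone points into two groups, one on each side of $S_C$.

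First I would record the equivalence that $S_C$ is incompressible as an orbifold if and only if $\widetilde{S}_C$ is incompressible in $M$. This is the orbifold Dehn's lemma / equivariant dictionary: an orbifold compressing disk for $S_C$ (a disk meeting the singular locus $K$ in at most one point, with essential boundary on $S_C$) lifts to a genuine compressing disk for $\widetilde{S}_C$, and conversely any compression of $\widetilde{S}_C$ can be made equivariant by the Meeks--Yau equivariant Dehn lemma applied to the covering involution, hence descends to an orbifold compressing disk for $S_C$. This is the step I expect to be the main obstacle, since one must handle compressing disks crossing the branch locus and verify the involution acts standardly; but it is precisely the point at which passing to the cover lets us invoke classical Seifert-fibered theory.

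Next, since $\widetilde{S}_C$ is a vertical torus, I would apply the standard classification of incompressible surfaces in Seifert fibered spaces (Waldhausen, Jaco): a vertical torus is incompressible if and only if its image curve $\gamma$ is essential in the base orbifold, i.e.\ $\gamma$ neither bounds a disk nor cuts off a disk containing a single cone point. Indeed, a complementary disk of $\gamma$ with no cone points makes $\widetilde{S}_C$ bound a fibered solid torus over an ordinary fiber, and a complementary disk with exactly one cone point makes it bound the fibered solid-torus neighborhood of an exceptional fiber; in either case a meridian disk compresses $\widetilde{S}_C$. Conversely, if each complementary disk contains at least two cone points, then the base orbifold on each side has nonpositive orbifold Euler characteristic and $\gamma$ admits no such compression, so $\widetilde{S}_C$ is incompressible.

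Finally I would translate the condition on $\gamma$ back downstairs. The curve $\gamma$ separates $S^2(q_1,\ldots,q_k)$ into two disks with $a$ and $k-a$ cone points, and downstairs this is exactly the decomposition of $S^3$ along $S_C$ into two balls, each meeting $K$ in a tangle assembled from the corresponding rational tangles, i.e.\ a Seifert tangle. A side is a single rational tangle exactly when its disk carries exactly one cone point ($a=1$ or $a=k-1$), and it is a trivial $0$- or $\infty$-tangle exactly when its disk carries none. Hence "$\gamma$ essential on both sides" coincides with "each side is a Seifert tangle that is not a rational tangle," and combining this with the two equivalences above yields the corollary. The remaining routine points are that the hypothesis $q_i \ge 3$ guarantees the exceptional fibers are genuine, so a one-cone-point side really is a compressible rational tangle, and that no horizontal compressing disk can intervene, which is immediate once both complementary base orbifolds carry at least two cone points.
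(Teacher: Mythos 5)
The paper does not actually prove this statement: it is imported verbatim from Oertel's paper (his Corollary~2.14) and used as a black box, so there is no in-paper proof to compare against. Judged on its own, your outline is essentially correct, but it takes the double-branched-cover route rather than Oertel's intrinsic one: Oertel works directly inside the fibered orbifold $O_K$, develops the vertical/horizontal dichotomy and orbifold incompressibility for suborbifolds there, and reads off the corollary from the fact that a $4$-punctured sphere bounding a tangle is orbifold-compressible into a side exactly when that side is a rational tangle; your version instead lifts $S_C$ to a vertical torus in the Seifert fibered double cover, invokes the Meeks--Yau equivariant Dehn lemma to identify orbifold incompressibility with incompressibility of the lift, and then uses the classical criterion for vertical tori over curves in the base orbifold. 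Your approach buys access to well-documented classical Seifert-fibered theory at the cost of the equivariance step, which you correctly flag as the delicate point. Two details you should nail down rather than gesture at: (i) the final translation needs the lemma that a Seifert tangle whose complementary disk in the base carries at least two cone points is never a rational tangle --- this follows because a Seifert fibration of a solid torus has at most one exceptional fiber, so the double branched cover of such a tangle cannot be a solid torus; and (ii) the case of a complementary disk with two order-$2$ cone points (excluded in this paper by the convention $q_i\ge 3$, but not by the corollary as stated) deserves explicit mention, since that is the one place the ``not a rational tangle'' phrasing is doing work that the cone-point count alone does not capture.
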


%
%
\section{Convention} \label{Convention}
In order to utilize Theorem~\ref{Oertel2}, throughout this paper, every Montesinos knot $K$ will have $q_i \geq 3$ for each $i$. We prove which Montesinos knots are hyperbolic.

\begin{Prop} A Montesinos knot $K = K(p_1/q_1,\ldots,p_k/q_k)$ is hyperbolic if $q_i \geq 3$ for each $i$ and $k \geq 4$.
\end{Prop}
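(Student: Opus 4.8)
The plan is to invoke Thurston's hyperbolization theorem for Haken manifolds \cite{bams1183548782}: a compact, orientable, irreducible, boundary-irreducible $3$-manifold with toral boundary that is atoroidal and not Seifert fibered admits a complete finite-volume hyperbolic structure on its interior. Since $K$ is a nontrivial knot, $S^3 \setminus K$ is irreducible, its boundary torus is incompressible, and a minimal-genus Seifert surface is incompressible, so $S^3\setminus K$ is Haken. It therefore remains to verify two things: that $S^3\setminus K$ is atoroidal, and that it is not Seifert fibered. Both will follow from Oertel's description of the essential surfaces, Theorems~\ref{Oertel1}--\ref{Oertel2} and Corollary~\ref{Oertel3}, together with the hypotheses $q_i\ge 3$ and $k\ge 4$.

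For atoroidality I would show there is no essential torus. Because $K$ is a knot we have $\sum_i p_i/q_i\neq 0$, so Theorem~\ref{Oertel1} applies with no exceptional class: every closed essential surface in $S^3\setminus K$ is isotopic to one obtained from a disjoint collection of $m\ge 1$ incompressible four-punctured spheres $S_C$ by peripheral tubings. Viewed in the complement, each $S_C$ is a four-holed sphere of Euler characteristic $-2$, and tubing (gluing an annulus along a pair of punctures) leaves the Euler characteristic unchanged, so closing up $m$ such spheres yields a surface $S$ with $\chi(S) = -2m \le -2$. Hence every closed essential surface has genus at least $2$; none is a torus, and $S^3\setminus K$ is atoroidal. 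In particular $K$ is not a satellite knot.

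For the Seifert-fibered condition I would exhibit a closed essential surface and use Moser's theorem, that a nontrivial knot with Seifert-fibered complement is a torus knot, together with the fact that a torus-knot complement, being a small Seifert fibered space, contains no closed essential surface. Existence is exactly where $k\ge 4$ enters: choosing the curve $c$ of Figure~\ref{S_Cs} to separate the $k$ rational tangles into two groups each containing at least two tangles — possible precisely when $k\ge 4$ — produces an $S_C$ bounding on each side a Seifert tangle that is not a single rational tangle, hence incompressible by Corollary~\ref{Oertel3}. Since $q_i\ge 3$, Theorem~\ref{Oertel2} allows me to tube this $S_C$ through the rational tangles into a closed incompressible surface, whose genus is at least $2$, so it cannot be boundary parallel and is therefore essential. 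Thus $S^3\setminus K$ contains a closed essential surface, so it is not a torus-knot complement and not Seifert fibered, and Thurston's theorem yields hyperbolicity.

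The main obstacle, and the step demanding the most care, is the atoroidality argument, since it relies on Theorem~\ref{Oertel1} genuinely capturing \emph{all} closed essential surfaces — including any candidate essential torus — so that the Euler-characteristic bound truly excludes genus $1$. One should also note that the hypotheses are sharp for this method: $q_i\ge 3$ is exactly what Theorem~\ref{Oertel2} requires, and $k\ge 4$ is precisely what lets the separating curve leave at least two tangles on each side (for $k=3$ every such $S_C$ has a rational-tangle side and is compressible). As a cross-check, the associated base $2$-orbifold $S^2(q_1,\ldots,q_k)$ has orbifold Euler characteristic $2-k+\sum_{i=1}^k 1/q_i \le 2 - 2k/3 \le -2/3 < 0$ under these hypotheses, confirming that $K$ avoids the finite list of non-hyperbolic Montesinos knots with spherical or Euclidean base orbifold.
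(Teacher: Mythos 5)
Your proposal is correct and follows essentially the same route as the paper: both establish irreducibility, deduce atoroidality from the Euler-characteristic count $\chi(S)=-2m<0$ for the tubed $S_C$'s of Theorem~\ref{Oertel1}, rule out the torus-knot/Seifert-fibered case by exhibiting a closed essential surface (the paper cites Oertel's Corollary~3 where you construct one directly from Corollary~\ref{Oertel3} and Theorem~\ref{Oertel2}, and invokes Waldhausen where you invoke Moser plus smallness of torus-knot complements), and then apply Thurston's hyperbolization theorem. The extra details you supply (Haken-ness via a Seifert surface, the orbifold Euler-characteristic cross-check) are consistent with, and slightly more explicit than, the paper's argument.
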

\begin{proof}
	We show that the knot is neither a satellite knot nor a torus knot so by Thurston's hyperbolization theorem for Haken manifolds, the knot complement is hyperbolic. First, since $S^3$ is irreducible, then so is the knot complement as every 2-sphere in $S^3$ bounds a ball to both sides and hence every 2-sphere in $S^3-K$ bounds a ball to one side. By Theorem~\ref{Oertel1}, every essential surface $S$ is a tubing of 4-punctured spheres. Since every tubing of 4-punctured spheres has $\chi(S) < 0$, then the knot complement is geometrically atoroidal so $K$ is not a satellite knot.
	
	We now show that $K$ is not a torus knot. By assumption we have $q_i \geq 3$ for each $i$ and $k \geq 4$, hence Corollary 3 of \cite{oertel1984} implies that the knot complement of $K$ contains closed essential surfaces of every genus greater than 2. However, by Waldhausen (see e.g. \cite{Lyon1971}), torus knots contain only one closed incompressible surface which is boundary parallel. Thus, $K$ cannot be a torus knot. So, we may apply Thurston's hyperbolization theorem to conclude that $K$ is hyperbolic.
\end{proof}

In light of the proposition, all Montesinos knots in this paper are hyperbolic. Moreover, for simplicity, we also assume that every Montesinos knot has $k=4$ rational tangles. Hyperbolicity ensures finiteness of the number of surfaces of genus $g$ in the complement of $K$ in $S^3$ up to isotopy, by Corollary 2.3 of \cite{JACO1984195}. We will also only count closed, connected, essential, orientable surfaces, up to isotopy, embedded in the complement.

%
%
\section{Idea of the Proof}\label{Idea}

We use Theorem~\ref{Oertel1} to prove that it allows us to define a punctured partitioned chord diagram. That is, punctured partitioned chord diagrams record the tubing information: which punctures of the $S_C$'s are tubed and which rational tangles each tube goes through. We then show that each tubing description of 4-punctured $S_C$'s, as in Theorem~\ref{Oertel1}, of a closed essential surface in the complement corresponds to a unique punctured partitioned chord diagram. Noting that each chord diagram can correspond to multiple surfaces, we show that each punctured partitioned chord diagram corresponds to at most 2 closed essential surfaces. Taking the dual tree of these chord diagrams, which is unique, we use these to show the types of allowable punctured partitioned chord diagrams. Namely, every punctured partitioned chord diagram coming from a closed essential genus $g$ surface must have a chord of maximum possible length $4g - 5$ and exactly two chords of length 1. Moreover, punctured partitioned chord diagrams are uniquely determined by the location of this chord of maximum possible length. Thus, the number of surfaces of genus $g$ in the complement is at most the number of possible locations of the chord of maximum length multiplied by 2. In particular, we show that there are at most $8\phi(g - 1)$ punctured partitioned chord diagrams which correspond to a tubing description of a closed, connected, essential, orientable surface of genus greater than 2. For surfaces of genus 2, there are at most 12 punctured partitioned chord diagrams which correspond to such surfaces. Lastly, we prove that all of these diagrams correspond to distinct, non-isotopic surfaces using results of \cite{FLOYD1984117}, \cite{Oertel1984385} and \cite{MR224099}.

%
%
\section{Tools and Definitions Used for Counting Surfaces in Montesinos Knots}\label{proof}

We introduce the main tool, i.e.~punctured partitioned chord diagrams, used throughout the proof and prove some necessary properties about punctured partitioned chord diagrams. Namely, we establish existence first then, after a few more properties are established, show uniqueness of punctured partitioned chord diagrams for a tubing description of a closed, connected, essential, orientable essential surface. Example~\ref{Example} shows how to obtain a tubing of a pair of punctures of incompressible $S_C$'s from a punctured partitioned chord diagram.

\begin{Def} \label{DefPPCD}
	A \textit{punctured partitioned chord diagram} is a chord diagram containing 
	\begin{enumerate}
		\item chords between pairs of base points (represented by arcs) which are disjoint,
		\item four marked regions partitioning the boundary circle (and in turn the disk),
		\item each base point is contained in a unique marked region,
		\item each marked region contains the same number of base points,
		\item a puncture disjoint from the chords, in the disk that the circle bounds,
		\item and no chord is isotopic into a single marked region.
	\end{enumerate}
\end{Def}

\begin{figure}[h!]
\centering
\includegraphics[viewport = 0 150 650 650, scale = 0.3, clip]{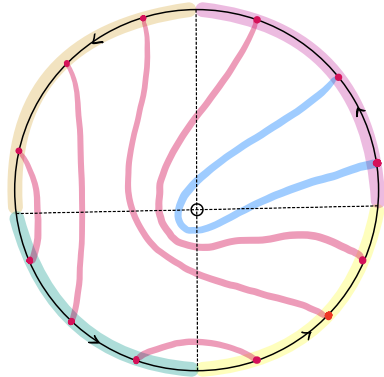}
\caption{An example of a punctured partitioned chord diagram containing a chord, shown in blue, with maximal possible length $11$ (see Definition~\ref{lengthofchord}).}
\label{PPCD}
\end{figure}

\textit{Remark}. By taking an isotopy of the puncture, arcs can be isotoped, relative to its base points, to be chords. We will be counting these diagrams up to isotopy.

\begin{Def}\label{lengthofchord}The \textit{length of a chord} with base points say $b_1$ and $b_2$ in a punctured partitioned chord diagram is defined by first isotoping the chord, keeping the chord contained in the punctured disk, onto the boundary circle of the punctured chord diagram and counting the minimum number of base points between $b_1$ and $b_2$ augmented by 1 (e.g. see blue chord in Figure~\ref{PPCD}).
\end{Def}

\textit{Remark}. The isotopy in Definition~\ref{lengthofchord} cannot pass through the puncture, hence the length of a chord is well-defined.

\begin{Def}\label{distancebetweenbasepoints}
	The \textit{distance between base points} $b_1$ and $b_2$ in a punctured partitioned chord diagram is defined to be the minimum number of base points between $b_1$ and $b_2$, exclusive.
\end{Def}

\begin{Def}\label{distancebetweenbasepoints}
	Two chords $c'$ and $c''$ are said to be \textit{parallel} if the base points of $c'$ are equidistant apart from the pair of base points of $c''$.
\end{Def}

\begin{Def}\label{dualtree}
	A \textit{dual tree} of a punctured partitioned chord diagram is the dual planar graph (i.e. a graph embedded in $\mathbb{R}^2$) defined as follows: 
	\begin{enumerate}
		\item vertices of the tree correspond to complementary regions in the chord diagram bounded by chords and arcs of the bounding circle,
		\item edges of the tree occur when the closure of complementary regions, corresponding to vertices, have nonempty intersection,
		\item the complementary region which contains the puncture will be marked as the root of the tree.
	\end{enumerate}
\end{Def}

\textit{Remark}. The dual graph is indeed a tree since each chord partitions the diagram into exactly two subdiagrams, so removing any dual edge disconnects the dual graph. Hence, the dual graph is minimally connected and so is a tree. Note that in the construction of the dual tree, no choices were made. So every chord diagram has a unique dual tree.

%
%
\subsubsection{Example of Constructing Surface from Punctured Partitioned Chord Diagram}\label{Example}
First, we discuss an important type of chord in a punctured partitioned chord diagram: the chord of maximum possible length. Suppose we have a punctured partitioned chord diagram with $g - 1$ base points in each region. Then, the maximum length of any chord is $4g - 5$. 

Observe that if there is such a chord in the diagram, it is unique since any other such chord necessarily intersects it. Now, to see that the maximum possible length of a chord is $4g - 5$, we use the Euler characteristic of surfaces and compute the number of 4-punctured spheres. Observe that each 4-punctured sphere has Euler characteristic equal to $-2$. Letting $n$ be the number of 4-punctured spheres of a surface $S$ with genus $g$, $m$ be the number of tubes $t$, and $k$ be the number of punctures, we have $$\chi(S) = 2-2g = n \cdot \chi(S_C) + m \cdot \chi(t) - k \cdot \chi(S^1) = -2n.$$ Hence, the number of 4-punctured spheres of a closed surface $S$ with genus $g$ is $g - 1$. 

We use the example in Figure~\ref{PPCD} to show how to obtain a closed essential surface from a punctured partitioned chord diagram. Note that this surface may not be unique. However, we may obtain a closed essential surface from a punctured partitioned chord diagram and a choice of incompressible 4-punctured spheres $S_C$'s. We first choose one of the two possible incompressible $S_C$'s in the knot complement, both of which have four punctures on four distinct arcs. The number of base points in a single region gives the total number of $S_C$'s.  The four arcs of the partition give the four arcs which contain the punctures of the $S_C$'s, well defined up to cyclic permutation of the regions. Chords between a pair of base points of the diagram give a tubing of the corresponding punctures of the $S_C$'s in the following way. Isotope the chord, relative to its base points, to the boundary of the disk, observing that the chord cannot be isotoped through the puncture. Apply the same process to every chord in the diagram. Observe that chords that pass from one region to another represent passing through the rational tangle(s) between the arcs corresponding to those regions. If two isotoped chords intersect, then the lengths of such intersecting chords (see Definition~\ref{lengthofchord}) indicate which tube will be innermost with respect to a meridian curve of $K$, where the longer length chord will be innermost. That is, after isotoping all chords to the boundary circle, then for pairs of chords which overlap, take the chord with largest length to correspond to the tube which is innermost between the pair of tubes corresponding to the chords. For example, the blue chord in Figure~\ref{PPCD} has maximum possible length and hence the corresponding tube indicated in blue in Figure~\ref{Example_Surface} (right), will be innermost amongst all tubes. After tubing between all punctures, following the chords of the diagram, we obtain an embedded closed essential surface in the complement of $K$.

\begin{figure}[h!]
\centering
	\begin{minipage}[b]{0.4\linewidth}
		\centering
		\includegraphics[viewport = 0 150 650 700, scale = 0.25, clip]{PPCD.pdf}
		\label{PPCD1}
	\end{minipage}
	\begin{minipage}[b]{0.4\linewidth}
		\centering
		\includegraphics[viewport = 0 150 650 650, scale = 0.25, clip]{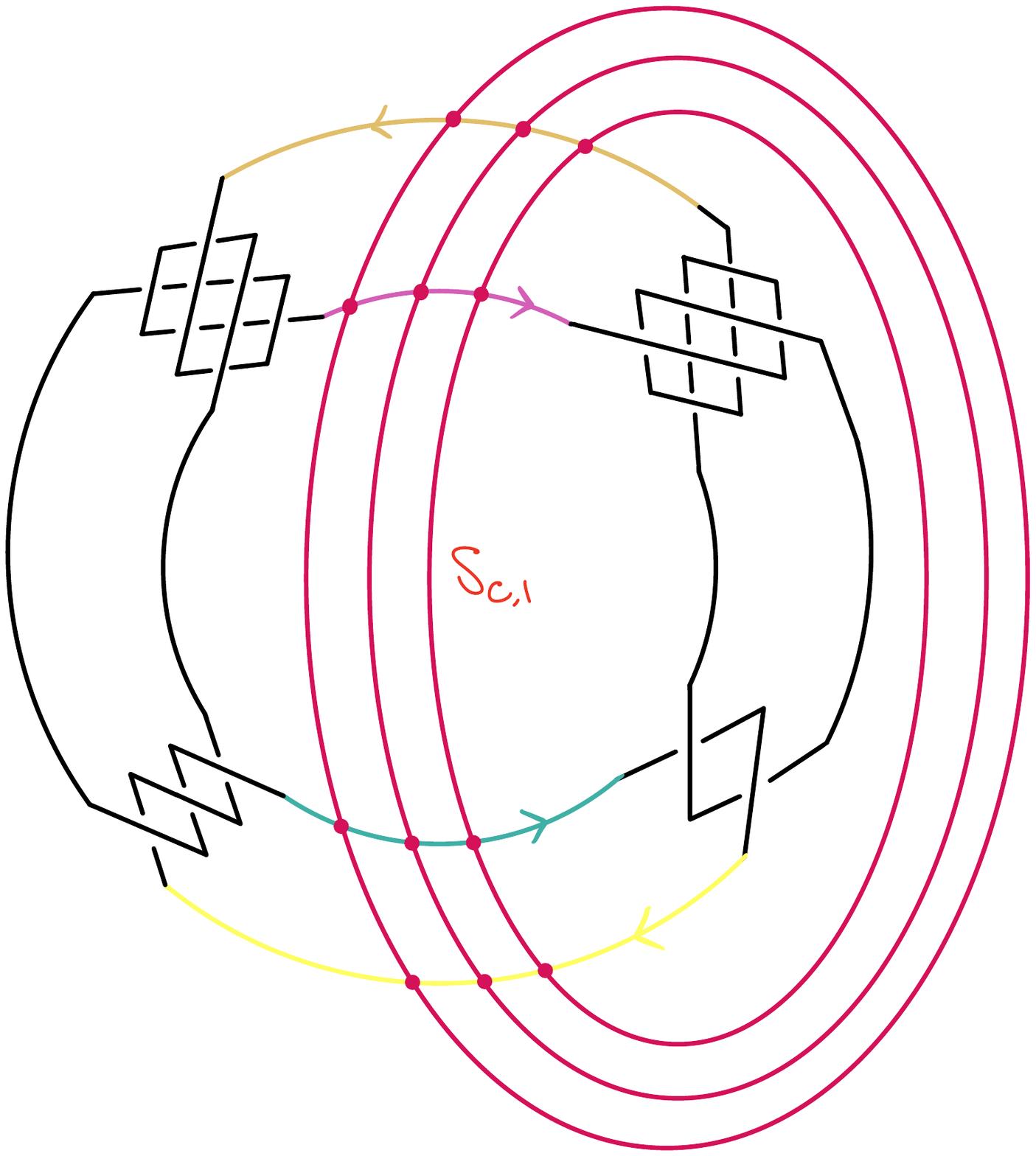}
	\end{minipage}
	\begin{minipage}[b]{0.4\linewidth}
		\centering
		\includegraphics[viewport = 0 150 650 650, scale = 0.25, clip]{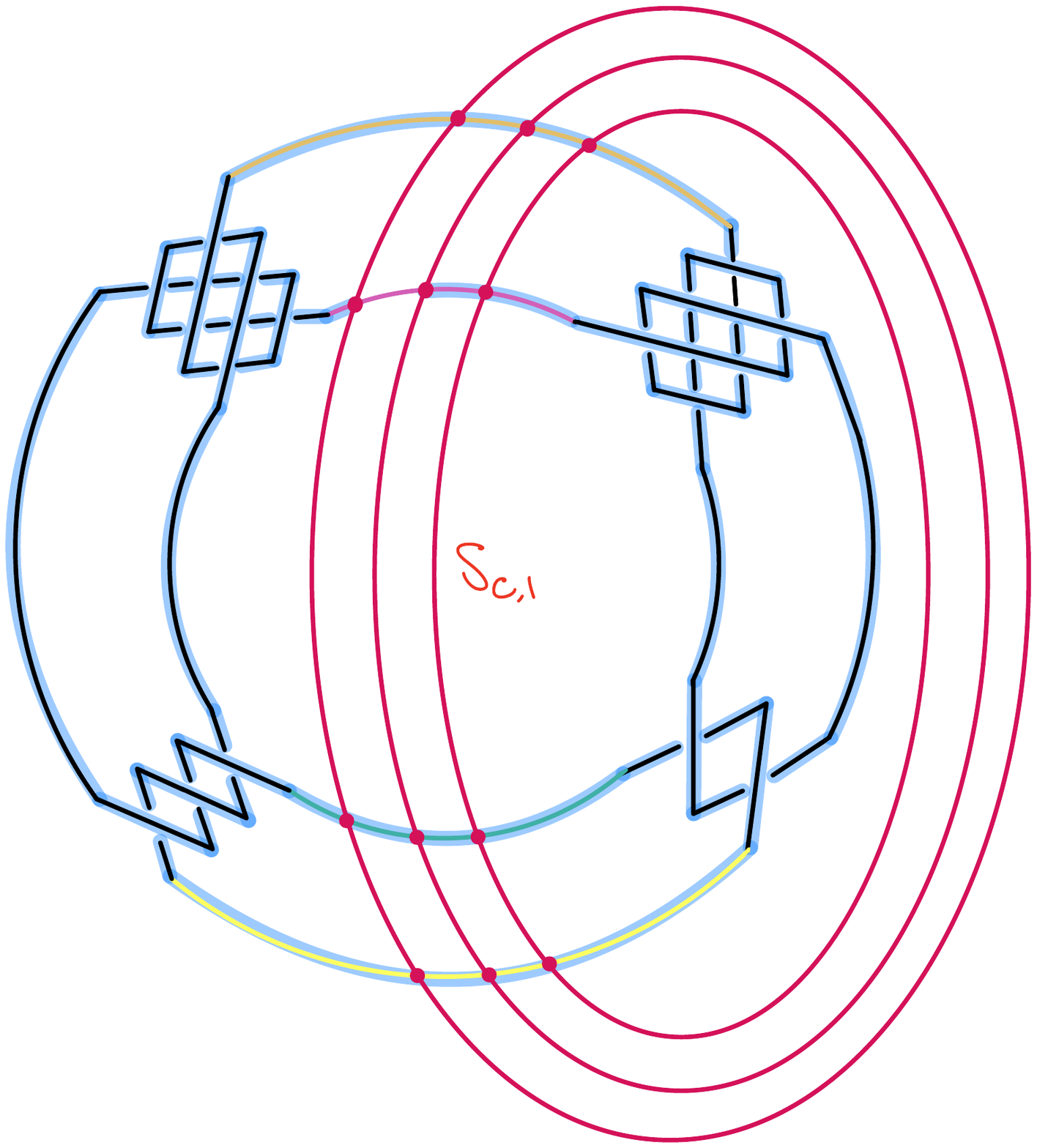}
	\end{minipage}
	\begin{minipage}[b]{0.4\linewidth}
		\centering
		\includegraphics[viewport = 0 150 650 650, scale = 0.25, clip]{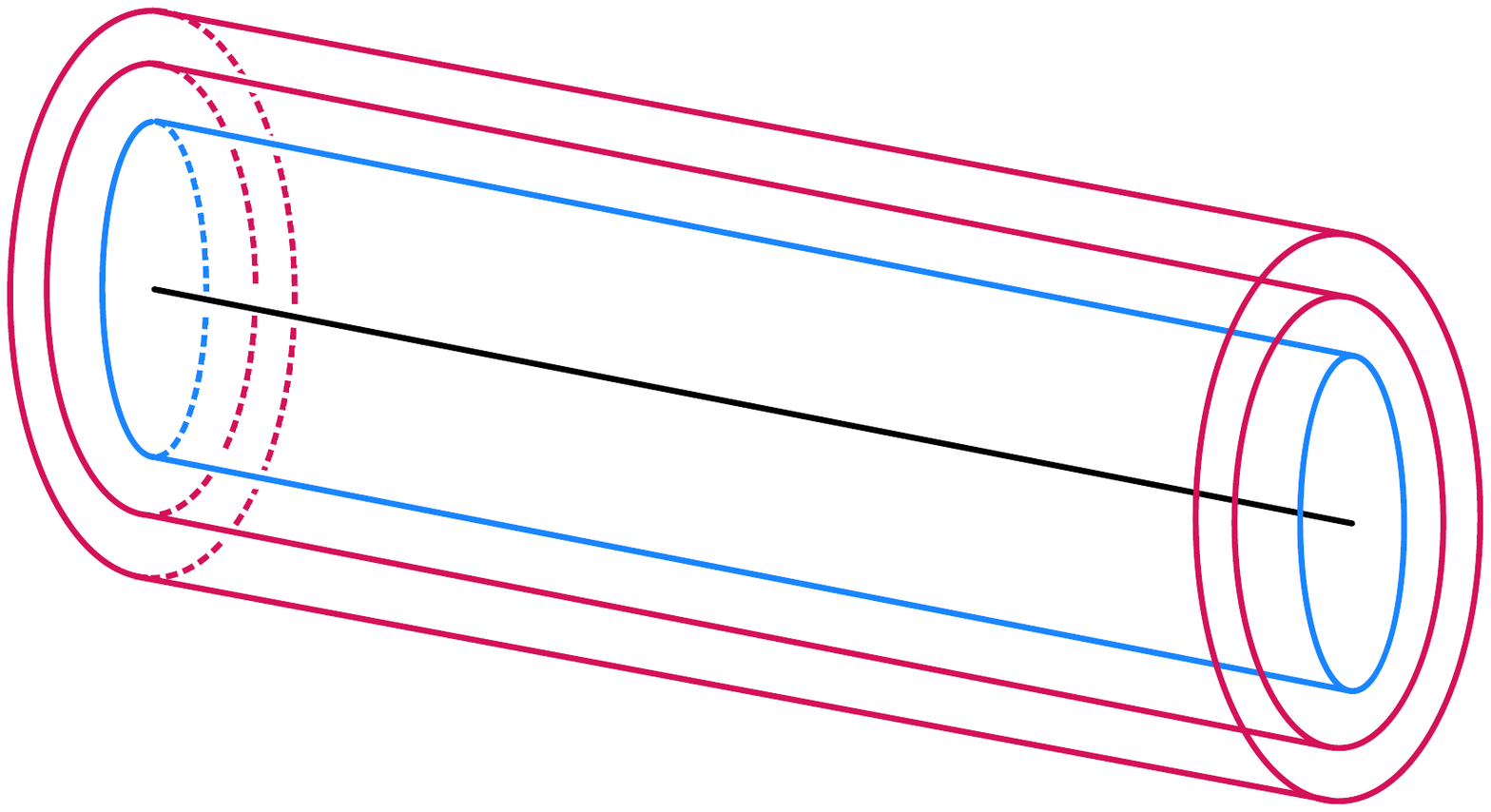}
	\end{minipage}
	\caption{The tubing corresponding to the blue chord of the punctured partitioned chord diagram in Figure~\ref{PPCD}.}
	\label{Example_Surface}
\end{figure}

%
%
\subsubsection{Geometric Interpretation of Punctured Partitioned Chord Diagrams}\label{RegNbhdPPCD}
We discuss a geometric way to interpret punctured partitioned chord diagrams. A collared neighborhood of the knot $K$ in the knot complement $M = S^3 - K$ is a tubular neighborhood of $K$ which does not include $K$. Namely, it is homeomorphic to $K \times (D^2 - \{0\})$ where $K \times \{0\}$ is sent to $K$ in $M$. This collared neighborhood is homeomorphic to a solid torus minus the core. In other words, a collared neighborhood of $K$ is homeomorphic to $S^1 \times (D^2 - \{0\})$. When we restrict to the arcs of $K$ that contain the punctures of the 4-punctured spheres $S_C$, then we are only concerned about four particular arcs. Moreover, by Theorem~\ref{Oertel2}, we only need to record when a tube passes through a rational tangle. Hence, we may omit the parts of the collared neighborhood which pass through a rational tangle and then attach the regular neighborhoods of these four arcs together following the associated rational tangles between them. The four arcs of the knots give rise to the four regions of the diagram, and regions are adjacent exactly when arcs are adjacent when traversing the knot. When a chord in the diagram passes from one region to another, this corresponds to a tube passing through at least one rational tangle. We now have a space homeomorphic to a $S^1 \times (D^2 - \{0\})$ with four marked annuli consisting of meridians. Finally, we collapse $D^2 - \{0\}$ of the collared neighborhood to $(0,1]$, since every point in $(0,1]$ will have exactly one $S^1$ fiber over it. Thus, we get an annulus partitioned into four pieces. Moreover, the inner boundary component  represents $K$ and thus becomes the puncture of the punctured partitioned chord diagram.\\

We review definitions of integer pairings as in \cite{AHW2002}, which will be used in the latter part of Section~\ref{generalcase}. We follow the descriptions and definitions cited in \cite{lee2021essential}.

\begin{Def}\label{pairing}
	Let $[1, N] \subseteq \mathbb{N}$ be the set of integers $\{1, 2, \ldots, N\}$. A bijection $g : [a , b] \rightarrow [c, d]$ defined on subsets $[a, b], [c, d] \subseteq [1, N]$ is called a \textit{pairing}.
	
	A pairing is \textit{orientation preserving} if it is increasing and \textit{orientation reversing} if it is decreasing. 
\end{Def}

\begin{Def}\label{pairing}
	Let $[1, N] \subseteq \mathbb{N}$ be the set of integers $\{1, 2, \ldots, N\}$ and suppose we have a collection of pairings $\{g_i\}, 1 \leq i \leq k$. The $g_i$'s generate a pseudogroup on $[1, N]$ and any two integers are said to be in the same \textit{orbit} if some pairing in the pseudogroup sends one to the other.
\end{Def}

%
\section{Description of Montesinos Knots in Terms of Orbifolds} \label{DescriptionOfOrbifold}

We will prove in Proposition~\ref{S_CForm} that incompressible, peripheral incompressible 4-punctured spheres $S_C$'s have the form as in Figure~\ref{S_Cs} and that those are the only such $S_C$'s in the complement of $K$ in $S^3$. First, we recall a description, given in \cite{oertel1984}, of a fibered orbifold $O_K$ with underlying space $S^3$ and $\Z_2$ singular set $K = K(p_1/q_1,\ldots,p_k/q_k)$. For a rational tangle of slope $p/q$, we describe a different construction in terms of orbifold fibrations. Namely, let $(T, p/q)$ be a solid Seifert fiber torus with exceptional fiber of order $(q,-p)$. Intuitively, this means that the fibers in a neighborhood of the exceptional fiber twist about, i.e. corkscrew around, the exceptional fiber $p$ times and traverse along the exceptional fiber $q$ times. In order to uniquely determine $p$, we must fix a fundamental domain for $\partial T$. To do this, we fix two simple closed curves $x$ and $y$ on $\partial T$: take $x$ to intersect each fiber transversely exactly once and $y$ a fiber of $T$. Thus, lifting the boundary $\partial D$ of a meridian disk $D$ of $T$ to the universal cover of $\partial T$, we get a line of slope $p/q$ relative to the axis given by the lift of $x$ and the axis given by the lift of $y$. We may obtain a rational tangle $(L, p/q)$ from this construction by taking the quotient of a $180^{\circ}$ rotation of $T$ about a diameter of the exceptional fiber of $(T, p/q)$, where the rotation is fiber preserving (see Figure~\ref{OrbiFibration}, right). Label the arcs of intersection of the axis of rotation and $T$ as $\kappa_0$ and $\kappa_1$. After a homeomorphism of $(L, p/q)$, we may obtain the space shown on the right in Figure~\ref{OrbiFibration} where $\kappa_0$ and $\kappa_1$ will be the arcs of the rational tangle. The space $(L, p/q)$ is the 3-ball with marked arcs $\kappa_0, \kappa_1$ and a coordinate system consisting of an $x$-~and $y$-axis. The fibering of $(T, p/q)$ descends to a fibration of $(L,p/q)$ in the following way. Generic fibers, disjoint from the axis of rotation, of $(T, p/q)$ become parallel copies of the $y$-axis. The generic fibers of $(T, p/q)$ that intersect the axis of rotation descend to exceptional fibers as arcs with one endpoint on $\kappa_0$ and the other endpoint on $\kappa_1$. For example, the generic fiber on $\partial T$ shown in Figure~\ref{OrbiFibration}, left, descends to a parallel copy of the $y$-axis in $\partial L$ and the two generic fibers on $\partial T$ that intersect the axis of rotation descends to the two exceptional fibers (arcs) on $\partial L$ having one endpoint on $\kappa_0$ and the other on $\kappa_1$ (see Figure~\ref{OrbiFibration}, upper right).

To construct the 3-orbifold $O_K$ of $K$, deform every rational tangle $(L_i, p_i/q_i)$ in such a way that it is lens-shaped with its axis at the edge, as shown in upper figure of Figure~\ref{2Orbifold}. Then, identify the left face of $L_i$ with the right face of $L_{i + 1}$ (subscripts mod $k$) such that fibers, punctures, and axes are identified and half of one $x$-axis is identified with half of the next $x$-axis (see Figure~\ref{2Orbifold}, upper). The 3-orbifold $O_K$ is fibered over the 2-orbifold as in Figure~\ref{2Orbifold}, lower.

\begin{figure}[h!]
\centering
	\begin{minipage}[b]{0.3\linewidth}
		\centering
		\includegraphics[viewport = 30 170 650 725, scale = 0.25, clip]{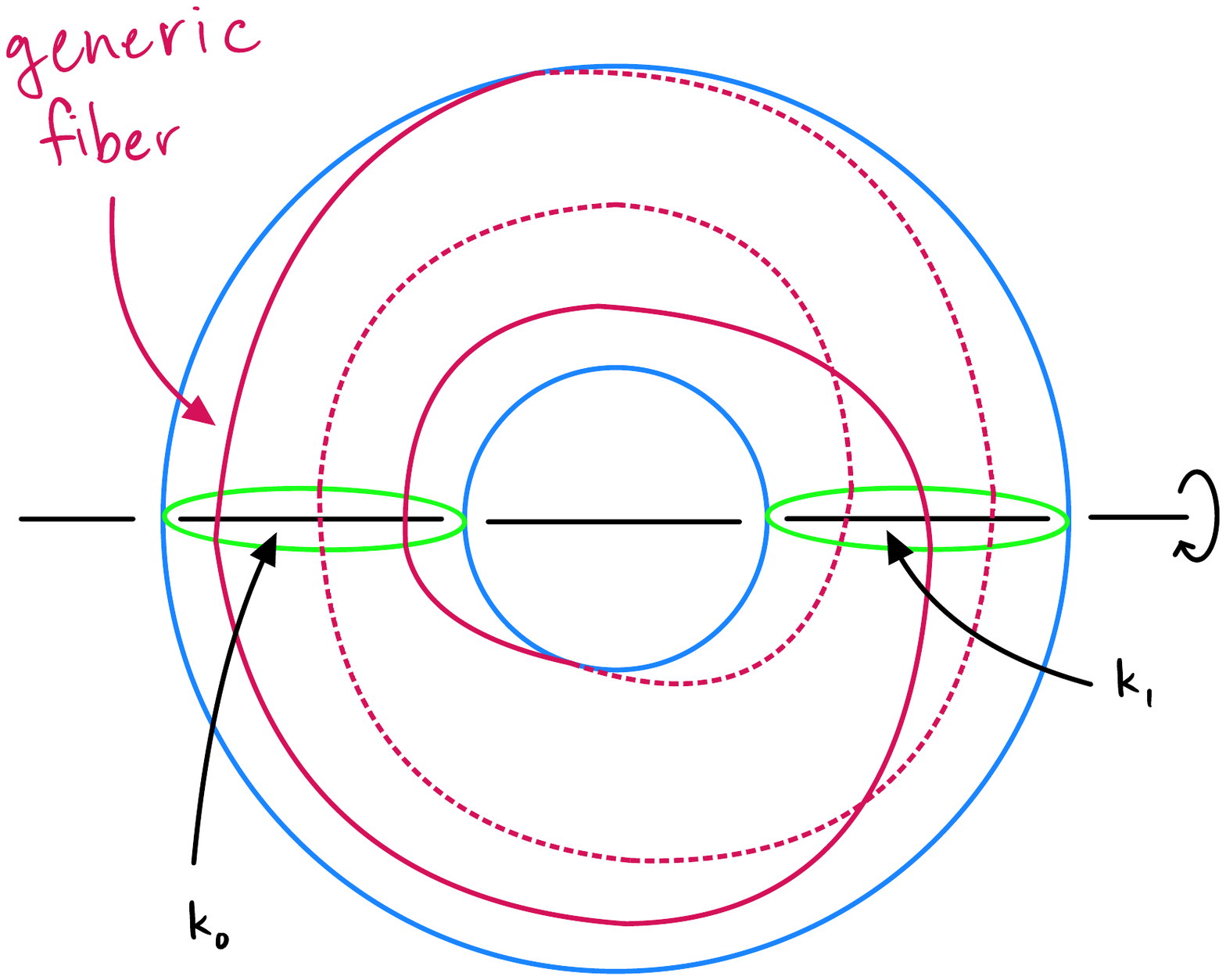}
	\end{minipage}
	\begin{minipage}[b]{0.375\linewidth}
		\centering
		\includegraphics[viewport = 0 150 650 650, scale = 0.25, clip]{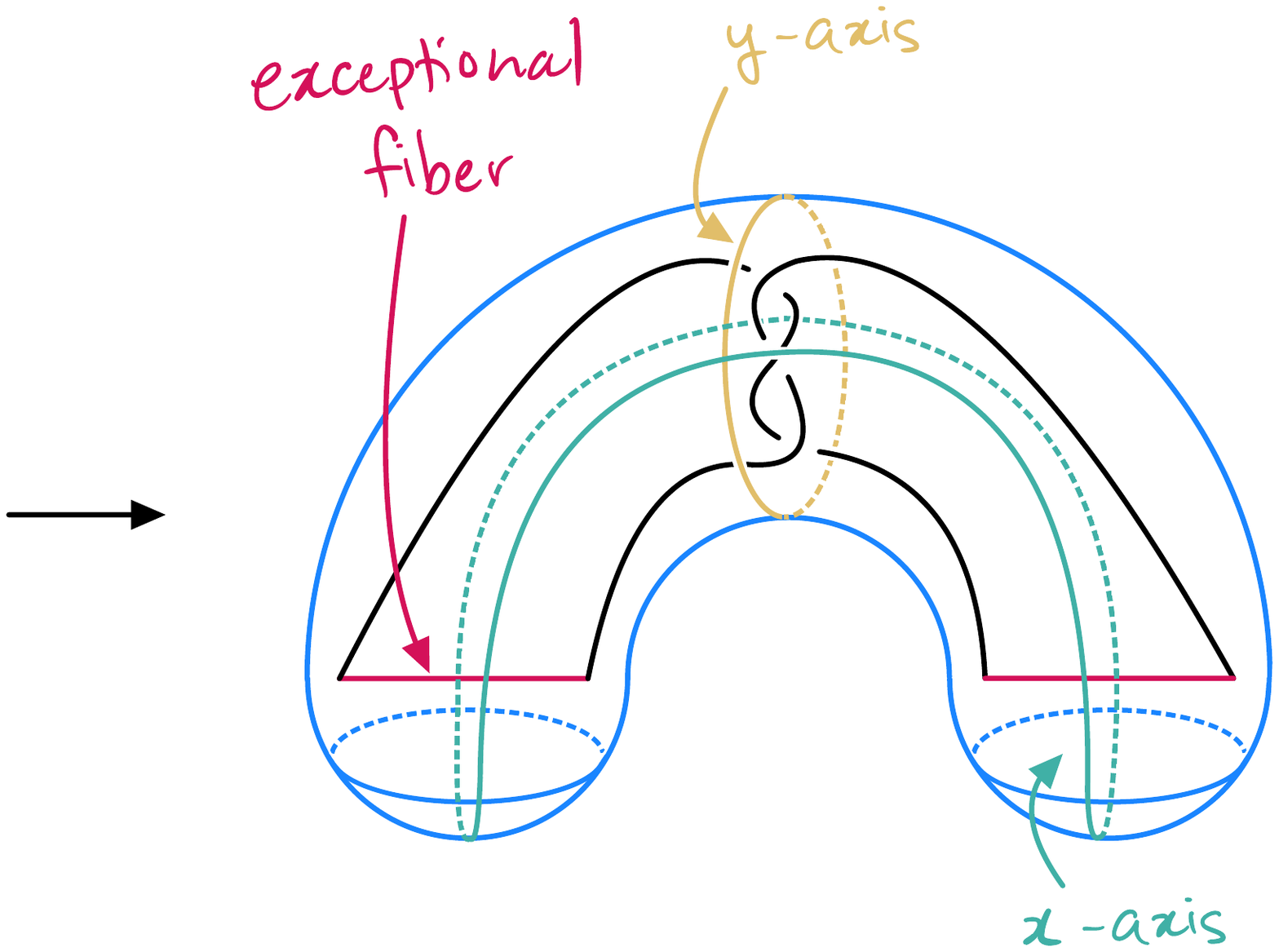}
	\end{minipage}
	\caption{Left: $(T, p/q)$ for $p/q = 1/3$, Right: $(L, p/q)$.}
	\label{OrbiFibration}
\end{figure}

\begin{figure}[h!]
\centering
	\begin{minipage}[b]{0.45\linewidth}
		\centering
		\includegraphics[viewport = 0 90 650 725, scale = 0.25, clip]{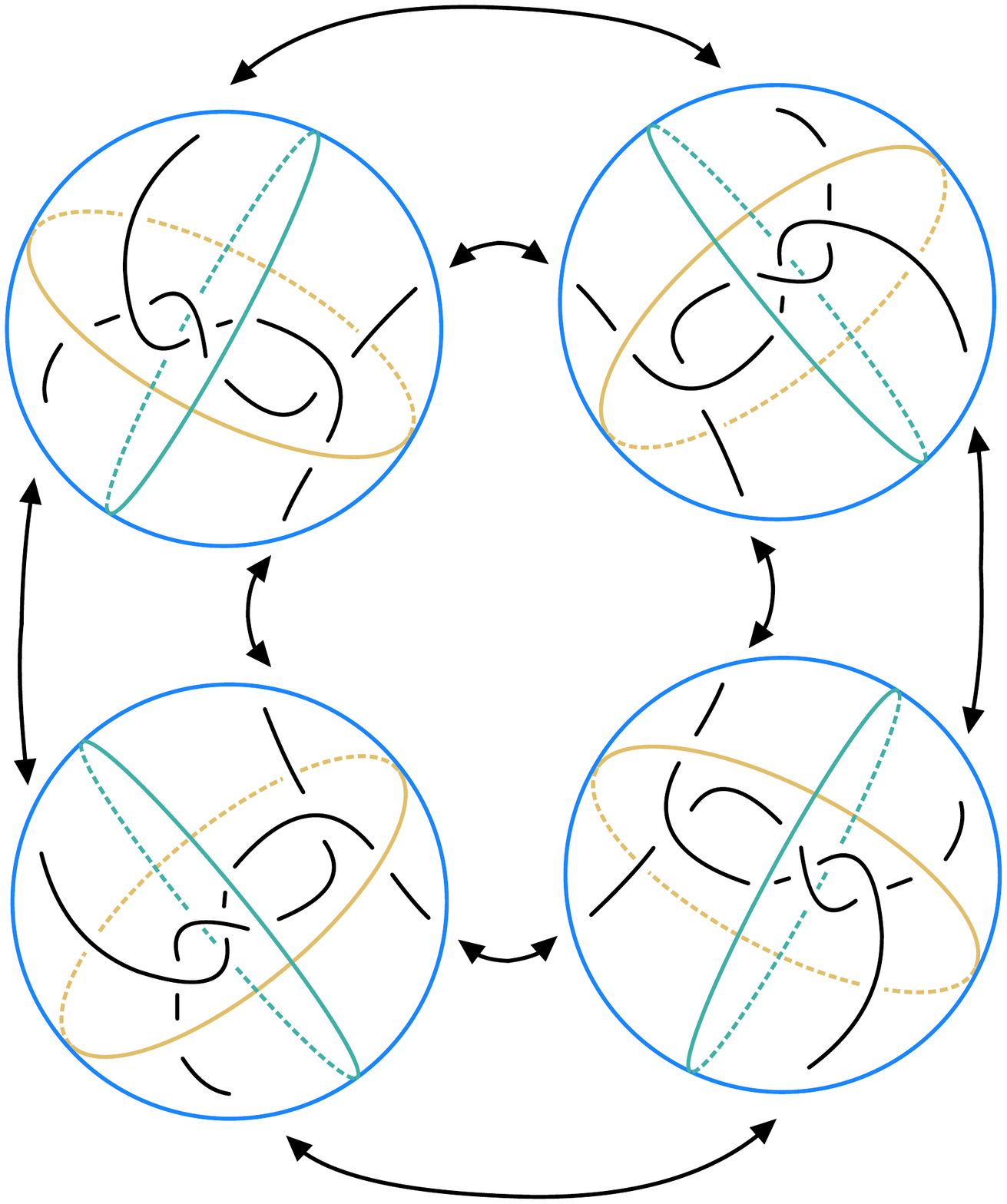}
	\end{minipage}
	\begin{minipage}[b]{0.9\linewidth}
		\centering
		\includegraphics[viewport = 0 140 650 725, scale = 0.25, clip]{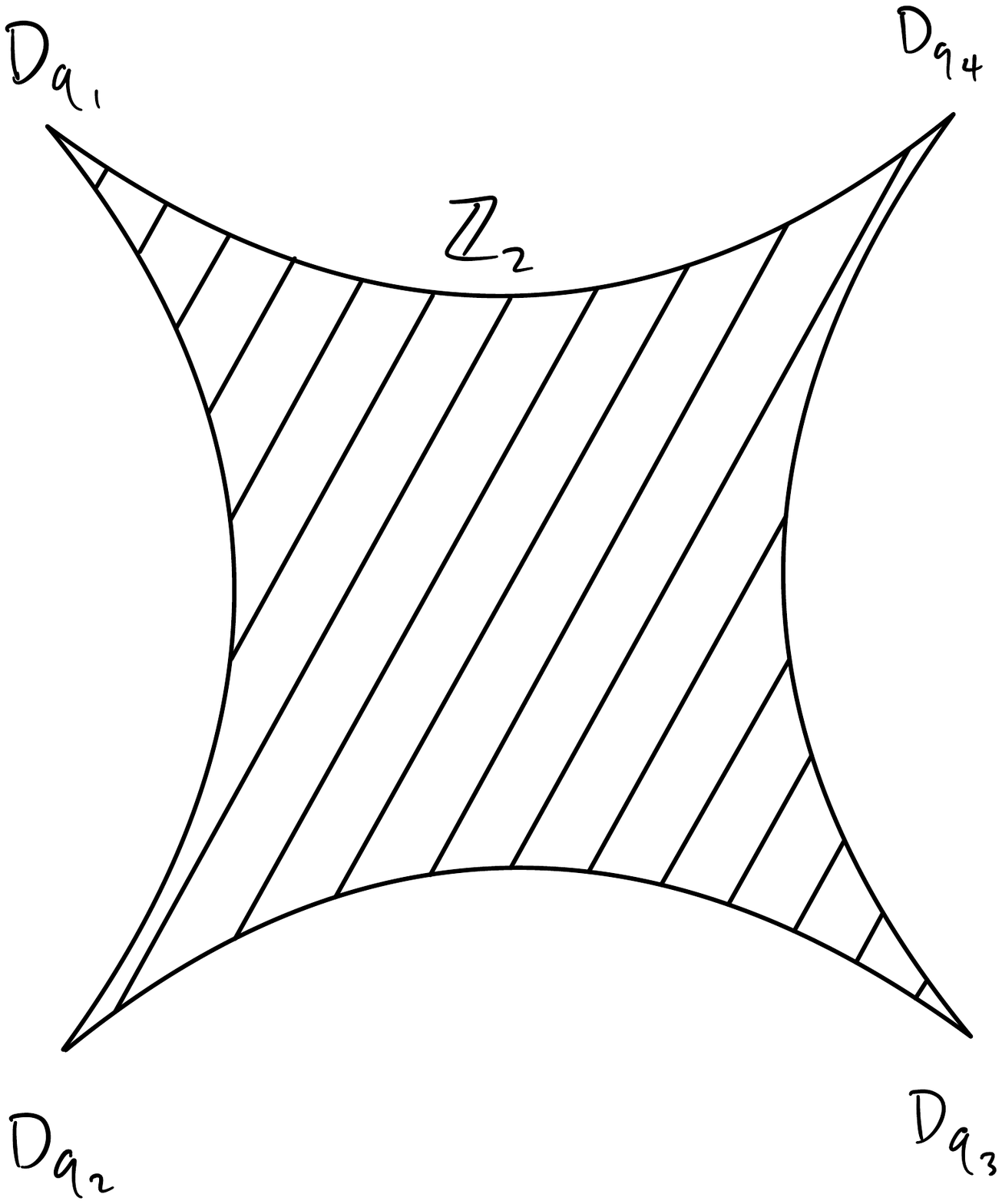}
	\end{minipage}
	\caption{Upper: Construction of the orbifold $O_K$ for $K=K(p_1/q_1,\ldots,p_4/q_4)$, Lower: An example of the base 2-orbifold of the 3-orbifold $O_K$.}
	\label{2Orbifold}
\end{figure}

\begin{Prop}\label{S_CForm}
	For an incompressible 4-punctured sphere, $S_C$, in a Montesinos knot complement, the associated closed curve in the knot diagram that represents $S_C$ intersects four distinct arcs between rational tangles. Moreover, if two $S_C$'s, belonging to the collection via a surface $S$ as in Theorem~\ref{Oertel1}, bound the same rational tangles, then they must be parallel, and thus, isotopic.
\end{Prop}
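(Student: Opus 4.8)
The plan is to prove both assertions inside the fibered orbifold $O_K$ of Section~\ref{DescriptionOfOrbifold}, where Corollary~\ref{Oertel3} is available. I would first record the correspondence from \cite{oertel1984}: an incompressible $S_C$ in $S^3 - K$ may be isotoped to a \emph{vertical} $4$-punctured sphere in $O_K$, so that it is carried by a properly embedded arc $\alpha$ in the base $2$-orbifold which separates the four cone points, these being exactly the four rational tangles. Cutting the base along $\alpha$ and refibering each piece exhibits $S_C$ as bounding a Seifert tangle on each side.

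For the first assertion I would apply Corollary~\ref{Oertel3}: incompressibility forces each of the two Seifert tangles bounded by $S_C$ to be non-rational. A Seifert tangle is non-rational exactly when it contains at least two rational tangles, so each side of $\alpha$ carries at least two cone points; since there are four in total, the partition is $2+2$. On the projection sphere the two groups are then consecutive, so the curve $c$ representing $S_C$ must cross precisely the two connecting regions separating the groups, and each such region carries two parallel strands of $K$, producing four punctures. To see that these lie on four \emph{distinct} arcs I would place $S_C$ in minimal position with respect to $K$, so that no bigon is formed between $c$ and $K$: two punctures on a common strand would produce such a bigon, whose removal lowers the puncture count below four and contradicts that $S_C$ is a $4$-punctured sphere. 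Hence $c$ meets four distinct arcs between rational tangles.

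For the ``moreover'', suppose incompressible spheres $S_C$ and $S_C'$ bound the same rational tangles, i.e.\ they induce the same $2+2$ partition. After an isotopy making them disjoint, they cobound a region $R$ in the complement. Since every rational tangle lies strictly on one side or the other, $R$ contains no rational tangle, so as a Seifert tangle it is trivial and therefore a product $S_C \times I$; equivalently, the two separating arcs $\alpha$ and $\alpha'$ in the base orbifold are isotopic rel the cone points, and this isotopy lifts to a vertical isotopy of the surfaces. In either formulation $S_C$ and $S_C'$ are parallel, hence isotopic.

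The step I expect to be the main obstacle is the reduction to the orbifold, namely verifying that the given incompressible $S_C$ is genuinely isotopic to a vertical surface so that Corollary~\ref{Oertel3} applies, together with the bookkeeping that a $2+2$ split of consecutive tangles is realized by crossing exactly the two intermediate connecting regions and nothing else. Once the region between two same-tangle spheres is identified as tangle-free, the parallelism in the second part is routine.
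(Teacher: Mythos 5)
Your proposal follows essentially the same route as the paper: reduce to the fibered orbifold $O_K$, use Oertel's results to represent an incompressible, peripheral incompressible $S_C$ as a vertical surface over an essential arc in the base $2$-orbifold, invoke Corollary~\ref{Oertel3} to force at least two rational tangles on each side, and prove parallelism of same-partition spheres by isotoping the corresponding disjoint base arcs and lifting through the fibers. The step you flag as the main obstacle is handled in the paper exactly as you anticipate, by citing Oertel's Corollary~2.8 and Lemma~2.9 (no horizontal incompressible surfaces since $\sum p_i/q_i \neq 0$), and your added projection-sphere bookkeeping (consecutive $2+2$ split, bigon removal) is a harmless elaboration of what the paper leaves implicit.
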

	
Proposition~\ref{S_CForm} will follow immediately from the next two lemmas.

	\begin{Lemma}\label{S_CForm1}
		Incompressible, peripheral incompressible 4-punctured spheres $S_C$ are represented by essential arcs in the base 2-orbifold of the orbifold fibration $O_K$.
	\end{Lemma}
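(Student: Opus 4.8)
The plan is to translate the question about incompressible, peripheral-incompressible $4$-punctured spheres $S_C$ in the knot complement into the language of the orbifold fibration $O_K$, where we can apply Oertel's Corollary~\ref{Oertel3}. Recall that the knot complement $S^3 - K$ is the complement of the $\Z_2$-singular set in $O_K$, and $O_K$ fibers over a base $2$-orbifold (Figure~\ref{2Orbifold}, lower) with generic fiber a circle. A $4$-punctured sphere $S_C$ is, by its construction, a vertical surface: it is a sphere meeting $K$ transversely in its four punctures and is built to respect the fibration. The first step is to make precise that $S_C$, being vertical, projects to an arc $\alpha$ in the base $2$-orbifold whose endpoints lie on the boundary (mirror/orbifold locus), so that $S_C$ is recovered as the preimage $\pi^{-1}(\alpha)$ under the fibration map $\pi : O_K \to B$.

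With this correspondence in hand, the core of the argument is to show that $S_C$ is incompressible and peripheral-incompressible as a surface in the knot complement \emph{if and only if} the corresponding arc $\alpha$ is essential in the base $2$-orbifold, meaning $\alpha$ does not cut off a disk (it is not boundary-parallel or inessential in $B$). First I would invoke Corollary~\ref{Oertel3}: a vertical $4$-punctured sphere is incompressible as an orbifold exactly when it bounds a Seifert tangle on each side that is not a rational tangle. I would then match this tangle condition to the geometry of the arc: an arc $\alpha$ that bounds a disk containing at most one exceptional orbifold point corresponds precisely to a $4$-punctured sphere bounding a rational tangle on one side, and hence to a \emph{compressible} or peripheral surface. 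Conversely, an essential arc separates the exceptional points so that each complementary region contains at least two, which by the orbifold-to-tangle dictionary forces each side to be a Seifert tangle that is not a rational tangle. Thus essentiality of $\alpha$ is equivalent to the Seifert-tangle condition on both sides, which by Corollary~\ref{Oertel3} is equivalent to incompressibility of $S_C$.

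The remaining step is to upgrade ``incompressible as an orbifold'' to the genuine incompressibility and peripheral-incompressibility of $S_C$ in the honest knot complement $S^3 - K$. Here I would argue that a compressing disk or a boundary-parallelism for $S_C$ in $S^3 - K$ can be isotoped to respect the fibration (made vertical or horizontal via an innermost-disk / standard equivariant position argument), and therefore descends to a corresponding compression or parallelism of the arc $\alpha$ in $B$; contrapositively, essentiality of $\alpha$ rules these out. This gives the two-way implication and establishes the lemma: incompressible, peripheral-incompressible $S_C$'s are represented exactly by essential arcs in the base $2$-orbifold.

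The main obstacle I anticipate is precisely this last transfer between the orbifold and the knot-complement settings. Corollary~\ref{Oertel3} delivers orbifold-incompressibility, but showing that no honest compressing disk survives requires an isotopy putting any such disk into fibered (vertical/horizontal) position relative to the Seifert fibration of $O_K$ restricted away from the singular locus; controlling the boundary behavior near the $\Z_2$-singular set $K$ and ensuring the equivariant position argument goes through is the delicate point. I would handle this by citing the standard theory of incompressible surfaces in Seifert-fibered spaces (vertical/horizontal dichotomy) adapted to the orbifold setting, which is exactly the framework Oertel sets up, so that the orbifold statement and the knot-complement statement coincide.
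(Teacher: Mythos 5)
Your second and third paragraphs track the paper's argument closely: the paper likewise characterizes essentiality of the arc by whether each complementary region of the base $2$-orbifold contains at least two singular points corresponding to rational tangles, and invokes Corollary~\ref{Oertel3} to conclude that an inessential arc would force the corresponding $S_C$ to bound at most one rational tangle on a side, contradicting incompressibility. That part of your proposal is essentially the paper's proof.

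The genuine gap is in your first paragraph, where you assert that $S_C$ is vertical ``by its construction'' and is ``built to respect the fibration.'' This begs the question. The lemma is used (via Proposition~\ref{S_CForm} and Proposition~\ref{NonisotopicSCs}) to classify \emph{all} incompressible, peripheral incompressible $4$-punctured spheres, and a priori such a surface need not be a union of fibers: the dichotomy is that it can be isotoped to be either vertical or horizontal, and a horizontal $4$-punctured sphere would not project to an arc in the base orbifold at all, so your correspondence $S_C = \pi^{-1}(\alpha)$ would simply not exist for it. The paper closes this by citing Oertel's Corollary~2.8 (an incompressible, peripheral incompressible punctured surface in $(S^3,K)$ is isotopic to a vertical or horizontal surface in $O_K$) together with Oertel's Lemma~2.9 (since $\sum_i p_i/q_i \neq 0$ for a knot, there are no horizontal incompressible, peripheral incompressible punctured surfaces), leaving only the vertical case. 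You mention the vertical/horizontal dichotomy only in your last paragraph, and there only to position compressing disks, not to rule out the horizontal alternative for $S_C$ itself; without that step the lemma is not established. Your final paragraph's worry about transferring orbifold incompressibility to honest incompressibility is reasonable but is already built into Oertel's framework, which the paper uses directly.
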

	\begin{proof}
		By Corollary 2.8 of \cite{oertel1984}, an incompressible, peripheral incompressible punctured surface in $(S^3, K)$ is either vertical (union of fibers) or horizontal (transverse to fibers) in the orbifold fibration of $O_K$. By Lemma 2.9 of \cite{oertel1984}, the $S_C$'s in the collection of Theorem~\ref{Oertel1} are all vertical in the orbifold fibration, by the fact that $\sum_{i=1}^k p_i/q_i \neq 0$ implies there exists no horizontal incompressible, peripheral incompressible punctured surfaces. In the orbifold fibration $O_K$, which has base 2-orbifold as in Figure~\ref{2Orbifold}, vertical 4-punctured spheres $S_C$'s are represented by arcs in the base orbifold. We show, using Corollary~\ref{Oertel3}, that for the arcs to represent incompressible, peripheral incompressible $S_C$'s, then the arcs must be essential, i.e. not homotopic into a boundary component of the base 2-orbifold, relative to its base points. Otherwise, for the complement of an inessential arc in the base 2-orbifold, there are two regions. Either one region contains exactly one rational tangle, marked as a singular point of order $D_{q_i}$, the dihedral group of order $2q_i$, or it contains no rational tangles (see e.g. Figure~\ref{Essential_Curves}). This implies that the corresponding $S_C$ bounds either one rational tangle on one side or no rational tangles. Contradiction to Corollary~\ref{Oertel3}, since incompressible $S_C$'s must bound at least two rational tangles on either side.
	\end{proof}
	\begin{Lemma}\label{S_CForm2}
		If two $S_C$'s, coming from the collection of 4-punctured spheres of a surface $S$ as in Theorem~\ref{Oertel1}, bound the same rational tangles, then they must be parallel, and thus, isotopic.
	\end{Lemma}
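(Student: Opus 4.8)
The plan is to descend to the base 2-orbifold $B$ of the fibration $O_K$ and argue entirely with the arcs representing the two surfaces. By Lemma~\ref{S_CForm1}, each of the two incompressible spheres is the $S^1$-preimage of an essential arc in $B$; call these arcs $\alpha$ and $\beta$. Since a vertical surface is determined by its base arc and any ambient isotopy of a base arc lifts fiberwise to a fiber-preserving ambient isotopy of its preimage, it suffices to show that $\alpha$ and $\beta$ are isotopic as essential arcs in $B$. Here $B$ is topologically a disk carrying the four singular points $v_1,\dots,v_4$ (of orders $q_1,\dots,q_4$) in cyclic order, with the endpoints of the arcs on the boundary circle corresponding to $K$. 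By Corollary~\ref{Oertel3} an essential arc must separate the four singular points into two pairs, and the hypothesis that the two $S_C$'s bound the same rational tangles says precisely that $\alpha$ and $\beta$ induce the \emph{same} bipartition $\{v_i,v_j\}\mid\{v_k,v_\ell\}$ of the singular set.

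First I would normalize the endpoints: sliding the two endpoints of $\beta$ along $\partial B$ is an isotopy that does not cross any singular point, hence preserves the bipartition, so I may arrange $\partial\alpha=\partial\beta=\{p,q\}$. Next I would put $\alpha$ and $\beta$ in minimal position rel $\{p,q\}$ and argue that their interiors are disjoint. Once they are disjoint, $\alpha\cup\beta$ bounds an embedded bigon $R\subset B$ with corners $p,q$, and the key observation is that $R$ contains no singular point: a point $v_m$ trapped in $R$ lies on one side of $\alpha$ but, by the matching of the two complementary sides away from $R$, on the \emph{opposite} side of $\beta$, which contradicts the equality of the two bipartitions. Thus $R$ is a genuine singular-point-free bigon, so $\alpha$ is isotopic to $\beta$ across $R$. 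Lifting this isotopy fiberwise carries $S_C$ to $S_{C'}$ in $O_K$, and the product region over $R$ (intersected with $S^3-K$) exhibits the two surfaces as parallel, which is the conclusion.

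The main obstacle is exactly the disjointness step, since minimal position only rules out \emph{empty} bigons: a priori $\alpha$ and $\beta$ could still meet in pairs of points cutting off an innermost bigon that traps a singular point and hence cannot be removed by the usual bigon move. The crux is to exclude these using the common bipartition. The point is that any innermost bigon, empty or not, has its interior on a single definite side of $\alpha$ and a single definite side of $\beta$; running the same side-comparison as above, an enclosed singular point would again be separated oppositely by the two arcs, contradicting the hypothesis, while an empty innermost bigon can be removed, contradicting minimality. Either way no interior intersection survives and we reduce to the disjoint case. This is precisely the classification statement that a simple essential arc in a disk with marked points is determined up to isotopy by the bipartition of the marked points it induces, and I expect the delicate point to be the side-parity bookkeeping that certifies a trapped singular point must flip sides; the remainder is the standard innermost-bigon reduction together with the fiberwise lifting of isotopies from $B$ to $O_K$.
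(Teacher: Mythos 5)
Your overall framework---descend to the base $2$-orbifold via Lemma~\ref{S_CForm1}, isotope the representing arcs, and lift the isotopy fiberwise---is exactly the paper's, and your treatment of the disjoint case (the cobounded bigon contains no singular point because a trapped point would be separated oppositely by the two arcs) is a correct, if more explicit, version of the paper's one-line observation. The genuine gap is the step you yourself flag as delicate: reducing to the disjoint case. For two arcs that actually intersect, an innermost bigon $B$ bounded by subarcs $a\subset\alpha$, $b\subset\beta$ places any enclosed singular point in the component of $D\setminus\alpha$ containing $\mathrm{int}(B)$ and the component of $D\setminus\beta$ containing $\mathrm{int}(B)$, and nothing forces these to be ``opposite'' sides with respect to the two bipartitions; the side-matching argument that works for the single bigon cobounded by disjoint arcs does not transfer. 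Indeed, the classification statement you invoke---that an essential embedded arc in a disk with marked points is determined up to isotopy by the bipartition it induces---is false: if $\alpha$ separates $\{v_1\}$ (or $\{v_1,v_2\}$) from the rest and $c$ is a simple closed curve enclosing two marked points that $\alpha$ separates, then $T_c(\alpha)$ induces the same bipartition as $\alpha$ but intersects it essentially and is not isotopic to it. Every innermost bigon between $\alpha$ and $T_c(\alpha)$ necessarily traps a marked point, so your argument cannot rule this configuration out.

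The fix is that you do not need to prove disjointness at all: the hypothesis of the lemma is that the two $S_C$'s belong to the collection produced by Theorem~\ref{Oertel1}, which is a collection of \emph{disjoint} incompressible spheres. Hence the representing arcs in the base orbifold are already disjoint, and your (correct) argument for the disjoint case, followed by the fiberwise lift, completes the proof. This is precisely the route the paper takes. As written, your proposal discards the disjointness hypothesis and attempts to prove a strictly stronger statement that is false at the level of arcs in a marked disk, so the intersection-removal step cannot be repaired without reintroducing that hypothesis (or some substitute, e.g.\ the incompressibility classification of Proposition~\ref{NonisotopicSCs}, which would be circular here).
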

	\begin{proof}
	Now, from Theorem~\ref{Oertel1} the collection of $S_C$'s are disjoint, so the $S_C$'s are represented by disjoint arcs in the base 2-orbifold. This implies that the collection of essential arcs corresponding to the $S_C$, coming from Lemma~\ref{S_CForm1}, all have base points on the two distinct, non-adjacent boundary components of order $\mathbb{Z}_2$. Moreover, since the $S_C$'s bound the same rational tangles, then the components of the complement of the arcs contain the same singular, not of order $\mathbb{Z}_2$, points of the base orbifold. We may isotope these arcs to each other in the base orbifold, which gives an isotopy, via the fibers, of the corresponding $S_C$'s in the orbifold fibration. 
	\end{proof}

\begin{figure}[h!]
		\centering
		\includegraphics[viewport = 0 150 650 650, scale = 0.3, clip]{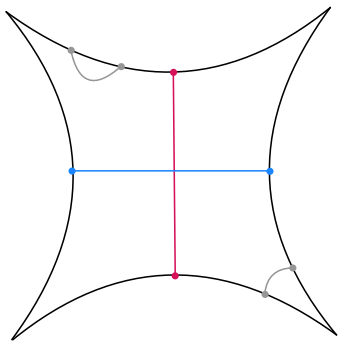}
		\caption{Example of essential arcs in the base 2-orbifold, shown in magenta and blue, and inessential arcs, shown in gray.}
		\label{Essential_Curves}
\end{figure}

\begin{Prop}\label{NonisotopicSCs}
	Let $k > 3$ be the number of rational tangles of a Montesions knot $K$. Then there are $\frac{k(k - 3)}{2}$ non-isotopic incompressible 4-punctured spheres $S_C$'s in the complement of $K$.
\end{Prop}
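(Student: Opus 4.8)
The plan is to reduce the count to a purely combinatorial problem about partitions of the $k$ rational tangles, using the two preceding lemmas to move everything into the base $2$-orbifold. By Lemma~\ref{S_CForm1} every incompressible, peripheral incompressible $S_C$ is carried by an essential arc $\alpha$ in the base $2$-orbifold whose endpoints lie on the two non-adjacent $\mathbb{Z}_2$ boundary components, and by the incompressibility criterion of Corollary~\ref{Oertel3} (as already extracted in the proof of Lemma~\ref{S_CForm1}) the arc must separate the cone points so that each complementary region contains at least two of the $k$ singular points arising from the rational tangles. Dually, the closed curve $c$ representing $S_C$ on the projection sphere meets $K$ in four points lying on four \emph{distinct} connecting arcs (Proposition~\ref{S_CForm}); since each of the $k$ gaps between consecutive tangles carries exactly two parallel connecting strands, $c$ must cross exactly two of the $k$ gaps, and this pair of gaps splits the cyclically arranged tangles into two consecutive blocks.

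Next I would set up the bijection. Assigning to each incompressible $S_C$ the unordered partition of the $k$ tangles into the two consecutive blocks on either side of $c$ is well defined, and by Lemma~\ref{S_CForm2} two $S_C$'s that induce the same partition (bound the same rational tangles) are isotopic. For the converse I would argue that the partition is an isotopy invariant: an ambient isotopy of $S_C$ in $S^3 - K$ fixes $K$ setwise, hence cannot move a rational tangle from one side of the surface to the other, so isotopic $S_C$'s determine the same partition. Therefore isotopy classes of incompressible $S_C$'s are in bijection with unordered partitions of the cyclic sequence of $k$ tangles into two consecutive blocks, each of size at least $2$.

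Finally I would perform the count. Choosing the two gaps crossed by $c$ amounts to choosing an unordered pair among the $k$ gaps separating consecutive tangles, giving $\binom{k}{2}$ candidate partitions; each such pair determines a unique splitting into two consecutive blocks, and conversely the common boundary of any two consecutive blocks consists of exactly two gaps, so there is no double counting from the unordered convention. The incompressibility constraint rules out precisely the configurations in which one block is a single tangle; there are exactly $k$ of these (one for each tangle, isolated by the two gaps flanking it), and since $k > 3$ no partition has singletons on both sides. Hence the number of non-isotopic incompressible $S_C$'s is
\[
\binom{k}{2} - k = \frac{k(k-1)}{2} - k = \frac{k(k-3)}{2}.
\]

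I expect the main obstacle to be the bookkeeping that makes this correspondence a genuine bijection rather than merely a surjection: confirming that the unordered block partition is matched one-to-one with the unordered pair of crossed gaps, and that the incompressibility condition of Corollary~\ref{Oertel3} translates exactly into the ``no singleton block'' condition and nothing stronger. The latter rests on the fact that for $q_i \geq 3$ a union of two or more consecutive rational tangles is never itself a rational tangle, so that every non-singleton block yields an admissible Seifert tangle on its side; once these two points are pinned down, the arithmetic $\binom{k}{2} - k = k(k-3)/2$ is immediate.
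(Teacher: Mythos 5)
Your combinatorial count is correct and is essentially the paper's own argument: both use Corollary~\ref{Oertel3} to restrict to curves bounding at least two tangles on each side and then count admissible splittings of the cyclically ordered tangles into two consecutive blocks. Your parametrization by unordered pairs of gaps, giving $\binom{k}{2}-k$, and the paper's by ordered (first, last) tangle pairs, giving $k(k-3)$ then halved, are the same count in different clothing, and your observation that no configuration is doubly excluded when $k>3$ is the right check.

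The one genuine gap is in the distinctness step, where you assert that an isotopy of $S_C$ in $S^3-K$ ``cannot move a rational tangle from one side of the surface to the other'' because $K$ is fixed setwise. This does not follow as stated: $S_C$ meets $K$ in four points, and during an isotopy of the properly embedded $4$-punctured sphere those intersection points are free to slide along $K$; a tangle changes sides exactly when a pair of punctures sweeps entirely across it, and nothing about the isotopy staying in the complement forbids such a sweep (for trivial tangles it is certainly possible, so the hypothesis $q_i\geq 3$ must enter somewhere). The paper closes exactly this gap by passing to the orbifold fibration $O_K$: the incompressible $S_C$'s are vertical surfaces there, any isotopy between them must be fiber-preserving by Tollefson's theorem, and a fiber-preserving isotopy cannot carry one essential arc of the base $2$-orbifold to another that separates the singular points differently. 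You need this (or some other rigidity statement, such as distinguishing the complementary Seifert tangle pairs) to upgrade ``same tangles bounded $\Rightarrow$ isotopic'' from Lemma~\ref{S_CForm2} into the full bijection your count relies on.
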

	\begin{proof}
		By Corollary~\ref{Oertel3}, the closed curve on the projection sphere which represents the incompressible 4-punctured sphere $S_C$ must bound at least two rational tangles on both sides. We count the number of choices of first and last rational tangles for which the closed curve on the projection sphere bounds. There are $k$ choices for the first rational tangle. After making this choice, label it as the $1^{\text{st}}$ rational tangle and order the rest of the rational tangles in a cyclic fashion as $\{1, 2, \ldots, k\}$. The second choice can be made following this cyclic labelling where the closed curve bounds all the rational tangles from the $1^{\text{st}}$ till the second choice. However, the second choice has only $k - 3$ options. This is because we cannot choose the same rational tangle twice, namely choosing the $1^{\text{st}}$ rational tangle, since this curve will not bound two rational tangles. Moreover, the curve cannot bound all the rational tangles, which is equivalent to the choice of the last rational tangle to be $k$ because all the rational tangles are contained on one side of the closed curve. Lastly, the curve cannot bound all except one rational tangle, which is equivalent to the choice of the $(k - 1)^{\text{th}}$ rational tangle, since one side of the curve has only one rational tangle. Thus, we get $k(k - 3)$ choices for closed curves representing incompressible $S_C$'s. Since we are projecting onto the sphere, these choices bound rational tangles on both sides and hence up to isotopy, there are at most $k(k - 3)/2$ incompressible $S_C$'s. 
		
		These $k(k - 3)/2$ incompressible $S_C$'s are non-isotopic since any isotopy between the $S_C$'s must be fiber-preserving by \cite{Tollefson1978INVOLUTIONS}. This is impossible, since the isotopy must pass through the fibers of $O_K$. Thus, there are exactly $k(k - 3)/2$ non-isotopic incompressible $S_C$'s.
	\end{proof}

We establish a few lemmas about punctured partitioned chord diagrams. First, using the above proposition, the following lemma shows the existence of punctured partitioned chord diagrams for closed, connected, orientable essential surfaces in Montesinos knot complements.

\begin{Lemma} \label{ExistencePPCD} 
	Every embedded closed essential surface of genus $g$ in a Montesinos knot complement has a corresponding punctured partitioned chord diagram which has $g - 1$ base points in each of the four regions. 
\end{Lemma}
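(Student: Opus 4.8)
The plan is to construct, from an arbitrary embedded closed essential surface $S$ of genus $g$, a punctured partitioned chord diagram, and then verify it satisfies Definition~\ref{DefPPCD} with $g-1$ base points per region. By Theorem~\ref{Oertel1}, since $K$ is a knot we have $\sum p_i/q_i \neq 0$, so $S$ is isotopic to a surface obtained from a disjoint collection of incompressible $S_C$'s by a sequence of peripheral tubing operations. First I would fix such a tubing description. By the Euler characteristic computation already carried out in Section~\ref{Example}, the number $n$ of $4$-punctured spheres in the collection equals $g-1$, since $\chi(S)=2-2g=-2n$. This is the source of the ``$g-1$ base points in each region'' count, so I would make that identification explicit.

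Next I would build the diagram via the geometric interpretation of Section~\ref{RegNbhdPPCD}. The key idea is to use the collared neighborhood of $K$, which collapses to an annulus partitioned into four pieces (one piece per arc of $K$ meeting the punctures of the $S_C$'s), with the inner boundary circle becoming the puncture. Each $S_C$ in the collection contributes exactly one puncture on each of the four arcs, by Proposition~\ref{S_CForm}; these punctures become the base points, so each of the four marked regions receives exactly $n=g-1$ base points, giving conditions (ii), (iii), and (iv) of Definition~\ref{DefPPCD}. Each peripheral tubing operation joins two punctures of (possibly distinct) $S_C$'s by an annulus; I would record each such tube as a chord connecting the two corresponding base points. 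Disjointness of the tubes in $M$, together with the innermost-outermost ordering recorded by the meridian, lets me realize the chords as disjoint arcs in the punctured disk, giving condition (i). The puncture itself, coming from $K$, gives condition (v).

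The remaining nontrivial point is condition (vi): no chord is isotopic into a single marked region. This is exactly where essentiality enters, and I expect it to be the main obstacle. A chord isotopic into a single region would correspond to a tube that does not pass through any rational tangle (it joins two punctures lying on the \emph{same} arc of $K$ between consecutive tangles without crossing a tangle region). By Theorem~\ref{Oertel2}, since each $q_i\geq 3$, such a tube would render the tubed surface compressible, contradicting that $S$ is essential. Thus every tube must pass through at least one rational tangle, which forces the corresponding chord to cross from one region into another and hence not be isotopic into a single region. I would spell out carefully the correspondence between ``tube passes through a rational tangle'' and ``chord crosses a region boundary,'' since this is precisely the dictionary established in Section~\ref{RegNbhdPPCD}.

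Finally, I would remark that the construction produces \emph{a} valid punctured partitioned chord diagram; well-definedness up to isotopy of the diagram (and the converse passage from diagram back to surface) is handled separately, so here I only need existence. I would close by noting that the diagram so constructed has $g-1$ base points in each region, completing the statement of Lemma~\ref{ExistencePPCD}.
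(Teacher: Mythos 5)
Your proposal is correct and follows essentially the same route as the paper: invoke Theorem~\ref{Oertel1} to get the tubing description, use the Euler characteristic computation to get $g-1$ spheres, use Proposition~\ref{S_CForm} to place one base point per $S_C$ in each of the four regions, and use Theorem~\ref{Oertel2} to rule out chords isotopic into a single region. The only cosmetic difference is that you route the construction explicitly through the collared-neighborhood picture of Section~\ref{RegNbhdPPCD}, which the paper reserves for the sub-lemma on chord disjointness, but the substance is identical.
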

	\begin{proof}
	By Theorem~\ref{Oertel1}, every closed essential surface $S$ is isotopic to a surface obtained from a tubing of incompressible $S_C$'s. In other words, every closed essential surface can be thought of as a disjoint collection of 4-punctured spheres $S_C$'s and tubes between pairs of punctures of the $S_C$'s. To obtain a chord diagram from this decomposition of $S$, we take the punctures of the $S_C$'s to be the base points of the chord diagram and the tubes between punctures to be chords between base points. We first show that this chord diagram is a punctured partitioned chord diagram. By Proposition~\ref{S_CForm}, the collection of $S_C$'s are disjoint and parallel, so we get an ordering via a nesting of the $S_C$'s in the knot projection (see e.g. Figure~\ref{Example_Surface}, right). Moreover, all punctures of the $S_C$'s lie on four distinct arcs and by taking an orientation of the knot we get the four regions of the chord diagram based on this orientation. This gives that each region contains exactly $g - 1$ base points, the number of distinct $S_C$'s. Observe that regions are well defined up to cyclic ordering. Now, if we label the punctures of the $S_C$ as $S_{C,j}^i$ where $i$ represents the $i^{\text{th}}$ $S_C$ with $i=1$ is the innermost $S_C$ with respect to the projection of the knot and $j$ represents the arc that the puncture lies on. Since each tube does not contribute to the Euler characteristic of a surface, then the Euler characteristic of $S$ is determined by the number of $S_C$'s. Hence, all chord diagrams of $S$ must have the same number of base points since each $S_C$ has exactly four punctures. Tubing between pairs of punctures correspond to chords between the corresponding base points in the chord diagram. Since there are only two choices for tubing any pair of punctures, then the chord diagram must contain a puncture to record these two choices. The following lemma shows that the chords do not intersect and cannot be isotopic into a single region, which completes the proof.

	\begin{Lemma} \label{ExistencePPCD2} 
	Every chord in the punctured partitioned chord diagram of Lemma~\ref{ExistencePPCD} does not pairwise intersect with any other chord in the diagram and cannot be isotopic into a single region.
	\end{Lemma}
\textit{Proof}. Suppose for contradiction that two chords of a chord diagram of $S$ intersect. We may assume that the intersection is transverse, after an isotopy of the chords reducing the number of points of intersection. Using the geometric interpretation of a punctured partitioned chord diagram, observe that a point of intersection of two chords represents a meridian of the knot. This implies $S$ has self intersection which cannot be resolved, a contradiction to the surface being embedded. Hence, a corresponding chord diagram for $S$ does not have intersecting chords.
	
	Now, suppose there is a chord isotopic into a single region of the punctured partitioned chord diagram. This represents a tube that is completely contained on a single arc of the knot $K$. In particular, this tube does not pass through a rational tangle. Contradiction, since this would mean that the surface is compressible by Theorem~\ref{Oertel2}.
	\end{proof}

%
%
%
\section{Surface Counts of Genus 2 Surfaces in Montesinos Knot Complements}\label{g2case}

Throughout this section, we assume that every surface $S$ will have genus $g = 2$. We prove there are at most 12 punctured partitioned chord diagrams for this case. 
\begin{Prop}\label{2TreeLeaves}
	Every closed, connected, essential, orientable essential surface $S$ in a Montesinos knot complement has a corresponding punctured partitioned chord diagram with exactly 4 base points, 1 in each region, which has a dual tree with exactly 2 leaves.
\end{Prop}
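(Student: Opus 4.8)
The plan is to reduce the statement entirely to a counting argument on the chords, since the genus fixes the number of base points and two disjoint chords are rigid enough to pin down the dual tree. First I would invoke Lemma~\ref{ExistencePPCD} with $g = 2$: the associated punctured partitioned chord diagram carries $g - 1 = 1$ base point in each of the four marked regions, hence exactly $4$ base points in total. Because every base point lies on a unique chord and each chord joins a pair of base points, these four base points are distributed among exactly $4/2 = 2$ chords. This already yields the ``$4$ base points, one in each region'' part of the claim directly from the lemma.

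Next I would count the complementary regions cut out by the chords. By Lemma~\ref{ExistencePPCD2} the two chords are disjoint, and by the definition of a chord each is an embedded arc with both endpoints on the bounding circle. A single such properly embedded arc separates the disk into two pieces, and adding a second arc disjoint from the first subdivides exactly one of those pieces into two; thus the two chords cut the disk into exactly $3$ complementary regions. The key point to verify here is that this count of $3$ is insensitive to how the chords are routed around the puncture: the puncture is a single removed point, of codimension two, so it neither merges nor separates regions but merely designates which of the three is the root. I would also note in passing that the ``alternating'' pairing of the four base points is automatically excluded, since two chords whose endpoints interleave on the boundary circle cannot be made disjoint; only the two non-crossing pairings $(b_1 b_2)(b_3 b_4)$ and $(b_2 b_3)(b_4 b_1)$ survive.

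Finally I would translate the region count into the leaf count. By Definition~\ref{dualtree} and the remark following it, the dual graph is always a tree, and its vertices are in bijection with the complementary regions; here there are $3$ vertices, so the tree has $3 - 1 = 2$ edges. The unique tree on three vertices is the path $P_3$, whose only degree-one vertices are its two endpoints, so the dual tree has exactly $2$ leaves regardless of which region hosts the puncture. The main, and essentially sole, obstacle is the rigidity claim in the second step---that disjoint boundary-to-boundary chords always yield precisely three regions irrespective of their interaction with the puncture---and I would confirm it by checking the two admissible non-crossing pairings explicitly, verifying in each case the adjacency pattern $X - Z - Y$ with middle region $Z$.
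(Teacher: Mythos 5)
Your proof is correct, and it reaches the same conclusion as the paper's while handling the key step differently. Both arguments begin identically, by invoking Lemma~\ref{ExistencePPCD} (equivalently the Euler characteristic computation) to get one base point per region and hence two disjoint chords. Where the paper then enumerates the six possible punctured partitioned chord diagrams on four base points (Figure~\ref{PPCD2Leaves}) and checks by inspection that each dual tree has two leaves, you give a uniform argument: each properly embedded chord separates the (punctured) disk, so two disjoint chords cut it into exactly three complementary regions, the puncture being a point cannot alter this count, and a tree on three vertices is necessarily the path $P_3$ with two leaves. Your version is cleaner and figure-free for the purposes of this proposition, and your observation that the interleaved pairing is forced to have crossing chords is a correct (if inessential) aside. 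The one thing the paper's enumeration buys that your argument does not is the explicit list of the six diagrams, which is reused verbatim in the proof of Proposition~\ref{G2Case} to obtain the bound $6 \cdot 2 = 12$; if you adopted your argument you would still need to justify that count of six separately (two non-crossing pairings times three choices of region for the puncture, which your region count in fact makes easy).
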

	\begin{proof}
		By Lemma~\ref{ExistencePPCD}, $S$ has a punctured partitioned chord diagram. Moreover, by the computation in the remark after Definition~\ref{lengthofchord}, there is only one 4-punctured sphere $S_C$ in the collection in Theorem~\ref{Oertel1} which $S$ is isotopic to a tubing of. Thus, the punctured partitioned chord diagram has exactly one base point in each of its four regions, by Proposition~\ref{S_CForm}. Two tubes are required to tube the four punctures of the $S_C$. There are only 6 possible punctured partitioned chord diagrams with exactly four base points and two chords, up to isotopy of the chords and puncture (see Figure~\ref{PPCD2Leaves}). Each of the 6 chord diagrams have dual trees with exactly 2 leaves, completing the proof.
	\end{proof}

\begin{figure}[h!]
		\centering
		\includegraphics[viewport = 0 380 850 720, scale = 0.7, clip]{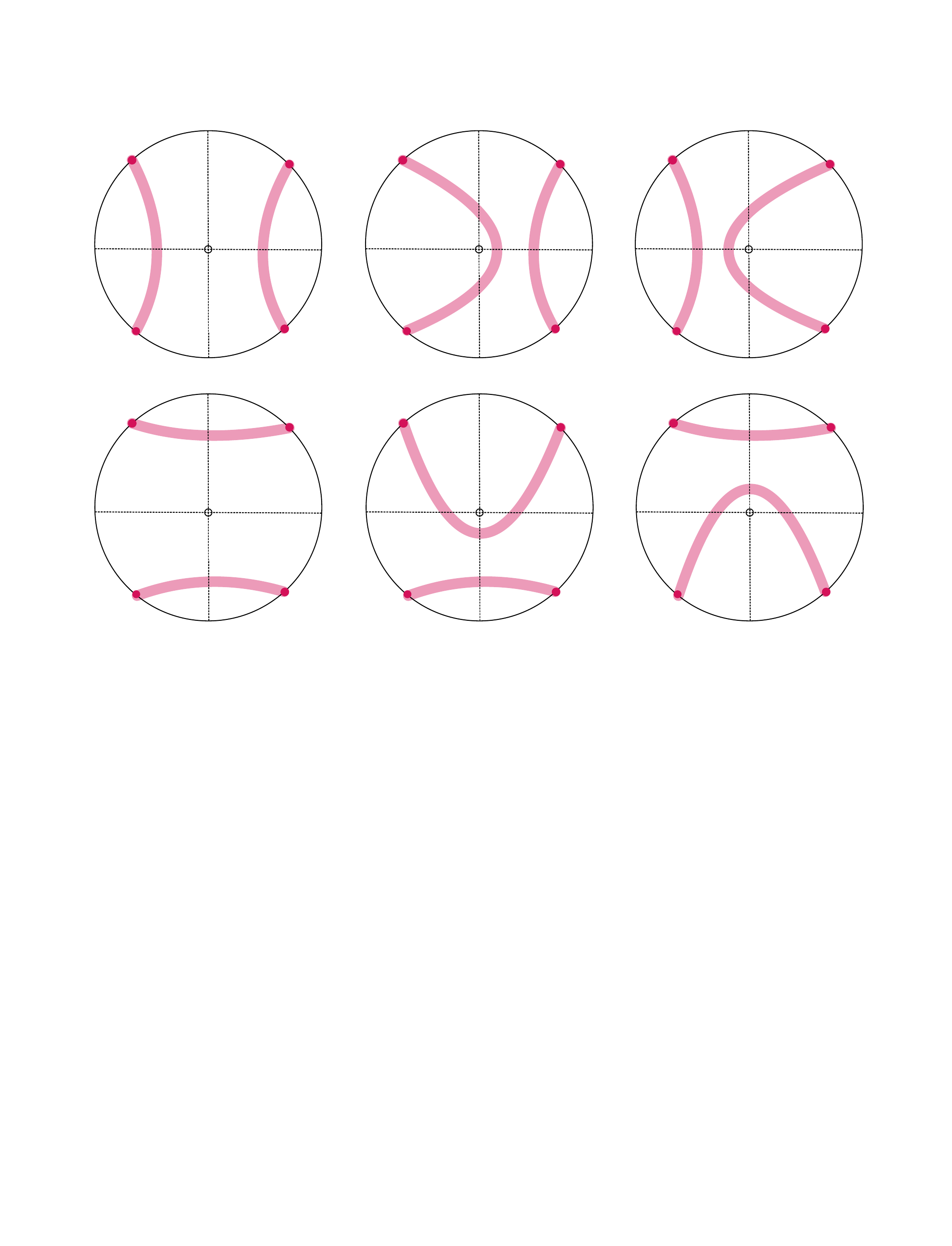}
		\caption{The 6 possible punctured partitioned chord diagrams with exactly four base points and two chords.}
		\label{PPCD2Leaves}
\end{figure}

\begin{Prop}[Genus 2 Case] \label{G2Case}
	The number of closed incompressible surfaces of genus $g = 2$ in the complement of a hyperbolic Montesinos knot $K = K(p_1/q_1,\ldots,p_k/q_k)$ with each $q_i\geq 3$ and $k=4$, is at most $12$.
\end{Prop}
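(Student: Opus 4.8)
The plan is to reduce the problem to the finite enumeration already performed in Proposition~\ref{2TreeLeaves} and then to multiply by the number of incompressible spheres $S_C$ available. By Proposition~\ref{2TreeLeaves}, any closed, connected, essential, orientable surface of genus $2$ determines a punctured partitioned chord diagram with exactly four base points and two chords, and up to isotopy there are precisely $6$ such diagrams (Figure~\ref{PPCD2Leaves}). It therefore suffices to show that each of these $6$ diagrams is realized by at most $2$ surfaces up to isotopy.

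I would make this precise by recalling, from the construction of Section~\ref{Example}, exactly what data recovers a surface from its diagram. A diagram together with a choice of one incompressible $4$-punctured sphere $S_C$ determines an embedded closed surface: the four regions are matched, up to cyclic permutation, with the four arcs of $K$ carrying the punctures of the chosen $S_C$; each chord prescribes a tube joining the corresponding pair of punctures; the puncture of the diagram records which of the two possible tubings of each pair is taken; and the chord-length convention of Definition~\ref{lengthofchord} records the nesting of the tubes. Thus the diagram fixes the entire tubing combinatorics, and the only remaining freedom is the choice of $S_C$. Invoking Proposition~\ref{NonisotopicSCs} with $k=4$ gives exactly $\tfrac{4(4-3)}{2}=2$ non-isotopic incompressible $S_C$'s, so each diagram is realized by at most $2$ surfaces, yielding at most $6 \cdot 2 = 12$ closed incompressible surfaces of genus $2$.

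The main obstacle is the assertion that, once $S_C$ is fixed, a punctured partitioned chord diagram determines the surface up to isotopy; equivalently, that the position of the puncture and the innermost/outermost convention capture all embedding data of the tubes, so that no two inequivalent embedded tubings can share both the same diagram and the same $S_C$. This is where the geometric interpretation of Section~\ref{RegNbhdPPCD}---identifying chords with tubes and intersection points of chords with meridians of $K$---does the essential work. The complementary statement, that these $12$ surfaces are pairwise non-isotopic so that the count is exact, is not needed for the upper bound and is deferred to Section~\ref{DistinctnessOfPPCDs}.
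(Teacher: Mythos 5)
Your proposal is correct and follows essentially the same route as the paper: reduce to the $6$ diagrams of Proposition~\ref{2TreeLeaves} and multiply by the $2$ choices of incompressible $S_C$ to get $12$. You are somewhat more explicit than the paper about why a diagram together with a choice of $S_C$ determines at most one surface (a point the paper leaves implicit here and only addresses later, in Proposition~\ref{MaxPPCDs}), but the underlying argument is the same.
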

	\begin{proof}
		By Proposition~\ref{2TreeLeaves}, a genus $g = 2$ surface $S$ must have a punctured partitioned chord diagram with a total of four base points, one in each region. Also from Proposition~\ref{2TreeLeaves}, the dual tree of this chord diagram must have exactly 2 leaves. Note that if the punctured partitioned chord diagram has a chord of length 3, then there is no choice for the last chord. However, there are punctured partitioned chord diagrams which do not have any chords of length 3. Now, since each surface has a punctured partitioned chord diagram, we need only to count the number of ways to construct punctured partitioned chord diagrams with four base points and two chords such that the dual tree has exactly 2 leaves. Figure~\ref{PPCD2Leaves} shows the only such possible punctured partitioned chord diagrams. For each of the 6 possibilities, there are two choices for the incompressible 4-punctured sphere to tube (see Figure~\ref{S_Cs}). Thus, the number of closed essential surfaces of genus $g = 2$ in a Montesinos knot complement is bounded by $6(2) = 12.$
	\end{proof}

%
%
%
\section{Surface Counts of Surfaces with Genus Greater than 2 in Montesinos Knot Complements}\label{generalcase}

We will show that counting surfaces via all possible punctured partitioned chord diagrams gives the upper bound in Theorem~\ref{main}. In this section, we assume all surfaces are of genus $g$ greater than 2.

%
\begin{Prop}\label{TreeLeaves}
	A tubing description of 4-punctured $S_C$'s, corresponding to a closed essential surface not containing a genus 2 component in a Montesinos knot complement, has a corresponding punctured partitioned chord diagram which has a dual tree containing exactly 3 leaves. In other words, the punctured partitioned chord diagram contains exactly 3 chords with the property that the two base points of each chord is adjacent to each other.
	
	Moreover, any punctured partitioned chord diagram that either
	\begin{enumerate}
		\item has exactly 2 chords of length 1 and no chord of maximum possible length $4g - 5$,
		\item or has 3 or more chords of length 1
	\end{enumerate}
	corresponds to a disconnected surface containing a genus 2 component.
\end{Prop}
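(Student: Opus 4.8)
The plan is to prove both halves by translating the statement into two combinatorial objects associated to the tubing --- the dual tree of the punctured partitioned chord diagram and the \emph{tube graph} --- and reading off genus-$2$ components from each. First I would nail down the dictionary between the three formulations appearing in the statement. A vertex of the dual tree is a leaf exactly when its complementary region meets only one chord; since every base point lies on a chord and the chords are disjoint (Lemma~\ref{ExistencePPCD2}), such a region carries no base point on its bounding arc, so its unique chord joins two base points that are \emph{adjacent} on the circle, and conversely every adjacent-endpoint chord cuts off an empty region that is a leaf. Among these adjacent-endpoint chords, at most one can have the puncture in its empty region, and by Definition~\ref{lengthofchord} that chord attains the maximal length $4g-5$ while all the others have length $1$. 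This yields the bookkeeping that the number of leaves equals the number of length-$1$ chords plus one precisely when a maximal-length chord is present, which is what lets me move freely between the ``exactly $3$ leaves'' and the length-$1$-chord formulations.

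Next I would set up the tube graph $G$: its vertices are the $g-1$ spheres $S_C^i$ (recall from the Euler-characteristic count in Section~\ref{proof} that a genus-$g$ surface is a tubing of exactly $g-1$ of them) and its edges are the tubes, so $G$ is $4$-regular. A direct computation shows that each connected component $S_\kappa$ of $S$ is the tubing of $V_\kappa$ spheres and is a closed surface of genus $g_\kappa = b_1(G_\kappa) = V_\kappa + 1$; hence a genus-$2$ component is exactly a single sphere whose four punctures are tubed to one another, that is, a vertex of $G$ carrying two loops. The essential geometric input I would extract from the parallel nesting of the $S_C$'s (Proposition~\ref{S_CForm}) together with the orbifold-fibration picture of Section~\ref{DescriptionOfOrbifold} is a precise description, at each of the four corners of the partition, of which sphere sits in the extreme position; this is what determines which adjacent-endpoint chords are loops of $G$ and at which vertex they are based. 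I expect this corner/nesting bookkeeping to be the main obstacle, and everything below reduces to it.

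For the first assertion I would argue the contrapositive-free direction directly: if $S$ has no genus-$2$ component then no vertex of $G$ carries two loops, so the loop-chords are severely constrained, and connectivity forces the non-capping chords to be mutually parallel about a single long chord. This produces a caterpillar-shaped dual tree on $2g-1$ vertices and $2g-2$ edges whose only degree-one vertices are the root region cut off by the maximal-length chord together with exactly two length-$1$ turn-back chords; a degree count on the tree then pins the number of leaves at three. The genus-$2$ case of Proposition~\ref{2TreeLeaves} (a single sphere, two caps, a path with two leaves) is the base case of this analysis, and the appearance of the third leaf for $g>2$ is exactly the wrap-around chord needed to thread the nested backbone of spheres into one connected piece.

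Finally, for the ``moreover'' cases I would exhibit a genus-$2$ component explicitly. If there are three or more length-$1$ chords, then because the four corners fall into two antipodal pairs and, by the nesting description of the second paragraph, the two corners of an antipodal pair cap the \emph{same} extreme sphere, a pigeonhole forces some antipodal pair of corners both to carry length-$1$ chords; the common extreme sphere of that pair then has all four of its punctures capped, so it splits off as a genus-$2$ component. If instead there are exactly two length-$1$ chords and no maximal-length chord, then the missing wrap-around chord means the two caps cannot be completed by a long chord into a single higher-genus piece; I would show the two length-$1$ chords must therefore cap one common sphere, producing a genus-$2$ component with the remaining $g-2$ spheres forming the complementary component(s). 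In both cases the decisive step is once more identifying which sphere each cap is based at, so the whole proposition rests on the corner/nesting lemma flagged above.
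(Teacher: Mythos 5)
Your dictionary between leaves and adjacent-endpoint chords, your tube graph $G$ (genus-$2$ component $\Leftrightarrow$ vertex of $G$ carrying two loops, via $g_\kappa = V_\kappa+1$), and your pigeonhole on antipodal corners for the case of three or more length-$1$ chords are all correct and line up with the paper's argument: the paper's proof of the $4$-and-$5$-leaves case rests on exactly the fact you identify, namely that adjacent base points in adjacent regions belong to the same (extreme) $S_C$, which it derives from the nesting order $\{S_{C,1}^1,\ldots,S_{C,1}^{g-1},S_{C,2}^{g-1},\ldots\}$ supplied by Proposition~\ref{S_CForm} together with disjointness of the $S_C$'s (a tube capping a corner must join two punctures of the same sphere because a rational tangle sits between the two arcs). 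So the ``corner/nesting lemma'' you flag as the main obstacle is available and is not the real gap.

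The real gap is in your third paragraph, the first assertion of the proposition. You assert that absence of a double-loop vertex plus ``connectivity'' forces a caterpillar with a single long chord and exactly two turn-back chords, but this presupposes precisely what must be proved: that a maximum-possible-length chord exists at all (in the paper this is Lemma~\ref{ExistenceMaxChord}, which is \emph{deduced from} Proposition~\ref{TreeLeaves}, so you cannot assume it here), and that there are exactly two length-$1$ chords. The paper instead runs an exhaustive case analysis on the leaf count: more than $5$ is impossible since length-$1$ chords can only sit at the four corners and only one region contains the puncture; $4$ or $5$ leaves force two length-$1$ chords at an antipodal corner pair, hence a genus-$2$ component; and $2$ leaves are excluded by comparing the number of base points on the two sides of the leaves --- unequal sides force a third length-$1$ chord, while equal sides force the two corners to be antipodal, again yielding a genus-$2$ component. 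That side-counting step is also what your sketch of item (1) of the ``moreover'' is missing: two length-$1$ chords at \emph{adjacent} corners cap different extreme spheres and do not visibly produce a genus-$2$ component, so you must first rule that configuration out (adjacent corners leave $g-3$ base points on one side and $3g-5$ on the other, which by Lemma~\ref{ChordDiagram2Leaves}-type reasoning creates a third length-$1$ chord, contradicting ``exactly $2$''). Without these eliminations your caterpillar claim is a restatement of the conclusion rather than a proof of it.
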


We establish the following lemma in order to prove the above proposition.
\begin{Lemma}\label{1MaxChord}
	Suppose that a punctured partitioned chord diagram comes from a tubing description of a closed essential surface. Then the leaves of the dual tree correspond to chords of length 1 or maximum possible length $4g-5$, where $g$ is the genus of the surface.
\end{Lemma}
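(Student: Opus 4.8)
The plan is to analyze a single leaf of the dual tree and compute the length of its associated chord directly from Definition~\ref{lengthofchord}, reading off the answer from the location of the puncture. By Definition~\ref{dualtree}, a leaf is a degree-one vertex, so it corresponds to a complementary region $R$ whose closure meets the closure of exactly one other region; equivalently, $R$ is bounded by a single chord $c$ together with an arc $\alpha$ of the bounding circle. This chord $c$ is the one I will show has length $1$ or $4g-5$, and the correspondence ``leaf $\leftrightarrow$ chord'' is via this unique bounding chord.

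First I would show that the interior of the arc $\alpha$ contains no base points. Suppose $b$ were such a base point. By the chord-diagram axioms, $b$ is an endpoint of a unique chord $c'\neq c$. Since $R$ is a leaf region, bounded by the single chord $c$, the chord $c'$ cannot be contained in $R$, so $c'$ must exit $R$ across its boundary; as it starts on $\alpha$ it cannot exit along the circle, so it would have to cross $c$. This contradicts Lemma~\ref{ExistencePPCD2}, which guarantees that chords are pairwise disjoint. Hence no base point lies in the interior of $\alpha$, so the two endpoints of $c$ are consecutive base points on the $R$-side of the circle.

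Next I would compute the length of $c$ via Definition~\ref{lengthofchord}, splitting into two cases according to the location of the puncture, recalling that the isotopy onto the boundary circle must stay in the punctured disk and hence cannot sweep across the puncture. If the puncture does not lie in $R$, then $c$ may be isotoped across $R$ onto $\alpha$; as $\alpha$ carries no interior base points, the length of $c$ is $0+1=1$. If the puncture does lie in $R$, then $c$ cannot be pushed across $R$, so the only admissible isotopy sweeps $c$ across the complementary side, and $c$ comes to lie on the arc carrying all the remaining base points. By Lemma~\ref{ExistencePPCD} the diagram has $g-1$ base points in each of its four regions, hence $4(g-1)$ in total, so this arc carries $4(g-1)-2=4g-6$ base points, giving $c$ length $4g-6+1=4g-5$, the maximum possible. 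Combining the two cases shows every leaf chord has length $1$ or $4g-5$.

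I expect the main obstacle to be the careful bookkeeping in the two middle steps: one must check that a base point on $\alpha$ genuinely forces a crossing of chords (rather than permitting $c'$ to run harmlessly along the boundary), and one must confirm that the isotopy of Definition~\ref{lengthofchord} is obstructed by the puncture \emph{exactly} when the puncture lies in $R$, so that the base-point count is taken on the correct arc and the dichotomy between length $1$ and length $4g-5$ is clean.
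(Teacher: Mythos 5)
Your proposal is correct and follows essentially the same route as the paper: identify a leaf region $R$ bounded by a single chord and a boundary arc containing no base points, then case on whether the puncture lies in $R$ to get length $1$ or $4g-5$. The only difference is that you explicitly justify the absence of base points on the arc via disjointness of chords (Lemma~\ref{ExistencePPCD2}) and carry out the base-point count $4(g-1)-2=4g-6$, both of which the paper asserts without elaboration.
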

	\begin{proof}
		Leaves correspond to complementary regions which are adjacent to only one other unique region. In particular, the boundary of such a region, call it $R$, consists of a single chord $C$ and a single segment of the boundary circle that does not contain any base points. If the puncture is not in $R$, then $C$ has length 1. Otherwise, $C$ has maximum possible length $4g - 5$.
	\end{proof}

%
%
We now prove Proposition~\ref{TreeLeaves}.
\begin{proof}[Proof of Proposition~\ref{TreeLeaves}]
	We exhaust the possibilities for the number of leaves in a dual tree. The second part of the statement is proved in the cases concerning dual trees with more than 3 leaves. Namely, the surface must be disconnected or compressible in every punctured partitioned chord diagram having a dual tree containing more than 3 leaves. Observe that there cannot exist a dual tree containing more than 5 leaves. This is because there are only 4 allowable locations for chords of length 1, i.e. between adjacent regions, and only one region containing the puncture which corresponds to the chord of maximal possible length can exist in any diagram. This implies any other leaf must correspond to a chord of length 1 completely contained in a single region, a contradiction to Theorem~\ref{Oertel2} since the corresponding surface will be compressible. 
	
	\textit{4 and 5 leaves}: Suppose we have a chord diagram whose unique dual tree contains 4 or 5 leaves. In the chord diagram, we show that two leaves will correspond to tubing 4 punctures belonging to the same $S_C$ and hence $S$ is contains a genus 2 component. Since we are only concerned about tubing the same $S_C$, we will label the punctures of the disjoint collection $S_C$'s and show two tubes are between four punctures of the same $S_C$. Fixing  a projection of $K$ onto $S^2$, label the 4-punctured spheres and punctures as $S_{C,j}^i$ where $i$ represents the $i^{\text{th}}$ innermost $S_C$ with respect to the projection of the knot and $j$ represents the $j^{\text{th}}$ arc that the punctures lie on. Note that $j$ is well-defined, up to cyclic ordering. Fix an orientation of the knot. Following this orientation gives an ordering, that is, a permutation, of the sequence $\{S_{C,1}^1,S_{C,2}^1,\ldots, S_{C,3}^{g - 1},S_{C,4}^{g - 1}\}$. By Proposition~\ref{S_CForm}, the $S_C$'s are nested and all punctures contained on a single arc must belong to distinct $S_C$'s. Thus, the ordering must be of the form $\{S_{C,1}^1, S_{C,1}^2,\ldots, S_{C,1}^{g - 1},S_{C,2}^{g - 1}, S_{C,2}^{g - 2},\ldots,S_{C,4}^2,S_{C,4}^1\}$, up to cyclic ordering. Note that the ordering of the sequence gives a labeling of the base points of the chord diagram. By Lemma~\ref{1MaxChord}, the leaves correspond to either length 1 or maximum length chords. There can only be one chord of maximal possible length, otherwise two such chords would intersect in the chord diagram. We look at the leaves which correspond to length 1 chords and show two of these chords correspond to a tubing of one distinct $S_C$. By Theorem~\ref{Oertel2}, each tube must pass through at least one rational tangle, so there are exactly 4 possibilities for the location of the chords of length 1: between base points of adjacent regions. Base points of adjacent regions, call them $b_1$ and $b_2$, must belong to the same $S_C$. This is because there must be at least one rational tangle between the two regions, which the base points belong to, and this rational tangle must be either inside or outside of the $S_C$ of one of base points. However, by Theorem~\ref{Oertel1}, then the base points must belong to the same $S_C$ otherwise the collection of $S_C$'s will not be disjoint if the base points belong to different $S_C$'s. So, without loss of generality, the base points of the chords of length 1 are $S_{C,1}^{g - 1},S_{C,2}^{g - 1},S_{C,3}^{g - 1},S_{C,4}^{g - 1}$, by the ordering of the base points. This is a contradiction, since this will be a tubing of $S_C^{g - 1}$ which gives a genus 2 component of the surface.
	
	\textit{1 and 2 leaves}: Immediately, we see any nonempty tree must contain at least 2 leaves. Suppose that the dual tree of a punctured partitioned chord diagram contains exactly 2 leaves. The complement of the base points of the 2 leaves partitions the boundary circle into four segments: two segments containing base points and two segments without base points. We call the boundary circle of the chord diagram to have two sides based on the two segments which contains base points. Now, there are two possibilities: either both sides of the leaves contain the same number of base points or one side of the leaves contains a different number of base points than the other side. For the latter case, observe that in the chord diagram, we have that there will be base points on one side of the leaves which cannot have a chord to a base point on the other side of the leaves. Hence, there must exist a chord between adjacent base points, otherwise if all remaining base points are not adjacent, then in order to chord the remaining base points, it must intersect the chords whose base points lie on different sides of the leaves, which gives a contradiction to chords being disjoint. Since there exists a chord between adjacent base points, then this implies that the tree has at least 3 leaves which is impossible by assumption. For the case where both sides of the leaves contain the same number of base points, notice that the two leaves cannot both correspond to chords between base points contained in the same region. This is due to the fact, by disjointness of the chords, only one of these can have maximal length. This implies that the other chord, which has length 1, represents a tubing of base points which does not pass through a rational tangle region, contradicting Theorem~\ref{Oertel2}. Now, if the two leaves correspond to chords between base points of adjacent regions, then this contradicts the surface not having a genus 2 component.
\end{proof}

We bound the number of punctured partitioned chord diagrams in terms of the genus of the surface. However, we first establish the following proposition for the number of surfaces corresponding to a punctured partitioned chord diagram:

%
\begin{Prop}\label{MaxPPCDs}
	If $S$ and $S'$ are closed essential surfaces where $S$ is a tubing of $S_C$'s and $S'$ is a tubing of $S_{C'}$'s with $C \neq C'$, then $S$ and $S'$ are not isotopic.

	In particular, every punctured partitioned chord diagram containing four regions, each having $g - 1$ base points, corresponds to exactly 2 non-isotopic essential surfaces.
\end{Prop}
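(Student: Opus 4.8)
The plan is to derive the ``exactly $2$'' count from the first assertion, so the real work lies in proving non-isotopy for surfaces built from different constituent spheres. First I would record that, by Proposition~\ref{NonisotopicSCs} with $k=4$, there are precisely two isotopy classes of incompressible $S_C$, say $S_C$ and $S_{C'}$; these are distinguished by the partition of the four rational tangles into adjacent pairs that the defining curve induces, namely $\{1,2\}\mid\{3,4\}$ for one and $\{2,3\}\mid\{4,1\}$ for the other. Given a \PPCD{} $P$ with $g-1$ base points per region, the construction of Example~\ref{Example} produces, after \emph{choosing} one of these two spheres, a closed essential surface whose nesting and innermost/outermost tube order are forced by the chord lengths; incompressibility is automatic, since condition~(6) of Definition~\ref{DefPPCD} together with Theorem~\ref{Oertel2} guarantees every tube crosses a tangle. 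Thus $P$ determines at most two surfaces $S,S'$, one per choice of sphere, and ``exactly $2$'' is equivalent to the statement that $S$ and $S'$ are never isotopic.

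For the non-isotopy statement I would argue that the isotopy class of the constituent $S_C$ is an invariant of the tubed surface. The key observation is that an ambient isotopy of $S^3-K$ carrying $S$ to $S'$ is isotopic to the identity and therefore fixes $K$ together with its four rational tangles setwise. Hence any partition of the tangle set that can be read off canonically from an essential surface must be preserved by such an isotopy. Since $C\neq C'$ yield the two \emph{different} adjacent-pair partitions above, it suffices to show that a tubing of copies of $S_C$ remembers this partition; the surfaces then cannot be isotopic.

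To extract the partition I would pass to the orbifold fibration $O_K$ of Section~\ref{DescriptionOfOrbifold}. By Lemma~\ref{S_CForm1} each constituent $S_C$ is vertical and is represented by an essential arc in the base $2$-orbifold, and by Proposition~\ref{S_CForm} the $g-1$ copies are parallel; non-isotopic $S_C$ and $S_{C'}$ correspond to non-isotopic such arcs, which separate the tangle singular points differently. I would recover these arcs from $S$ by isotoping $S$ into fibered position away from the tubes and cutting along a maximal system of meridional tube-circles, the resulting incompressible $4$-punctured spheres being isotopic to the chosen $S_C$; rigidity of fiber-preserving isotopies in the sense of Tollefson~\cite{Tollefson1978INVOLUTIONS} then shows that their isotopy class, and hence the induced tangle partition, is an invariant of $S$. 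Combining this Part~1 conclusion with the at-most-two count of the first paragraph yields exactly two non-isotopic surfaces per \PPCD{}.

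The main obstacle I anticipate is this last extraction step: a priori the tubes of $S$ can be isotoped in ways that obscure the underlying $S_C$ decomposition, so I must establish a uniqueness-of-decomposition statement, namely that the isotopy class of the maximal collection of disjoint incompressible $4$-punctured spheres inside a tubed surface is well defined. I expect to handle this with an innermost-disk and irreducibility argument combined with Corollary~\ref{Oertel3}, using that each $S_C$ must bound a Seifert tangle that is not a rational tangle on either side, which pins down the separated tangles independently of how the tubes are routed.
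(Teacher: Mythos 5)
Your overall strategy is the same as the paper's: reduce everything to showing that the isotopy class of the constituent $4$-punctured spheres is an invariant of the tubed surface, which in turn requires a uniqueness-of-decomposition statement. You correctly identify that statement as the crux. The gap is that you never actually prove it, and the tools you propose for it --- ``an innermost-disk and irreducibility argument combined with Corollary~\ref{Oertel3}'' --- are not the right ones. Corollary~\ref{Oertel3} only characterizes \emph{which} vertical $4$-punctured spheres are incompressible; it says nothing about whether a closed tubed surface $S$ determines its decomposition canonically. The objects that witness the decomposition are not disks but \emph{annuli}: each decomposing curve $\alpha_i \subset S$ cobounds an essential annulus $A_i$ in $M\backslash\backslash S$ with a meridian of $K$ (the core of a tube, pushed out to $\partial M$). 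The paper's proof first shows (Lemma~\ref{DecomposeByAnnuli}) that any two such annuli can be isotoped to be disjoint, via an innermost-\emph{annulus} argument for closed curves of intersection followed by an innermost-disk argument for arcs, and then shows that every decomposing curve of one system is parallel to one of the other, because otherwise a leftover curve would give an annular compression of an $S_C$, contradicting the annular incompressibility of the $S_C$'s coming from Theorem~\ref{Oertel1}. Without this annulus argument (or an equivalent), your claim that ``the tubes cannot be routed so as to obscure the $S_C$ decomposition'' remains an assertion, and it is precisely the content of the proposition.

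Two smaller points. First, your statement that an ambient isotopy carrying $S$ to $S'$ ``fixes $K$ together with its four rational tangles setwise'' is not literally true (the time-one map is only isotopic to such a map); what you actually need, and what the canonical-decomposition lemma delivers, is that isotopic surfaces have isotopic constituent spheres, after which $S_C\not\simeq S_{C'}$ (Proposition~\ref{NonisotopicSCs}) finishes the argument without any appeal to the tangle partition or to Tollefson's rigidity. Second, the detour through the orbifold fibration and ``fibered position'' is unnecessary once the decomposition is known to be canonical; the paper gets by with the two non-isotopic $S_C$'s already in hand.
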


We first prove the following lemma.

\begin{Lemma}\label{DecomposeByAnnuli}
	Let $S$ be a closed essential surface in $M = S^3 \backslash \mathring{N}(K)$ obtained from tubing $S_C$'s. Let $\widetilde{M} = M\backslash\backslash S$ be the complement of a regular neighborhood of $S$ in $M$. Suppose that $A, B$ are embedded essential annuli in $\widetilde{M}$ each having exactly one component on $\partial M$ which is a meridian of $K$. Then $A, B$ are isotopic to disjoint annuli.
\end{Lemma}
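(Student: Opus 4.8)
The plan is a standard innermost-disk / outermost-arc argument, exploiting that $\widetilde{M}$ is irreducible and that essential annuli are both incompressible and boundary-incompressible. First I would record the ambient properties I intend to use. Since $M = S^3\setminus\mathring{N}(K)$ is irreducible and $S$ is incompressible, cutting $M$ along $S$ preserves irreducibility, so $\widetilde{M}$ is irreducible. Moreover, being essential, each of $A$ and $B$ is incompressible and, not being boundary-parallel, is boundary-incompressible in $\widetilde{M}$; these are exactly the two facts the removal steps consume.

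Next I would normalize the boundaries before touching the interiors. Each annulus meets $\partial M$ in a single meridian, and any two meridians on the torus $\partial M$ are isotopic, hence can be made disjoint (parallel) by an ambient isotopy. After this, the only boundary intersections of $A$ and $B$ can occur along their remaining boundary circles, which lie on the copies of $S$ in $\partial\widetilde{M}$. I then put $A$ and $B$ in general position, so that $A\cap B$ is a properly embedded $1$-manifold consisting of circles together with arcs whose endpoints lie on $\partial_S A \cap \partial_S B$.

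I would then remove intersections in two passes. To kill the closed curves, take a circle of $A\cap B$ innermost on $A$, bounding a disk $D\subset A$. Viewed as a curve on $B$, it cannot be essential, for otherwise $D$ would compress the incompressible annulus $B$; hence it bounds a disk $D'\subset B$, the sphere $D\cup D'$ bounds a ball in the irreducible $\widetilde{M}$, and I isotope $A$ across that ball to remove the curve. Iterating clears all closed components. To kill the arcs, let $\alpha$ be an arc of $A\cap B$ outermost on $B$, cutting off a disk $D\subset B$ with $\partial D = \alpha\cup\beta$ and $\beta\subset\partial_S B\subset S$. Then $D$ is a boundary-compressing disk for $A$ along $S$; since $A$ is boundary-incompressible, $\alpha$ must be boundary-parallel in $A$, and I use $D$ together with the disk it cuts off in $A$ to isotope $A$ and delete $\alpha$ without creating new intersections. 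Iterating empties $A\cap B$, and since every move was an isotopy of $A$ in $\widetilde{M}$ fixing its meridian boundary, the resulting annuli are disjoint.

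The main obstacle is the arc-removal step. One must verify carefully that an essential annulus in this cut-open manifold is genuinely boundary-incompressible (here the cleanest route is that an incompressible, boundary-compressible annulus is boundary-parallel, which is excluded by essentiality), that the boundary normalization really forces the outermost disk to give a boundary-compression meeting $S$ rather than $\partial M$, and that each isotopy keeps $A$ embedded, annular, and anchored at its meridian so that the strictly decreasing count of intersection components guarantees termination.
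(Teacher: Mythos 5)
Your overall strategy (make the meridian boundary circles disjoint, then clear circles of intersection, then clear arcs by an innermost/outermost disk argument using incompressibility, boundary-incompressibility, and irreducibility of $\widetilde{M}$) is the same as the paper's, and your arc-removal pass is sound --- in fact, since both endpoints of every arc of $A\cap B$ lie on the single boundary circle $\partial_S A$ (resp.\ $\partial_S B$), each arc automatically cuts off a disk on each annulus, so the innermost/outermost disk exchange goes through with irreducibility alone. The genuine gap is in your circle-removal pass. You only treat circles of $A\cap B$ that are \emph{inessential} on $A$: you start from ``a circle innermost on $A$, bounding a disk $D\subset A$.'' But a circle of intersection can be essential on $A$, i.e.\ parallel to the core of $A$ and hence to the meridian $\partial_0 A$; by incompressibility such a circle is then essential on $B$ as well, it bounds no disk on either annulus, and your iteration never touches it. ``Iterating clears all closed components'' therefore fails exactly on these core-parallel circles, and there is no a priori reason they cannot occur.

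The missing idea is an \emph{innermost annulus} exchange, which is how the paper handles this case: after the trivial circles are gone, the remaining circles are all parallel to $\partial_0 A$ in $A$ and to $\partial_0 B$ in $B$; one takes two adjacent such circles $\gamma,\gamma'$ cobounding a sub-annulus $\tilde{A}\subset A$ whose interior misses $B$, observes that $\gamma,\gamma'$ also cobound a sub-annulus $\tilde{B}\subset B$ (else some intersection circle between them would contradict innermostness), and isotopes $B$ to push $\tilde{B}$ across $\tilde{A}$, removing $\gamma$ and $\gamma'$ in pairs. The paper also records the parity issue your write-up would inherit: if an odd number of essential circles remains, the last one is removed by an isotopy interchanging the two meridian circles $\partial_0 A$ and $\partial_0 B$ on $\partial M$. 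You should add both of these steps (or an argument that core-parallel intersection circles cannot arise) before your proof is complete.
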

	\begin{proof}
		We may assume that $A$ and $B$ are in general position. Let $\partial_0 A$ and $\partial_0 B$ be one boundary component of $A$ and $B$, respectively, which is a meridian of $K$. By further isotoping $\partial_0 A$ and $\partial_0 B$, we may assume that $\partial_0 A \cap \partial_0 B$ is empty. If $A \cap B$ is empty, then we are done. Otherwise, note that we may simplify $A \cap B$ to consists of essential arcs and essential simple closed curves. Since $\partial_0 A \cap \partial_0 B$ is empty, then no essential arc of intersection has a base point on $\partial_0 A$ or $\partial_0 B$. Observe that we may resolve the intersections corresponding to simple closed curves by an innermost annulus argument. Since the simple closed curves of intersection in $A$ and $B$ are parallel to $\partial_0 A$ or $\partial_0 B$, we may find an innermost annulus $\tilde{A}$ in $A$, with respect to $\partial_0 A$ and $\partial_1 A$, bounded by two simple closed curves, say $\gamma$ and $\gamma'$. Observe that $\gamma$ and $\gamma'$ also bound an annulus $\tilde{B}$ in $B$, otherwise there would be another simple closed curve of intersection of $A$ and $B$ in between $\gamma$ and $\gamma'$, contradicting that $\gamma$ and $\gamma'$ were innermost in $A$. Thus, we resolve $\gamma$ and $\gamma'$ by an isotopy of $B$ pushing $\tilde{B}$ past $\tilde{A}$. Continuing in this fashion, we may resolve all simple closed curves of intersection, since there are only finitely many due to $A$ and $B$ being embedded and in general position. Note that if necessary, we may resolve the final simple closed curve of intersection by interchanging $\partial_0 A$ and $\partial_0 B$ on $\partial \widetilde{M}$ via an isotopy.
		
		To resolve the arcs of intersection, we use an innermost disk argument. Without loss of generality, take an arc of intersection $\alpha$ which is innermost on $A$ bounding a disk $D_1$ in $A$. Since $A$ and $B$ are essential surfaces, then there exists a disk $D_2$ in $B$ bounded by $\alpha$. Note that $\alpha$ need not be innermost on $B$. The boundary of the disk $D = D_1 \cup D_2$ must also bound a disk $D'$ in $\partial \widetilde{M}$. Thus, since $\widetilde{M}$ is irreducible, then we may resolve $\alpha$ by an isotopy of $B$ that pushes $D_2$ past $D_1$. Since both $A$ and $B$ are embedded and in general position, then there are only finitely many arcs of intersection of $A$ and $B$. Thus, continue this process of taking innermost disks until all such intersections are resolved, completing the proof.
	\end{proof}

\begin{proof}[Proof of Proposition~\ref{MaxPPCDs}]
	We know that every embedded closed essential surface $S$ can be decomposed, via tubes, into a collection of incompressible 4-punctured spheres $S_C$'s. We show this collection of $S_C$'s consists of parallel copies of one of the two non-isotopic $S_C$'s. Suppose that $\alpha_1, \ldots, \alpha_k$ are disjoint essential simple closed curves on $S$ which decomposes $S$ into incompressible 4-punctured spheres and annuli. Let $S_0 = S$ and $A_0$ an embedded essential annulus in $M \backslash\backslash S_0$ with $\partial_0 A_0 = [\mu]$. Then, take $S_1$ to be the surface obtained after an annular compression of $S_0$ along $A_0$ and let $A_1$ be an embedded essential annulus in $M \backslash\backslash S_1$ with $\partial_0 A_1 = [\mu]$. Take $S_2$ to be the surface obtained after an annular compression of $S_1$ along $A_1$ and let $A_2$ be an embedded essential annulus in $M \backslash\backslash S_2$ with $\partial_0 A_2 = [\mu]$. Continuing in this fashion, we have a sequence of compressions of $S$ which gives the decomposition into 4-punctured spheres and tubes. Note that $A_i$ deformation retracts onto $\alpha_i$. 
	
	Now, suppose that we have another such collection $\beta_1, \ldots \beta_k$ with corresponding annuli $B_1, \ldots, B_k$. We will show that $\sqcup \alpha_n = \sqcup \beta_n$, up to isotopy and each $\alpha_i$ is parallel to some $\beta_j$. If $\alpha_i$ is disjoint from $\beta_j$ for all $i$ and $j$ and each $\alpha_i$ is parallel to some $\beta_j$, then we are done since this implies that the decompositions of $S$ via $\alpha_i$ and $\beta_j$ produce isotopic $S_C$'s. So, we take the first $\alpha_i$ which intersects some $\beta_j$. This implies that the annuli $A_i$ and $B_j$ also intersect. By Lemma~\ref{DecomposeByAnnuli}, $A_i$ and $B_j$ can be made disjoint. This implies that $\alpha_i$ and $\beta_j$ can also be made disjoint. Continue this process of resolving the intersections of $\alpha_i$'s and $\beta_j$'s until $\sqcup \alpha_n \cap \sqcup \beta_n$ is empty. 
	
	We now prove that each $\alpha_i$ is parallel to some $\beta_j$. Suppose not, and there exists an $\alpha_i$ not parallel to any $\beta_j$. Then in the decomposition of $S$ via the $\beta_j$'s into 4-punctured spheres $S_C$'s and tubes, $\alpha_i$ will be contained in one of the $S_C$'s and is not parallel to any of the punctures. So, we may use $\alpha_i$ to take an annular compression of the $S_C$ along $\alpha_i$. A contradiction to Theorem~\ref{Oertel1}, since $S_C$ is annularly incompressible. Thus, each $\alpha_i$ is parallel to some $\beta_j$. Note that, without loss of generality, it may be the case that there are multiple $\alpha_i$'s parallel to some $\beta_j$. This implies that each of the $\alpha_i$ produces the same annular compression as $\beta_j$. Hence, this implies that the decomposition of $S$ into tubes and $S_C$'s is canonical up to the $S_C$'s. 
	
	There are only two choices, up to isotopy, of incompressible $S_C$'s, by Theorem~\ref{Oertel2}. So, if $S$ and $S'$ are tubings of $S_C$'s and $S_{C'}$'s, where $C \neq C'$, then $S$ and $S'$ canonically decompose into non-isotopic 4-punctured spheres and thus are non-isotopic. Moreover, this also implies that every punctured partitioned chord diagram corresponds to exactly 2 non-isotopic surfaces.
\end{proof}

We now show that punctured partitioned chord diagrams are determined by its chord of length $4g-5$, the maximum possible length. Then, we count all the possible locations of such a chord that give rise to a closed, connected, essential, orientable surface. Observe that by Proposition~\ref{MaxPPCDs}, every punctured partitioned chord diagram corresponds to exactly 2 non-isotopic essential surfaces. Combining these two facts gives the largest number of closed, connected, essential, orientable surfaces of genus $g$ in the complement of a Montesinos knot.

\begin{Prop}\label{UniquelyDetermined}
	Every punctured partitioned chord diagram corresponding to a closed essential surface,  not containing a genus 2 component, is uniquely determined by the location of the chord of maximum possible length $4g-5$.
	
	Moreover, let $c$ be the chord of length $4g - 5$ contained in a region, say $R_1$, and fix $c', c''$ to be the chords of length 1. Let $i$ be the number of base points of $R_1$ which are chorded to base points not in $R_1$. Let $i > 0$ if the set of base points are closer to $c''$ and $i < 0$ if the base points are closer to $c'$. Then, there are $g - 1$ chords parallel to $c$, $\frac{g - 1 - i}{2}$ chords parallel to $c'$ and $\frac{g - 1 + i}{2}$ chords parallel to $c''$. See Figure~\ref{FixedChords}.
\end{Prop}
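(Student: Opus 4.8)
The plan is to combine the three-leaf structure from Proposition~\ref{TreeLeaves} with the identification of leaves in Lemma~\ref{1MaxChord}, and then run a rigidity argument showing that fixing $c$ leaves no freedom. First I would record the global structure. By Proposition~\ref{TreeLeaves} the dual tree has exactly three leaves, and by Lemma~\ref{1MaxChord} each leaf is either the unique chord of length $4g-5$ or a chord of length $1$; since two chords of length $4g-5$ would necessarily intersect, the three leaf chords are $c$ (length $4g-5$) together with two length-$1$ chords $c'$ and $c''$. A tree with three leaves has a single degree-$3$ vertex and three arms of degree-$2$ vertices. In the dual tree of a non-crossing matching a degree-$2$ vertex is a face bounded by exactly two chords and two base-point-free boundary arcs, so the two chords bounding it are parallel in the sense of Definition~\ref{distancebetweenbasepoints}; hence along each arm the chords form one maximal bundle of mutually parallel chords, and the diagram is exactly three such bundles sharing the central face. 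It therefore suffices to locate each bundle and count its chords.

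Second I would fix coordinates. Label the $4g-4$ base points $1,\dots,4g-4$ cyclically, with $R_1,\dots,R_4$ the four consecutive blocks of $g-1$ points, and take $c\subset R_1$. Because $c$ has length $4g-5$ and the puncture lies in the lens it cuts off (Lemma~\ref{1MaxChord}), its endpoints are consecutive base points $p,p+1$ with $1\le p\le g-2$, the puncture sitting between them. The chords parallel to $c$ are then the nested family $\{p-t,\,p+1+t\}$ encircling the puncture, and the heart of the argument is to show this family has exactly $g-1$ members. I would argue it is forced: every base point on the puncture side of $c$ must be swallowed by this bundle, since a leftover near-side point could only be matched by a chord crossing the bundle (violating disjointness) or by creating a fourth leaf (violating Proposition~\ref{TreeLeaves}); symmetry about the puncture then shows the bundle consumes $2(g-1)$ base points, i.e.\ $g-1$ chords, and leaves a complementary block $F$ of $2g-2$ consecutive base points to be matched among themselves.

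Third I would analyze $F$. Reading off the labels, $F$ runs from inside $R_2$ through all of $R_3$ into $R_4$, so it contains exactly the two region boundaries $R_2|R_3$ and $R_3|R_4$. By Theorem~\ref{Oertel2} every chord must cross a region boundary, so the two length-$1$ leaves $c''$ and $c'$ are forced to sit at these two boundaries, and the remaining chords of $F$ fill in as the two parallel bundles based there. The region-size constraint (each $R_j$ carries exactly $g-1$ base points) then pins the split: the left endpoints of the $c''$-bundle based at $R_2|R_3$ run down to the near end of $F$, which counts $g-1-p$ chords, and symmetrically the $c'$-bundle based at $R_3|R_4$ has $p$ chords; consistency is confirmed by $(g-1)+(g-1-p)+p=2g-2$, the total number of tubes from the Euler characteristic count in Section~\ref{proof}.

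Finally I would introduce $i$ and conclude. The base points of $R_1$ that are chorded outside $R_1$ are precisely those lying beyond where the $c$-bundle exits $R_1$; there are $|g-1-2p|$ of them, on the side of the puncture nearer $c''$ when $g-1-2p>0$ and nearer $c'$ otherwise, so by the stated sign convention $i=(g-1)-2p$. Substituting gives $p=\tfrac{g-1-i}{2}$ chords parallel to $c'$ and $g-1-p=\tfrac{g-1+i}{2}$ chords parallel to $c''$, with $g-1$ parallel to $c$, as claimed. Since the $c$-bundle, the two boundaries carrying $c'$ and $c''$, and the two bundle sizes were each forced by the position $p$ of $c$, the whole diagram is determined by the location of $c$, which is the uniqueness statement. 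I expect the main obstacle to be the rigidity in the second and third steps, namely showing that the non-crossing, exactly-three-leaf, and region-size conditions genuinely leave no alternative non-crossing completion once $c$ is fixed, rather than the bookkeeping in the final step.
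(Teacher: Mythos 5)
Your structural setup is sound and in fact runs parallel to the paper's: your three ``arms'' of the tripod dual tree are exactly the paper's three parallel families ($c$ together with the $\beta$-chords, $c'$ with the $\alpha$-chords, $c''$ with the $\gamma$-chords), and your tiling count is essentially the paper's $3\times 3$ linear system in the multiplicities. The observation that a degree-$2$ dual vertex forces two nested chords with consecutive endpoints, so that each arm is a contiguous nested bundle and the three bundles tile the circle of base points, is correct and is a clean way to see what the paper encodes via Proposition~\ref{TypesOfChords} and Lemma~\ref{ChordsOfMaxLength}.

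The gap is precisely where you suspected it: step 2. ``Symmetry about the puncture'' does not show that the $c$-bundle consumes $2(g-1)$ base points; nothing about the puncture forces the bundle to stop at any particular even size. The count $T=g-1$ only falls out once you know where the other two bundles are centered and impose the three adjacency equations from the tiling (e.g.\ $p+T+U=2g-2$, $U+V=g-1$, $V+T=p+g-1$, whose unique solution is $T=g-1$, $U=g-1-p$, $V=p$). But your step 3 locates the length-$1$ leaves at $R_2|R_3$ and $R_3|R_4$ by inspecting the complementary block $F$, whose extent you computed \emph{using} $T=g-1$ --- so as written the argument is circular. Two honest ways to close it: (a) do what the paper does and first prove the placement of the length-$1$ chords independently of any bundle sizes (Lemma~\ref{Length1LeavesLocation}, via the sub-diagram leaf count of Lemma~\ref{ChordDiagram2Leaves}), then solve the linear system; or (b) within your framework, enumerate the $\binom{4}{2}$ possible placements of the two length-$1$ leaves among the four region boundaries, write the tiling equations for each, and check that only the $R_2|R_3$, $R_3|R_4$ placement admits a solution with all three bundle sizes positive (the others force a negative multiplicity). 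Either repair yields the stated counts, and your translation $i=(g-1)-2p$ then gives the formulas in the proposition (up to the harmless swap of the labels $c'$, $c''$, under which the formulas are symmetric).
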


To prove the proposition, we establish a few lemmas.

\begin{Lemma}\label{ExistenceMaxChord}
	Every punctured partitioned chord diagram coming from a closed essential surface, not containing a genus 2 component, in a Montesinos knot complement must have a unique chord of maximum possible length. In other words, each punctured partitioned chord diagram coming from such a closed essential surface has exactly one chord of length $4g-5$.
\end{Lemma}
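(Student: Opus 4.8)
The plan is to split the statement into its two natural halves, \emph{uniqueness} and \emph{existence} of a chord of length $4g-5$, and to dispatch each using the structural results already established, principally Proposition~\ref{TreeLeaves} and Lemma~\ref{1MaxChord}. Throughout I keep in mind the combinatorial meaning of maximal length: by Definition~\ref{lengthofchord} a chord attains length $4g-5$ exactly when its two base points are adjacent on the boundary circle (no base point lies on the short arc between them) and the puncture sits in the small region cut off between the chord and that base-point-free arc. This is precisely the configuration in which the length-computing isotopy is forced to sweep the chord the long way around, past all $4g-6$ remaining base points.

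For uniqueness I would argue as already hinted in the example discussion of Section~\ref{Example}. Suppose $c_1$ and $c_2$ are both chords of length $4g-5$; their base point sets are disjoint by the definition of a punctured partitioned chord diagram, and the chords themselves are disjoint by Lemma~\ref{ExistencePPCD2}. Disjointness forces one of them, say $c_2$, to lie entirely on one side of $c_1$. Since the short arc of $c_1$ carries no base points, the two base points of $c_2$ cannot lie there, so $c_2$ must lie on the long-arc side of $c_1$. But the puncture lies on the short-arc side of $c_1$ (this is what makes $c_1$ maximal), hence it cannot simultaneously lie on the short-arc side of $c_2$, so $c_2$ cannot be maximal. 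This contradiction shows at most one chord of length $4g-5$ exists.

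For existence I would use the leaf count. By Proposition~\ref{TreeLeaves}, a diagram arising from a closed essential surface with no genus $2$ component has a dual tree with exactly $3$ leaves, and these correspond to three chords whose endpoints are adjacent; for $g \geq 3$ there are $4(g-1) \geq 8$ base points, which forbids a single chord from bounding two leaf regions at once, so the three leaf chords are distinct. By Lemma~\ref{1MaxChord} each leaf chord has length $1$ or the maximal length $4g-5$, the latter occurring precisely for the leaf whose region contains the puncture, that is, when the root of the dual tree (Definition~\ref{dualtree}) is itself a leaf. If no maximal-length chord existed, all three leaf chords would have length $1$, placing the diagram in case~(2) of Proposition~\ref{TreeLeaves} (three or more chords of length $1$) and forcing the surface to be disconnected with a genus $2$ component, contradicting the hypothesis. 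Hence at least one chord of length $4g-5$ exists, and combined with uniqueness the diagram has exactly one.

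The step I expect to be the main obstacle is the existence argument, since it rests on correctly reading off, via Lemma~\ref{1MaxChord}, that a leaf fails to carry a maximal-length chord exactly when its region misses the puncture, and then on legitimately invoking the already-proved classification in Proposition~\ref{TreeLeaves} to exclude the all-length-$1$ configuration. The uniqueness half is essentially a planar separation observation and should be routine once the short-side characterization of maximal chords is in place.
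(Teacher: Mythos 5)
Your proof is correct and follows essentially the same route as the paper: existence comes from the three-leaf count of Proposition~\ref{TreeLeaves} together with Lemma~\ref{1MaxChord}, forcing three length-$1$ chords and hence a genus~$2$ component if no maximal chord exists, while uniqueness is the planar disjointness observation the paper records as a one-line remark in Section~\ref{Example} (two maximal chords cannot both have the puncture on their base-point-free side). The only cosmetic difference is that you invoke part~(2) of Proposition~\ref{TreeLeaves} directly where the paper re-derives the adjacent-regions placement of the length-$1$ chords before citing it.
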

	\begin{proof}
		Suppose that the punctured partitioned chord diagram $D$ of such a surface $S$ has no chord of length $4g-5$. There is only one way to obtain a chord of length $4g-5$. Namely, by chording adjacent base points so that the chord is contained in all four regions (e.g. see blue chord in Figure~\ref{PPCD}). By Proposition~\ref{TreeLeaves}, $D$ must have exactly three leaves. Thus, all three leaves of $D$ must have corresponding chords with length 1 by Lemma~\ref{1MaxChord}. These leaves are only allowed to be located at four possible places; namely between base points of adjacent regions. If one of the three leaves is not between base points of adjacent regions, then such a leaf must correspond to a chord of length 1 completely contained in a single region. Contradiction to Theorem~\ref{Oertel2}, since this chord would not pass through a rational tangle. Thus, all three leaves must be between base points of adjacent regions. However this gives another contradiction, since this implies that $S$ has a genus 2 component by Proposition~\ref{TreeLeaves}. 
	\end{proof}

\begin{Lemma}\label{ChordDiagram2Leaves}
	Every general chord diagram which
	\begin{enumerate}
		\item contains four or more base points,
		\item does not contain punctures,
		\item and does not contain intersecting chords 
	\end{enumerate}
	must have at least two chords of length 1.
	
	Moreover, if the two sides, with respect to two leaves, do not contain the same number of base points, then the chord diagram contains three or more chords of length 1. 
\end{Lemma}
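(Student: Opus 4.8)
The plan is to prove both assertions by exploiting the correspondence between chords of length $1$ and leaves of the dual tree. First I would observe that although Definition~\ref{dualtree} is stated for \emph{punctured} partitioned chord diagrams, neither the construction of the dual graph nor the argument in the remark following it (that each chord separates the disk into two subdiagrams, so every dual edge is separating) uses the puncture; hence a general chord diagram as in the hypotheses also has a well-defined dual tree. A leaf of this tree is a complementary region $R$ whose boundary is a single chord $C$ together with a single arc $\gamma$ of the bounding circle. Arguing as in Lemma~\ref{1MaxChord}, but now with no puncture available, I would show $\gamma$ contains no base point in its interior: any such base point would be matched by a chord forced either to subdivide $R$ or to cross $C$, both impossible. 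Consequently $C$ joins two adjacent base points and has length $1$, and conversely the empty sliver cut off by any length-$1$ chord is a leaf; thus leaves and chords of length $1$ are in bijection. Since the diagram has at least four base points it has at least two chords, hence at least three complementary regions, so its dual tree has at least three vertices and therefore at least two leaves. This proves the first claim.

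For the second claim I would fix two leaves, i.e.\ two length-$1$ chords $C_1=\{a_1,a_2\}$ and $C_2=\{b_1,b_2\}$, and record the cyclic order of base points as $a_1,a_2,I,b_1,b_2,J$, where $I$ and $J$ are the two arcs (the two ``sides'') carrying the remaining base points, with $|I|=m$ and $|J|=\ell$. The four points $a_1,a_2,b_1,b_2$ are already matched among themselves, so every base point of $I\cup J$ is matched within $I\cup J$; a direct check of the cyclic order shows that a chord from $I$ to $J$ crosses neither $C_1$ nor $C_2$, so such crossing chords are permitted, and the matching on $I\cup J$ decomposes into within-$I$ chords, within-$J$ chords, and $I$--$J$ chords.

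The crucial step is an innermost-chord observation: if $I$ carries any within-$I$ chord at all, then it carries one of length $1$. To see this I would take a within-$I$ chord $\{i_a,i_b\}$ minimizing $b-a$; non-crossingness forces the base points strictly between $i_a$ and $i_b$ to be matched among themselves, and if $b-a\ge 3$ their innermost matched pair is a within-$I$ chord with smaller span, contradicting minimality. Hence $b-a=1$, so $\{i_a,i_b\}$ is a length-$1$ chord lying in $I$, and the same holds for $J$. I would then argue the contrapositive of the desired statement: if neither $I$ nor $J$ contains a length-$1$ chord, then by this observation neither contains \emph{any} within-arc chord, so every base point of $I$ is matched across to $J$ and vice versa, forcing $m=\ell$. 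Therefore $m\ne\ell$ produces a length-$1$ chord inside $I$ or $J$, which is distinct from $C_1$ and $C_2$ and gives a third chord of length $1$.

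The main obstacle is the bookkeeping in the innermost-chord step and in verifying which chords are compatible with non-crossingness: one must be careful that a chord joining the two arcs $I$ and $J$ does \emph{not} cross $C_1$ or $C_2$ (so crossing chords cannot be excluded outright), and that an innermost within-arc chord really is length $1$ in the original diagram rather than merely in a subdiagram. The parity constraint, that the total number $4+m+\ell$ of base points is even and hence $m\equiv\ell\pmod 2$, is a useful consistency check but is not needed for the argument itself.
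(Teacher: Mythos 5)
Your proof is correct, but it reaches the first claim by a different route than the paper. For the existence of two length-$1$ chords, the paper first observes that every chord in such a diagram has odd length, then takes a chord $c_0$ of largest length and recursively extracts a longest chord from each of the two subdiagrams it bounds, terminating at a length-$1$ chord on each side of $c_0$. You instead establish a bijection between length-$1$ chords and leaves of the dual tree (correctly noting that the dual-tree construction never uses the puncture) and then simply count: at least two chords gives at least three complementary regions, hence a tree with at least two leaves. Your argument is shorter, avoids the parity observation and the recursion, and has the advantage of making explicit the leaf/length-$1$-chord correspondence that the paper uses informally elsewhere (e.g.\ in Proposition~\ref{TreeLeaves} and Lemma~\ref{1MaxChord}); the paper's recursive decomposition, on the other hand, is the template it reuses in Lemma~\ref{Length1LeavesLocation}, so it earns its keep there. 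For the second claim the two arguments are essentially the same pigeonhole idea --- unequal sides force a within-side chord --- but you make rigorous a step the paper leaves implicit, namely that a within-side chord forces a \emph{length-$1$} within-side chord, via your innermost-chord minimization; the paper jumps directly from ``some base point cannot be chorded across'' to ``there is a chord between adjacent base points.'' One cosmetic remark: in your innermost-chord step you treat only the case $b-a\ge 3$; the case $b-a=2$ should be mentioned (it is impossible because the single base point strictly between $i_a$ and $i_b$ would have to be chorded out across $\{i_a,i_b\}$), though this is exactly the parity fact the paper records up front.
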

	\begin{proof}
		Let $D$ be a chord diagram which does not contain punctures or intersecting chords. Observe that $D$ must contain only odd length chords. Otherwise, one of the complementary regions of an even length chord contains an odd number of base points. By definition, every chord has exactly two base points and every base point belongs to a unique chord. Thus, an odd number of base points implies that there must be intersecting chords, a contradiction. So, every chord in $D$ is odd length.
		
		Now, we can find a chord of largest length in $D$, since there are only finitely many chords. If the largest length among all chords is 1, then every chord must be length 1. Thus, since $D$ contains four or more base points, then $D$ has at least two chords of length 1. 
		
		Suppose that a largest length chord in $D$ is not length 1. Call this chord $c_0$. The complement of $c_0$ in $D$ consists of two sub-diagrams, say $D_1$ and $D_2$. We will consider both $D_1$ and $D_2$ to be sub-diagrams containing the chord $c_0$ and the base points of $c_0$. Observe that when $c_0$ is viewed as a chord in $D_1$ and $D_2$, it is a chord of length 1. Without loss of generality, we look at the sub-diagram $D_1$. Note that both $D_1$ and $D_2$ satisfy conditions \textit{1} through \textit{4}. Continue this process of locating the largest length chord in $D_1$, call it $c_1$. Observe that the length of $c_1$ in $D$ is less than or equal to the length of $c_0$ in $D$ since $c_0$ was the largest length chord in $D$ and chords do not intersect. Moreover, this process of finding the largest length chord in each consecutive sub-diagram must stop at a length 1 chord. If not, then at some point in this process a sub-diagram must consist of chords of the same length and must have length 3 or more. However, the only way this can happen is if chords intersect in the sub-diagram. Contradiction, since these intersecting chords in this sub-diagram must also be intersecting chords in $D$. Thus, we have found a length 1 chord in $D_1$ which is a chord between adjacent base points of both $D_1$ and $D$. This process also applies to the $D_2$ and thus $D$ has at least two chords of length 1.
		
		The second part of the statement can be proved using the same proof of the case of two leaves in Proposition~\ref{TreeLeaves}. Assume the two sides, with respect to two leaves, do not contain the same number of base points. Then there are base points on one side of the leaves which cannot have a chord to a base point on the other side of the leaves. Hence, there must exist a chord between adjacent base points. Otherwise, if all remaining base points are not adjacent, then in order to chord the remaining base points, it must intersect the chords whose base points lie on different sides of the leaves, which gives a contradiction to chords being disjoint. Thus, there exists a chord between adjacent base points, which implies that the dual tree has at least 3 leaves.
	\end{proof}

\begin{Lemma}\label{Length1LeavesLocation}
	The location of the two length 1 chords of a punctured partitioned chord diagram of a closed, connected, essential, orientable surface of genus $g$ greater than 2 must be located between base points of regions disjoint from the region containing the base points of the chord of maximum possible length.
\end{Lemma}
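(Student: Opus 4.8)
The plan is to combine the leaf count already assembled with a bookkeeping of which $S_C$'s each tube joins, and then to reduce the remaining content to a forcing argument resting on Lemma~\ref{ChordDiagram2Leaves} and Proposition~\ref{TreeLeaves}.

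First I would set up the reduction. By Proposition~\ref{TreeLeaves}, Lemma~\ref{ExistenceMaxChord}, and Lemma~\ref{1MaxChord}, the diagram has exactly three leaves: the unique chord $c$ of length $4g-5$, both of whose base points lie in one region (call it $R_1$), together with exactly two chords of length $1$. By Theorem~\ref{Oertel2} no chord is isotopic into a single region, so each length-$1$ chord joins adjacent base points lying in two \emph{distinct} consecutive regions; hence it sits at one of the four region boundaries, and any one boundary carries at most one length-$1$ chord, since a second parallel one would enclose the first and so have length at least $3$. Thus it suffices to prove that neither length-$1$ chord lies at a boundary meeting $R_1$; the two length-$1$ chords are then forced onto the two boundaries disjoint from $R_1$, one on each.

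Second I would record an innermost/outermost dichotomy together with a connectivity constraint that will guide the case analysis. Labelling the base points along an orientation of $K$ exactly as in the proof of Proposition~\ref{TreeLeaves}, a direct reading of the ordering shows that a length-$1$ chord at either boundary meeting $R_1$ is a self-tube of the innermost sphere $S_C^{1}$ or of the outermost sphere $S_C^{g-1}$, and likewise for the two boundaries disjoint from $R_1$. Modelling the tubing as a multigraph with one vertex per sphere $S_C^{i}$ and one edge per tube (self-tubes being loops), connectedness of $S$ is equivalent to connectedness of this graph, and a sphere carrying two self-tubes is closed off as a separate genus-$2$ component. This already forbids the two length-$1$ chords from being self-tubes of the same sphere.

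Third, the crux: I would show that a length-$1$ chord on a boundary meeting $R_1$ cannot occur. By the symmetry of the construction interchanging the two regions neighbouring $R_1$, it suffices to treat a length-$1$ chord $\ell$ between $R_1$ and the next region $R_2$; then $\ell$ joins the base point of $R_1$ adjacent to $R_2$ and the base point of $R_2$ adjacent to $R_1$, and $c$, which occupies two adjacent base points of $R_1$ while encircling the puncture, uses base points disjoint from those of $\ell$. The base points of $R_2$ interior to $\ell$ can be matched neither within $R_2$ (Theorem~\ref{Oertel2}) nor across $\ell$ or across the second length-$1$ chord without crossing them; tracking the non-crossing condition, the innermost still-unmatched adjacent pair is again a boundary-straddling pair, i.e.\ a third chord of length $1$. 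This produces a fourth leaf, so by Proposition~\ref{TreeLeaves} the surface is disconnected or carries a genus-$2$ component, contradicting the hypotheses. Hence no length-$1$ chord meets $R_1$, which completes the proof.

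I expect this third step to be the main obstacle. The clean way to make the forcing precise is to cut the disk along $c$, discarding the puncture, and to apply the second clause of Lemma~\ref{ChordDiagram2Leaves} (unequal sides force a third chord of length $1$) to the resulting crossingless, puncture-free diagram on the remaining $4g-6$ base points, while checking both that the two length-$1$ chords of the original diagram persist as length-$1$ chords after the cut and that Theorem~\ref{Oertel2} forbids resolving the pinch by a single-region chord. The delicate point is the dependence on the exact position of $c$ inside $R_1$, which splits the verification into a few sub-cases.
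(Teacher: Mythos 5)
Your reduction and your target are the same as the paper's: exactly three leaves (the chord $c$ of length $4g-5$ in $R_1$ plus two length-$1$ chords), and a contradiction obtained by forcing a third length-$1$ chord against Proposition~\ref{TreeLeaves}. The gap is in the crux step, which you yourself flag as the main obstacle and leave unresolved. Your first formulation (``the innermost still-unmatched adjacent pair is again a boundary-straddling pair'') is an assertion, not an argument, and your proposed ``clean'' version fails as stated. If you cut along $c$, discard its two base points, and apply the second clause of Lemma~\ref{ChordDiagram2Leaves} to the leaves $c_1$ (at the $R_1$--$R_2$ boundary) and $c_2$ (at the $R_2$--$R_3$ boundary), the unequal sides ($g-3$ versus $3g-7$) do force a third length-$1$ chord in the cut diagram --- but that chord need not have length $1$ in $D$. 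Its base points are adjacent only after deleting $c$'s base points, so it can be a chord of $D$ with both base points in $R_1$ flanking $c$, i.e.\ a chord nested just outside $c$, of length $4g-7$ in $D$ and parallel to $c$. Such nested parallel chords genuinely occur in valid diagrams (they are exactly the chords with both base points in $R_1$ described in Lemma~\ref{ChordsOfMaxLength}), so in that case you obtain no fourth leaf and no contradiction. The configuration can be repaired by iterating --- delete the flanking pair and reapply, noting that the nested flanking chords exhaust the available base points of $R_1$ before the two sides equalize, after which the forced adjacent pair is genuinely adjacent in $D$ --- but that iteration, together with the dependence on the position of $c$ inside $R_1$, is precisely the sub-case analysis you deferred, and it is the substantive content of the lemma.

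The paper avoids this trap by localizing rather than globalizing: it fixes the base point $b_0$ of $R_4$ adjacent to $R_3$, observes that $b_0$ lies at distance at least $g-2$ from every base point of $c$, $c_1$, $c_2$, and analyzes the chord through $b_0$. Whichever sub-diagram that chord cuts off, Lemma~\ref{ChordDiagram2Leaves} then produces a length-$1$ chord located away from $c$, hence a genuine third length-$1$ chord of $D$. Your preliminary steps are fine and match the paper: the four candidate boundary positions, the observation that opposite boundaries tube the same (innermost or outermost) sphere so that Proposition~\ref{TreeLeaves} excludes the opposite-boundary configuration, and the reduction by symmetry to a length-$1$ chord at the $R_1$--$R_2$ boundary. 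Only the forcing step needs to be reworked along the lines above.
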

	\begin{proof}
		Let $D$ be the punctured partitioned chord diagram of such a surface. By Proposition~\ref{TreeLeaves} and Lemma~\ref{ExistenceMaxChord}, $D$ contains exactly one chord of maximum possible length and two chords of length 1. Label the chord of maximum possible length as $c_0$, and the two chords of length 1 as $c_1, c_2$. Observe that $D$ has four regions each of which contain $g - 1$ base points. Label the region containing the base points of the chord of maximum possible length $R_1$. Going counterclockwise from $R_1$, label the other three regions as $R_2, R_3$ and $R_4$, respectively (see Figure~\ref{RegionsOfPPCD}). 
		
		The case for genus $g = 3$ is done, since each region has exactly two base points. Thus, there are only two possible places for the two length 1 chords: between $R_2$ \& $R_3$ and $R_3$ \& $R_4$. Any other location would imply that $D$ has a chord of length 1 completely contained in a singe region. Contradiction to Theorem~\ref{Oertel2}, since this chord would correspond to a tube that does not pass through a rational tangle.

\begin{figure}[h!]
		\centering
		\includegraphics[viewport = 0 210 620 690, scale = 0.4, clip]{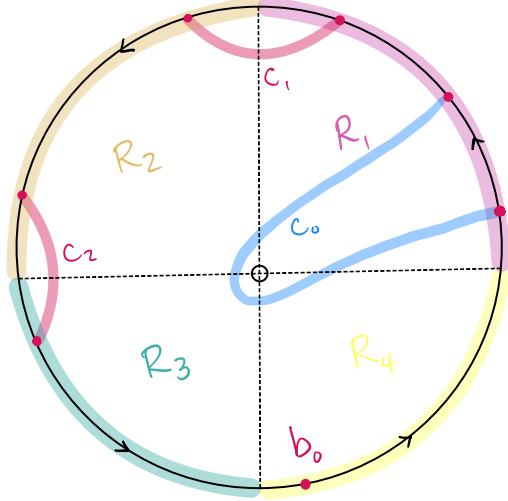}
		\caption{Labeling of the punctured partitioned chord diagram $D$.}
		\label{RegionsOfPPCD}
\end{figure}

		Now we prove the case of $g > 3$ by contradiction. Suppose without loss of generality that $c_1$ has one base point in $R_1$ and the other base point in $R_2$. Then there are only two locations for $c_2$: between $R_1$ \& $R_4$ and $R_2$ \& $R_3$. Note that if $c_2$ is between $R_3$ \& $R_4$, then by Proposition~\ref{TreeLeaves}, this would mean that there is a disconnected component of the closed, connected, essential, orientable essential surface, a contradiction. Without loss of generality, say that $c_2$ is between $R_2$ \& $R_3$. The same argument applies if $c_2$ is between $R_1$ \& $R_4$. We mimic the proof of Lemma~\ref{ChordDiagram2Leaves}, where we reduce $D$ to sub-diagrams by looking at the complement of a chord in $D$. Note that $D$ contains $g - 4$ base points in $R_1$, $g - 3$ base points in $R_2$, $g - 2$ base points in $R_3$, and $g - 1$ base points in $R_4$. By Lemma~\ref{ChordDiagram2Leaves}, both of these new diagrams must have at least two chords of length 1. We show that one of the sub-diagrams contains a chord of length 1 which is also a chord of length 1 in $D$, distinct from $c_1, c_2$. This gives a fourth leaf in $D$, contradicting Proposition~\ref{TreeLeaves}. 
		
		Let $b_0$ be a base point in $R_4$ which is immediately adjacent to the region $R_3$. We show that every possible chord with $b_0$ as a base point results to $D$ having a third chord of length 1 contradicting Proposition~\ref{TreeLeaves}. Let $c$ be the chord with $b_0$ as one of its base points. If $c$ is length 1, we have a third length 1 chord in $D$, contradicting Proposition~\ref{TreeLeaves}. Suppose that $c$ is not length 1. Since no chords in $D$ intersect, then $c$ partitions $D$ into two sub-diagrams. There are two cases to consider: one of the sub-diagrams contain at least one of $c_0, c_1, c_2$ or contain none of these chords. Suppose one of the sub-diagrams, call it $D'$, does not contain any of $c_0, c_1$ or $c_2$. This implies, by Lemma~\ref{ChordDiagram2Leaves}, that there exists two chords of length 1 in $D'$, one of which is $c$, and hence the other chord is of length 1 in $D$ which is not $c_1$ or $c_2$, a contradiction. 
		
		Now suppose that one of the sub-diagrams contains at least one of $c_0, c_1, c_2$. We look at the sub-diagram $D'$ that contains exactly one of $c_0, c_1, c_2$. Without loss of generality, suppose $D'$ contains $c_2$. Observe that $b_0$ has at least $g - 2$ base points in $D$ between itself and any base point of $c_0, c_1, c_2$. This is because $R_3$ has $g - 2$ base points disjoint from $c_2$ and $R_4$ has $g - 2$ base points disjoint from $b_0$. There does not exist $g - 2$ consecutive base points in either $R_2$ and $R_1$ in between leaves. Thus, there must be a different number of base points between $b_0$ \& $c_2$ and $b_1$ \& $c_2$ in $D'$. This means that the two sides of $D'$, with respect to the leaves $c$ and $c_2$, must contain a different number of base points in $D'$. Two of these leaves in $D'$ are $c$ and $c_2$, so by Lemma~\ref{ChordDiagram2Leaves} $D'$ has three or more chords of length 1. The last chord is between base points adjacent in $D'$ which are also adjacent in $D$. This gives a third chord of length 1 in $D$, contradicting Proposition~\ref{TreeLeaves}. 
	\end{proof}

\begin{Prop}\label{TypesOfChords}
	Let $D$ be a punctured partitioned chord diagram with a chord $c$ of maximum possible length $4g - 5$ whose base points are completely contained in a single region and exactly two chords, $c', c''$, of length 1 whose base points are in regions distinct from the region containing the base points of $c$. Then there are exactly three types of chords in $D$, whose type is based on if exactly one of $c, c', c''$, is completely contained in one of the sub-diagrams of the chord.
\end{Prop}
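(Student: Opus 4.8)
The plan is to read the classification directly off the dual tree of $D$, whose shape is already pinned down by the earlier results. First I would identify the leaves: by Proposition~\ref{TreeLeaves} the dual tree $T$ of $D$ has exactly three leaves, and by Lemma~\ref{1MaxChord} every leaf is cut off by a chord of length $1$ or of maximum possible length $4g-5$. Under the standing hypotheses the only such chords in $D$ are $c$, $c'$, and $c''$, while Lemma~\ref{ExistenceMaxChord} guarantees $c$ is the unique maximal chord; hence the three leaf regions of $T$ are exactly those cut off by $c$, $c'$, $c''$. Since a tree with exactly three leaves is a topological tripod, $T$ has a unique vertex $v$ of degree three, every other vertex has degree one or two, and $T$ is the union of three arcs $P_c, P_{c'}, P_{c''}$ joining $v$ to the three leaves and meeting only at $v$. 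Isolating this tripod structure is the first key step, since the classification is then forced.

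Next I would set up the dictionary between chords of $D$ and edges of $T$. By Definition~\ref{dualtree} each edge records the adjacency of two complementary regions across a single chord; conversely, because the chords are pairwise disjoint, each chord $d$ borders exactly one complementary region on each of its two sides, so there is exactly one edge $e$ of $T$ crossing $d$. Deleting $d$ separates $D$ into the two sub-diagrams lying on its two sides, and the complementary regions of these sub-diagrams are precisely the two components of $T \setminus e$. Thus the bipartition of the three special leaf-regions induced by deleting $d$ agrees with the bipartition induced by deleting $e$ from $T$.

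With this dictionary in place I would classify. Each edge of the tripod $T$ lies on exactly one of the three arms $P_c, P_{c'}, P_{c''}$. Deleting an edge on $P_X$ (for $X \in \{c,c',c''\}$) cuts off the leaf $X$, leaving $v$ and the other two leaves in the complementary component; translating back, exactly one of the two sub-diagrams of the corresponding chord $d$ contains the special chord $X$ and contains neither of the other two, while the opposite sub-diagram contains the remaining two. Since there are three special chords in total, the side carrying a single special chord is unambiguous, and it assigns to $d$ a well-defined type $X \in \{c,c',c''\}$; the two sharpness checks---that no sub-diagram is free of special chords and that no bipartition splits them as one-and-one---follow immediately from the tripod picture. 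Finally, each arm contains the leaf edge dual to the special chord bounding its leaf region, so all three types are realized, giving exactly three.

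The step I expect to be the main obstacle is making the chord-to-edge dictionary fully rigorous---in particular verifying that the two sub-diagrams appearing in the statement correspond exactly to the two components of $T \setminus e$, with the correct bookkeeping for the cutting chord being shared by both sides and for the leaf regions being canonically identified with their bounding special chords. Once that correspondence is secured, the tripod structure carries the entire argument, and the case analysis producing exactly one isolated special chord per chord is immediate.
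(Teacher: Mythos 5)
Your argument is correct, but it takes a genuinely different route from the paper. The paper disposes of this proposition in a few lines as a direct corollary of Lemma~\ref{ChordDiagram2Leaves}: if some chord $\overline{c}$ had a sub-diagram containing none of $c, c', c''$, that sub-diagram would by Lemma~\ref{ChordDiagram2Leaves} contain a second length-$1$ chord besides $\overline{c}$, which would then be a length-$1$ chord of $D$ distinct from $c'$ and $c''$ --- a contradiction; ruling out the $0$--$3$ split of the special chords leaves only the $1$--$2$ split, which is exactly the claimed trichotomy. Your proof instead reads the classification off the dual tree: the three special chords are precisely the chords cutting off the three leaves, a tree with exactly three leaves is a tripod with a unique degree-three vertex, and every edge of a tripod separates exactly one leaf from the other two. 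Both arguments are sound and rest on previously established facts; yours is more structural and buys a little more --- it shows all three types are realized and makes visible which arm of the tripod each chord lives on, anticipating the parallelism classes of Lemma~\ref{ChordsOfMaxLength} --- at the cost of the chord--edge bookkeeping you flag, which is indeed the only delicate point and is handled correctly (each chord is dual to a unique edge because disjoint chords separate the disk, so no two complementary regions share two chords). One small caveat: you invoke Proposition~\ref{TreeLeaves} for the three-leaf count, but the hypotheses of the present proposition are purely combinatorial and do not mention a surface; the count follows instead from Lemma~\ref{1MaxChord} together with the hypothesis that $c, c', c''$ are the only chords of length $1$ or of maximal length, each of which bounds a base-point-free complementary region and hence a leaf.
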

	\begin{proof}
		This is an immediate consequence of Lemma~\ref{ChordDiagram2Leaves}. Suppose there is a chord $\overline{c}$ which partitions $D$ into two sub-diagrams $D'$, $D''$ such that exactly one of $D'$ and $D''$ does not contain any of $c, c', c''$. Without loss of generality, suppose $D'$ does not contain any of $c, c', c''$. Then, by Lemma~\ref{ChordDiagram2Leaves}, there exists a chord of length 1 in $D'$ which is not $c, c'$ or $c''$. Thus, this chord is also of length 1 in $D$. Contradiction, since $c'$ and $c''$ were the only chords of length 1 in $D$. Therefore, every chord in $D$ must partition $D$ into two sub-diagrams such that one of the sub-diagrams contains exactly one of $c, c'$ or $c''$.
	\end{proof}

\begin{Lemma}\label{ChordsOfMaxLength}
	Suppose $D$ is a punctured partitioned chord diagram as in Proposition~\ref{TypesOfChords}. Chords which have a base point in the region containing the base points of $c$ must have a sub-diagram which contains $c$, but does not contain $c'$ or $c''$.
	
	Moreover, the base points of such chords are equidistant to the base points of $c$. In other words, the chords are parallel to $c$.
\end{Lemma}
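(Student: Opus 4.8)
The plan is to read off the conclusion from Proposition~\ref{TypesOfChords} together with the positions of the length $1$ chords supplied by Lemma~\ref{Length1LeavesLocation}. Adopt the labeling there: $c$ is the unique chord of maximum length, with both base points in the region $R_1$ and with the puncture in its leaf region, while the two length $1$ chords $c', c''$ sit between $R_2 \& R_3$ and between $R_3 \& R_4$; in particular neither $c'$ nor $c''$ has a base point in $R_1$. By Proposition~\ref{TypesOfChords} each chord $\overline{c}$ of $D$ splits $D$ into two sub-diagrams, exactly one of which contains exactly one of $c, c', c''$; call that the chord \emph{isolated} by $\overline{c}$. The first assertion is then precisely the statement that a chord $\overline{c}$ with a base point $a \in R_1$ isolates $c$.

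First I would treat the position of the second base point $a'$ of $\overline{c}$. If $a' \in R_1 \cup R_2 \cup R_4$, consider the sub-diagram $P$ cut off by $\overline{c}$ on the side adjacent to $R_1$ (the arc running inside $R_1$, or the short arc from $a$ into $R_2$ or into $R_4$). Because the base points of $c'$ and $c''$ lie at the $R_2/R_3$ and $R_3/R_4$ junctions, $P$ contains a base point of neither $c'$ nor $c''$; hence $c', c''$ both lie in the complementary sub-diagram, and by Proposition~\ref{TypesOfChords} the chord isolated by $\overline{c}$ must be $c$, with $c \in P$ (otherwise that sub-diagram would contain all three). This gives the desired sub-diagram for these positions.

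The remaining case $a' \in R_3$ is the one I expect to be the main obstacle, and the plan is to exclude it. Such a chord would separate $R_2$ from $R_4$ into opposite sub-diagrams and, in generic position, would place $c'$ on one side and $c''$ on the other, so Proposition~\ref{TypesOfChords} alone does not forbid it. Instead I would run the region-by-region count of Lemma~\ref{ChordDiagram2Leaves}: on the side of $\overline{c}$ carrying all of $R_2$, the base points of $R_2$ left over after $c'$, together with the fragments of $R_1$ and $R_3$ on that side, must form a non-crossing matching with no edge lying inside a single region. A parity and nesting argument, exactly as in the two-leaf case of Proposition~\ref{TreeLeaves}, shows this is impossible: one is always forced either to match two base points of the same region, contradicting Theorem~\ref{Oertel2} and condition (6) in the definition of a punctured partitioned chord diagram, or to create a length $1$ chord distinct from $c', c''$, that is, a fourth leaf, contradicting Proposition~\ref{TreeLeaves}. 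Hence no chord with a base point in $R_1$ reaches $R_3$, finishing the first assertion.

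For the parallelism statement I would argue inside the sub-diagram $D_c$ isolated by $\overline{c}$, which contains $c$ and the puncture but not $c'$ or $c''$. The base points of $D_c$ other than those of $c$ and $\overline{c}$ fall into two flanks of $c$, separated by the puncture lying in the lens of $c$; since no chord may cross $c$, cross $\overline{c}$, or pass through the puncture, a base point on one flank can only be joined to one on the other, producing a chord concentric with $c$, or to another point on the same flank. If $\overline{c}$ were not parallel to $c$, the two flanks would have different sizes, and the excess base points on the longer flank would have to be matched among themselves; by Lemma~\ref{1MaxChord} the innermost such edge is a length $1$ chord and hence a leaf distinct from $c, c', c''$, again contradicting Proposition~\ref{TreeLeaves}. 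Therefore the two flanks are balanced, every chord with a base point in $R_1$ joins the $k$-th base point out from $c$ on one flank to the $k$-th out on the other, and in particular its base points are equidistant from those of $c$, i.e.\ it is parallel to $c$. The two places I expect real bookkeeping are the $R_1$-to-$R_3$ exclusion and the balanced-flank count, both handled by the non-crossing matching and parity technique already established in Lemma~\ref{ChordDiagram2Leaves}.
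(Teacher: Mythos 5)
Your proposal is correct and follows essentially the same route as the paper: both assertions are reduced to Proposition~\ref{TypesOfChords} combined with the principle from Lemma~\ref{ChordDiagram2Leaves} that unequal base-point counts on the two sides of a pair of leaves force a third length-$1$ chord, contradicting the fact that $D$ has exactly two. Your extra case split on the location of the second base point (with the direct positional argument when it lies in $R_1$, $R_2$, or $R_4$) is only a reorganization of the paper's single distance computation, not a different method.
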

	\begin{proof}
	Let $4(g - 1)$ be the number of base points in a punctured partitioned chord diagram $D$, and that each of the four regions has $g - 1$ base points. The case for $g = 3$ was proved in Lemma~\ref{Length1LeavesLocation}, so we may assume that $g \geq 4$.
	
	Fix the location of the chord of maximum possible length, call it $c$. Without loss of generality, let $R_1$ be the region containing the base points of $c$. Then by Lemma~\ref{Length1LeavesLocation}, the two chords of length 1, call them $c', c''$, must be located between $R_2$ \& $R_3$ and $R_3$ \& $R_4$, respectively (see Figure~\ref{FixedChords}). This is because chords of length 1 cannot be between base points in a single region, contradicting Theorem~\ref{Oertel2}. The chord of maximum possible length cannot be between base points of adjacent regions since this implies that the surface is disconnected, by Proposition~\ref{TreeLeaves}. Moreover, Lemma~\ref{Length1LeavesLocation} gives that the location of length 1 chords are determined by the location of the chord of maximum length.

\begin{figure}[h!]
	\centering
	\includegraphics[viewport = 0 190 640 730, scale = 0.4, clip]{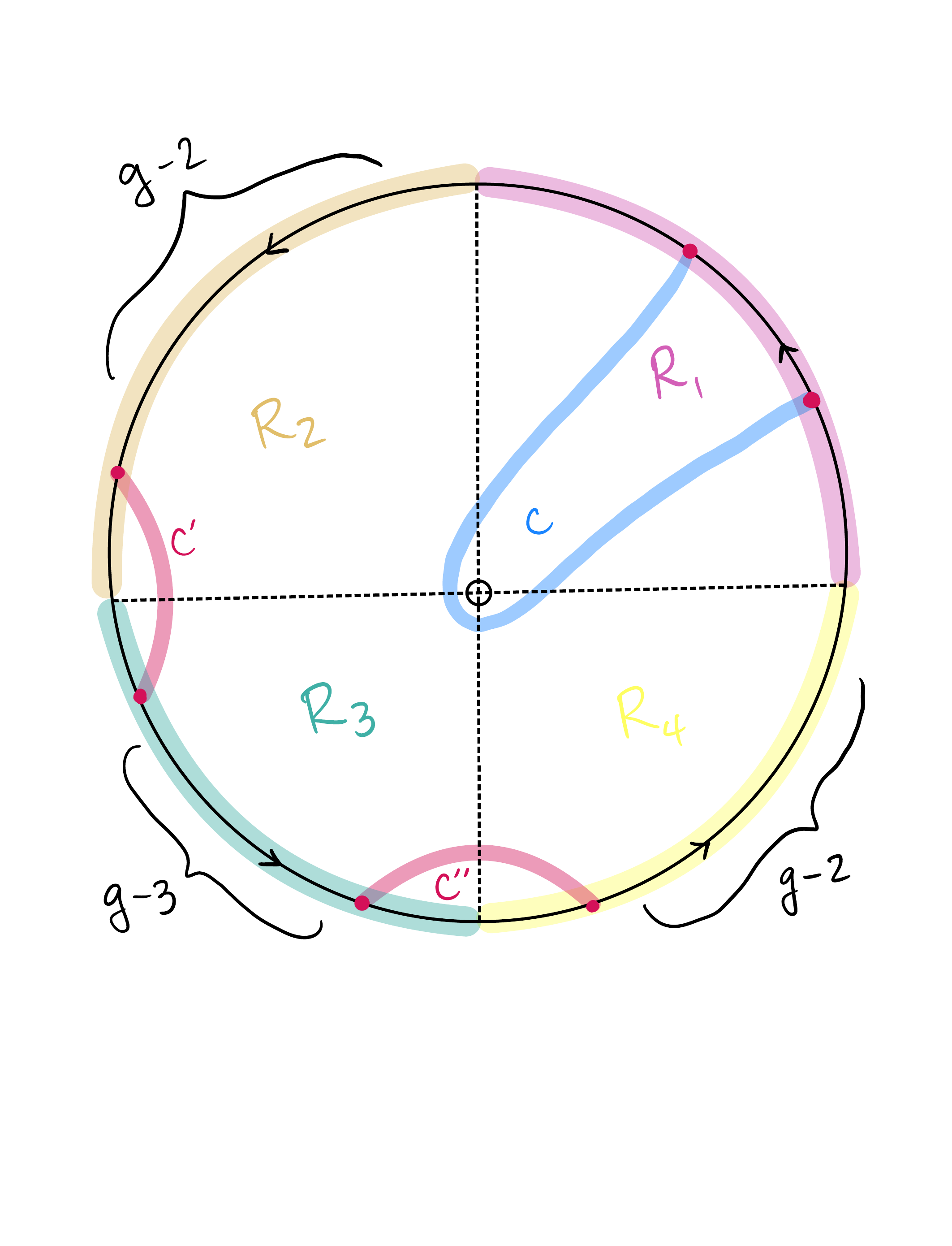}
	\caption{The three leaves of a punctured partitioned chord diagram, along with the chords $c, c', c''$ and the number of base points in each region.}
	\label{FixedChords}
\end{figure}

	Suppose a chord $\overline{c}$ has a base point in $R_1$ and neither sub-diagram does not only contain $c$. Thus, one sub-diagram, call it $D'$, either contains only $c'$ or $c''$. However, in either case, the distance between one pair of base points is always at least $g - 2$ and the distance between the other set of base points is at most $g - 3$. Hence, $D'$ does not have the same number of base points on both sides of the leaves. So, by Lemma~\ref{ChordDiagram2Leaves}, there exists another leaf in $D'$ that is not $\overline{c}$ or either of $c'$ or $c''$. This implies that this leaf is also a leaf in $D$ with corresponding chord having length 1. Contradiction, since $D$ has only two chords of length 1.
	
	We now prove the second statement of the lemma. Suppose the base points of a chord $\overline{c}$, which has one base point in $R_1$, does not have both its base points equidistant to the base points of $c$. Thus, a sub-diagram of $\overline{c}$ does not have the same number of base points on both sides of the leaves. By Lemma~\ref{ChordDiagram2Leaves}, there exists another leaf in $D'$ that does not correspond to $\overline{c}$ or $c$. Thus, this leaf corresponds to a chord of length 1 in $D$ that is not $c'$ or $c''$, again a contradiction.
	\end{proof}

Using the established lemmas, we prove Proposition~\ref{UniquelyDetermined}.

\begin{proof}[Proof of Proposition~\ref{UniquelyDetermined}]	
	By Proposition~\ref{TypesOfChords}, there are only three types of chords. We show that there is only one way to construct a punctured partitioned chord diagram with these types of chords which corresponds to a tubing description of a closed essential surface of genus $g$. Recall that $i$ represents the number of base points in $R_1$ chorded to base points not in $R_1$. Note that $0 \leq i \leq g - 3$. By Lemma~\ref{ChordsOfMaxLength}, we need only consider a chord diagram $\overline{D}$ with the following properties: $\overline{D}$ does not contain a puncture, contains $0$ base points in $R_1$, $g - 2 - i$ base points in $R_2$, $g - 3$ base points in $R_3$, and $g - 2$ base points in $R_4$ (see Figure~\ref{ConstructPPCD}). Observe that the parity of $g$ and $i$ must be opposite, otherwise $g - 1$ and $i$ would have the opposite parity. Since $g - 1$ is the number of base points in $R_1$, then $g - 1 - i$ must be odd. However, $g - 1 - i$ is the number of base points in $R_1$ chorded together, a contradiction since chording an odd number of base points in the same region would give two intersecting chords. 
	
\begin{figure}[h!]
	\centering
	\includegraphics[viewport = 0 190 625 700, scale = 0.4, clip]{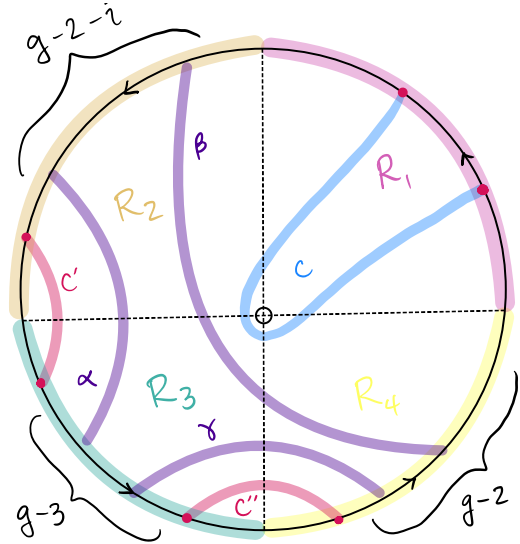}
	\caption{The only allowable chords, drawn in purple, in $\overline{D}$, along with the chords $c, c', c''$ and the number of base points in each region.}
	\label{ConstructPPCD}
\end{figure}

	There are only three types of allowable chords for $\overline{D}$, as shown in Figure~\ref{ConstructPPCD}. This is because any other types of chords would result in a fourth leaf in the dual tree, by Lemma~\ref{ChordDiagram2Leaves}. A contradiction to Proposition~\ref{TreeLeaves}. A chording of the chord diagram $\overline{D}$ can be expressed as a system of linear equations in terms of the number of types of allowable chords $\alpha, \beta, \gamma$ and the number of base points. Namely, the system of equations is
	\begin{align*}
		\alpha + \beta &= g - 2 - i\\
		\alpha + \gamma &= g - 3\\
		\beta + \gamma &= g - 2
	\end{align*}
which can be expressed in matrix notation as 
	\begin{align*}
		\begin{bmatrix}
		1 & 1 & 0\\
		1 & 0 & 1\\
		0 & 1 & 1
		\end{bmatrix}
		\begin{bmatrix}
		\alpha \\
		\beta\\
		\gamma
		\end{bmatrix}
		=
		\begin{bmatrix}
		g - 2 - i \\
		g - 3\\
		g - 2
		\end{bmatrix}
	\end{align*}
Since the matrix has non-zero determinant, then if there is a solution, in particular we require it must be non-negative integer solutions, it must be unique. The solution can be expressed in terms of $g$ and $i$ as 
	\begin{align*}
		\alpha &= g - \frac{g + i -3}{2} - 3 = \frac{g - 3 - i}{2}\\
		\beta &= \frac{g + i - 3}{2} - i + 1 =\frac{g - 1 - i}{2}\\
		\gamma &= \frac{g + i - 3}{2}
	\end{align*}
Each of $\alpha, \beta,$ and $\gamma$ are integers, since $g$ and $i$ have opposite parity. By assumption, $g \geq 4$ and hence $g + i - 3$ is an even number between $0$ and $2(g - 3)$, inclusive. Observing that each of $\alpha, \beta,$ and $\gamma$ are non-negative and integral, we have a unique positive integer solution. Therefore, this unique solution corresponds to the unique punctured partitioned chord diagram and the proposition is proved. 

The second statement of the proposition follows from Lemma~\ref{ChordsOfMaxLength} and the number of $\alpha, \beta$ and $\gamma$ chords. Lemma~\ref{ChordsOfMaxLength} implies that there are $i$ and $\frac{g - 1 - i}{2}$ chords parallel to $c$. Moreover, the $\beta$ chords and $c$ itself are also parallel to $c$, so the total number of chords parallel to $c$ is $i + \frac{g - 1 - i}{2} + \frac{g - 3 - i}{2} + 1 = g - 1$ chords parallel to $c$. The $\alpha$ chords and $c'$ itself are all parallel to $c'$, so there are $\frac{g - 3 - i}{2} + 1 = \frac{g - 1 - i}{2}$ parallel to $c'$. Lastly, the $\gamma$ chords and $c''$ itself are parallel to $c''$, so there are $\frac{g + i - 3}{2} + 1 = \frac{g - 1 + i}{2}$ chords parallel to $c''$.
\end{proof}

We will find the punctured partitioned chord diagrams that give a description of a closed, connected, essential, orientable surface by utilizing Claim 2 of \cite{lee2021essential} which uses tools of \cite{AHW2002}.

\begin{Prop}\label{MaxPPCDsGeneral}
	 Let $\phi(g - 1)$ be the Euler totient function of $g - 1$ and let $g > 2$. There are at most $8\phi(g - 1)$ punctured partitioned chord diagrams which correspond to tubing descriptions of closed, connected, essential, orientable surfaces of genus $g$.
\end{Prop}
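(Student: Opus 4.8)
The plan is to combine the rigidity supplied by Proposition~\ref{UniquelyDetermined} with an orbit-counting argument in the style of Claim~2 of \cite{lee2021essential}. By Proposition~\ref{UniquelyDetermined}, every punctured partitioned chord diagram recording a connected genus $g$ surface is completely determined by the location of its unique chord $c$ of maximum length $4g-5$; concretely, by the region $R_1$ containing the base points of $c$ together with the signed integer $i$ measuring how many base points of $R_1$ are chorded outside $R_1$ and in which direction. First I would fix $R_1$ and enumerate the admissible values of $i$: the argument in Proposition~\ref{UniquelyDetermined} shows that $|i|\le g-3$ with $i$ of parity opposite to $g$, and that each such $i$ determines, via Proposition~\ref{TypesOfChords} and the resulting linear system, the three parallel families of $\alpha$, $\beta$, and $\gamma$ chords, hence the whole diagram, uniquely.

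Next I would convert the connectivity of the assembled surface into a transitivity statement about a pseudogroup, exactly in the sense of Definition~\ref{pairing} and the tools of \cite{AHW2002}. Labeling the $g-1$ nested copies of $S_C$ by $\{1,\ldots,g-1\}$, each parallel family acts as an orientation preserving or reversing pairing on this index set, effecting a definite shift whose value is read off from $\alpha$, $\beta$, $\gamma$. Tracing the knot orientation through the four regions, the composite pairing prescribes how the four-punctured spheres are glued, and the surface is connected precisely when the generated pseudogroup has a single orbit on $\{1,\ldots,g-1\}$. The crucial point is to show that the single-orbit condition is equivalent to a coprimality condition on a net shift $s=s(i,g)$, namely $\gcd(s,g-1)=1$. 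Counting the residues coprime to $g-1$ then produces exactly $\phi(g-1)$ admissible shifts, which is where the Euler totient enters.

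Finally I would package the coprimality count together with the symmetries of the configuration. The admissible placements of the maximal chord range over the four regions and, for each, over the two directions encoded by the sign of $i$ (equivalently the two choices of innermost tube with respect to a meridian); combining these placements with the $\phi(g-1)$ connected shifts yields at most $8\phi(g-1)$ diagrams. Because the enumeration sweeps over all candidate placements and retains only those satisfying the coprimality condition, the outcome is genuinely an upper bound; this is consistent with Proposition~\ref{MaxPPCDs}, which assigns a concrete pair of tubings to each such diagram. I would, in passing, verify that no placement gives rise to more than one diagram and that these eight symmetry classes are exhaustive, so that the constant is exactly $8$.

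I expect the main obstacle to be the middle step: faithfully encoding the tubing gluing of the nested $S_C$'s as a pseudogroup on $\{1,\ldots,g-1\}$ and proving that transitivity is governed by $\gcd(s(i,g),g-1)=1$. Pinning down the three shifts together with their orientation signs — so that the count of admissible residues is precisely $\phi(g-1)$ rather than some divisor-sum variant — is the delicate part, and it is exactly the point where the machinery of \cite{AHW2002}, imported through \cite{lee2021essential}, must be invoked.
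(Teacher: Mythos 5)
Your overall strategy is the paper's: rigidity from Proposition~\ref{UniquelyDetermined}, then encode connectivity as transitivity of a pseudogroup of pairings on the $g-1$ nested $S_C$'s and reduce to a coprimality condition via Claim~2 of \cite{lee2021essential}. The paper carries out the middle step you flag as the obstacle by exhibiting three explicit orientation-reversing pairings $f$, $g$, $h$ with $v = i + \tfrac{g-1-i}{2}$, $u = \tfrac{g-1-i}{2}$, and $g-1$ orbits respectively, and concluding connectivity iff $\gcd(u,v)=1$ with $u+v=g-1$.

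The genuine gap is in your accounting of the constant $8$. You obtain $8 = 4 \times 2$ as (four regions for the maximal chord) $\times$ (two signs of $i$), with $\phi(g-1)$ admissible shifts for each such placement. But the sign of $i$ is not an independent doubling: within a fixed region $R_1$ the location of the maximal chord already determines $i$ together with its sign, these locations are in bijection with $u \in \{1,\dots,g-2\}$ via $u = \tfrac{g-1-i}{2}$, and the connected ones are exactly those with $\gcd(u,g-1)=1$. So the per-region count of $\phi(g-1)$ already sweeps over both signs of $i$; restricting to one sign would give only about $\phi(g-1)/2$. Multiplying by an extra $2$ for the sign therefore double-counts (and is inconsistent: for $g-1$ prime it would give $2(g-2)$ placements in a region that has only $g-2$ slots). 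The factor of $2$ the paper actually uses is the choice between the two non-isotopic incompressible $4$-punctured spheres $S_C$ of Proposition~\ref{NonisotopicSCs} (equivalently Proposition~\ref{MaxPPCDs}: each diagram yields exactly two non-isotopic surfaces), giving $4\phi(g-1)$ diagrams and $8\phi(g-1)$ surfaces. Your version omits this $S_C$ factor, which is indispensable for the surface count feeding into Theorem~\ref{main}; the two errors happen to cancel numerically, but the bound is not established for the right reason.
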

	\begin{proof}
		We count the number of punctured partitioned chord diagrams that give a tubing description of a closed, connected, essential, orientable surface. Fix $c$ to be the chord of maximum possible length in the diagram. Without loss of generality, let $c$ be in $R_1$. A disconnected surface happens exactly when a proper subset of the collection of 4-punctured spheres $S_C$'s from Theorem~\ref{Oertel1} are tubed together. From the Euler characteristic argument in Example~\ref{Example}, there are $g - 1$ $S_C$'s of a closed surface of genus $g$. Label the 4-punctured spheres and punctures as $S_{C,j}^i$ where $i$ represents the $i^{\text{th}}$ innermost $S_C$ with respect to the projection of the knot and $j$ represents the $j^{\text{th}}$ arc that the punctures lie on. Thus, for a disconnected surface, we would get a collection of chords which have a set of base points of the form $\{S_{C,1}^{i_1}, S_{C,2}^{i_1}, S_{C,3}^{i_1}, S_{C,4}^{i_1}, \ldots, S_{C,3}^{i_k}, S_{C,4}^{i_k}\}$, where $0 < k < g - 1$. 
		
		Since we want to eliminate the case that a proper subset of $S_C$'s are tubed together, then we may take the viewpoint of \cite{AHW2002} via integer pairings as follows. Fix a projection of the knot along with the $S_C$'s and then associate to each $S_C^i$ the integer $i$, where $i = 1$ represents the innermost and $i = g - 1$ represents the outermost $S_C$ (see Figure~\ref{ConnectedPPCD}). We may think of chords as pairings where base points will be viewed as integers and a chord is a map from one integer to another. Moreover, since we only want to find which $S_C$'s are tubed together, then we get additional pairings based on identifying the punctures belonging to the same $S_C$.
		
		Observe that the problem now reduces to finding orbits of pairings, since every orbit corresponds to exactly one component of the surface obtained from the tubing description of the punctured partitioned chord diagram. Thus, we find the number of orbits coming from the chords and the identification of the base points coming from the same $S_C$. Note that the chords which are parallel to $c'$ and the chords parallel to $c''$ do not contribute to the number of orbits, since these chords have base points belonging to the same $S_C$. By Proposition~\ref{UniquelyDetermined}, there are $g - 1$ such chords. Since in any punctured partitioned chord diagram there are $2(g - 1)$ chords, then we look at the orbits of the remaining $g - 1$ chords.

\begin{figure}[h!]
\centering
	\begin{minipage}[b]{0.45\linewidth}
		\centering
		\includegraphics[viewport = 20 170 710 710, scale = 0.36, clip]{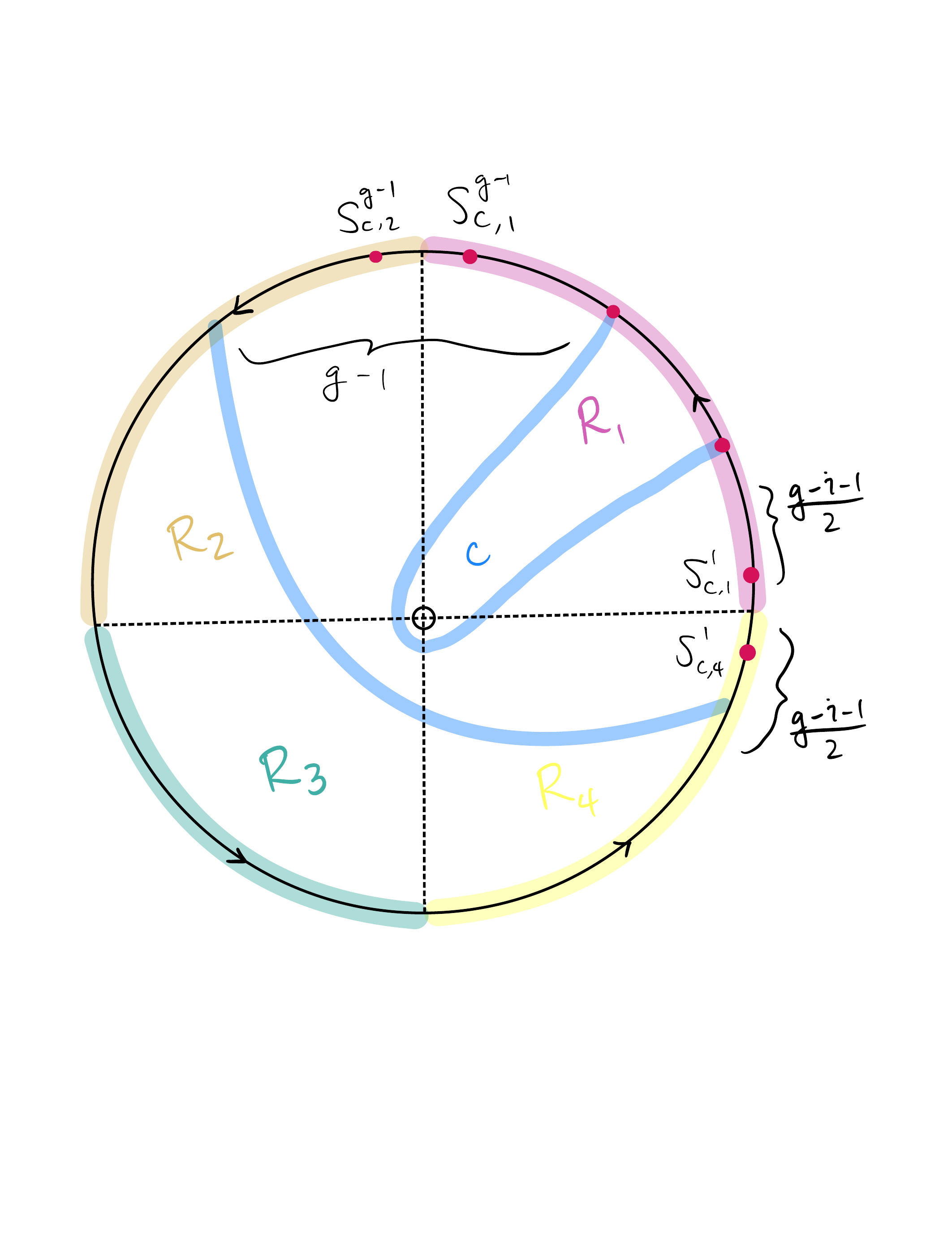}
		\caption{The chords and pairings along with the number of orbits.}
		\label{ConnectedPPCD}
	\end{minipage}
	\begin{minipage}[b]{0.45\linewidth}
		\centering
		\includegraphics[viewport = 0 270 650 650, scale = 0.36, clip]{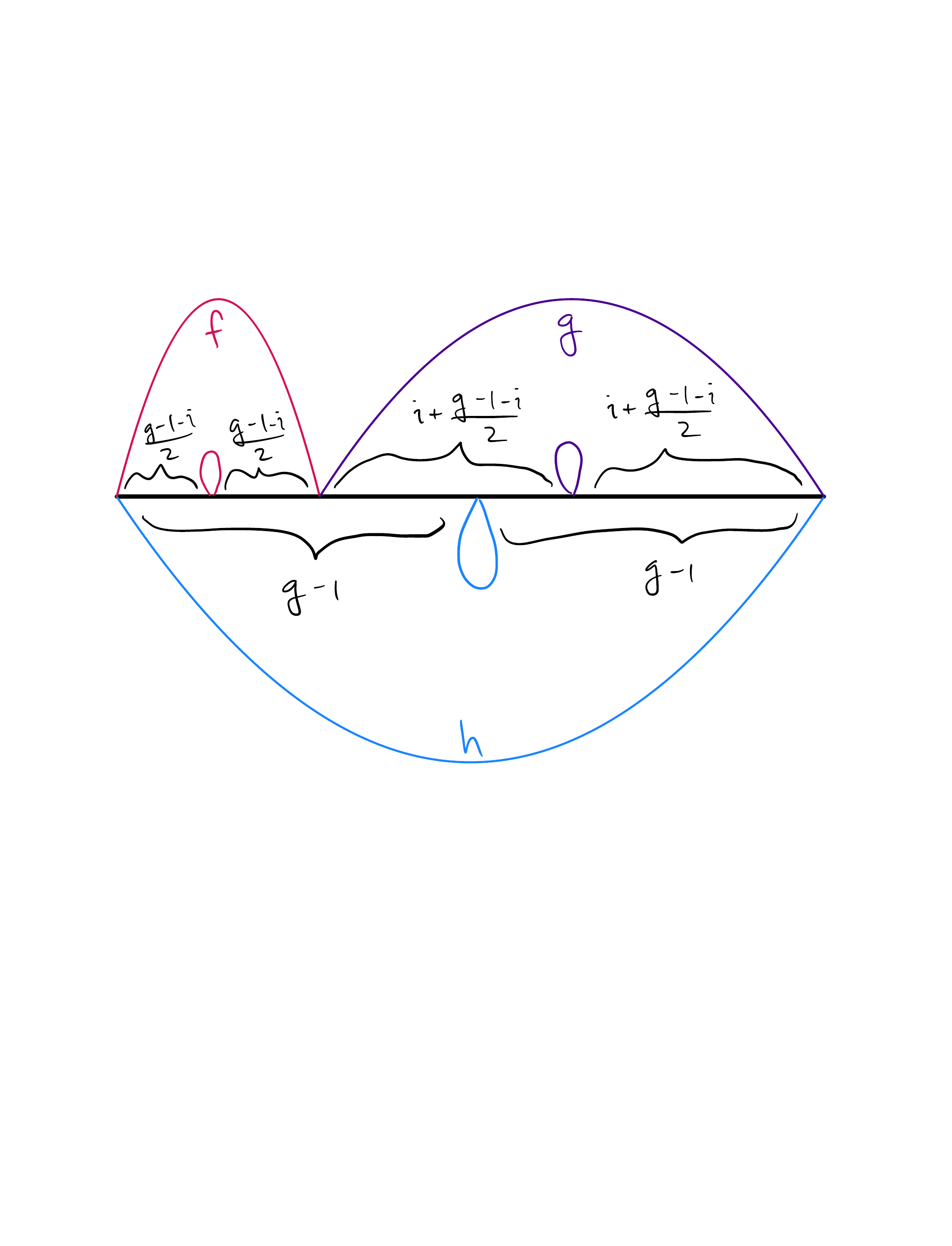}
		\caption{The orientation reversing pairings $f, g,$ and $h$ obtained from a punctured partitioned chord diagram.}
		\label{OrbitsOfPPCD}
	\end{minipage}
\end{figure}

		By Proposition~\ref{UniquelyDetermined}, the remaining $g - 1$ chords are parallel to $c$. We may view these chords together as one pairing, call it $h$, which is orientation reversing, with $g - 1$ orbits. By Proposition~\ref{UniquelyDetermined}, there are $\frac{g - 1 - i}{2}$ $S_C$'s of $R_4$ having chords parallel to $c$. These base points will be identified with $\frac{g - 1 - i}{2}$ base points in $R_1$ since these base points belong the same $S_C$. Since $R_1$ and $R_4$ are adjacent, this gives an orientation reversing pairing, call it $g$, with $\frac{g - 1 - i}{2}$ orbits. Lemma~\ref{ChordsOfMaxLength} implies that the remaining $i + \frac{g - 1 - i}{2}$ chords have base points in $R_1$ and $R_2$ which belong to $i + \frac{g - 1 - i}{2}$ $S_C$'s. Again, since $R_1$ and $R_2$ are adjacent, gives the last orientation reversing pairing, call it $f$, with $i + \frac{g - 1 - i}{2}$ orbits (see Figure~\ref{OrbitsOfPPCD}).

		Let $u = \frac{g - 1 - i}{2}$ and $v = i + \frac{g - 1 - i}{2}$ which gives $u + v = g - 1$. Applying Claim 2 of \cite{lee2021essential} to $u, v, u + v, f, g$ and $h$, we get that there are is only one orbit if and only if $gcd(u, v) = 1$. This gives that there are at most $\phi(g - 1)$ locations for $c$ in $R_1$ which gives a tubing description yielding a closed, connected, essential, orientable surface. Moreover, by symmetry of the punctured partitioned chord diagram, there are four other regions each with $\phi(g - 1)$ choices for $c$ that gives a tubing description yielding a closed, connected, essential, orientable surface. Since there are two choices for the $S_C$, we get that the total number of closed, connected, essential, orientable surfaces is at most $8\phi(g - 1)$.
	\end{proof}

%
%
%
\section{Distinctness of Surfaces from Punctured Partitioned Chord Diagrams} \label{DistinctnessOfPPCDs}

Lastly, we prove Theorem~\ref{main} by showing that each punctured partitioned chord diagram giving a tubing description of a closed, connected, essential, orientable surface gives distinct, non-isotopic surfaces. We give necessary definitions used to prove distinctness. Distinctness will be proved in two cases: for punctured partitioned chord diagrams corresponding to surfaces of genus greater than 2 and then for genus 2 surfaces. The genus greater than 2 case will contain two subcases: when the chords of maximum possible length of two punctured partitioned chord diagrams are contained in the same region and when they are in separate regions. The former case will utilize incompressible branched surfaces and the latter case will utilize results of Waldhausen \cite{MR224099} on isotopies between incompressible surfaces.\\

For an in-depth treatment of incompressible branched surfaces, see \cite{FLOYD1984117} and \cite{Oertel1984385}. A \textit{branched surface} $\mathcal{B}$ in a 3-manifold $M$ is a subspace locally modeled as in Figure~\ref{LocalModel}. If $\mathcal{B}$ is embedded in $M$, then $N = N(\mathcal{B})$ denotes a fibered regular neighborhood of $\mathcal{B}$ in $M$ as in Figure~\ref{Neighborhood}. The portions of $\partial N - \partial M$ called $\partial_h N$ (horizontal boundary) and $\partial_v N$ (vertical boundary) are also shown in {Figure~\ref{Neighborhood}. 
A \textit{monogon} in $M - \mathring{N}$ is a disk $D$ with $D \cap N = \partial D$ which intersects $\partial_v N$ in a single fiber. A \textit{disk of contact} is a disk $D$ embedded in $N$ transverse to fibers with $\partial D \subseteq \text{int}(\partial_v N)$.

\begin{figure}[h!]
\centering
	\begin{minipage}[b]{0.45\linewidth}
		\centering
		\includegraphics[viewport = 20 60 640 640, scale = 0.36, clip]{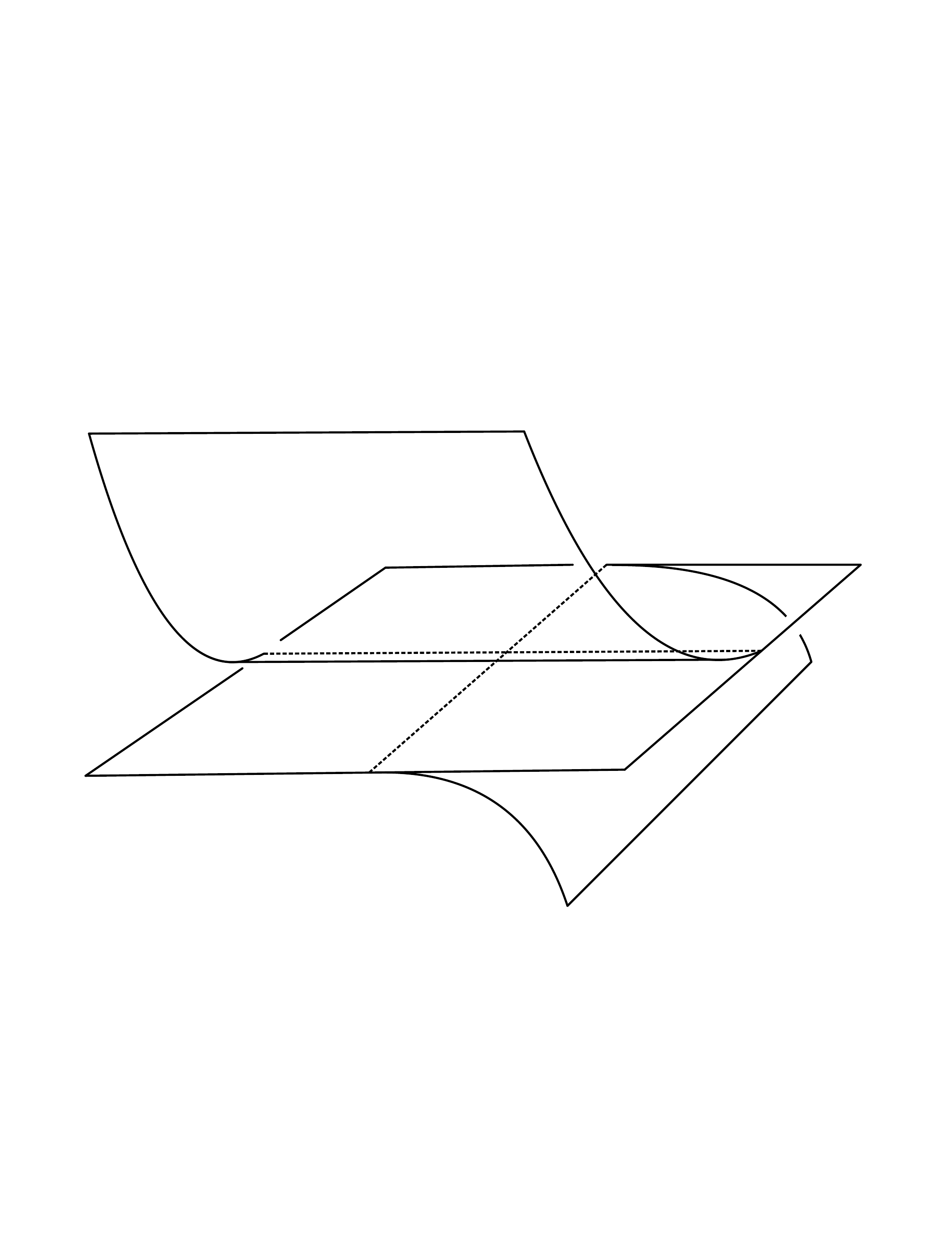}
		\caption{Local model of a branched surface $\mathcal{B}$.}
		\label{LocalModel}
	\end{minipage}
	\begin{minipage}[b]{0.45\linewidth}
		\centering
		\includegraphics[viewport = 0 225 670 610, scale = 0.36, clip]{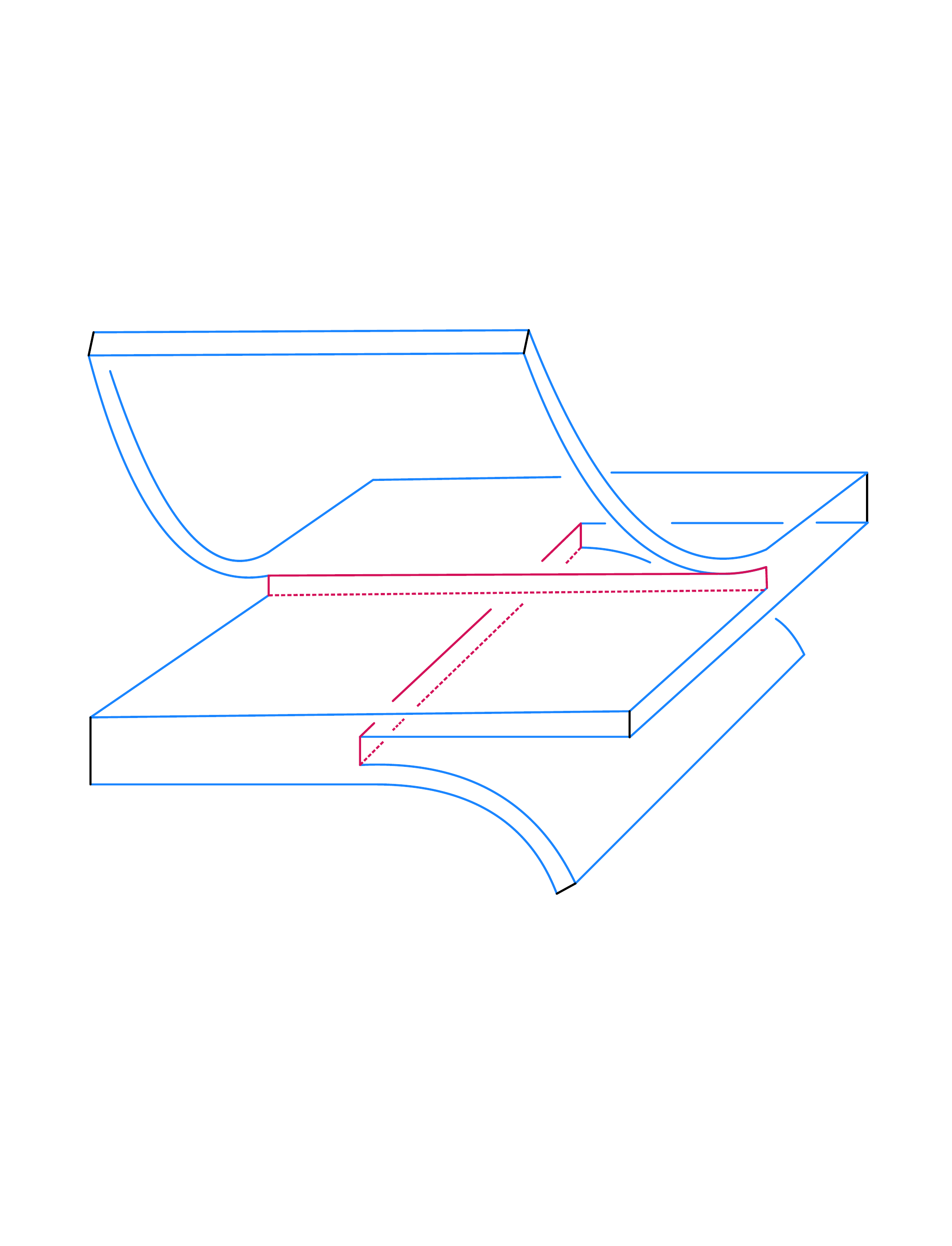}
		\caption{A local neighborhood $N(\mathcal{B})$ of the branched surface $\mathcal{B}$ with the horizontal boundary components drawn in blue and the vertical boundary components drawn in red.}
		\label{Neighborhood}
	\end{minipage}
\end{figure}

An \textit{incompressible branched surface} $\mathcal{B}$ is an embedded branched surface which satisfies the following three conditions:
	\begin{enumerate}
		\item $\mathcal{B}$ has no disks of contact or half-disks of contact,
		\item $\partial_h N$ is incompressible and $\partial$-incompressible in $M - \mathring{N}$, where a $\partial$-compressing disk for $\partial_h N$ is assumed to have boundary in $\partial M \cup \partial_h N$,
		\item and there are no monogons in $M - \mathring{N}$.
	\end{enumerate}

We state three results which will be used throughout the proof of distinctness.

\begin{Th}[Oertel \cite{Oertel1984385}, Theorem 1]\label{Oertel4}
	Suppose $\mathcal{B}$ is an incompressible branched surface and suppose $\mathcal{B}(\bf{w})$ is a 2-sided surface carried by $\mathcal{B}$ with positive weights ($w_i > 0$ for all $i$). If $\mathcal{B}(\bf{v})$ is any surface carried by $\mathcal{B}$ ($v_i \geq 0$ for all $i$), then $\mathcal{B}(\bf{v})$ is isotopic to $\mathcal{B}(\bf{w})$ if and only if $\mathcal{B}(\bf{v})$ can be isotoped to $\mathcal{B}(\bf{w})$ using a sequence of simple isotopies (and isotopies in $N(\mathcal{B})$ respecting fibers of $N(\mathcal{B})$).
\end{Th}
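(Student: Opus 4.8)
The reverse (``if'') direction is immediate: both simple isotopies and fiber-respecting isotopies of $N(\mathcal{B})$ are in particular ambient isotopies of $M$, so a sequence of them realizes an isotopy from $\mathcal{B}(\mathbf{v})$ to $\mathcal{B}(\mathbf{w})$. The content is the forward direction, and the plan is as follows. First I would record the standing consequence of incompressibility of $\mathcal{B}$: by the Floyd--Oertel criterion \cite{FLOYD1984117} (the three defining conditions---no disks of contact or half-disks of contact, incompressible and $\partial$-incompressible $\partial_h N$, and no monogons in $M - \mathring{N}$), every surface carried by $\mathcal{B}$ with non-negative weights, in particular both $\mathcal{B}(\mathbf{v})$ and $\mathcal{B}(\mathbf{w})$, is incompressible and $\partial$-incompressible in $M$. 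This is the hypothesis that lets me bring isotopy-rigidity results for incompressible surfaces to bear.

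Next I would exploit the positivity $w_i > 0$. Because every sector of $\mathcal{B}$ carries positive weight, $\mathcal{B}(\mathbf{w})$ meets every fiber of $N(\mathcal{B})$, so cutting $N(\mathcal{B})$ along $\mathcal{B}(\mathbf{w})$ decomposes it into $I$-bundle (product) pieces, one collar over each local sheet, while $M - \mathring{N}$ is left untouched. Thus the complementary manifold $M \backslash\backslash \mathcal{B}(\mathbf{w})$ is obtained from $M - \mathring{N}$ by attaching product regions, and each component of $\partial_h N$ appears as an honest boundary piece of a product. This ``full support'' picture is exactly what rules out a complementary region of $\mathcal{B}(\mathbf{w})$ in which a carried surface could hide an isotopy not expressible through $N$; it is the step where positivity is genuinely used.

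The core of the argument is then to normalize the given ambient isotopy $h \colon M \times I \to M$ with $h_1(\mathcal{B}(\mathbf{v})) = \mathcal{B}(\mathbf{w})$. I would put the trace of $h$, together with $\mathcal{B}(\mathbf{v})$ and $\mathcal{B}(\mathbf{w})$, in general position with respect to the fibered neighborhood $N(\mathcal{B})$, and then run the standard innermost-disk and outermost-arc reductions on the intersections of the trace with $\partial_h N$ and $\partial_v N$. The three incompressibility conditions are precisely what is needed to discharge each bad configuration: a disk of intersection bounded by an innermost curve is removed by incompressibility of $\partial_h N$ (using irreducibility of $M$, which the paper's hyperbolic setting guarantees); an outermost arc meeting $\partial_v N$ in a single fiber would produce a monogon and is excluded; and disks of contact are excluded outright. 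After finitely many such moves the nontrivial part of the isotopy is supported inside $N(\mathcal{B})$ relative to the product decomposition coming from $\mathcal{B}(\mathbf{w})$.

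Finally I would interpret the localized isotopy combinatorially. A piece of the isotopy supported in one $I$-bundle region either slides $\mathcal{B}(\mathbf{v})$ across a fiber---a fiber-respecting isotopy---or pushes a sheet from one horizontal boundary component of a product region to the other, which is by definition a simple isotopy; concatenating these recovers $h$ up to the allowed moves. The main obstacle I expect is the normalization step: guaranteeing that the innermost/outermost reductions terminate and that removing one bad intersection does not create others requires a carefully chosen complexity on the trace (for instance lexicographic in the numbers of intersection curves and arcs), and verifying that positivity of $\mathbf{w}$ really forbids a stray incompressible piece in a complementary region---without which the forward implication can fail---is the subtle point that the hypothesis $w_i > 0$ is there to secure.
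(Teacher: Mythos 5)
This statement is not proved in the paper at all: it is Theorem~1 of Oertel's \emph{Incompressible branched surfaces} \cite{Oertel1984385}, imported verbatim as a black box and used later (in Proposition~\ref{RegionsOfBS}) to conclude that distinct weight vectors on an incompressible branched surface with no product complementary regions give non-isotopic surfaces. So there is no in-paper proof to compare your proposal against, and you should not be expected to reprove it here; a citation is what the paper intends.

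Taken on its own terms, your sketch has the right general shape but leaves the genuinely hard step unproved, as you yourself acknowledge. The ``if'' direction and the observation that positivity of $\mathbf{w}$ makes $N(\mathcal{B})$ cut along $\mathcal{B}(\mathbf{w})$ a union of $I$-bundles are both correct and are indeed where the hypothesis $w_i>0$ enters. But the core of Oertel's argument is not a normalization of the trace of the ambient isotopy by innermost-disk/outermost-arc surgery; it proceeds by analyzing the compact region cobounded by $\mathcal{B}(\mathbf{v})$ and its isotopic image relative to the complementary pieces of $N(\mathcal{B})$, and uses Waldhausen-style product-region recognition (the same Proposition~5.4 of \cite{MR224099} quoted later in this paper) to peel off parallelism regions one at a time, each peel being a simple isotopy. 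Your plan to ``decompose the isotopy itself'' is the step most likely to fail as stated: an ambient isotopy of a closed incompressible surface need not restrict nicely to $N(\mathcal{B})$ at intermediate times, and controlling that is exactly what the product-region argument avoids. One smaller inaccuracy: Floyd--Oertel gives incompressibility of surfaces carried with \emph{positive} weights, so incompressibility of $\mathcal{B}(\mathbf{v})$ (which may have some $v_i=0$) is not automatic; in the forward direction it follows only because $\mathcal{B}(\mathbf{v})$ is assumed isotopic to the incompressible surface $\mathcal{B}(\mathbf{w})$.
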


\begin{Prop}[Oertel \cite{FLOYD1984117}, Proposition 3]\label{IncompBranch}
	Let $M$ be an irreducible 3-manifold and $S$ a 2-sided incompressible surface embedded in $M$. Then the branched surface $\mathcal{B}_S$, constructed by using $S$, is incompressible.
\end{Prop}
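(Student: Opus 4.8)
The plan is to verify directly the three defining conditions of an incompressible branched surface for $\mathcal{B}_S$, in each case converting a hypothetical violation into either a compressing disk for $S$ (contradicting incompressibility) or an essential $2$-sphere in $M$ (contradicting irreducibility). The whole argument rests on the construction of $\mathcal{B}_S$ from $S$: by the way it is built, $S$ is carried by $\mathcal{B}_S$ with strictly positive weights, the horizontal boundary $\partial_h N$ of the fibered neighborhood $N = N(\mathcal{B}_S)$ consists of parallel copies of (pieces of) $S$, and the interval fibers of $N$ give a canonical way to slide objects in $N$ out onto $\partial_h N$. I would record these structural facts first, since every subsequent step relies on the ability to push along fibers in order to move an arc or curve from the interior of $N$ onto a copy of $S$.

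Next I would handle condition (2), the incompressibility and $\partial$-incompressibility of $\partial_h N$ in $M - \mathring{N}$. A compressing disk $D$ for $\partial_h N$ has $\partial D$ an essential curve on $\partial_h N$, hence an essential curve on a copy of $S$. Since $S$ is incompressible, $\partial D$ bounds a disk $D'$ in $S$, so $D \cup D'$ is a $2$-sphere, which bounds a ball by irreducibility of $M$; thus $D$ was compressing a trivial curve, a contradiction. The $\partial$-incompressibility claim is the analogous statement for arcs, using incompressibility of $S$ together with irreducibility of $M$; here I would be careful that in the paper's setting $S$ is closed, so $\partial_h N$ is closed and this part of condition (2) reduces to ruling out the appropriate compressing/boundary-compressing disks rather than genuine arc compressions.

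Then I would rule out monogons and disks of contact, namely conditions (3) and (1). For a monogon $D$ in $M - \mathring{N}$ meeting $\partial_v N$ in a single fiber, I would push $\partial D$ along the fibers of $N$ across to $\partial_h N$: collapsing the single vertical fiber turns $D$ into a compressing (or boundary-compressing) disk for a copy of $S$, contradicting incompressibility of $S$. For a disk of contact $D \subset N$ transverse to the fibers with $\partial D \subset \operatorname{int}(\partial_v N)$, I would follow the fibers through $D$ to $\partial_h N$ and cap off, producing from $D$ a disk bounded by a curve on $S$; incompressibility of $S$ makes this curve bound a disk in $S$, and irreducibility of $M$ then forces the disk of contact to be removable, contradicting its existence. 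Half-disks of contact are treated identically, with an arc in place of a closed curve.

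The main obstacle I anticipate is the bookkeeping in these fiber-pushing arguments: making rigorous that a monogon, a disk of contact, or a compressing disk for $\partial_h N$ really descends to a genuine compression of $S$, rather than to a trivial feature that is absorbed into the fibered structure. In particular, one must ensure that the curves produced on $S$ are essential, so that the contradiction with incompressibility actually bites, and one must invoke irreducibility of $M$ at exactly the points where an inessential curve would otherwise escape the argument. Once this dictionary between the local models of $N(\mathcal{B}_S)$ and compressions of $S$ is set up carefully, each of the three conditions follows by the same two-move pattern: push to $S$, then apply incompressibility of $S$ or irreducibility of $M$.
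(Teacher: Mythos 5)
The paper does not actually prove this statement: it is imported verbatim from Floyd--Oertel \cite{FLOYD1984117} as a black box (the author only later verifies, by hand, that the specific $\mathcal{B}_S$ built in Section~\ref{ConstructionOfIncompBS} has no disks of contact before invoking it). So there is no in-paper proof to compare against, and your proposal has to be judged as a reconstruction of the Floyd--Oertel argument.

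As such, it has a genuine gap: you treat all three conditions as formal consequences of ``$S$ incompressible'' plus ``$M$ irreducible,'' but the statement is false at that level of generality. A branched surface obtained from an incompressible $S$ by an arbitrary identification of parallel sheets can perfectly well have a disk of contact (pinch two nearby parallel copies of a sub-disk of $S$ along their common boundary circle), and nothing in incompressibility or irreducibility forbids this. What actually rules these out in \cite{FLOYD1984117} is that $S$ is first put into normal/minimal position with respect to a handle decomposition, and the branched surface is built by identifying the finitely many parallel normal pieces in each handle; a disk of contact or a monogon would then let you isotope $S$ to reduce its weight, contradicting minimality. Your phrase ``irreducibility of $M$ then forces the disk of contact to be removable, contradicting its existence'' is where this is hidden, and as written it is circular --- removability does not contradict existence unless you have a minimality hypothesis to violate. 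A second, smaller gap is in condition (2): $\partial_h N$ is a proper subsurface with boundary, so a curve essential in $\partial_h N$ need not be essential in $S$; the case where the pushed-off curve bounds a disk in $S$ is not disposed of by noting the sphere bounds a ball --- you must use that ball to isotope $S$ and again contradict minimal position (or produce a disk of contact). So the two-move pattern ``push to $S$, then apply incompressibility or irreducibility'' is the right skeleton, but the proof cannot close without explicitly invoking the normal-form construction of $\mathcal{B}_S$ and a weight-minimality argument.
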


For the general construction of such a branched surface, see \cite{FLOYD1984117}. For the specific case of Montesinos knots with 4 rational tangles, see Section~\ref{ConstructionOfIncompBS} below.

\begin{Prop}[Waldhausen \cite{MR224099}, Proposition 5.4]\label{Waldhausen}
	Let $M$ be an irreducible $3-$manifold. In $M$, let $F$ and $G$ be incompressible surfaces, such that $\partial F \subseteq \partial F \cap \partial G$, and $F \cap G$ consists of mutually disjoint simple closed curves, with transversal intersection at any curve which is not in $\partial F$. Suppose there is a surface $H$ and a map $f : H \times I \rightarrow M$ such that $f | H \times 0$ is a covering map onto $F$, and $$f(\partial(H \times I) - H \times 0) \subset G.$$ Then there is a surface $\tilde{H}$ and an embedding $\tilde{H} \times I \rightarrow M$, such that $$\tilde{H} \times 0 = \tilde{F} \subset F, (\partial (\tilde{H} \times I) - \tilde{H} \times 0) = \tilde{G} \subset G$$ (i.e., a small piece of $F$ is parallel to a small piece of $G$), and that moreover $\tilde{F} \cap G = \partial \tilde{F}$, and either $\tilde{G} \cap F = \partial \tilde{G}$, or $\tilde{F}$ and $\tilde{G}$ are disks. 
\end{Prop}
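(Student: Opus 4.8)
This proposition is the classical passage from a \emph{singular} parallelism to an \emph{embedded} one, so the plan is to run a Waldhausen-style cut-and-paste argument: put $f$ in general position, analyze the preimage of $G$ inside the solid $H \times I$, and use incompressibility of $F$ and $G$ together with irreducibility of $M$ to simplify that preimage until an innermost product region can be extracted as an embedding.

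First I would homotope $f$, keeping $f|_{H \times 0}$ a covering onto $F$ and keeping $f(\partial(H \times I) - H \times 0) \subset G$, so that $f$ is transverse to $G$ in the interior of $H \times I$. Then $\Sigma := f^{-1}(G)$ is a properly embedded surface there. The top face $H \times 1$ and the side faces $\partial H \times I$ already lie in $\Sigma$ (they map into $G$), and $\Sigma$ meets the bottom face exactly in the circle system $(f|_{H \times 0})^{-1}(F \cap G)$; the essential part of $\Sigma$ to understand is its interior sheets and the sheets with boundary on $H \times 0$. The hypothesis $\partial F \subseteq \partial G$ keeps this picture consistent along $\partial M$, and transversality of $F \cap G$ away from $\partial F$ keeps the boundary circles generic.

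Next I would simplify $\Sigma$. Any $2$-sphere component bounds a ball (the ambient regions are irreducible) and, since $G$ is incompressible so its image is null-homotopic in $G$, can be pushed off by an isotopy of $f$; any interior disk or trivial annulus component can be pushed across the ball or product region it cobounds, again using irreducibility of $M$ and incompressibility of $G$. After finitely many such reductions every interior sheet of $\Sigma$ is essential and $f$ has no removable intersections with $G$. I would then let $W$ be the closure of the component of $(H \times I) \setminus \Sigma$ that meets the bottom face and is innermost in the $I$-direction, so that $W$ is bounded below by a subsurface $B \subseteq H \times 0$ and elsewhere by first sheets of $\Sigma$. After the simplification, $W$ is a product region $\tilde H \times I$ with $f(B) \subset F$ and the rest of $\partial W$ mapping into $G$, and the covering hypothesis on $f|_{H \times 0}$ guarantees that $f(B)$ is an honest embedded subsurface $\tilde F \subset F$. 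Restricting $f$ to $W$ then gives the required embedding $\tilde H \times I \hookrightarrow M$ with $\tilde H \times 0 = \tilde F$ and $\partial(\tilde H \times I) - \tilde H \times 0 = \tilde G \subset G$; since $\tilde F$ and $\tilde G$ are cut out of $F$ and $G$ along the intersection circles, one reads off $\tilde F \cap G = \partial \tilde F$ and either $\tilde G \cap F = \partial \tilde G$ or the degenerate disk case.

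The main obstacle is exactly the promotion of the singular map $f$ to an embedding on $W$: one must rule out self-intersections of the image product region and spurious interior intersections with $F$ and $G$. This is where incompressibility of $F$ and $G$ and irreducibility of $M$ do the real work, through the loop-theorem-style innermost-disk reductions, and it is also why the conclusion must allow the exceptional alternative in which $\tilde F$ and $\tilde G$ are disks, recording the case where no nondegenerate embedded product survives the simplification.
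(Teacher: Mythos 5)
This statement is not proved in the paper at all: it is quoted verbatim as Proposition~5.4 of Waldhausen \cite{MR224099} and used as a black box, so there is no in-paper argument to compare against. Judged on its own, your sketch follows the right general strategy (transversality, analysis of $f^{-1}(G)$ in $H\times I$, elimination of inessential sheets, extraction of an innermost complementary region $W$), which is indeed the skeleton of Waldhausen's argument.

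However, there is a genuine gap at exactly the step you flag as ``the main obstacle'' and then do not resolve. You assert that the covering hypothesis on $f|_{H\times 0}$ guarantees that $f(B)$ is an embedded subsurface $\tilde F\subset F$ and that ``restricting $f$ to $W$ then gives the required embedding.'' Neither claim follows: a covering map onto $F$ restricted to a subsurface $B\subseteq H\times 0$ need not be injective (it can be a nontrivial cover of its image), and even when $f|_{B}$ happens to embed, the restriction $f|_{W}$ is a priori only a singular map of a product region, possibly with self-intersections and with interior intersections with $F\cup G$. Producing an \emph{embedded} product from this singular one is the entire content of the proposition, and in Waldhausen's actual proof it is carried by an induction on the number of curves of $F\cap G$ together with his earlier results (the hierarchy/loop-theorem machinery and the characterization of product regions in \cite{MR224099}, Sections~3--5), applied to the component of $M$ split along $F\cup G$ that receives $f(W)$ --- one does not simply restrict $f$. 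Your sketch also glosses the case analysis that forces the exceptional ``$\tilde F$ and $\tilde G$ are disks'' alternative; that alternative arises from the innermost-disk reductions and cannot be read off from the cut locus as stated. As written, the plan restates the difficulty rather than overcoming it.
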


We use Theorem~\ref{Oertel4} for the case where two punctured partitioned chord diagrams have their maximum possible length chords in the same region. When punctured partitioned chord diagrams have their maximum possible length chords in different regions, we utilize Proposition~\ref{Waldhausen}. 

By Theorem~\ref{Oertel2}, every incompressible surface is carried by the branched surface $\mathcal{B}$ where $\mathcal{B}$ is shown in Figure~\ref{CarrierBranch} for $K(\frac{1}{3}, \frac{1}{3}, \frac{2}{3}, \frac{1}{2})$. However, to use Theorem~\ref{Oertel4}, we need to take a branched subsurface $\mathcal{B}_S$, constructed from a closed connected incompressible surface $S$, which carries all surfaces having their corresponding maximum possible length chord in the same region. Thus, we define the following components of the branched surface $\mathcal{B}$.

\begin{figure}[h!]
		\centering
		\includegraphics[viewport = 0 170 620 740, scale = 0.55, clip]{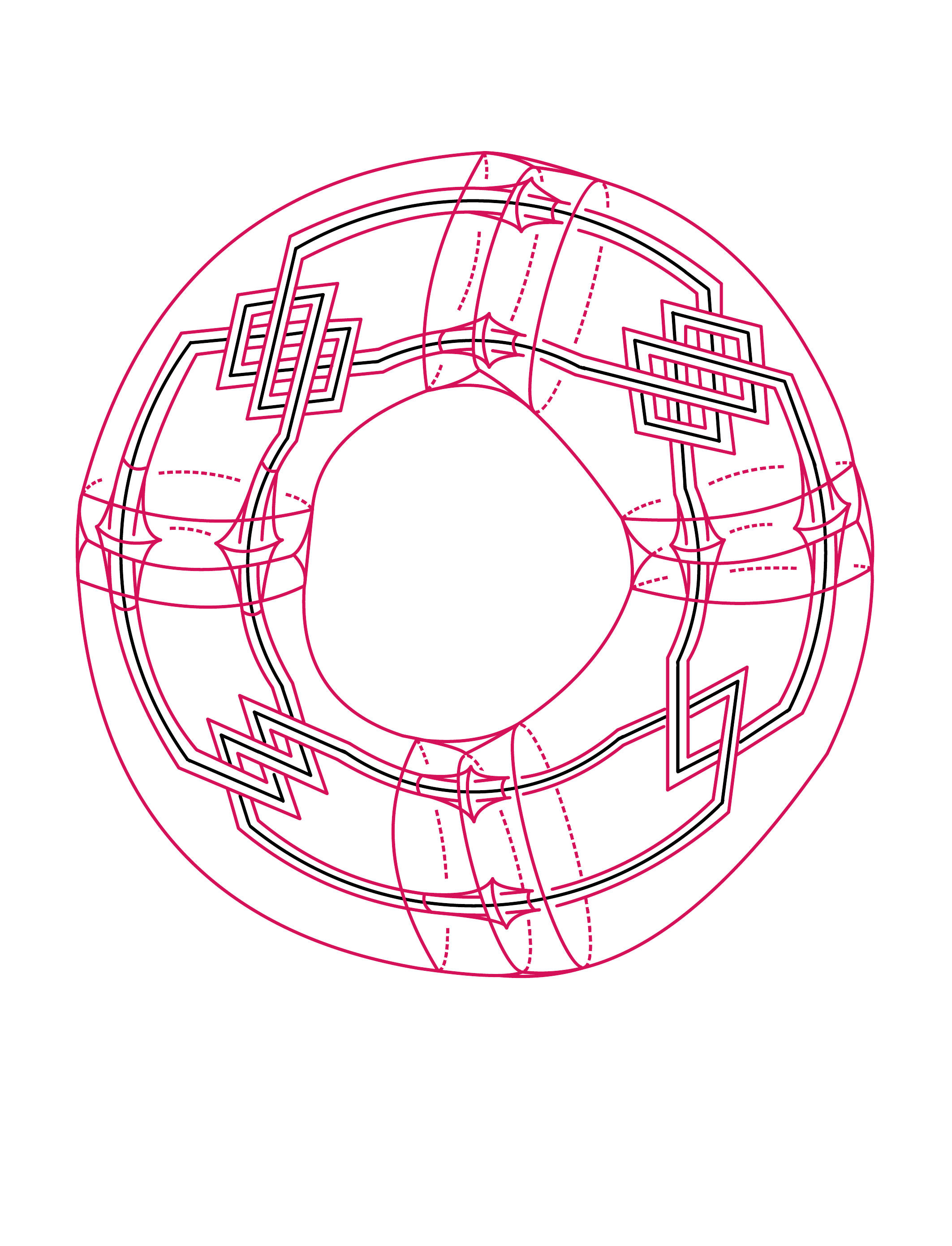}
		\caption{The branched surface $\mathcal{B}$ for $K(\frac{1}{3}, \frac{1}{3}, \frac{2}{3}, \frac{1}{2})$.}
		\label{CarrierBranch}
\end{figure}

\begin{Def} A \textit{switch component} of the branched surface $\mathcal{B}$ is the union of two horizontal boundary components coming from the tubes, which are branched along a singular locus which is a puncture of the $S_C$ horizontal boundary component. See Figure~\ref{Switch}. 
\end{Def}

\begin{Def} A \textit{two-way switch component} of the branched surface $\mathcal{B}$ is the union of three horizontal boundary components, all coming from the tubes, which are branched along a singular locus containing three components: a puncture of the $S_C$ horizontal boundary component and two components coming from the intersection of the tubes. See Figure~\ref{TwoWaySwitch}.
\end{Def}

\begin{Def} A \textit{one-way switch component} of the branched surface $\mathcal{B}$ is the union of two horizontal boundary components branched along a singular locus consisting of two components: a puncture of the $S_C$ horizontal boundary component and a component coming from the intersection of the two tubes. See Figure~\ref{OneWaySwitch}.
\end{Def}

\begin{figure}[h!]
\centering
	\begin{minipage}[b]{0.32\linewidth}
		\centering
		\includegraphics[viewport = 0 210 640 620, scale = 0.24, clip]{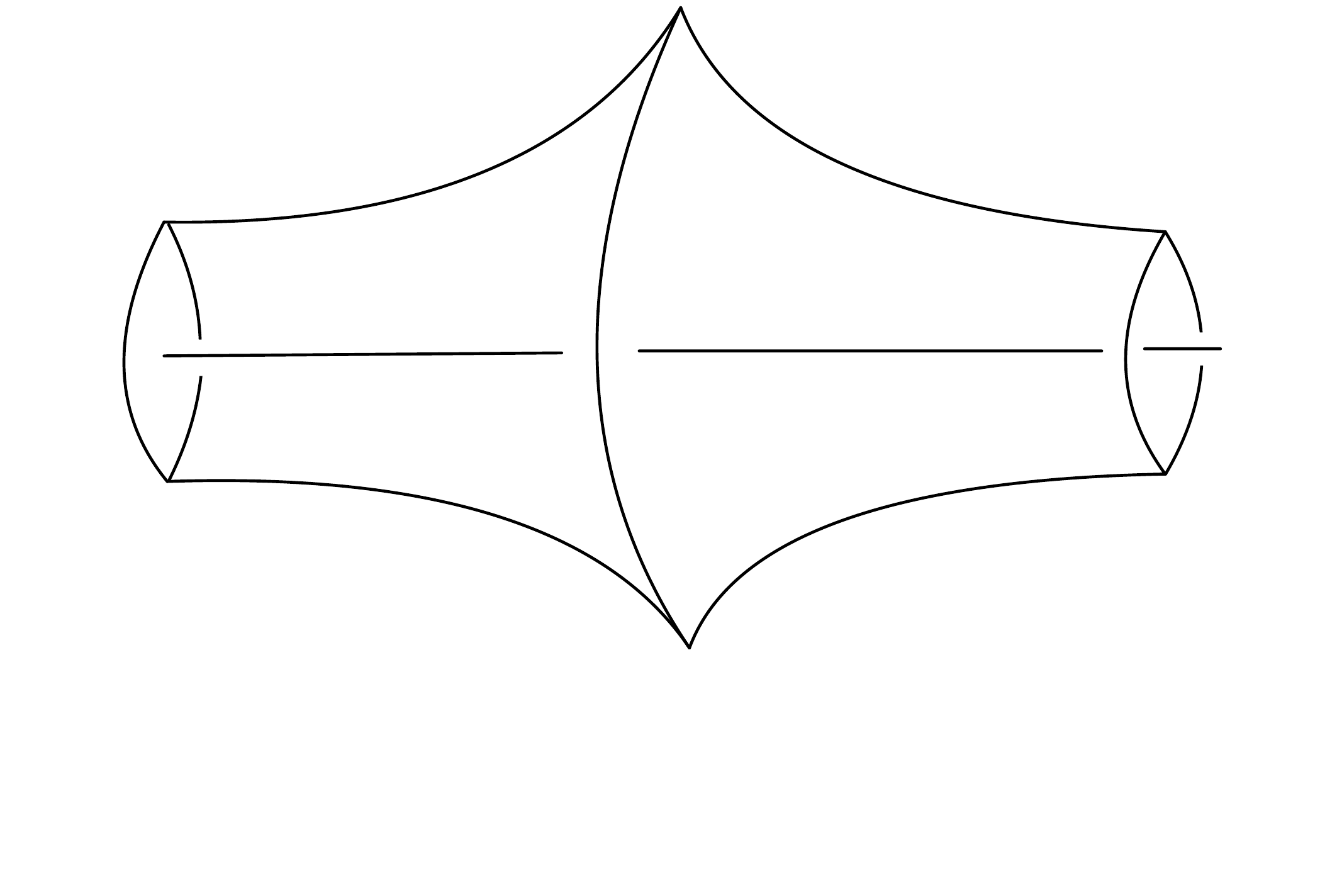}
		\caption{Switch component of $\mathcal{B}$.}
		\label{Switch}
	\end{minipage}
	\begin{minipage}[b]{0.32\linewidth}
		\centering
		\includegraphics[viewport = 0 210 650 600, scale = 0.24, clip]{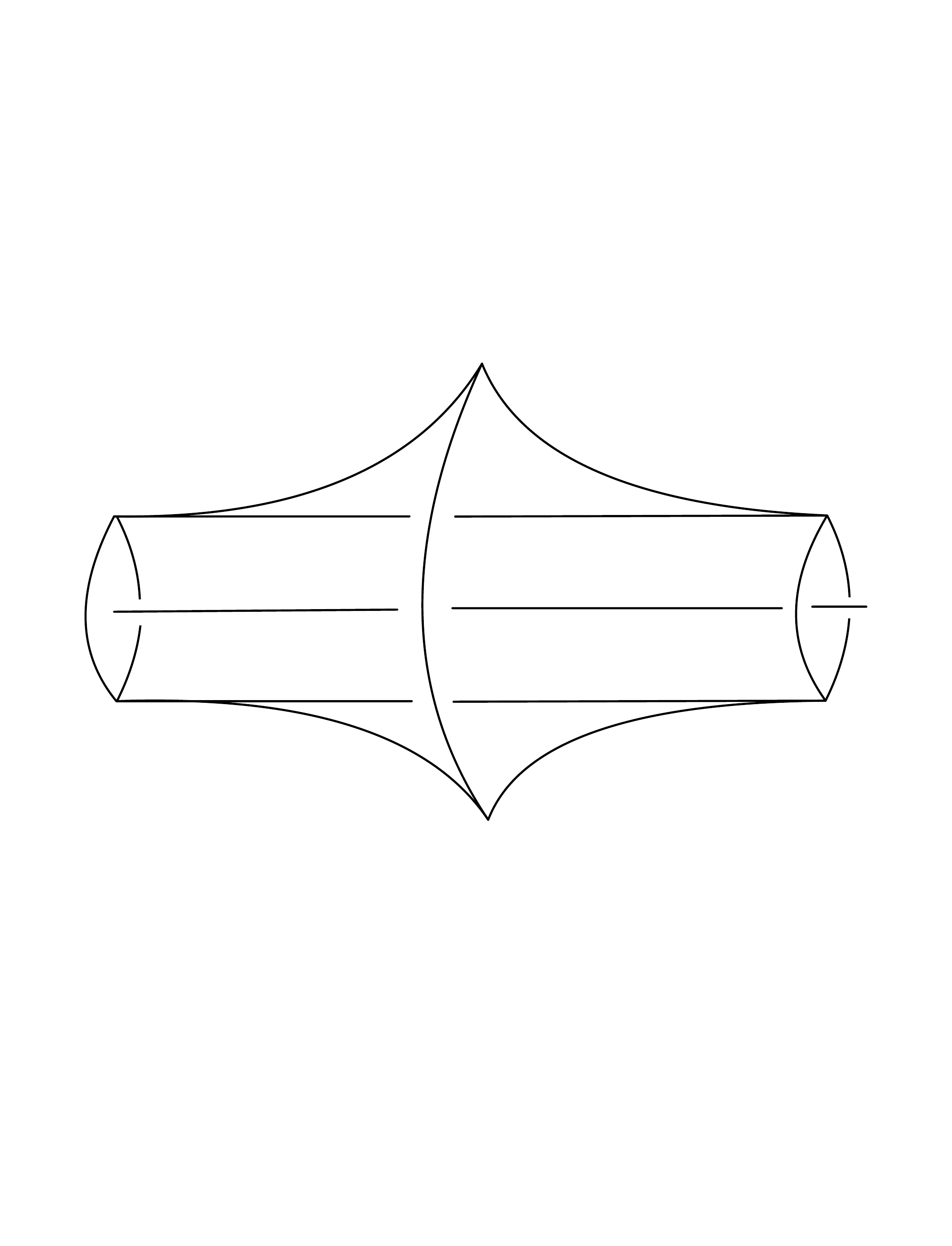}
		\caption{Two-way switch component of $\mathcal{B}$.}
		\label{TwoWaySwitch}
	\end{minipage}
	\begin{minipage}[b]{0.32\linewidth}
		\centering
		\includegraphics[viewport = 0 210 660 580, scale = 0.24, clip]{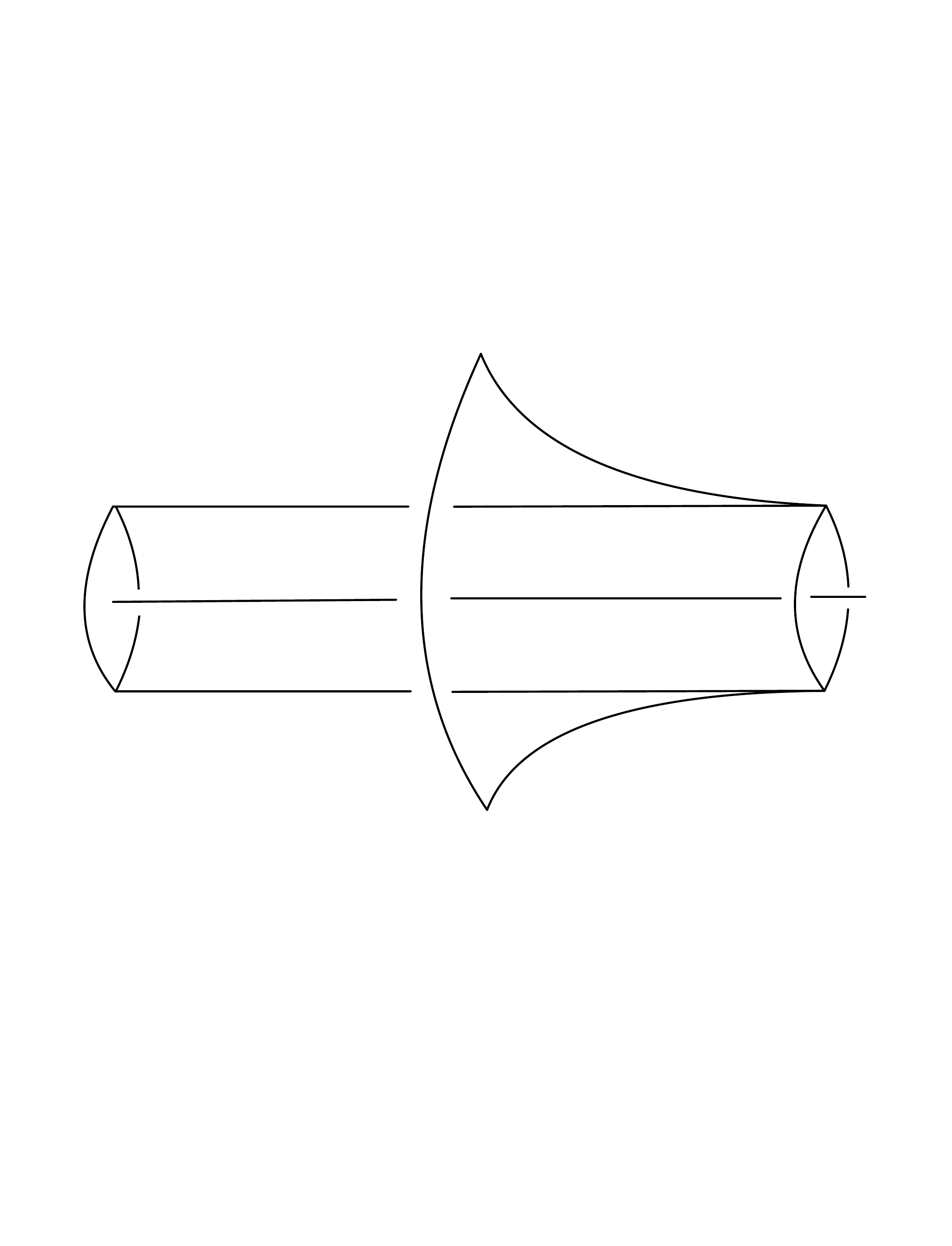}
		\caption{One-way switch component of $\mathcal{B}$.}
		\label{OneWaySwitch}
	\end{minipage}
\end{figure}

%
%
%
\subsubsection{Construction of Incompressible Branched Surface from Punctured Partitioned Chord Diagram}\label{ConstructionOfIncompBS}
We show how to construct an incompressible branched surface $\mathcal{B}_S$ from a punctured partitioned chord diagram. We follow the process given by Floyd and Oertel \cite{FLOYD1984117} yielding an incompressible branched surface from an incompressible surface. Let $D$ be a punctured partitioned chord diagram corresponding to a closed, connected, essential, orientable surface $S$ of genus greater than 2. By Propositions~\ref{UniquelyDetermined} and~\ref{TypesOfChords}, $D$ is uniquely determined by the maximum possible length chord and there are only three types of chords in $D$. Without loss of generality, let $R_1$ be the region containing the base points of the chord of maximum possible length and label the rest of the regions counterclockwise from $R_1$ as $R_2, R_3$, and $R_4$. We first choose the incompressible 4-punctured sphere $S_C$ that we use to build $\mathcal{B}_S$. After this choice of $S_C$, the four regions of $D$ correspond to four arcs of the knot, which we label as $a_1, a_2, a_3$, and $a_4$, containing the punctures of the $S_C$. The chord of maximum possible length represents the innermost tube and has the property that the tube has both ends on the arc $a_1$, corresponding to the region $R_1$, of the knot and passes through every rational tangle exactly once. Thus, this gives a switch component of the branched surface contained on $a_1$ since the tube is innermost and hence only has two horizontal boundary components both coming from the tube. By Proposition~\ref{TypesOfChords}, all chords in which have a base point in either $R_2$ or $R_4$ must pass through $R_3$. This gives that all tubes having one end on either the arc $a_2$ or $a_4$ are carried by a one-way switch component. Lastly, the region $R_3$ consists of chords parallel to the chords of length 1 and portions of chords parallel to the chord of maximum possible length. Hence, we have three contributions of horizontal boundary components coming from the tubes where one tube is innermost amongst the three. This gives that the corresponding arc $a_3$ has a two-way switch component. Thus, the branched surface we obtain from any such punctured partitioned chord diagram $D$ is shown in Figure~\ref{BranchedSurface}. By Proposition~\ref{IncompBranch}, $\mathcal{B}_S$ is incompressible since it does not have any disks of contact.

\begin{figure}[h!]
		\begin{minipage}[b]{0.47\linewidth}
		\centering
		\includegraphics[viewport = 0 180 620 700, scale = 0.36, clip]{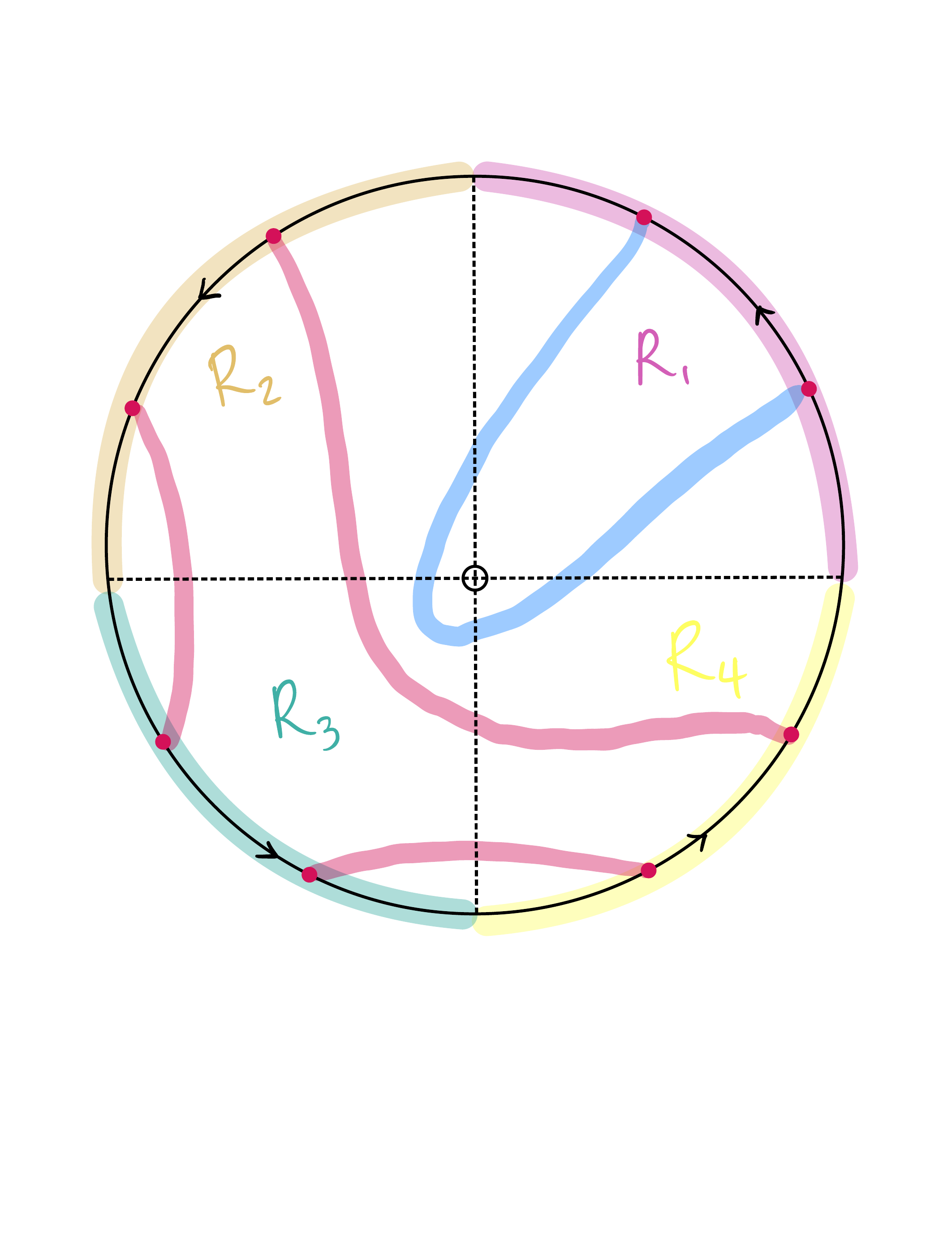}
		\end{minipage}
		\begin{minipage}[b]{0.47\linewidth}
		\centering
		\includegraphics[viewport = 0 150 620 700, scale = 0.36, clip]{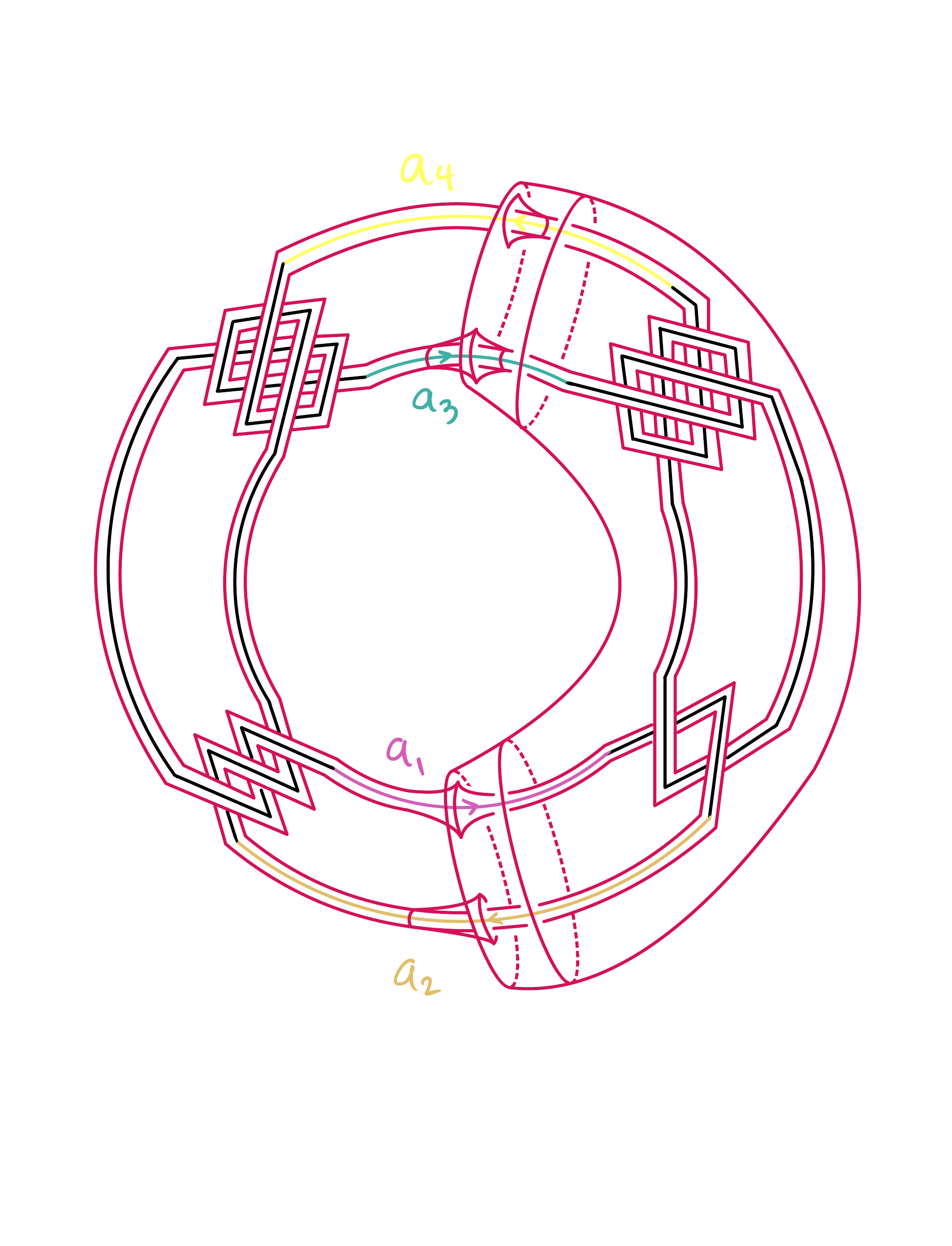}
		\end{minipage}
		\caption{The corresponding branched surface $\mathcal{B}_S$ of the punctured partitioned chord diagram.}
		\label{BranchedSurface}
\end{figure}

Observe that the construction of $\mathcal{B}_S$ is independent of the specific rational tangles, since the regions of the punctured partitioned chord diagrams only depend on which arcs are adjacent by traversing the knot. Moreover, $\mathcal{B}_S$ carries all surfaces which have the chord of maximum possible length in the same region and have the same choice for the 4-punctured sphere $S_C$.

\begin{Prop}\label{RegionsOfBS}
	Let $S$ be a closed, connected, essential, orientable surface of genus greater than 2. Then, none of the complementary regions of the corresponding incompressible branched surface $\mathcal{B}_S$ are product regions. In particular, any surface carried by $\mathcal{B}_S$ is not isotopic to any other surface also carried by $\mathcal{B}_S$. 
\end{Prop}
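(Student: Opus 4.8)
The plan is to read off both assertions from Oertel's uniqueness theorem (Theorem~\ref{Oertel4}) after a direct inspection of the complementary regions of $N(\mathcal{B}_S)$. Since $\mathcal{B}_S$ is incompressible by Proposition~\ref{IncompBranch}, Theorem~\ref{Oertel4} applies to it: whenever $\mathcal{B}_S(\mathbf{w})$ is carried with all weights positive and $\mathcal{B}_S(\mathbf{v})$ is any carried surface, the two are isotopic only through a sequence of simple isotopies together with fiber-respecting isotopies inside $N(\mathcal{B}_S)$. A simple isotopy is performed precisely by pushing a subsurface across a complementary region of $N(\mathcal{B}_S)$ that is a product $I$-bundle $(\Sigma \times I, \Sigma \times \partial I)$, while fiber-respecting isotopies inside $N(\mathcal{B}_S)$ preserve the weight vector. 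Hence, once I establish that $\mathcal{B}_S$ has no product complementary region, no nontrivial simple isotopy is available, the weight vector becomes an isotopy invariant of a carried surface, and two carried surfaces with distinct weights --- in particular any two of the distinct surfaces that $\mathcal{B}_S$ carries (those whose chord of maximum possible length lies in the fixed region and share the chosen $S_C$, as in Section~\ref{ConstructionOfIncompBS}) --- are non-isotopic. Thus the whole proposition reduces to the first sentence.

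To prove the first sentence I would enumerate the complementary regions of $N(\mathcal{B}_S)$ from the explicit model built in Section~\ref{ConstructionOfIncompBS}. There $\mathcal{B}_S$ is assembled from the parallel horizontal sheets of the $S_C$'s joined by one switch component on the arc $a_1$, one-way switch components on $a_2$ and $a_4$, and a two-way switch component on $a_3$ (Figures~\ref{Switch}, \ref{OneWaySwitch}, and \ref{TwoWaySwitch}), with the tubes threaded through the rational tangles as prescribed by the three chord types of Proposition~\ref{TypesOfChords}. Because the frontier of each complementary region is determined locally by the switch components it abuts, the regions form a short explicit list: the region meeting the boundary torus $\partial M$ (equivalently the region carrying the puncture of the chord diagram), the outermost region lying outside all $S_C$-sheets, the regions squeezed between consecutive parallel sheets as they run through a rational tangle $(L_i, p_i/q_i)$, and the regions abutting the branch loci of the switch components. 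The task is then to verify that none of these models is a product $I$-bundle meeting $\partial_v N$ in $(\partial \Sigma) \times I$.

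The main obstacle, and the only step that genuinely invokes the standing hypothesis $q_i \geq 3$, is excluding product regions inside the rational tangles. Here I would work in the orbifold picture of Section~\ref{DescriptionOfOrbifold}: a complementary region threading $(L_i, p_i/q_i)$ meets the order-$q_i$ singular fiber of $O_K$, and were it a product $\Sigma \times I$ with both ends horizontal, the singular fiber would sit as an essential vertical arc, forcing a monogon or a compression and contradicting that $\mathcal{B}_S$ has neither (Proposition~\ref{IncompBranch}); equivalently, such a product would let a sheet be isotoped across the tangle, contradicting that every tube passes essentially through a rational tangle (Theorem~\ref{Oertel2}) and that the bounding $S_C$'s are rational tangles on neither side (Corollary~\ref{Oertel3}). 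This is exactly where $q_i \geq 3$ cannot be dropped, since for small $q_i$ the tangle no longer obstructs such a product. The remaining regions are routine: each region abutting a switch, one-way switch, or two-way switch component has branch locus in its frontier, so its two horizontal boundary pieces carry different numbers of sheets and cannot be parallel copies $\Sigma \times \{0,1\}$ of a single surface; and the boundary-torus and outermost regions have connected $4$-punctured-sphere horizontal boundary with no matching second sheet, so they are non-product as well. Assembling these cases shows that no complementary region of $N(\mathcal{B}_S)$ is a product, which by the reduction of the first paragraph completes the proof.
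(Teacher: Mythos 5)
Your first paragraph is essentially the paper's reduction: invoke Proposition~\ref{IncompBranch} to get incompressibility of $\mathcal{B}_S$, then use Theorem~\ref{Oertel4} to conclude that once no complementary region is a product, no simple isotopy is available and distinct carried surfaces are non-isotopic. That part is fine (and you are, if anything, slightly more explicit than the paper about the weight vector being the invariant). The gap is in how you propose to verify the first sentence. Your enumeration of complementary regions is not the right one: the ``regions squeezed between consecutive parallel sheets as they run through a rational tangle'' are precisely the product pieces of $M\backslash\backslash S$ that the Floyd--Oertel construction collapses into the fibered neighborhood $N(\mathcal{B}_S)$ itself --- they are the $I$-fibers, not complementary regions. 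So the case you single out as ``the main obstacle,'' and to which you devote the orbifold/monogon argument, does not occur; and even if it did, the claim that a product region containing an order-$q_i$ singular fiber ``would force a monogon or a compression'' is an appeal to intuition, not an argument --- nothing in the definition of a product region is contradicted by its containing an exceptional fiber. Relatedly, $q_i\geq 3$ does not enter this proposition through the complementary regions at all; it enters upstream, via Theorem~\ref{Oertel2}, in guaranteeing that $S$ is incompressible so that Proposition~\ref{IncompBranch} applies.

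The paper's actual verification is a short, purely combinatorial count. There are exactly four complementary regions: the cusp neighborhood of $K$ (one horizontal annulus, one vertical annulus), the solid torus bounded by the two-way switch component (three horizontal and three vertical annuli), the region inside the $S_C$ and outside the horizontal tube (one horizontal, one vertical component), and the region outside the $S_C$ and the tubes (one horizontal genus-two component, two vertical annuli). A product region $\Sigma\times I$ must have exactly two horizontal boundary components, namely $\Sigma\times\{0\}$ and $\Sigma\times\{1\}$, so each of the four regions is excluded simply by noting it has either one or three horizontal components. Your treatment of the genuine regions gestures at this criterion (``different numbers of sheets,'' ``no matching second sheet'') but gets the descriptions wrong --- e.g.\ the horizontal boundary of the cusp region is an annulus, not a $4$-punctured sphere --- and is too cursory to stand on its own. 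To repair the proof you would delete the rational-tangle case entirely and replace the last paragraph with the explicit four-region list and the horizontal-boundary-component count.
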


	\begin{proof}
		By the construction of $\mathcal{B}_S$, it carries all surfaces which are tubing descriptions of surfaces utilizing the same choice of 4-punctured sphere $S_C$ and having the chord of maximum possible length in the same region. We show that the four complementary regions of $\mathcal{B}_S$ are not product regions. It follows from Theorem~\ref{Oertel4} that there are no isotopies between the surfaces that $\mathcal{B}_S$ fully carries.

		The region which is a cusp neighborhood of the knot $K$ is bounded by of one horizontal boundary component and one vertical boundary component, where each component in an annulus, whose union is a torus. However, since there is only one horizontal boundary component, then this is not a product region.

		The region bounded by a two-way switch component is homeomorphic to a solid torus whose boundary consists of three vertical annuli and three horizontal annuli. In particular, it contains more than two horizontal components. Thus, this region is not a product region.

		The region inside of the $S_C$, with respect to the projection onto the sphere, and outside the horizontal tube has only one horizontal boundary component and one vertical boundary component. So, it is not a product region.

		The final region is the region outside of the $S_C$, the horizontal tubes, and two vertical annuli. The boundary of this region is homeomorphic to a genus 2 surface with two vertical annuli which deformation retract onto non-separating curves of the surface. Thus, the boundary consists of exactly one horizontal component and two vertical components. Hence, it is not a product region, which completes the proof.
	\end{proof}

Proposition~\ref{RegionsOfBS} shows that two surfaces obtained by tubing the same choice of $S_C$ which have punctured partitioned chord diagrams with the maximum possible length chord in the same region, are distinct. We now prove the case that punctured partitioned chord diagrams having the maximum length chord in the different regions are distinct. Proposition~\ref{Waldhausen} will be used extensively in analyzing the regions bounded by two surfaces in the complement of the knot. We analyze such regions and show each are not pocket nor product regions, as in the statement of Proposition~\ref{Waldhausen}. In light of the proposition, we will define such regions contained in the punctured partitioned chord diagram containing two sub-diagrams each corresponding to closed, connected, essential, orientable  surface of genus greater than 2. The following heavily utilizes the geometric description of punctured partitioned chord diagrams.

Let $D_1$ and $D_2$ be two punctured partitioned chord diagrams, each giving rise to genus $g$ surfaces, for $g$ greater than 2, $S_1$ and $S_2$, respectively. We may represent both on a single punctured partitioned chord diagram $D$ (see Figure~\ref{SinglePPCD}). Since $D$ represents a cusp neighborhood of the knot, we will analyze each complementary region of the chords of $D$ and show that none are product or pocket regions. By Proposition~\ref{Waldhausen}, we may then conclude that $S_1$ and $S_2$ are non-isotopic surfaces. We first define possible regions of $D$ and show in Lemma~\ref{RegionsOfD} that $D$ can be made to have only these regions.

\begin{figure}[h!]
		\centering
		\includegraphics[viewport = 0 180 620 690, scale = 0.4, clip]{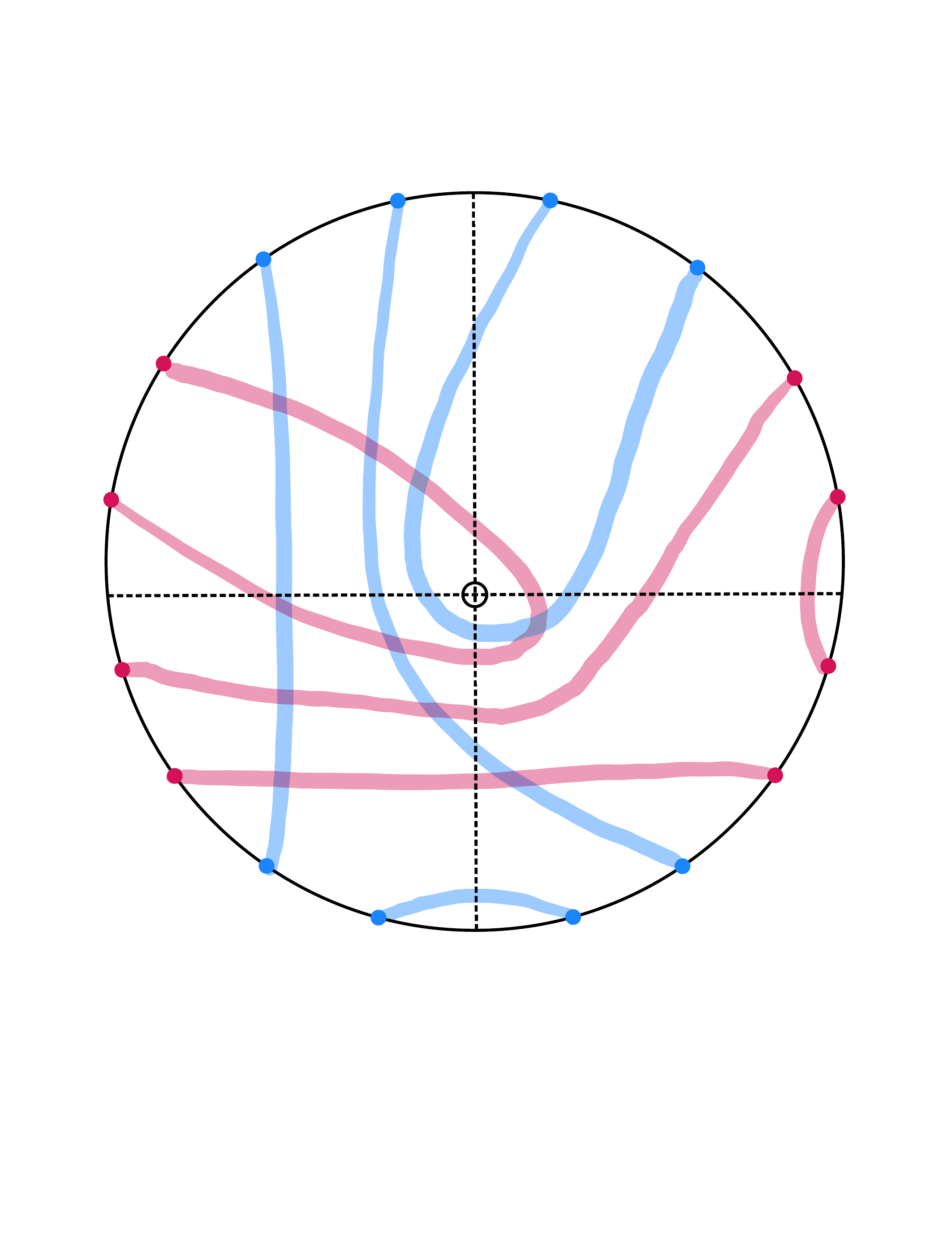}
		\caption{An example of $D$ with sub-diagrams $D_1$ and $D_2$ drawn in red and blue, respectively.}
		\label{SinglePPCD}
\end{figure}

\begin{Def} Let $D$ be a punctured partitioned chord diagram, containing sub-diagrams $D_1$ and $D_2$, as above. A complementary region of $D$ is defined to be one of the following types:
	\begin{enumerate}[leftmargin = 2.54cm, label = \textit{Type \Roman*}:]
		\item A region bounded by two intersecting segments of chords, one from $D_1$ and one from $D_2$ which contains the puncture of $D$.
		\item A region bounded by $2n$ intersecting segments of chords, for $n \in \mathbb{N}_{\geq 2}$, with $n$ segments from chords of $D_1$ and $n$ segments from chords of $D_2$, in an alternating fashion.
		\item A region bounded by 4 segments: a segment of the boundary circle, two segments from chords of $D_i$ and one segment from a chord of $D_j$ where $i, j \in \{1, 2\}$ and $i \neq j$.
		\item A region bounded by $2n + 1$ segments for $n \in \mathbb{N}_{\geq 1}$ with $n$ segments from chords of $D_1$, $n$ segments from chords of $D_2$, and one segment of the boundary circle, in an alternating fashion. Moreover, the segment of the boundary circle is adjacent to a segment of a chord of $D_1$ and a segment of a chord of $D_2$.
		\item A region bounded by 6 segments: two segments of the boundary circle, two segments from chords and a chord of length 1 in $D$ all coming from $D_i$, and one segment from a chord of $D_j$ where $i, j \in \{1, 2\}$ and $i \neq j$.
	\end{enumerate}
 See Figure~\ref{TypesOfRegions}.
\end{Def}

\begin{figure}[h!]
\centering
	\begin{minipage}[b]{0.19\linewidth}
		\centering
		\includegraphics[viewport = 20 120 710 650, scale = 0.16, clip]{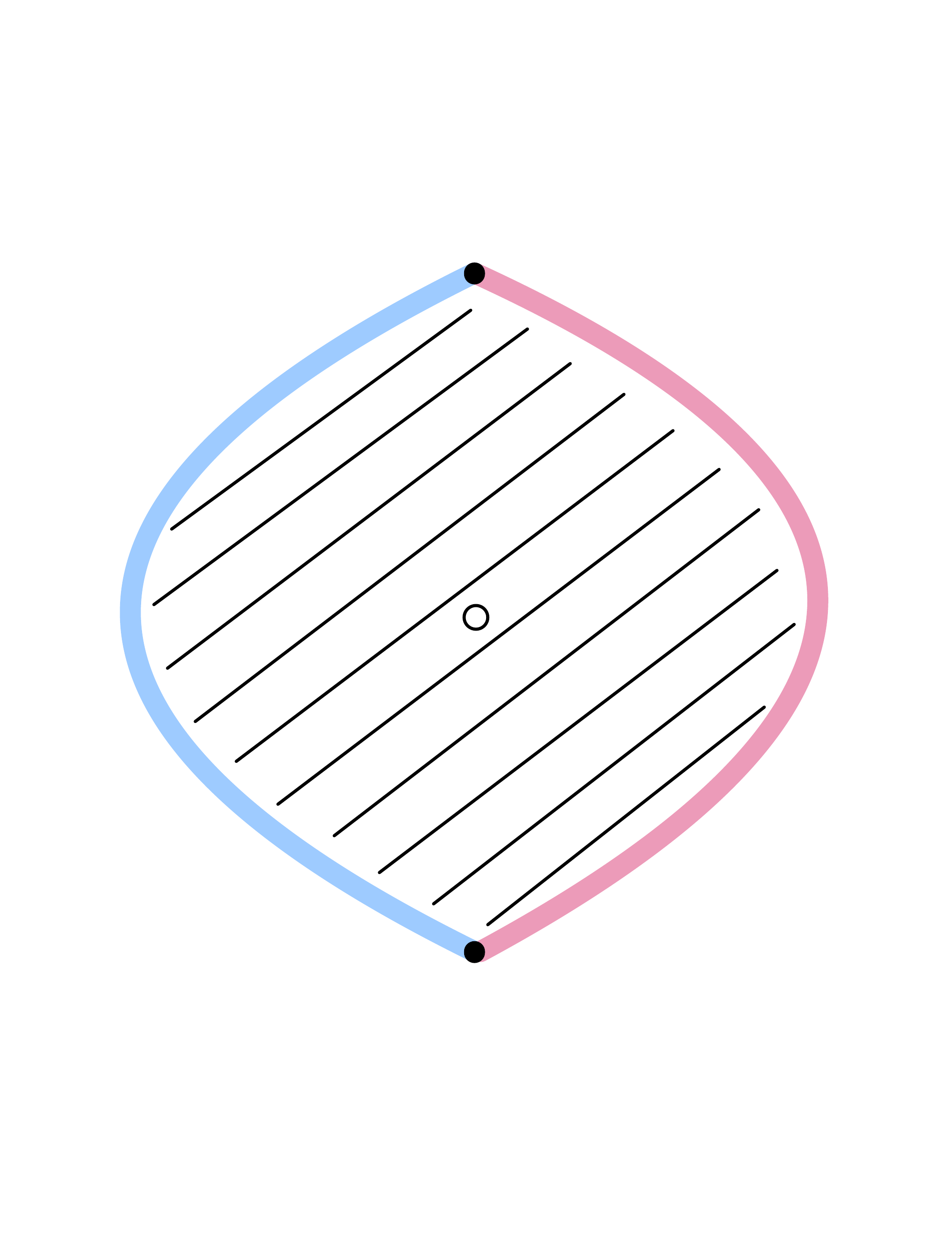}
	\end{minipage}
	\begin{minipage}[b]{0.19\linewidth}
		\centering
		\includegraphics[viewport = 20 120 610 650, scale = 0.16, clip]{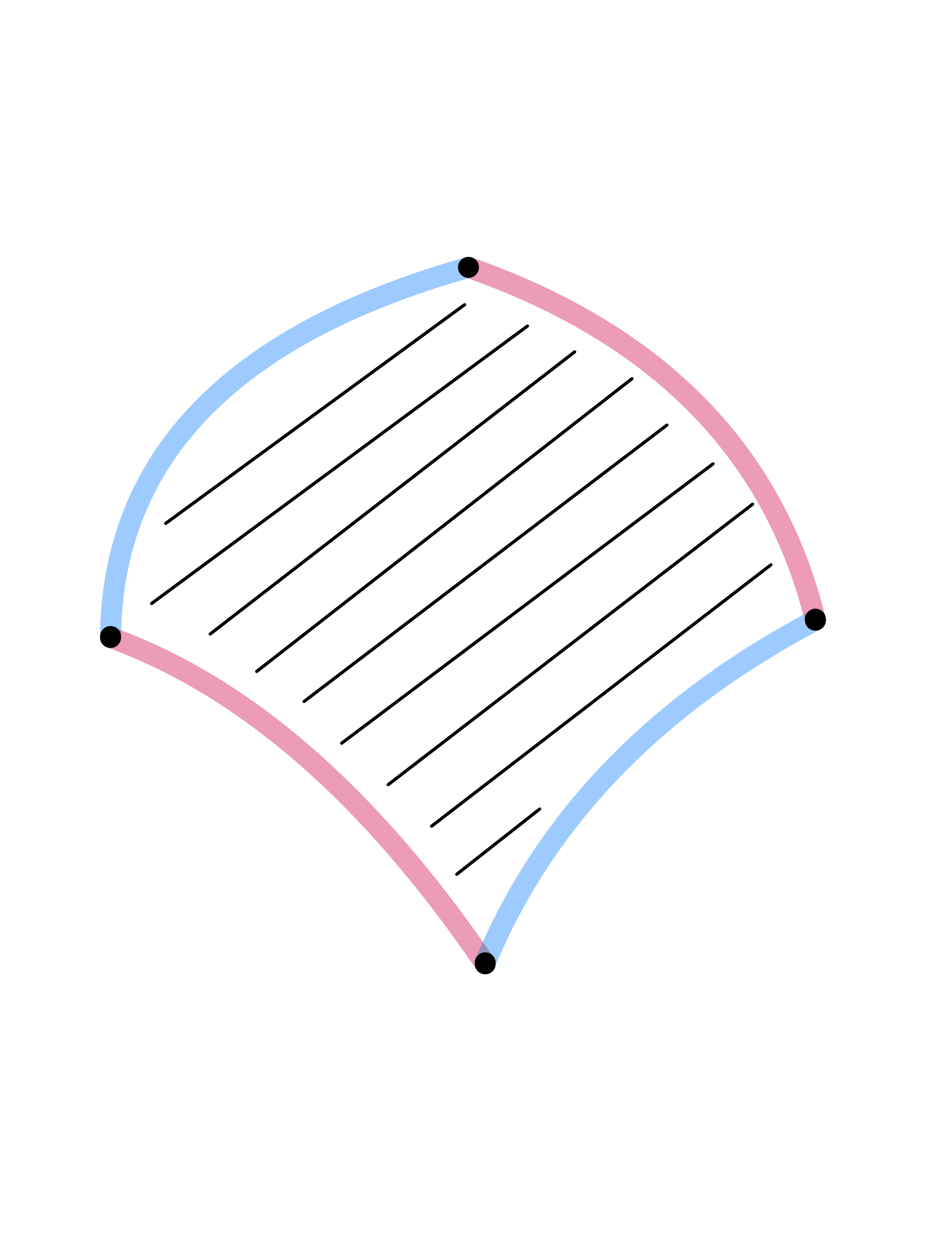}
	\end{minipage}
	\begin{minipage}[b]{0.19\linewidth}
		\centering
		\includegraphics[viewport = 25 120 610 670, scale = 0.16, clip]{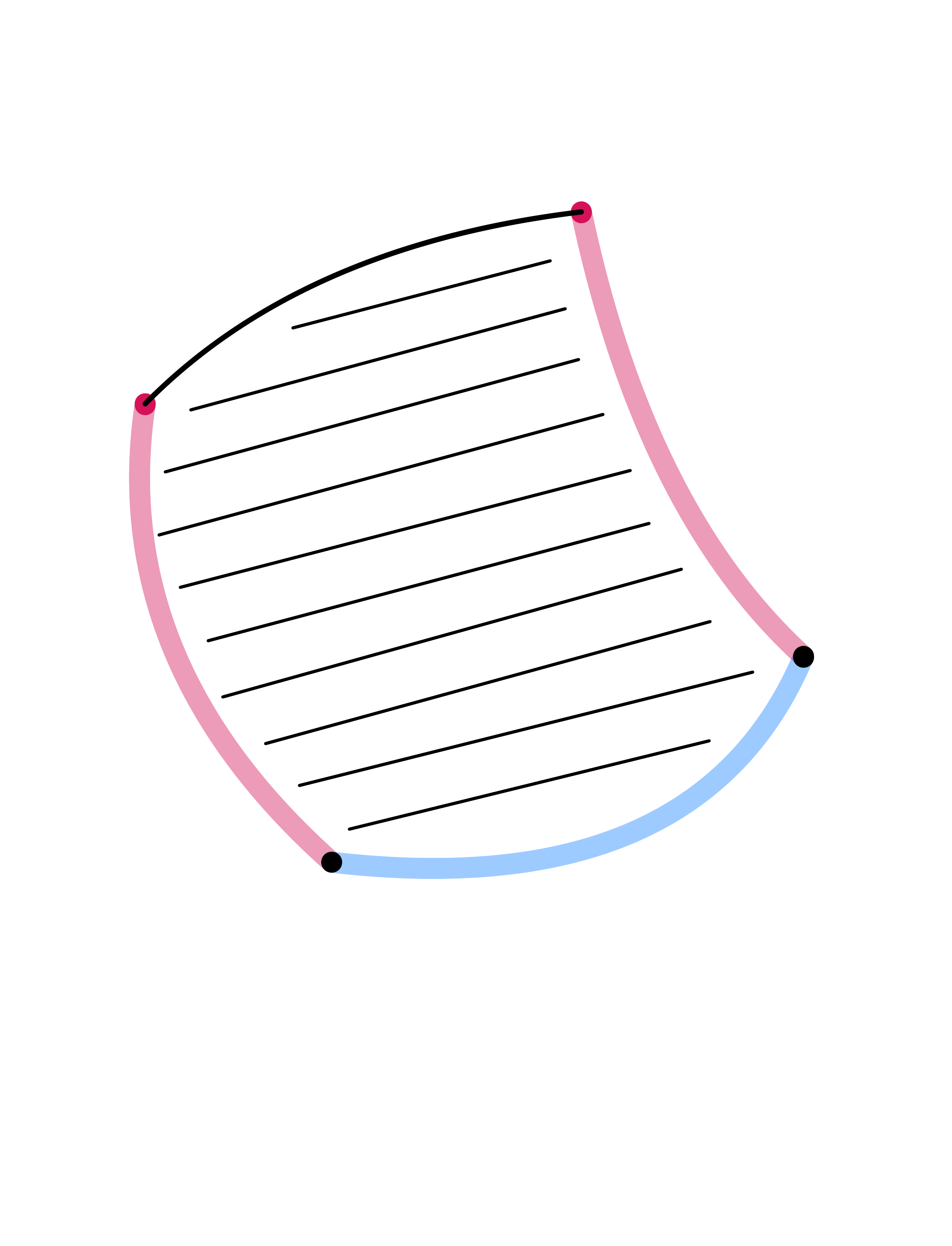}
	\end{minipage}
	\begin{minipage}[b]{0.19\linewidth}
		\centering
		\includegraphics[viewport = 40 150 610 740, scale = 0.16, clip]{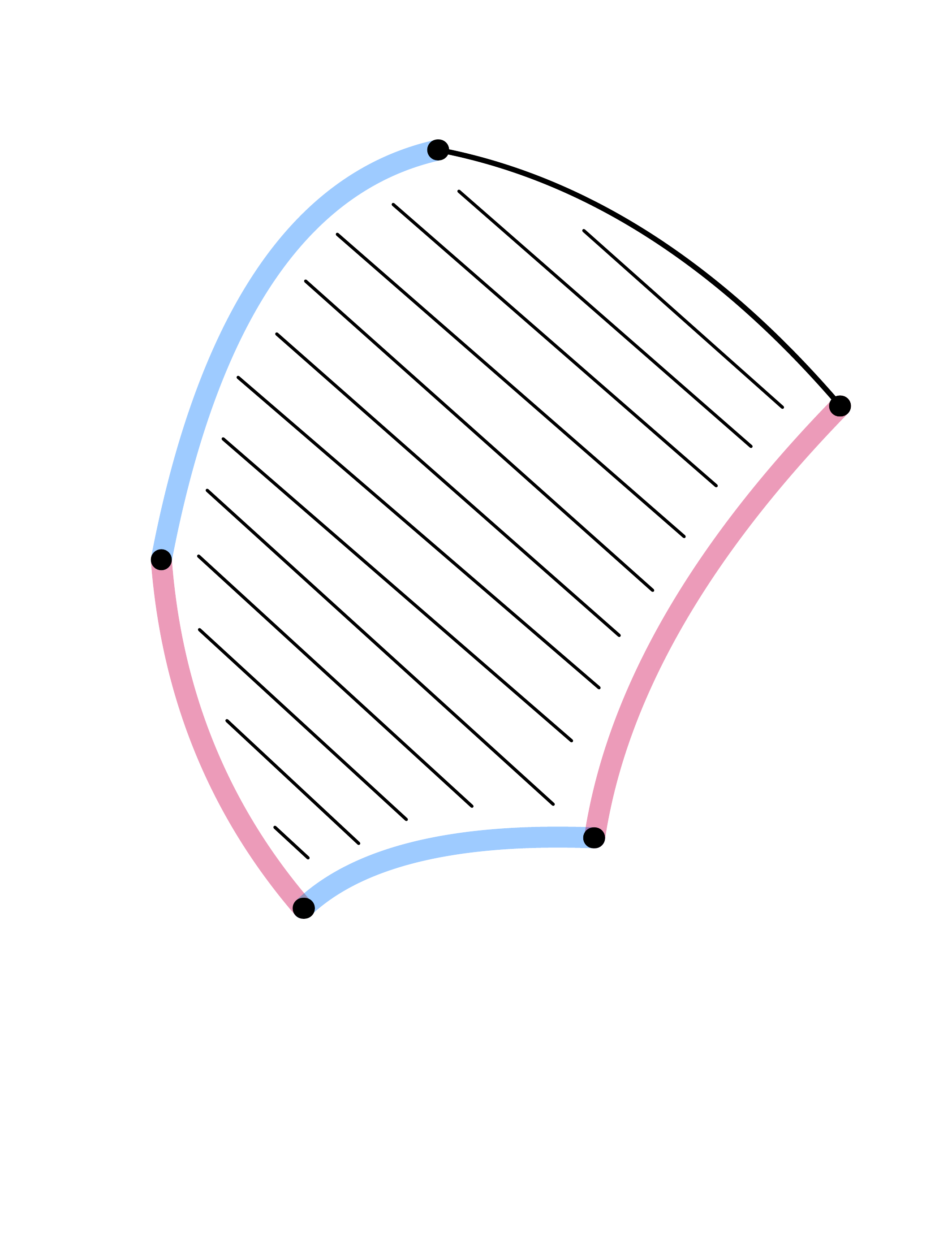}
	\end{minipage}
	\begin{minipage}[b]{0.19\linewidth}
		\centering
		\includegraphics[viewport = 20 120 610 680, scale = 0.16, clip]{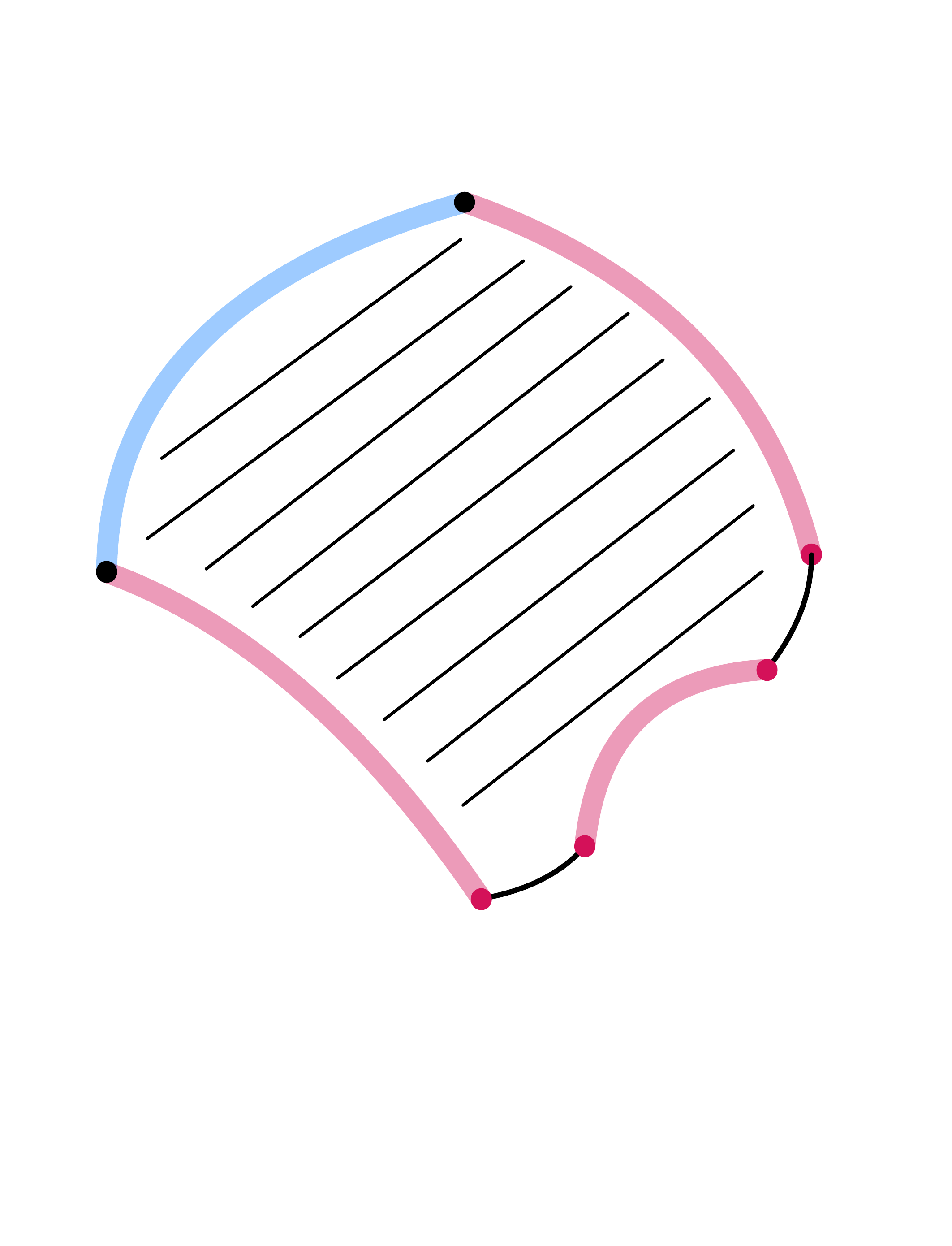}
	\end{minipage}
	\caption{Left to right: Examples of type \textit{I}, type \textit{II}, type \textit{III}, type \textit{IV} and type \textit{V} regions.}
	\label{TypesOfRegions}
\end{figure}

Observe that the segments in each type of region may belong to the same chord in the respective sub-diagram. We now prove a few properties of $D$.

\begin{Lemma}\label{PropertiesOfD}
	Let $D_1$ and $D_2$ be two punctured partitioned chord diagrams, each giving rise to surfaces $S_1$ and $S_2$, respectively. Moreover, assume that both $S_1$ and $S_2$ have genus greater than 2. Consider the superimposed union of $D_1$ and $D_2$ and call it $D$. Then, after an isotopy of the chords, $D$ has the following properties:
	\begin{enumerate}
		\item Chords of $D_1$ only intersect chords of $D_2$.
		\item The base points of $D$ on the boundary circle can be arranged in a cyclic way so that $2(g - 1)$ consecutive base points, with respect to some orientation the boundary circle, are from $D_i$, then $2(g - 1)$ consecutive base points from $D_j$, with $i,j$ alternating between $1$ and $2$.
		\item The intersection of any two chords is 4-valent and transverse.
		\item Chords intersect in an alternating fashion.
	\end{enumerate}
\end{Lemma}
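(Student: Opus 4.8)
The plan is to regard the four asserted properties as successive normalizations of the superimposed diagram $D$ and to establish them in the order (1), (3), (4), (2): property (1) is immediate, properties (3) and (4) are formal consequences of (1) together with a general position perturbation, and property (2) carries all of the combinatorial content.

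First I would dispatch property (1). By Lemma~\ref{ExistencePPCD2} the chords of $D_1$ are pairwise disjoint, and likewise the chords of $D_2$ are pairwise disjoint, since each $D_i$ is a genuine punctured partitioned chord diagram. Hence after superimposing, the only possible crossings in $D$ occur between a chord of $D_1$ and a chord of $D_2$, with no further isotopy required. For property (3), the $4$-valence is then automatic: a point lying on three chords would lie on two chords of the \emph{same} sub-diagram, contradicting (1); a small isotopy supported away from the base points makes each remaining double point transverse. Property (4) follows from (1) and (3): at every crossing exactly one chord of $D_1$ meets exactly one chord of $D_2$, so the four edge-germs at the crossing alternate between $D_1$ and $D_2$, and dually two consecutive segments on the boundary of a complementary region cannot both come from the same sub-diagram (they would meet at a same-diagram crossing, impossible by (1)), so the segments alternate.

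The substance lies in property (2). Here I would exploit the rigid structure of each $D_i$ furnished by Proposition~\ref{UniquelyDetermined}: each sub-diagram has exactly $2(g-1)$ chords in three parallel families, and by Lemma~\ref{ExistencePPCD} each of the four regions carries $g-1$ base points of $D_1$ and $g-1$ of $D_2$, for $2(g-1)$ base points per region. The key observation is that $D_1$ and $D_2$ record tube endpoints of two independent surfaces $S_1,S_2$, so their base points may be slid independently along the arcs of $K$ without changing either surface up to isotopy. Within one region I would un-interleave the two colors by adjacent transpositions that only exchange a $D_1$ base point with a neighboring $D_2$ base point; such a move slides one $D_2$-chord endpoint past a $D_1$-chord endpoint, possibly creating or destroying a $D_1$-$D_2$ crossing (which is permitted) but never moving a $D_2$ base point past another $D_2$ base point, so the disjoint parallel structure of each $D_i$ is preserved. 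This sorts each region into a monochromatic block of $D_1$ followed by one of $D_2$, and I retain the freedom to choose which color leads in each region. Choosing the pattern $D_1\,|\,D_2$ in $R_1$, $D_2\,|\,D_1$ in $R_2$, $D_1\,|\,D_2$ in $R_3$, $D_2\,|\,D_1$ in $R_4$ makes the trailing block of each region merge with the leading block of the next into a single block of $2(g-1)$ base points; since there are four (an even number of) regions this choice is globally consistent, producing the alternating blocks $D_1,D_2,D_1,D_2$ of size $2(g-1)$ around the circle. I would finish by noting that the two punctures, each representing the meridian disk of $K$, can be made to coincide, so the cyclic arrangement is compatible with the single puncture of $D$.

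The main obstacle is precisely the realizability claim inside property (2): one must check that these adjacent-transposition moves are genuinely induced by isotopies of $S_1$ and $S_2$ in the knot complement, not merely combinatorial reshufflings of an abstract diagram, and that they never disturb the disjointness of like-colored chords or the parallel-family structure guaranteed by Proposition~\ref{UniquelyDetermined}. Once this is verified, properties (1), (3), and (4) require only the bookkeeping indicated above.
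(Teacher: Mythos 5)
Your proposal is correct and follows essentially the same route as the paper: property (1) from the pairwise disjointness of chords within each sub-diagram, properties (3) and (4) as general-position and parity consequences of (1), and property (2) by an isotopy of base points that preserves the internal cyclic order of each $D_i$. Your block-sorting argument for (2) merely spells out in more detail the isotopy the paper invokes in one sentence, and the realizability concern you flag is handled the same way in both treatments (the moves never permute like-colored base points, so each sub-diagram's structure is untouched).
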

	\begin{proof}
		Property \textit{1} follows immediately from the definition of punctured partitioned chord diagrams. We may take an isotopy of the base points of $D_i$, preserving the respective order and adjacency of the base points, to obtain the cyclic arrangement stated in property \textit{2}. Moreover, this isotopy can be made so that chords of $D_i$ do not intersect with any other chords of $D_i$, preserving property \textit{1} throughout the isotopy. Every intersection is 4-valent since chords intersect in their interiors and any tangential intersection in the interior can be resolved by an isotopy of the chord which does not result in any additional intersections with any other chords. Thus, giving property \textit{3}. Finally, property \textit{4} follows from property \textit{1} since intersections of any two chords implies that the chords do not come from the same sub-diagram.
	\end{proof}

The existence of a region of type \textit{IV} for $n \geq 2$ is crucial to proving distinctness. We prove the existence of such a region with the following lemmas and prove all regions are of types $I, II, III, IV,$ or $V$.

\begin{Lemma}\label{ChordsOfD}
	Let $D$ be as in Lemma~\ref{PropertiesOfD}. Let $c_1$ and $c_2$ be the two maximum possible length chords of $D$, coming from $D_1$ and $D_2$, respectively. Then, there exists two chords $c_1'$ and $c_1''$ parallel to $c_1$ which have one pair of base points adjacent to each other and the other pair of base points contain $2(g - 1)$ base points of $D_2$.
	
	Moreover, there are also two chords $c_2'$ and $c_2''$ parallel to $c_2$ which are adjacent to each other at one end and contain $2(g - 1)$ base points of $D_1$ in between $c_2'$ and $c_2''$.
\end{Lemma}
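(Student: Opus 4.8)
The plan is to prove the statement for $c_1$ only; the statement for $c_2$ then follows verbatim after interchanging the roles of $D_1$ and $D_2$, since the block decomposition of Lemma~\ref{PropertiesOfD} is symmetric in the two sub-diagrams. Throughout I would work in the superimposed diagram $D$ after applying the isotopy of Lemma~\ref{PropertiesOfD}, so that the base points occur in four consecutive arcs (blocks) $B_1,B_2,B_3,B_4$ read cyclically, where $B_1,B_3$ consist of $2(g-1)$ base points of $D_1$ and $B_2,B_4$ of $2(g-1)$ base points of $D_2$, and where the internal cyclic order of each sub-diagram is preserved. The two inputs I would then recall are the internal structure of $D_1$ from Proposition~\ref{UniquelyDetermined} and Lemma~\ref{ChordsOfMaxLength}: $c_1$ is the unique chord of maximum possible length, its two base points are adjacent in the cyclic order of $D_1$ with the puncture lying on the short side, and there are exactly $g-1$ chords parallel to $c_1$, nested into a fan about it, each of which runs through all four regions of $D_1$.

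The key geometric observation is that, because the fan chords are parallel to $c_1$ and nested with it, and $c_1$ passes through all four regions, when carried into $D$ the chords of the fan all pass across one and the same $D_2$-block. I would then use the remaining freedom in the isotopy of Lemma~\ref{PropertiesOfD}(2) to place the cut of $D_1$'s cyclic sequence that separates the two $D_1$-blocks at a gap lying between the far endpoints of two consecutive fan chords, and to insert a full $D_2$-block there. Granting such a placement, let $c_1'$ and $c_1''$ be this consecutive pair of fan chords. Their near endpoints are two consecutive base points of $D_1$ sitting in the interior of a single $D_1$-block; since a single block of $D$ contains no base points of the other sub-diagram, these two endpoints are adjacent in $D$, which gives the first asserted property. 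Their far endpoints lie on opposite sides of the inserted $D_2$-block, so exactly $2(g-1)$ base points of $D_2$ lie between them, which gives the second. I would confirm that $c_1',c_1''$ are genuinely parallel to $c_1$ (rather than parallel to the length-$1$ chords) and are distinct, using the count $g-1\geq 2$ from Proposition~\ref{UniquelyDetermined} together with Lemma~\ref{ChordsOfMaxLength}.

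The principal obstacle is the combinatorial bookkeeping hidden in the previous paragraph. One must track, from the explicit distribution of the $g-1$ parallel chords supplied by Proposition~\ref{UniquelyDetermined}, precisely which chords abut each block boundary, and exhibit a consecutive pair of fan chords bounding a gap of $D_1$'s cyclic order that is free of other fan endpoints and hence can host an entire $D_2$-block, all while keeping the block sizes equal to $2(g-1)$ as demanded by Lemma~\ref{PropertiesOfD}(2). In other words, the real content is showing that the fan of $c_1$ necessarily meets a full $D_2$-block in two consecutive chords with adjacent near endpoints, rather than straddling the two $D_2$-blocks in a split fashion; once this placement is justified, both asserted properties, and by symmetry the corresponding properties of $c_2',c_2''$, follow directly.
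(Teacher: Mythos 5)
Your overall route is the paper's: combine the internal structure of $D_1$ (the fan of $g-1$ chords parallel to $c_1$ supplied by Proposition~\ref{UniquelyDetermined} and Lemma~\ref{ChordsOfMaxLength}) with the block arrangement of property \textit{2} of Lemma~\ref{PropertiesOfD}, and exhibit two consecutive fan chords whose near endpoints stay adjacent while their far endpoints straddle an inserted $D_2$-block. The problem is that the step you explicitly defer --- ``granting such a placement'' --- \emph{is} the content of the lemma, and the mechanism you propose for securing it does not exist. The isotopy of Lemma~\ref{PropertiesOfD} preserves the cyclic order and adjacency of each sub-diagram's base points and the region each base point lies in; where the two cuts of $D_1$'s cyclic sequence fall is therefore not a free parameter you can tune, but is forced by the nesting of the $S_C$'s. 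On each arc of the knot the punctures of $S_1$'s spheres are consecutive and so are those of $S_2$'s, so a $D_2$-block can only be inserted at a region boundary of $D_1$, and the two cuts sit at a pair of \emph{opposite} region boundaries. You cannot ``place the cut between the far endpoints of two consecutive fan chords''; you must show that the cut which is already there lands in such a gap.

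It does, and the count that closes the gap is the one the paper gestures at. By Lemma~\ref{ChordsOfMaxLength} every base point of $R_1$ lies on a chord parallel to $c_1$, the two endpoints of $c_1$ are adjacent in $R_1$, and the fan has $2(g-1)$ endpoints forming a contiguous arc centered at the puncture gap; since each of the two sides of the fan carries $g-1$ endpoints while $R_1$ contains only $g-1$ base points in total (at least one on each side, namely the endpoints of $c_1$), the fan necessarily spills at least one endpoint into each of the two regions adjacent to $R_1$. Hence both region boundaries of $R_1$ are straddled by endpoints of consecutive fan levels. Exactly one of the two cuts lies at a boundary of $R_1$ (they occupy opposite boundaries), and the partner endpoints of the two fan chords meeting that cut lie on the other side of the fan, inside $R_1$ and its other neighbor, where the second cut cannot occur --- so those partners remain adjacent in $D$. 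Taking this consecutive pair as $c_1',c_1''$ (one of which may be $c_1$ itself, as the paper's proof allows) yields both asserted properties, and the $c_2$ statement follows by symmetry as you say. With this paragraph in place your argument is correct and coincides with the paper's; without it, the proposal records the right plan but not a proof.
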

	\begin{proof}
		By Proposition~\ref{UniquelyDetermined}, there exists at least two chords $c_1'$ and $c_1''$ parallel to $c_1$ which are adjacent to each other. Note that we allow $c_1'$ or $c_1''$ to be $c_1$. Namely, the chords $c_1'$ and $c_1''$ must exist since $c_1$ and a $\beta$ type chord, as in Proposition~\ref{UniquelyDetermined}, exists. So, one of $c_1'$ or $c_1''$ may be $c_1$ itself. Moreover, $c_1$ has both its base points in the same region and there are $g - 1$ chords parallel to $c_1$ by Proposition~\ref{UniquelyDetermined}. Observe that there are $g - 1$ base points of $D_1$ in each region, and $c_1$ has adjacent base points. Then, using property \textit{2} of Lemma~\ref{PropertiesOfD}, $c_1'$ and $c_1''$ contain $2(g - 1)$ base points of $D_2$ in between a pair of base points. This proves the existence of $c_1'$ and $c_1''$ with such properties. Similarly, we may apply the same argument to obtain two chords $c_2'$ and $c_2''$ parallel to $c_2$ which are adjacent to each other and contain $2(g - 1)$ base points of $D_1$ in between $c_2'$ and $c_2''$. 
	\end{proof}

\begin{Lemma}\label{RegionsOfD}
	Let $D$ be as in Lemma~\ref{PropertiesOfD}. Then, every complementary region of $D$ is one of types \textit{I, II, III, IV}, or \textit{V}.
\end{Lemma}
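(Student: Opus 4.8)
The plan is to regard $D$ as a planar graph whose vertices are the base points on the bounding circle together with the chord crossings, whose edges are the boundary-circle arcs between consecutive base points and the segments into which the crossings cut the chords, and whose faces are exactly the complementary regions to be classified. By Lemma~\ref{PropertiesOfD}(3) every crossing is $4$-valent and transverse, and by Lemma~\ref{PropertiesOfD}(1),(4) the two chords meeting at a crossing come one from $D_1$ and one from $D_2$; hence, reading around the boundary of any face, the chord segments alternate between $D_1$ and $D_2$ at each crossing they traverse. I would also record at the outset that a boundary-circle arc appearing on a face contains no base point in its interior (otherwise a chord would emanate from that base point into the face and subdivide it), so each such arc runs between two base points that are consecutive in the cyclic order of Lemma~\ref{PropertiesOfD}(2). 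We may further assume $D_1$ and $D_2$ are in minimal position, so that no bigon of crossings bounds a disk disjoint from the puncture.

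With this setup I would split the argument according to the number of boundary-circle arcs on the face and the position of the puncture. For an \emph{interior} face (no boundary arc) all sides are chord segments, so by alternation their number is even, say $2n$. If the face contains the puncture, I would invoke Lemma~\ref{ChordsOfD}: the maximum-length chords $c_1,c_2$ are the innermost chords about the puncture and, being the two chords that wrap it, they cross in exactly two points, so the puncture face is the bigon they bound, which is type \textit{I}. If the face is interior but puncture-free, then by minimal position no pair of its chords crosses twice, so $n\geq 2$ and the face is type \textit{II}.

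For faces meeting the circle I would first bound the number of boundary arcs at two, deducing this from the alternating blocks of $2(g-1)$ base points in Lemma~\ref{PropertiesOfD}(2) together with the three-family parallel structure of each $D_i$ (Proposition~\ref{TypesOfChords}, Proposition~\ref{UniquelyDetermined}): three or more arcs would force either a chord isotopic into a single region or an extra leaf, contradicting Theorem~\ref{Oertel2} and Proposition~\ref{TreeLeaves}. A face with exactly one boundary arc, lying between consecutive base points $b,b'$, is then controlled by whether $b,b'$ occupy the same block. If they lie in different blocks the two flanking chords come from different sub-diagrams, the sides alternate $D_1D_2\cdots$ with equally many from each, and the face is type \textit{IV}. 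If $b,b'$ lie in the same block the flanking chords both come from one $D_i$; since these two $D_i$-chords are parallel and do not cross, a single $D_j$-chord crossing both caps the face, giving the three-chord triangle of type \textit{III}. A face with two boundary arcs I would show occurs only adjacent to a length-$1$ chord $c_i'$ or $c_i''$: the two short arcs flanking its base points, two further $D_i$ segments, and one crossing $D_j$ segment assemble into the six-sided type \textit{V}, matching Figure~\ref{TypesOfRegions}.

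The step I expect to be the main obstacle is the same-block one-arc case, namely excluding a longer alternating chain $D_iD_jD_iD_j\cdots$ in place of the length-three chain of type \textit{III}. Here I would argue that the two flanking $D_i$-chords are parallel members of one family (Lemma~\ref{ChordsOfD}, Proposition~\ref{UniquelyDetermined}), so any $D_j$-chord entering the strip between them must cross both; the innermost such crossing chord therefore caps the region after a single $D_j$ side, leaving exactly the three chord segments of type \textit{III}. Bounding the number of boundary arcs by two, and confirming that the only two-arc configuration is the one produced by a length-$1$ chord, will likewise rest on this explicit parallel structure of $D_1$ and $D_2$ rather than on purely local alternation, which is why the geometric description of the chord families, and not just Lemma~\ref{PropertiesOfD}, is the crux of the verification.
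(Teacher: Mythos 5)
Your proposal follows essentially the same route as the paper: classify the complementary faces by their number of boundary-circle segments, use the alternation and minimal-position properties from Lemma~\ref{PropertiesOfD} to identify the puncture face as the type \textit{I} bigon and to discard other bigons, and use the parallel-family structure of each $D_i$ (Proposition~\ref{TypesOfChords}, Lemma~\ref{ChordsOfD}) to cap the number of boundary arcs at two and sort the remaining faces into types \textit{II}--\textit{V}. Your write-up is somewhat more explicit than the paper's in the one-arc and two-arc case analysis (same-block versus different-block flanking chords), but the underlying argument and the lemmas invoked are the same.
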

	\begin{proof}
		Observe that $D$ contains exactly two chords of maximum possible length $c_1$ and $c_2$ coming from the chords of maximum possible length in $D_1$ and $D_2$, respectively. The puncture of $D$ is contained in exactly one region (see e.g. Figure~\ref{SinglePPCD}). Note that $c_1$ and a segment of the boundary circle in $D_1$ bounds the region containing the puncture. Similarly for $c_2$ in $D_2$. Thus, this implies that the puncture of $D$ lies in exactly one region bounded by segments of $c_1$ and $c_2$. In particular, this is a type \textit{I} region.
		
		Regions of $D$ are bounded by segments of chords of $D_1$, $D_2$, and segments of the boundary circle. By Lemma~\ref{PropertiesOfD}, all regions are bounded by alternating segments and boundary segments. We eliminate the region bounded by exactly one segment of a chord of $D_1$ and one segment of a chord of $D_2$ which does not contain the puncture. In this case, we may isotope the chords to have no intersections using the disk bounded by the segments. 
		
		We eliminate regions bounded by more than 2 segments of the boundary circle. By Proposition~\ref{TypesOfChords}, $D_1$ and $D_2$ have only one region which has more than 2 segments from the boundary circle. The region in $D_i$ is bounded by exactly three chords, each with the property that it is parallel to exactly one of $c_i, c_i',$ and $c_i''$. Thus, the region contains base points opposite of each other on the boundary circle. However, by Lemma~\ref{ChordsOfD} there exists a chord of $D_j$ which separates these base points. This chord partitions the region into two regions, each of which has at most 2 segments from the boundary circle. Thus, the only possible types of regions are of one of types \textit{I, II, III, IV}, or \textit{V}.
	\end{proof}

\begin{Lemma}\label{TypeIV}
	Let $D$ be as in Lemma~\ref{PropertiesOfD}. Then, there exists a region of type \textit{IV} for $n \geq 2$.
\end{Lemma}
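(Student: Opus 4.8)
The plan is to construct the required region explicitly from the four chords $c_1', c_1'', c_2', c_2''$ furnished by Lemma~\ref{ChordsOfD}. Recall that $c_1'$ and $c_1''$ are consecutive chords of $D_1$ parallel to $c_1$, adjacent at one end, whose opposite endpoints straddle $2(g-1)$ consecutive base points of $D_2$; by property \textit{2} of Lemma~\ref{PropertiesOfD} these base points form exactly one of the two $D_2$-blocks, so I would view $c_1'$ and $c_1''$ as the two walls of a band $B_1$ whose open end spans a whole $D_2$-block $\beta$. Symmetrically, $c_2'$ and $c_2''$ are the walls of a band $B_2$ whose open end spans a $D_1$-block $\alpha$.

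First I would record a rigidity property of these bands. Since $c_1'$ and $c_1''$ are consecutive parallels of $c_1$, no base point of $D_1$ lies strictly between them; combined with property \textit{1} of Lemma~\ref{PropertiesOfD} (chords of $D_1$ meet only chords of $D_2$), this shows that no chord of $D_1$ enters the interior of $B_1$, so every chord crossing $B_1$ is a chord of $D_2$ issuing from the block $\beta$ at its mouth. The symmetric statement holds for $B_2$. In particular, the only chords that can cross the walls of $B_1$ are walls or chords of $B_2$, and conversely.

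Next I would pin down the linking of the two bands. Because $g>2$, each block $\alpha,\beta$ contains $2(g-1)\ge 4$ base points, and the cyclic positions forced by Lemma~\ref{ChordsOfD} place the closed (adjacent) end of each band inside the open end of the other; hence all four walls mutually cross, with the crossings occurring in the alternating pattern guaranteed by property \textit{4} of Lemma~\ref{PropertiesOfD}. Tracing the bounding circle along the mouth of $B_1$ together with the two walls of $B_2$ that cut across it, the chords $c_1',c_1'',c_2',c_2''$ and a single arc of the circle cut out a region whose boundary reads, in cyclic order, an arc of the circle, a $D_1$-segment, a $D_2$-segment, a $D_1$-segment, and a $D_2$-segment. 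This is precisely a type \textit{IV} region with $n=2$, and its boundary arc is flanked by a $D_1$- and a $D_2$-segment as the definition requires. To finish I would invoke Lemma~\ref{RegionsOfD} to confirm the region is admissible and use the participation of \emph{both} walls of each band to exclude the degenerate alternatives (a purely chord-bounded type \textit{II} crossing region, or a type \textit{III} region meeting only one band).

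I expect the crux to be the third step: establishing the precise linking of $B_1$ and $B_2$ and verifying that the region I isolate genuinely meets the boundary circle with the full alternating $D_1$--$D_2$ pattern, rather than collapsing to the central type \textit{II} crossing region or to a one-sided type \textit{III} region. This hinges on reading off the exact cyclic positions of the endpoints of $c_1',c_1'',c_2',c_2''$ from Lemmas~\ref{PropertiesOfD} and~\ref{ChordsOfD}, and it is here that the hypothesis $g>2$, which forces blocks of size at least $4$, is essential.
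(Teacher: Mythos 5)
Your proposal starts from the same place as the paper (the four chords $c_1',c_1'',c_2',c_2''$ of Lemma~\ref{ChordsOfD}), but the step you yourself flag as the crux is where it breaks. The claim that ``all four walls mutually cross'' is not what the configuration forces, and it is incompatible with producing a type \textit{IV} region from these four chords. A type \textit{IV} region with $n=2$ is a pentagon with exactly two vertices on the circle and three interior crossings; it therefore requires exactly \emph{three} of the four $D_1$--$D_2$ pairs to cross, with the fourth pair not crossing and instead having adjacent endpoints on the circle --- that adjacency is what supplies the boundary arc. This is precisely what the paper establishes: $c_1''$ crosses $c_2''$, $c_2'$ crosses $c_1'$, $c_1'$ crosses $c_2''$, but $c_2'$ does \emph{not} cross $c_1''$ (its base points lie on one side of $c_1''$, with one of them adjacent to an endpoint of $c_1''$). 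If instead both walls of each band crossed both walls of the other in the grid pattern you describe, the central complementary region would be a type \textit{II} quadrilateral and the region at the mouth of $B_1$ would be type \textit{III} (arc plus two $D_1$-segments and one $D_2$-segment); no pentagon with the alternating pattern appears, so your final ``exclusion of degenerate alternatives'' cannot succeed.

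There are two further problems feeding into this. First, the cyclic positions are not as you assert: the closed end of $B_1$ sits in the $D_1$-block that is \emph{not} at the mouth of $B_2$ (and symmetrically), which is exactly why one wall of each band misses one wall of the other. Second, your argument reads crossings off the cyclic interleaving of endpoints, but in this diagram that is not sufficient: one wall of each band is (or may be) the maximum-length chord itself, which has adjacent endpoints yet wraps around the puncture, so for instance $c_1'$ and $c_2'$ cross (twice, cobounding the type \textit{I} bigon) even though their endpoint sets do not interleave. The paper's proof handles this by arguing via which side of a separating chord the puncture and the relevant base points lie on (``$c_2'$ is between $c_1'$ and $c_1''$''), which is the ingredient your proposal is missing.
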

	
	\begin{proof}
		By Lemma~\ref{ChordsOfD}, we have the existence of chords $c_1'$, $c_1''$, and $c_2'$ $c_2''$. In particular, this implies that one of $c_1'$ or $c_1''$ partitions the base points of $D_2$ into two sets each containing $2(g - 1)$ base points. Similarly, one of $c_2'$ or $c_2''$ partitions the base points of $D_1$ into two sets, based on the complement of such a chord, each containing $2(g - 1)$ base points. Without loss of generality, assume that $c_1''$ and $c_2''$ have this property (see e.g. Figrue~\ref{ExistenceOfTypeIV}).

\begin{figure}[h!]
		\centering
		\includegraphics[viewport = 0 230 620 730, scale = 0.4, clip]{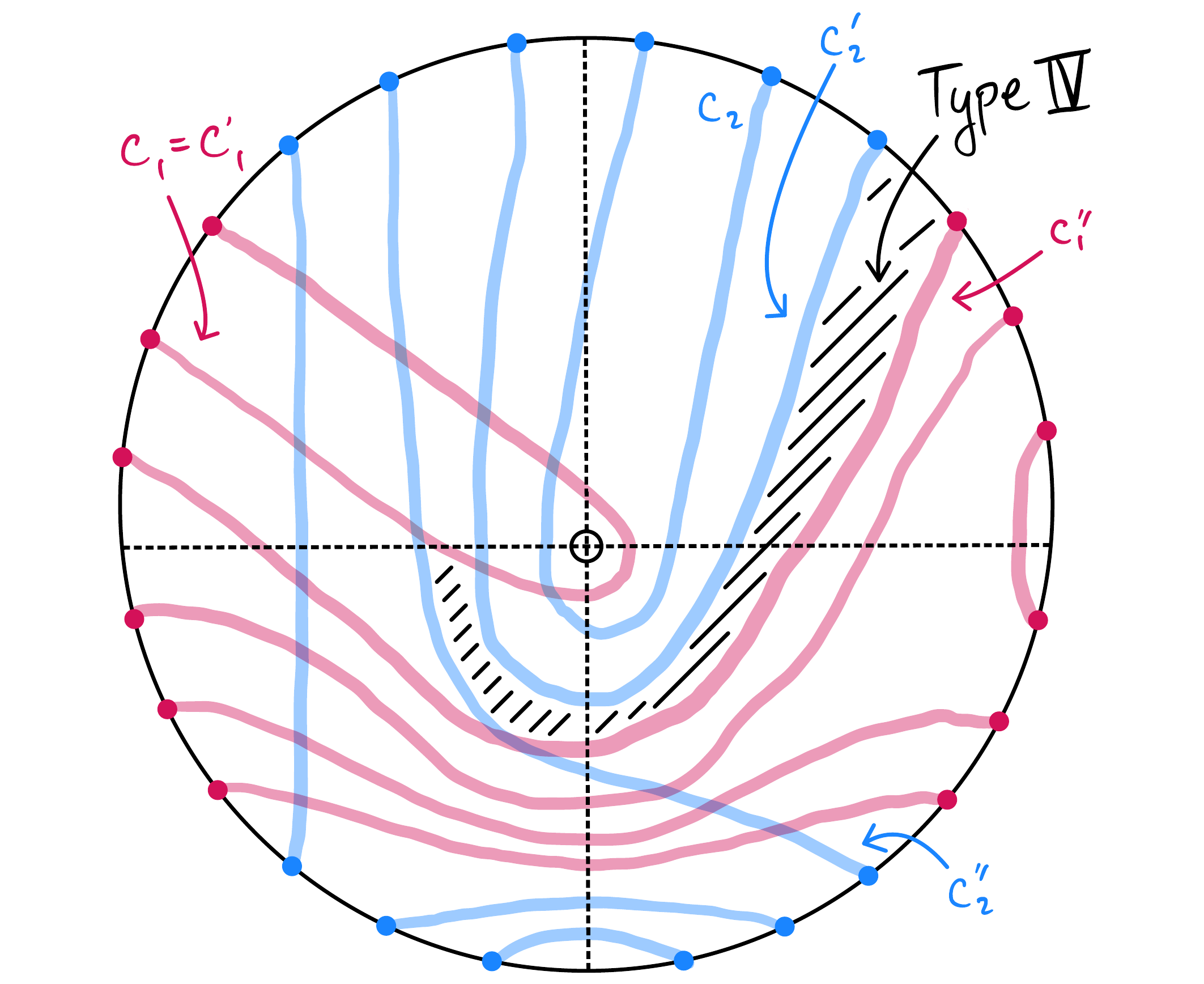}
		\caption{An example of a type \textit{IV} in $D$ with $D_1$ and $D_2$ drawn in red and blue, respectively.}
		\label{ExistenceOfTypeIV}
\end{figure}

		Thus, $c_1''$ and $c_2''$ intersect transversely exactly once. Moreover, since $c_1''$ partitions the base points of $D_2$, then $c_2'$ has its base points on one side of $c_1''$. In particular, $c_2'$ has exactly one base point adjacent to $c_1''$ and $c_2'$ does not intersect $c_1''$. Now, $c_2'$ must intersect $c_1'$ since $c_2'$ is between $c_1'$ and $c_1''$ which are both parallel to $c_1$. Thus, we have a region bounded by the following: a segment of the boundary circle between the adjacent base points of $c_2'$ and $c_1''$, a segment of $c_1''$, a segment of $c_2''$, a segment of $c_1'$, and a segment of $c_2'$. Therefore, we have the existence of a region of type \textit{IV}.
	\end{proof}

We now prove that for any two punctured partitioned chord diagrams corresponding to surfaces of genus greater than 2 and having their maximum possible length chords in distinct regions give rise to distinct, non-isotopic surfaces. This completes the proof of the case that for punctured partitioned chord diagrams corresponding to surfaces of genus greater than 2, these surfaces are non-isotopic.

\begin{Lemma}\label{Pockets}
	Pocket regions of two surfaces $S_1$ and $S_2$ are bounded by exactly two components: one component from $S_1$ and the other from $S_2$, each of which are connected.
\end{Lemma}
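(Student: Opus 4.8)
The plan is to read the boundary structure straight out of the pocket alternative in Proposition~\ref{Waldhausen}, and then to confirm via the diagram $D$ that nothing else attaches to the region. Recall that the pocket case of Proposition~\ref{Waldhausen} is exactly the one in which $\tilde F$ and $\tilde G$ are disks; concretely the embedded parallelism $\tilde H \times I$ is then a $3$-ball $R$ with $\tilde F = \tilde H \times \{0\} \subseteq S_1$ and $\tilde G$ the closure of $\partial(\tilde H \times I) - \tilde H \times \{0\}$ lying in $S_2$. I would first observe that $\tilde F$ is a disk by hypothesis and that $\tilde G$ is the complement of an open disk in the $2$-sphere $\partial R$, hence also a disk; so $\partial R = \tilde F \cup \tilde G$ is the union of exactly two connected faces, one in $S_1$ and one in $S_2$, meeting along the single circle $\partial \tilde F = \partial \tilde G \subseteq S_1 \cap S_2$. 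This is the assertion of the lemma.

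The point that still needs checking is that, in our configuration, the pocket region does meet each surface in a single face rather than returning to it; for this I would use the superimposed diagram $D$. By Lemma~\ref{PropertiesOfD} the chords of $D_1$ and $D_2$ meet transversely, $4$-valently, and in alternating fashion, and each curve of $S_1 \cap S_2$ is the meridian recorded by one such intersection point, so that a face of $\partial R$ on $S_i$ is cut out by segments of chords of $D_i$. By Lemma~\ref{RegionsOfD} every complementary region of $D$ is of one of the types \textit{I}--\textit{V}. A pocket, having a single face on each surface, must correspond to a region meeting the chords of $D_1$ in a single segment and those of $D_2$ in a single segment; any region bounded by two or more non-adjacent $D_1$-segments (respectively $D_2$-segments) has more than one face on $S_1$ (respectively $S_2$) and so is not a pocket; such regions include the type \textit{II} and type \textit{IV} regions with $n \geq 2$, which is consistent with what the later analysis must rule out.

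The main obstacle I anticipate is the faithful dictionary between the two-dimensional regions of $D$ and the genuine three-dimensional complementary regions of $S_1 \cup S_2$. In particular one must verify that adjacency of chord segments in $D$ corresponds to connectedness of the resulting face on the surface, and, most delicately, that the latitude permitted in the pocket case of Proposition~\ref{Waldhausen} for $S_1$ to meet the interior of $\tilde G$ does not split $\tilde G$ into several faces of $R$. I expect this to be controlled by transversality together with the fact that $\tilde H$ is a disk, so that $R$ is a product over a disk and its two faces are forced to be the disks $\tilde F$ and $\tilde G$ identified above.
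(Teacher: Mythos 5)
Your core idea coincides with the paper's: a pocket is a product region $\tilde H \times I$ furnished by Proposition~\ref{Waldhausen}, and its boundary splits as $\tilde H \times \{0\}$ (one connected face, on $S_1$) together with the closure of the remainder (one connected face, on $S_2$). The paper's proof is exactly this observation and nothing more.

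There is, however, a concrete error in your reading of Proposition~\ref{Waldhausen}. The alternative ``either $\tilde G \cap F = \partial \tilde G$, or $\tilde F$ and $\tilde G$ are disks'' is a dichotomy about how the parallel pieces sit inside $F$ and $G$; it is not a characterization of which case is a pocket. In both branches the conclusion is an embedded product $\tilde H \times I$, and the paper treats a pocket as such a product for an \emph{arbitrary} surface $\tilde H$ (see, e.g., Lemma~\ref{Annulus}, where the pockets are products over non-disk subsurfaces). By restricting to the branch where $\tilde F$ and $\tilde G$ are disks you prove the lemma only for $3$-ball pockets, and your connectedness argument --- ``$\tilde G$ is the complement of an open disk in the $2$-sphere $\partial R$'' --- is specific to that case. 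The later applications need the full statement: Lemma~\ref{Genus2Case1} and Proposition~\ref{NonIsotopicD} rule out candidate product regions whose putative faces are annuli or disconnected unions of annuli, which are not boundaries of $3$-balls. The fix is one line --- for connected $\tilde H$, the set $\partial(\tilde H \times I) - \tilde H \times \{0\} = \tilde H \times \{1\} \cup (\partial \tilde H \times I)$ is connected because each annulus over a component of $\partial \tilde H$ meets $\tilde H \times \{1\}$ --- but as written your proposal covers only a special case. Your second and third paragraphs, translating the statement into regions of the superimposed diagram $D$, are not needed for this lemma (the paper does not use $D$ here), and you correctly flag that the dictionary they rely on is not established; I would drop them.
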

	\begin{proof}
		By Proposition~\ref{Waldhausen}, pocket regions are regions homeomorphic to a product $H \times I$, for some surface $H$. Observe that pocket regions are bounded by one component of $S_1$ and one component of $S_2$ intersecting in a single simple closed curve. So, we have that $H \times \{0\}$ is the component of $S_1$ and $\partial(H \times I) - (H \times \{0\})$ is the component of $S_2$. Thus, pocket regions of $S_1$ and $S_2$ are bounded by exactly one component coming from $S_1$ and another component from $S_2$, each of which are connected.
	\end{proof}

\begin{Lemma}\label{TypesIandII}
	Let $D_1$ and $D_2$ be two punctured partitioned chord diagrams with maximum possible length chords $c_1$ and $c_2$, respectively. Let $D$ be the superimposed union of $D_1$ and $D_2$. Suppose that $c_1$ and $c_2$ have their base points in distinct regions of $D$. Then, type \textit{I} and \textit{II} regions are not pocket regions.
\end{Lemma}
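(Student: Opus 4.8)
The plan is to show that no region of type \textit{I} or \textit{II} can be a pocket region in the sense of Proposition~\ref{Waldhausen}, so that the parallelism needed to isotope $S_1$ to $S_2$ cannot be realized inside such regions. Throughout I would use the geometric interpretation of $D$ from Section~\ref{RegNbhdPPCD}: the puncture represents the cusp torus $\partial M$, each segment of a chord of $D_i$ is a slice of a tube lying on $S_i$, and each transverse intersection of a $D_1$-chord with a $D_2$-chord is a slice of a meridian circle along which $S_1$ meets $S_2$. By Lemma~\ref{Pockets}, a pocket region is a product $H \times I$ whose boundary consists of exactly one connected piece $H \times \{0\} \subset S_1$ and one connected piece $\partial(H\times I) - (H \times \{0\}) \subset S_2$, with $S_1 \cap S_2$ a single simple closed curve; I would check that the combinatorics of each region type is incompatible with this description.

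For a region of type \textit{I}, I would argue that it abuts $\partial M$. By definition such a region contains the puncture, and under the identification above the puncture is a slice of the cusp torus; hence in $M$ the corresponding complementary region has a face lying on $\partial M$. Since $S_1$ and $S_2$ are closed, this face belongs to neither $S_1$ nor $S_2$, producing a third boundary component in addition to the faces coming from $c_1$ and $c_2$. This contradicts the conclusion of Lemma~\ref{Pockets} that a pocket region is bounded by exactly two components, one from each surface. Therefore no type \textit{I} region is a pocket region.

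For a region of type \textit{II} with $2n$ boundary segments, $n \geq 2$, I would first observe that the $n$ segments coming from $D_1$ lie on $n$ \emph{distinct} chords of $D_1$: a single chord is an embedded arc and the region is a disk, so one chord contributes at most one arc to the region's boundary. By the geometric interpretation these $n$ segments lie on $n$ distinct tubes of $S_1$, and by property \textit{4} of Lemma~\ref{PropertiesOfD} they are separated in the boundary word by the $n$ segments coming from $D_2$. Consequently the face of the complementary region lying on $S_1$ is a union of $n \geq 2$ pieces of distinct tubes, pairwise separated by $S_2$-tubes, so this face is disconnected. Equivalently, the $2n \geq 4$ corners are slices of at least two distinct meridian circles of $S_1 \cap S_2$, so $S_1 \cap S_2$ is not a single simple closed curve. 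Either formulation contradicts Lemma~\ref{Pockets}.

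The main obstacle I anticipate is the type \textit{II} connectedness step: one must justify rigorously that the $n$ tube-segments, although all lying on the single connected surface $S_1$, contribute a \emph{disconnected} face to the boundary of the complementary region, and that distinct $D_1$-chords really do give distinct tubes that remain separated within this region. This requires combining the alternating structure from Lemma~\ref{PropertiesOfD} with the fibered product structure of the cusp neighborhood from Section~\ref{RegNbhdPPCD}, so that the alternating pattern on the boundary circle forces the separation on $\partial M$; getting this bookkeeping right, rather than the essentially immediate type \textit{I} case, is where the real work lies.
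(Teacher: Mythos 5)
Your proposal is correct and follows essentially the same route as the paper: for type \textit{I} the region meets the cusp torus $\partial M$, so its boundary is not accounted for by one piece of $S_1$ and one of $S_2$; for type \textit{II} the corresponding solid torus has $n\geq 2$ boundary annuli from each surface, alternating, which violates Lemma~\ref{Pockets}. The only cosmetic difference is that you spend effort showing the $n$ segments lie on distinct chords, which is not actually needed --- even if two segments came from the same tube, the alternating pattern already forces $\partial R \cap S_1$ to be disconnected.
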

	\begin{proof}
	 	We show that the complementary regions of $D$, along with the complementary regions of the $S_C$'s in the tubing description, are neither parallel nor pocket regions. By Lemma~\ref{PropertiesOfD}, the arrangement of the base points corresponds to all of the 4-punctured spheres being nested. Without loss of generality, we may assume that the $S_C$'s of $S_1$ are innermost, in the projection onto the projection sphere, and the $S_C$'s of $S_2$ are outermost (see Figure~\ref{NestedS_Cs}). Thus, with this arrangement, all pocket regions are contained in a collared neighborhood of $K$, represented by $D$. We show that any of the type \textit{I} and \textit{II} regions of $D$ do not correspond to pocket regions.

\begin{figure}[h!]
	\includegraphics[viewport = 0 150 610 690, scale = 0.4, clip]{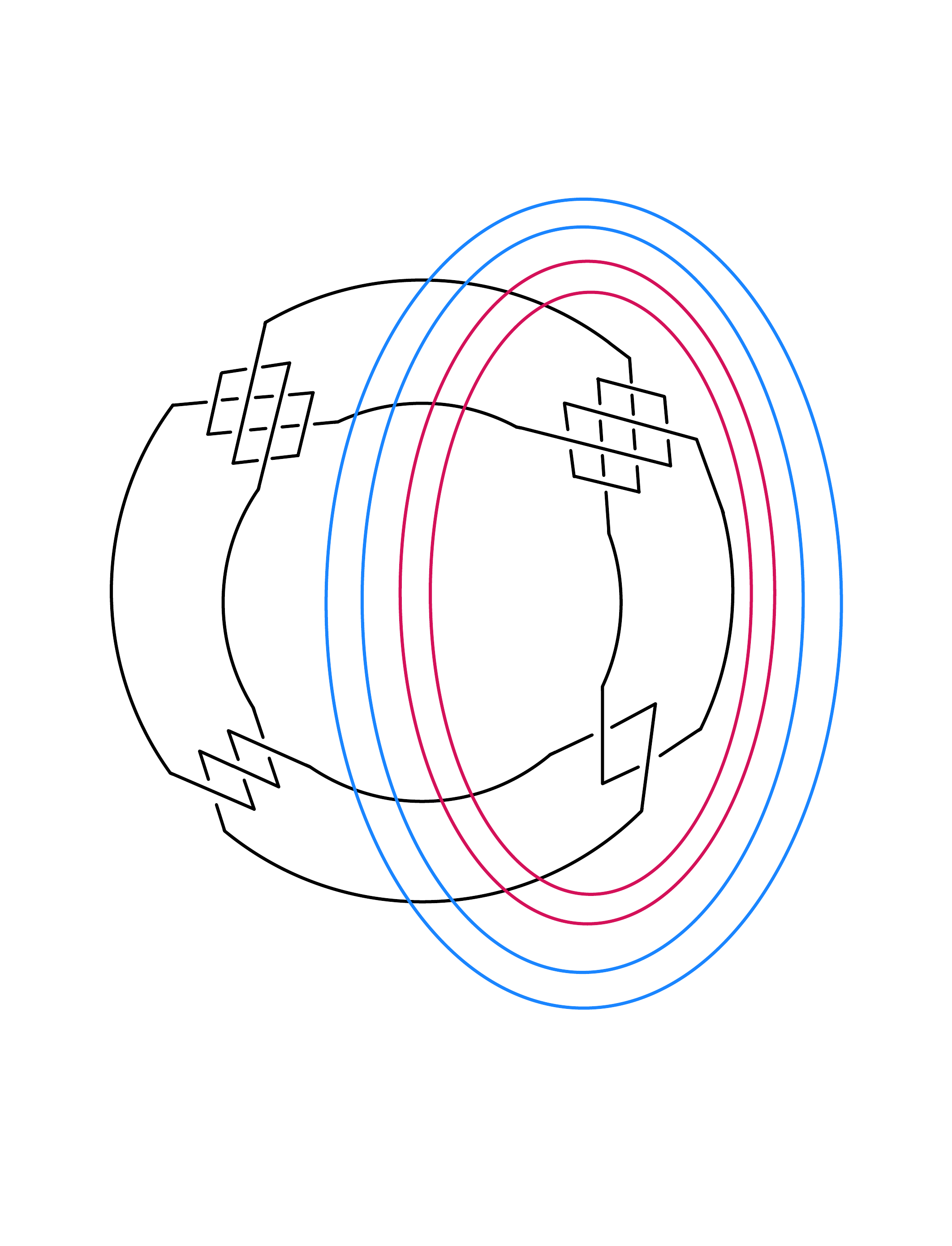}
	\caption{An example of arranging the $S_C$'s of $S_1$ and $S_2$ drawn in red and blue, respectively.}
	\label{NestedS_Cs}
\end{figure}

		There is exactly one type \textit{I} region. This region corresponds to a cusp neighborhood of the knot. This is not a product region, since the components of $S_1$ and $S_2$ are annuli and do not contribute to the portion of the region corresponding to the knot. The regions of type \textit{II} are also not pocket regions. This is because these regions in $D$ correspond to solid tori in the knot complement whose boundary torus has four connected components: two from $S_1$ and two from $S_2$. Thus, by Lemma~\ref{Pockets}, this is not a pocket region.
	\end{proof}

We now prove the remaining cases of regions of types \textit{III, IV} and \textit{V}.

\begin{Prop}\label{NonIsotopicD}
	Let $D_1$ and $D_2$ be two punctured partitioned chord diagrams corresponding to closed, connected, essential, orientable surfaces of genus greater than 2. Let $c_1$ and $c_2$ denote the maximum possible length chords of $D_1$ and $D_2$, respectively. Let $D$ be the superimposed union of $D_1$ and $D_2$. If $c_1$ and $c_2$ have their base points in distinct regions of $D$, then the corresponding surfaces $S_1$ and $S_2$ of $D_1$ and $D_2$, respectively, are non-isotopic.
\end{Prop}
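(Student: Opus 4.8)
The plan is to argue by contradiction using Waldhausen's Proposition~\ref{Waldhausen}. Suppose that $S_1$ and $S_2$ are isotopic. Both surfaces are closed, so $\partial S_1 = \partial S_2 = \emptyset$ and the boundary hypothesis of Proposition~\ref{Waldhausen} is vacuous; after a general position isotopy we may take $S_1 \cap S_2$ to be a finite collection of disjoint simple closed curves meeting transversally. Since $M = S^3 \setminus \mathring{N}(K)$ is irreducible and both surfaces are incompressible, an isotopy carrying $S_1$ onto $S_2$ furnishes a map $f \colon H \times I \to M$ with $H = S_1$, with $f|_{H \times 0}$ the identity onto $S_1$, and with $f(\partial(H\times I) - H \times 0) \subset S_2$ (here $\partial(H \times I) - H\times 0 = H \times 1$ because $H$ is closed). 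Proposition~\ref{Waldhausen} then produces an embedded product region $\tilde H \times I$ in $M$ with $\tilde H \times 0 \subset S_1$ and the remaining boundary in $S_2$, that is, a pocket region. To reach a contradiction it therefore suffices to show that no complementary region of the superimposed configuration of $S_1$ and $S_2$ is a pocket region.

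First I would record the configuration combinatorially. Arranging the $S_C$'s of $S_1$ to be innermost and those of $S_2$ outermost as in the proof of Lemma~\ref{TypesIandII}, every pocket region is contained in the collared neighborhood of $K$ and is realized by a complementary region of the superimposed diagram $D$. By Lemma~\ref{RegionsOfD} each such region is of type \textit{I}, \textit{II}, \textit{III}, \textit{IV}, or \textit{V}, and Lemma~\ref{TypesIandII} already excludes types \textit{I} and \textit{II}. It then remains to treat types \textit{III}, \textit{IV}, and \textit{V}, and the key tool is Lemma~\ref{Pockets}: a pocket region must be bounded by exactly two connected pieces, one lying on $S_1$ and one on $S_2$.

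The heart of the argument is the type \textit{IV} region with $n \geq 2$, whose existence is guaranteed by Lemma~\ref{TypeIV}. Such a region is bounded by $2n+1$ segments in an alternating fashion, with $n \geq 2$ segments coming from chords of $D_1$ and $n \geq 2$ from chords of $D_2$. Translating through the geometric interpretation of $D$, the corresponding region in $M$ inherits boundary contributions from $S_1$ in at least two places and from $S_2$ in at least two places, arranged so that the pieces from a single surface do not join into one connected component. Hence its boundary has strictly more than two connected components, and by Lemma~\ref{Pockets} it cannot be a pocket region. I would then dispatch types \textit{III} and \textit{V} by the same bookkeeping: each carries one or two segments of the boundary circle, representing the cusp of $K$, together with chord segments from both $D_1$ and $D_2$, and in each case counting the connected boundary pieces on $S_1$ and on $S_2$ shows the region fails the two-component criterion of Lemma~\ref{Pockets}. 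With every region of types \textit{I} through \textit{V} excluded, no pocket region exists, contradicting the output of Proposition~\ref{Waldhausen}; therefore $S_1$ and $S_2$ are non-isotopic.

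The main obstacle I anticipate is the faithful translation between the combinatorics of the superimposed chord diagram $D$ and the three-dimensional boundary data of the associated regions in $M$: one must verify that an alternating pattern of $2n+1$ chord segments really does force the boundary of the corresponding solid region to split into more than two components on $S_1 \cup S_2$, rather than reconnecting through the part of the boundary lying on the cusp or through identifications forced by the tubing. Carrying out this connectivity count rigorously, and in particular distinguishing when several segments belong to the same chord versus to distinct parallel chords, is where the care lies, and it is exactly why the existence of a genuine type \textit{IV} region with $n \geq 2$, rather than merely $n = 1$, is essential to the proof.
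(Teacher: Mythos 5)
Your overall architecture matches the paper's: reduce to the case of isotopic $S_C$'s, nest the spheres with those of $S_1$ innermost, invoke Lemma~\ref{RegionsOfD} to enumerate region types, dispose of types \textit{I} and \textit{II} via Lemma~\ref{TypesIandII}, and kill the type \textit{IV} region with $n\geq 2$ (Lemma~\ref{TypeIV}) by the two-component criterion of Lemma~\ref{Pockets} before applying Proposition~\ref{Waldhausen}. That part is sound and is exactly what the paper does.

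The gap is in your treatment of types \textit{III} and \textit{V}. You propose to ``dispatch them by the same bookkeeping,'' i.e.\ by showing the boundary fails the two-component count of Lemma~\ref{Pockets}. But this count does not fail for those regions: a type \textit{III} (or \textit{V}) region meets the boundary circle of $D$, so in $M$ it extends past the collared neighborhood and its boundary piece on $S_i$ reconnects through an $S_C$ into a \emph{single} connected component, while the lone segment from $D_j$ contributes a single annulus on $S_j$. One gets exactly two connected boundary pieces, one on each surface, so Lemma~\ref{Pockets} yields no contradiction --- this is precisely the ``reconnecting through the tubing'' obstacle you flag at the end but do not resolve. The paper closes this with a different argument: the $S_j$-side of the boundary is an annulus with Euler characteristic $0$, while the $S_i$-side contains an $S_C$ and hence has negative Euler characteristic; since the two sides of a product $H\times I$ must have equal Euler characteristic, the region cannot be a pocket. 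You need this (or an equivalent invariant distinguishing the two boundary pieces) to complete the types \textit{III} and \textit{V} cases; the connectivity count alone is insufficient there.
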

	 \begin{proof}		
		We may assume that $S_1$ and $S_2$ have isotopic 4-punctured spheres $S_C$'s, otherwise $S_1$ and $S_2$ are necessarily non-isotopic by Proposition~\ref{NonisotopicSCs}. By Lemma~\ref{TypesIandII}, regions of types \textit{I} and \textit{II} are not pocket regions. Moreover, the $S_C$'s of $S_1$ are innermost, in the projection onto the projection sphere, and the $S_C$'s of $S_2$ are outermost. Note that regions of types \textit{III} and \textit{IV} contain segments of the boundary circle. Thus, these pocket regions also consist of regions between the $S_C$'s. However, since the $S_C$'s are nested, then these regions consists of regions between the $S_C$'s and regions of types \textit{III, IV}, and \textit{V} of $D$.
		
		Observe that the regions of types \textit{III} and \textit{V} has exactly one horizontal boundary component which is an annulus (see blue component in Figure~\ref{TypesOfRegions}). Any horizontal boundary component of a region has non-positive Euler characteristic since it is a union of tubes and $S_C$'s. Any region which has a component of the boundary circle contains an $S_C$ and hence has negative Euler characteristic. Thus, \textit{III} and \textit{IV} regions are not pocket regions since the boundary components of the region have different Euler characteristics.
		
		Lastly, the regions of type \textit{IV} correspond to the region between an $S_C$ of $S_1$ and an $S_C$ of $S_2$. There is only one such region since all the $S_C$'s are nested with the $S_C$'s of $S_1$ innermost and the $S_C$'s of $S_2$ outermost. By Lemma~\ref{TypeIV}, there exists a type \textit{IV} region with $n \geq 2$. This implies that the region between an $S_C$ of $S_1$ and an $S_C$ of $S_2$ has disconnected components. Thus, by Lemma~\ref{Pockets}, this is not a product region. Applying Proposition~\ref{Waldhausen} implies that $S_1$ and $S_2$ are non-isotopic, which completes the proof. 
	 \end{proof}

Combining Propositions~\ref{RegionsOfBS} and~\ref{NonIsotopicD}, we obtain the results of Theorem~\ref{main} for the case of closed, connected, essential surfaces of genus greater than 2. We finally prove the case of genus 2 surfaces. We first prove the following lemma.

\begin{Lemma}\label{Annulus}
	Let $S$ be an orientable surface which is not the $2$-sphere and $P = S \times I$. Suppose that $A$ is an incompressible annulus in $P$ with one component of $\partial A$ on $P \times \{0\}$ and the other on $P \times \{1\}$, each disjoint from $\partial S \times \{0\} \cup \partial S \times \{1\}$, respectively. Then, $P \setminus A$ is a pocket region.
\end{Lemma}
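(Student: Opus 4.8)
The plan is to show that the incompressible annulus $A$ can be isotoped to a \emph{vertical} annulus in the product $P = S \times I$, after which cutting along it manifestly yields a product. First I would record that, since $S$ is not the $2$-sphere, $P = S \times I$ is irreducible, so innermost-disk and compression arguments are available. The two boundary curves $\gamma_0 = A \cap (S\times\{0\})$ and $\gamma_1 = A\cap(S\times\{1\})$ are simple closed curves lying in the interiors of the two horizontal faces, and by hypothesis they are disjoint from the vertical boundary $\partial S \times I$. Since $A$ is incompressible, each $\gamma_i$ is essential in the corresponding face (otherwise a compressing disk would be produced).

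Next I would invoke the structure theorem for incompressible, $\partial$-incompressible surfaces in the trivial $I$-bundle $S \times I$ (Waldhausen, \cite{MR224099}): every such surface is isotopic either to a horizontal leaf $S \times \{t\}$ or to a vertical surface $c \times I$ for a $1$-manifold $c \subset S$. A horizontal leaf has its boundary on the vertical boundary $\partial S \times I$, whereas $\partial A$ lies in the interiors of the horizontal faces; hence $A$ cannot be horizontal. As $A$ is an annulus, the only remaining possibility is a vertical annulus $\gamma \times I$ with $\gamma$ a single simple closed curve in $S$, and the incompressibility of $A$ forces $\gamma$ to be essential in $S$.

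The one technical point is the $\partial$-incompressibility needed to apply the structure theorem, and this is the step I expect to require the most care. When $S$ is closed, $\partial P = (S\times\{0\}) \sqcup (S\times\{1\})$ has two components; an essential spanning arc of the annulus $A$ joins $\gamma_0$ to $\gamma_1$, so a $\partial$-compressing disk would need a boundary arc in $\partial P$ joining these two components, which is impossible, and $A$ is automatically $\partial$-incompressible. When $\partial S \neq \emptyset$, I would argue that a $\partial$-compression of the incompressible annulus $A$ would render $A$ $\partial$-parallel, forcing $\gamma_0$ and $\gamma_1$ to be isotopic into $\partial S \times \{0\} \cup \partial S \times \{1\}$; this contradicts the standing assumption that $\partial A$ is disjoint from $\partial S \times \{0\} \cup \partial S \times \{1\}$ (and essential). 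I would handle this by taking a $\partial$-compressing disk, analyzing its boundary arc on $\partial P$, and deriving the contradiction directly from the placement of $\partial A$.

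Finally, once $A$ is isotopic to the vertical annulus $\gamma\times I$, cutting $P$ along $A$ is homeomorphic to cutting $S$ along $\gamma$ and then taking the product with $I$, namely $P \setminus A \cong (S \backslash\backslash \gamma)\times I$. Setting $H = S \backslash\backslash \gamma$ exhibits $P \setminus A$ as a product $H \times I$, which by Proposition~\ref{Waldhausen} and Lemma~\ref{Pockets} is precisely a pocket region, completing the proof.
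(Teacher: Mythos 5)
Your proposal is essentially the paper's argument: the paper simply cites Lemma~3.4 of Waldhausen \cite{MR224099} to isotope $A$ to a vertical annulus and then observes that cutting a product along a union of fibers yields products; you re-derive that vertical positioning by hand from the structure theorem for incompressible, $\partial$-incompressible surfaces in $S\times I$. The one step that does not quite work as written is your $\partial$-incompressibility argument when $\partial S\neq\emptyset$: if a $\partial$-compression exists then $A$ is $\partial$-parallel and each $\gamma_i$ is \emph{parallel} to a component of $\partial S\times\{i\}$, but being parallel to the boundary is perfectly compatible with being \emph{disjoint} from $\partial S\times\{0\}\cup\partial S\times\{1\}$, so the stated hypothesis yields no contradiction (the lemma never assumes $\gamma_i$ is non-peripheral). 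This gap is harmless for the conclusion, since in that residual case $A$ is $\partial$-parallel to an annulus running over $\partial S\times I$ and is again isotopic to a vertical annulus $\gamma\times I$ with $\gamma$ boundary-parallel in $S$, so $P\setminus A\cong(S\backslash\backslash\gamma)\times I$ is still a union of products; you should just treat that case directly rather than by contradiction.
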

	\begin{proof}
		By Lemma 3.4 of \cite{MR224099}, $A$ is isotopic to a vertical annulus, constant on $S \times \{0\} \cup \partial S \times I$. Thus, since $A$ is a union of fibers, then $P \setminus A$ is a union of pocket regions. 
	\end{proof}

\begin{Lemma}\label{Genus2Case1}
	Let $D_1$ and $D_2$ be punctured partitioned chord diagrams which correspond to closed, connected, orientable genus 2 surfaces $S_1$ and $S_2$, respectively. Let $D$ be the superimposed union of $D_1$ and $D_2$. If $D_1$ and $D_2$ both contain chords of maximum possible length, $S_1$ and $S_2$ are non-isotopic. 
\end{Lemma}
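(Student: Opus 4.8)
The plan is to reduce the genus $2$ case to an application of Waldhausen's Proposition~\ref{Waldhausen}, following the template of Proposition~\ref{NonIsotopicD}. First I would dispense with the easy reduction: if $S_1$ and $S_2$ are tubings of non-isotopic $S_C$'s they are already non-isotopic by Proposition~\ref{NonisotopicSCs}, so I may assume that both surfaces tube the same isotopy class of $4$-punctured sphere. By Proposition~\ref{2TreeLeaves} a genus $2$ diagram has exactly one base point in each region and exactly two chords; if it contains a chord of maximum possible length $4g-5 = 3$, that chord necessarily joins the two base points of an \emph{adjacent} pair of regions, the puncture is forced onto the short side, and the non-crossing condition forces the remaining chord to have length $1$ between the opposite pair of regions. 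Hence such a diagram is completely determined by the adjacent region-pair carrying its length $3$ chord, and so two distinct diagrams of this kind always have their maximal chords $c_1$ and $c_2$ in distinct region-pairs. Thus only the analogue of the ``different regions'' case of Proposition~\ref{NonIsotopicD} can occur.

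Next I would form the superimposed diagram $D = D_1 \cup D_2$ and position, as in Lemma~\ref{PropertiesOfD}, the two copies of $S_C$ so that the $S_C$ of $S_1$ is innermost and that of $S_2$ is outermost in the projection. After putting $S_1$ and $S_2$ in general position and cutting along their intersection, an isotopy between them would, by Proposition~\ref{Waldhausen}, exhibit a product (pocket) region between a component of $S_1$ and a component of $S_2$. I would rule such a region out piece by piece, exactly as in Proposition~\ref{NonIsotopicD}: the piece corresponding to the cusp neighborhood of $K$ has a single horizontal boundary component and so is never a product, and the only remaining candidate is the region $S_C \times I$ trapped between the two nested copies of $S_C$, whose product structure is governed by Lemma~\ref{Annulus} (any incompressible spanning annulus there must be isotopic to a vertical one).

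The main obstacle is precisely this trapped region, since in genus $2$ there is a single base point per region and hence no chord parallel to the maximal chord; the Type~\textit{IV}-with-$n \ge 2$ disconnectedness furnished by Lemma~\ref{TypeIV} in the higher-genus argument is therefore unavailable. The replacement I would use is the combinatorial observation that the innermost tube of $S_i$, recorded by the maximal chord $c_i$, runs through exactly the three rational tangles that are not adjacent to its region-pair, i.e.\ it misses precisely the tangle lying between the two regions of $c_i$. When $c_1$ and $c_2$ lie in distinct region-pairs these two innermost tubes miss different tangles, so they traverse different sets of rational tangles; by Lemma~\ref{Annulus} they cannot be made simultaneously vertical in any product structure on the trapped region, so that region is not a pocket region and its two boundary pieces, one from $S_1$ and one from $S_2$, are not parallel. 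Proposition~\ref{Waldhausen} then yields that $S_1$ and $S_2$ are non-isotopic. Together with the factor $2$ coming from the two choices of $S_C$ in Proposition~\ref{MaxPPCDs}, this distinguishes the surfaces arising from all four genus $2$ diagrams that carry a maximal chord.
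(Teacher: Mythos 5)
Your reduction to the case of a common $S_C$, your combinatorial description of the genus $2$ diagrams carrying a maximal chord (length $3$ between adjacent regions, puncture on the short side, remaining chord of length $1$), and your identification of the real difficulty --- the region trapped between the two nested copies of $S_C$, where the Type \textit{IV}, $n\ge 2$ argument of Lemma~\ref{TypeIV} is unavailable --- all match the paper's setup. The gap is in your replacement for that argument. You assert that because the two innermost tubes traverse different sets of rational tangles, ``by Lemma~\ref{Annulus} they cannot be made simultaneously vertical in any product structure on the trapped region.'' But Lemma~\ref{Annulus} is a statement about an incompressible annulus \emph{properly embedded} in a product $S\times I$ with one boundary circle on each end; the tubes of $S_1$ and $S_2$ are not such annuli --- they are pieces of the \emph{boundary} of the trapped region --- so the lemma says nothing about them, and ``not simultaneously vertical'' does not contradict the region being a product. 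Worse, the claim that differing tangle-traversal data obstructs an isotopy is essentially the conclusion you are trying to prove, so as written the step is circular.

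What the paper does instead is construct, for the configuration of Figure~\ref{Genus2D1}, three auxiliary \emph{meridional} annuli inside the trapped region $R$ (represented in $D$ by arcs running from a chord of $D_1$ to a chord of $D_2$ within a single region, parallel to the boundary circle). Cutting $R$ along these annuli produces a component whose boundary contains two disjoint annuli of $S_1$ and two disjoint annuli of $S_2$; Lemma~\ref{Pockets} then says that component is not a pocket, and Lemma~\ref{Annulus} is applied in the correct direction --- if $R$ \emph{were} a product, the spanning annuli would be vertical and every complementary piece would again be a pocket --- to conclude $R$ itself is not a pocket. You would need to supply this (or an equivalent) construction. You also omit the case analysis on the relative position of the two maximal chords: the paper treats separately the configuration where the chords of $D_1$ and $D_2$ intersect (Figure~\ref{Genus2D5}), in which every complementary region is of type \textit{I}--\textit{V} and Proposition~\ref{NonIsotopicD} applies directly, and the configuration of Figure~\ref{Genus2D1}, which needs the annulus construction above. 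Finally, the closing remark about the factor of $2$ from the choice of $S_C$ belongs to Proposition~\ref{MaxPPCDs}, not to this lemma.
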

	\begin{proof}
		We may assume that $S_1$ and $S_2$ have isotopic $S_C$'s, otherwise $S_1$ and $S_2$ are necessarily non-isotopic by Proposition~\ref{NonisotopicSCs}. Suppose we have the case where $D$ contains two chords of maximum possible length, as shown in Figure~\ref{Genus2D1}. Then these two chords bound a type \textit{I} region, which is not a pocket region by Lemma~\ref{TypesIandII}. We consider the complementary region $R$ between the $S_C$'s of $S_1$ and $S_2$. 
		
\begin{figure}[h!]
\centering
	\begin{minipage}[b]{0.42\linewidth}
		\centering
		\includegraphics[viewport = 0 180 710 750, scale = 0.24, clip]{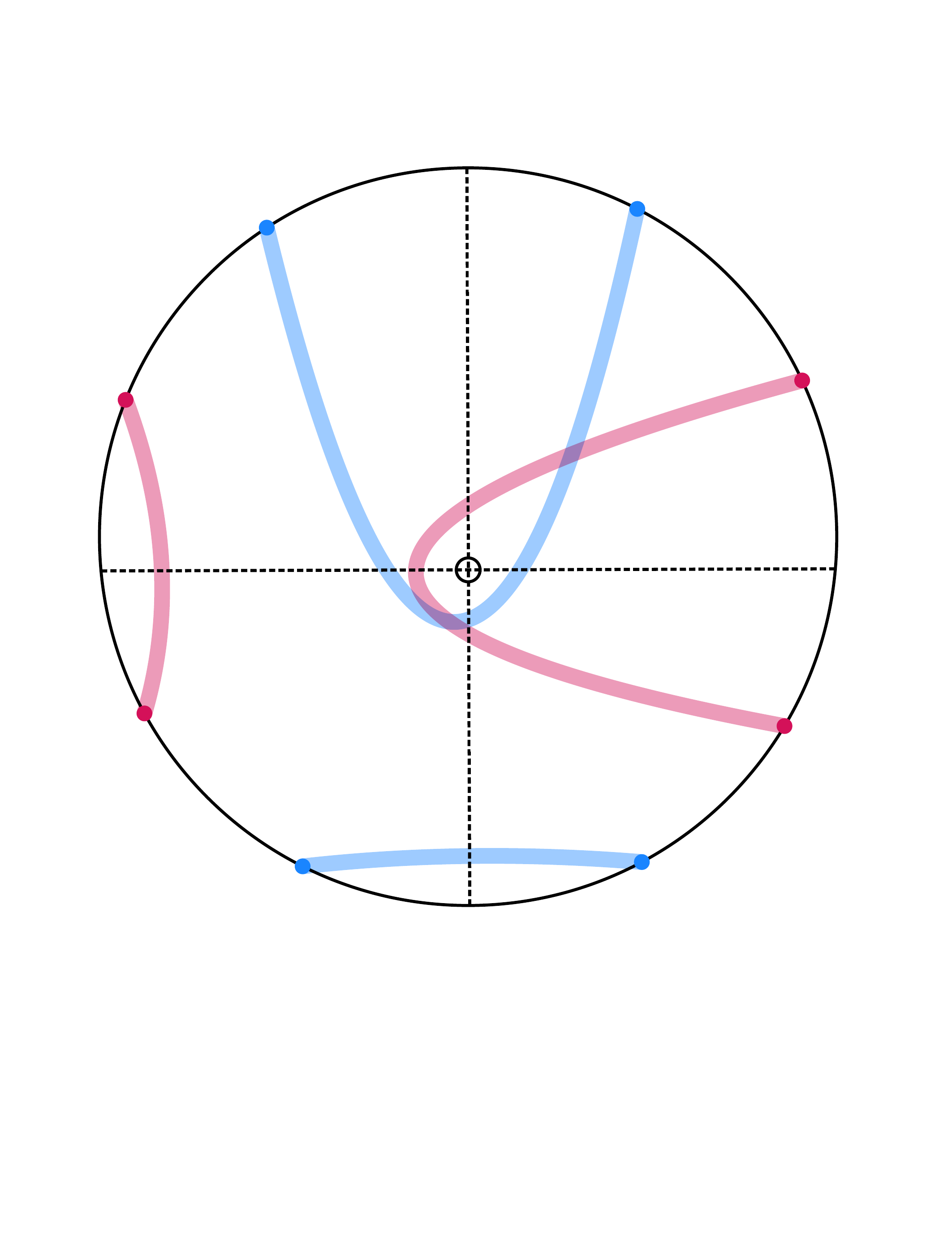}
		\caption{One possibility of $D_1$ and $D_2$, drawn in red and blue, respectively.}
		\label{Genus2D1}
	\end{minipage}
	\begin{minipage}[b]{0.42\linewidth}
		\centering
		\includegraphics[viewport = 0 140 710 750, scale = 0.24, clip]{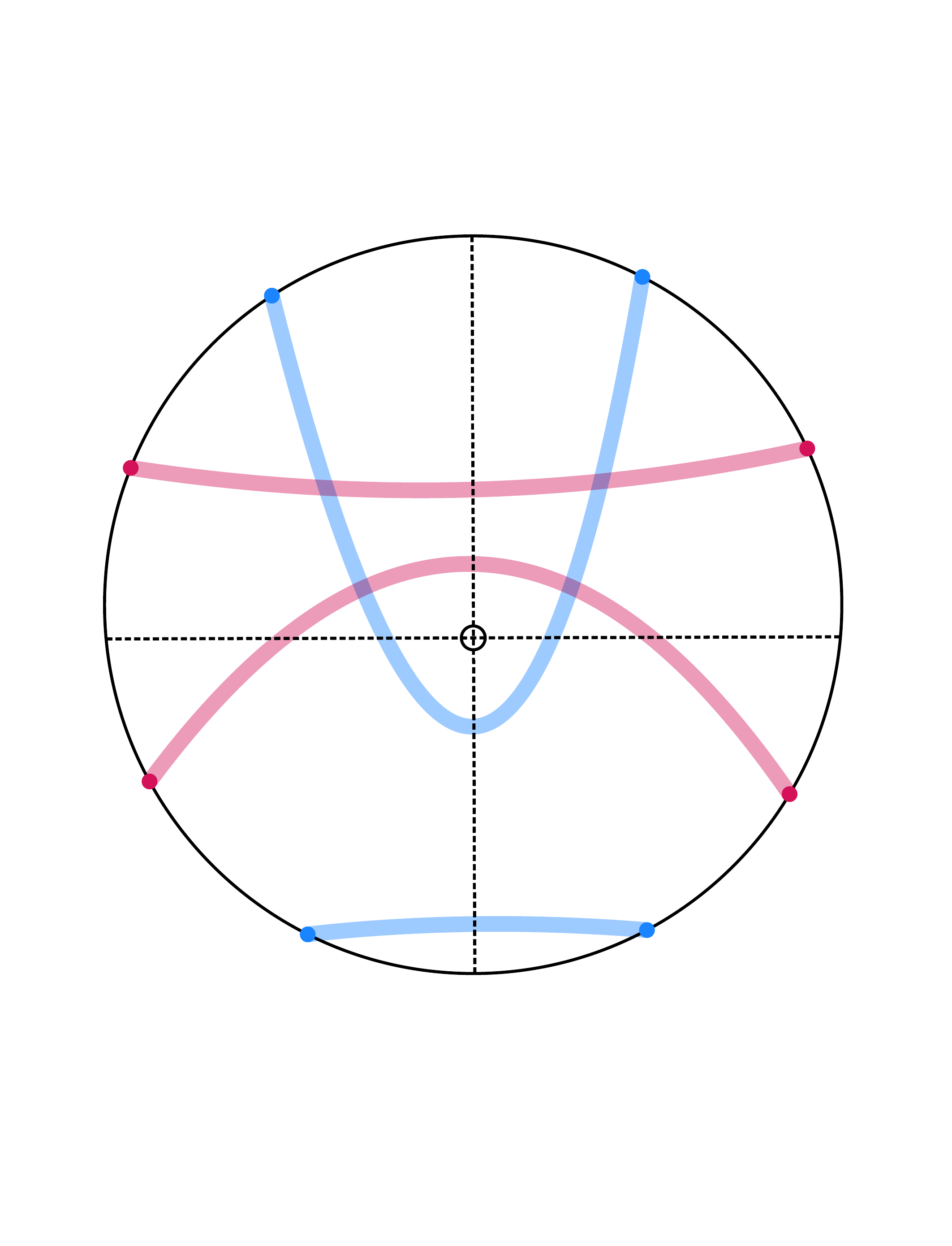}
		\caption{Last possibility of $D_1$ and $D_2$, drawn in red and blue, respectively.}
		\label{Genus2D5}
	\end{minipage}
\end{figure}
		
		Take three arcs with the following properties: one base point of the arc lies on a chord of $D_1$ and the other base point of the arc lies on a chord of $D_2$ where the chords are disjoint and the arc is completely contained in a single region of $D$ and parallel to a segment of the boundary circle (see Figure~\ref{Annulus4}). These three arcs represent incompressible annuli in $R$ since each $S^1$ fiber over every point of the arcs are meridians of $K$. Taking the complement of these three annuli decomposes $R$ such that one component is bounded into two disjoint annuli of $S_1$ and two disjoint annuli of $S_2$. Thus, Lemma~\ref{Pockets} implies that this component is not a pocket region. By Lemma~\ref{Annulus}, since this component is not a pocket region, then $R$ is also not a pocket region. The last region to consider is outside of the $S_C$. However, this is a type \textit{III} region and hence not a pocket, by Proposition~\ref{NonIsotopicD}.

\begin{figure}[h!]
		\centering
		\includegraphics[viewport = 0 170 620 660, scale = 0.36, clip]{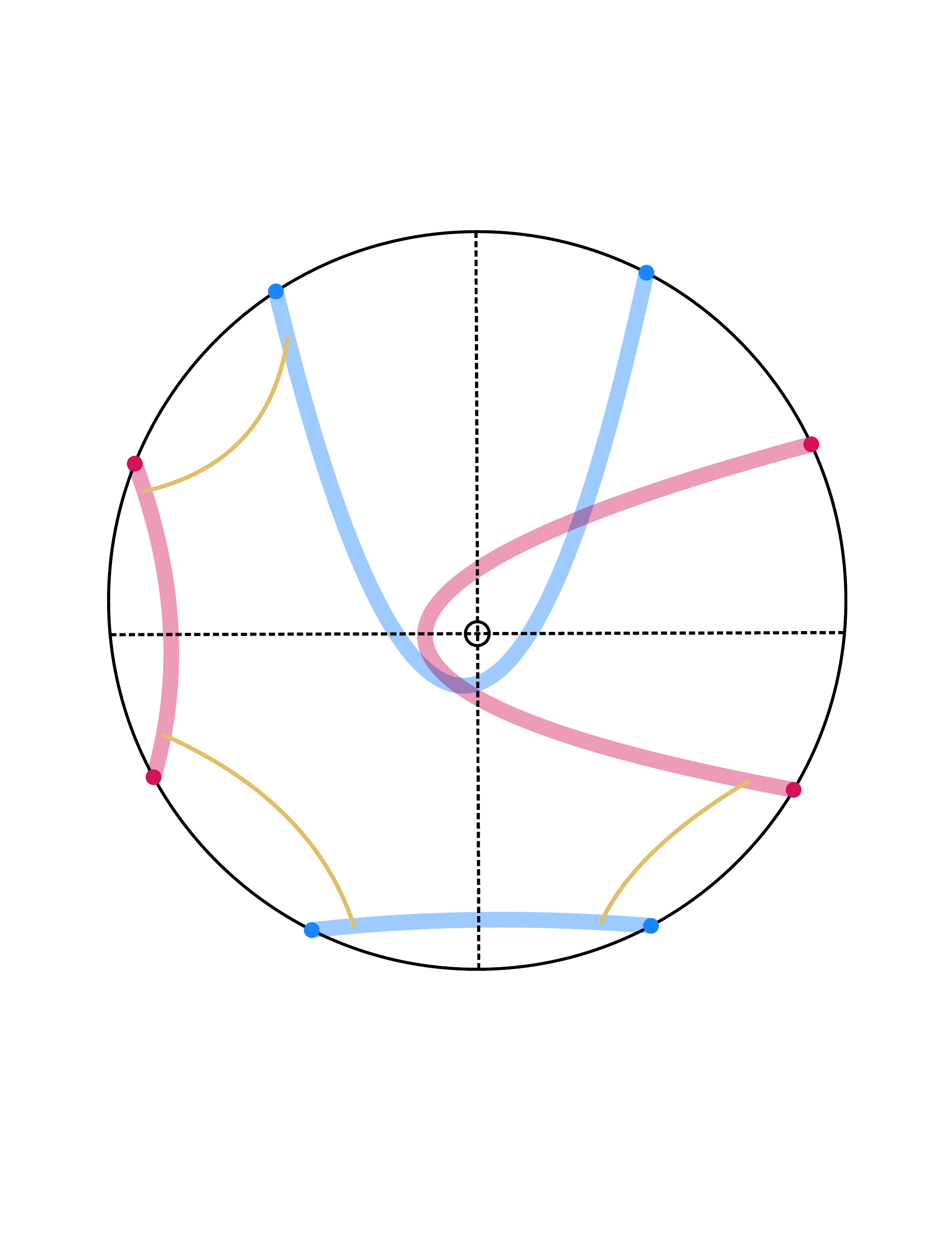}
		\caption{The three annuli in $D$, represented by arcs drawn in yellow.}
		\label{Annulus4}
\end{figure}

		Lastly, suppose we have the case where $D$ contains two chords of maximum possible length, as shown in Figure~\ref{Genus2D5}. Observe that each of the regions in this case are of types \textit{I, II, III, IV}, or \textit{IV}. By Proposition~\ref{NonIsotopicD}, these regions are not pocket regions and so $S_1$ and $S_2$ are not isotopic, completing the proof.
	\end{proof}

\begin{Prop}\label{DistinctG2Surfaces}
	Every punctured partitioned chord diagrams, up to isotopy of the chords, corresponding to a closed, connected, essential, orientable surface of genus 2 are non-isotopic.
\end{Prop}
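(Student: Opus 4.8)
The plan is to reuse the superimposition-and-region-analysis strategy of Proposition~\ref{NonIsotopicD} and Lemma~\ref{Genus2Case1}, adapted to the genus 2 diagrams that carry no chord of maximum possible length. First I would reduce to comparing surfaces built from the same 4-punctured sphere: if $S_1$ and $S_2$ are tubings of non-isotopic $S_C$'s, then Proposition~\ref{NonisotopicSCs} (equivalently the first part of Proposition~\ref{MaxPPCDs}) already makes them non-isotopic. For $g = 2$ each of the four regions carries a single base point, the maximum possible length is $4g - 5 = 3$, and by the proof of Proposition~\ref{G2Case} every one of the six diagrams of Figure~\ref{PPCD2Leaves} either contains a length 3 chord (in which case the second chord is forced) or has both of its chords of length 1. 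The pairs in which both $D_1$ and $D_2$ contain a chord of maximum possible length are settled by Lemma~\ref{Genus2Case1}, so it remains to treat the pairs in which at least one diagram, say $D_1$, has both of its chords of length 1.

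For these remaining pairs I would superimpose $D_1$ and $D_2$ into a single diagram $D$, applying Lemma~\ref{PropertiesOfD} so that chords of $D_1$ meet only chords of $D_2$, transversely and in alternating fashion, and then arrange the $S_C$ of $S_1$ to be innermost and the $S_C$ of $S_2$ outermost. As in Lemma~\ref{Genus2Case1}, the cusp region and the outer region are ruled out as pocket regions by Lemma~\ref{TypesIandII} and the type \textit{III} analysis of Proposition~\ref{NonIsotopicD}, and the delicate region is the region $R$ lying between the two nested $S_C$'s. I would cut $R$ along incompressible meridional annuli represented by arcs of $D$ running from a chord of $D_1$ to a chord of $D_2$ (as in Figure~\ref{Annulus4}); Lemma~\ref{Annulus} then reduces the problem to showing that the resulting pieces are not pocket regions, and Lemma~\ref{Pockets} gives this once I check that $\partial R$ meets each of $S_1$ and $S_2$ in at least two disjoint annuli. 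Because for $g = 2$ there are only a handful of superimposed configurations (namely $D_1$ one of the two diagrams whose chords both have length 1, and $D_2$ any of the six), I would verify this last point by directly inspecting each configuration rather than by a general count.

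The step I expect to be the main obstacle is exactly this final verification, because the general region bookkeeping of Section~\ref{DistinctnessOfPPCDs}---the type \textit{I} through \textit{V} classification and the existence of a type \textit{IV} region in Lemma~\ref{TypeIV}---is built on the presence of two chords of maximum possible length $c_1, c_2$ together with their parallel companions $c', c''$. When $D_1$ has no maximum length chord these auxiliary chords degenerate and Lemmas~\ref{ChordsOfD} and~\ref{RegionsOfD} no longer apply verbatim, so I cannot simply invoke the existence of a type \textit{IV} region. Instead I must confirm by hand, for each of the few superimposed diagrams, that the region between the two $S_C$'s still has disconnected boundary on at least one of $S_1, S_2$, so that Lemma~\ref{Annulus} together with Lemma~\ref{Pockets} rules out a product region. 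Once every complementary region is shown to be neither a product nor a pocket region, Proposition~\ref{Waldhausen} forces $S_1$ and $S_2$ to be non-isotopic, which combined with Lemma~\ref{Genus2Case1} establishes that all twelve genus 2 surfaces are distinct.
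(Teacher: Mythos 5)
Your proposal follows essentially the same route as the paper: reduce to the case of isotopic $S_C$'s, dispose of the two-maximum-length-chord pairs via Lemma~\ref{Genus2Case1}, and for the remaining superimposed diagrams analyze the region between the nested $S_C$'s by cutting along meridional annuli and invoking Lemmas~\ref{Annulus} and~\ref{Pockets} before applying Proposition~\ref{Waldhausen}. You also correctly anticipate the paper's handling of the residual cases by direct inspection of the finitely many configurations rather than through the type \textit{I}--\textit{V} machinery.
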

	\begin{proof}
		We may assume that $S_1$ and $S_2$ have isotopic $S_C$'s. We deal with the remaining cases of $D$ containing at most one chord of maximum possible length since Lemma~\ref{Genus2Case1} proved the case for $D_1$ and $D_2$ each containing a maximum possible length chord. We need only consider three cases, shown in Figure~\ref{Genus2PPCD}, that these are the remaining possibilities of $D$ for each of the six possibilities for $D_1$ and $D_2$.

\begin{figure}[h!]
\centering
	\begin{minipage}[b]{0.31\linewidth}
		\centering
		\includegraphics[viewport = 0 170 610 700, scale = 0.2, clip]{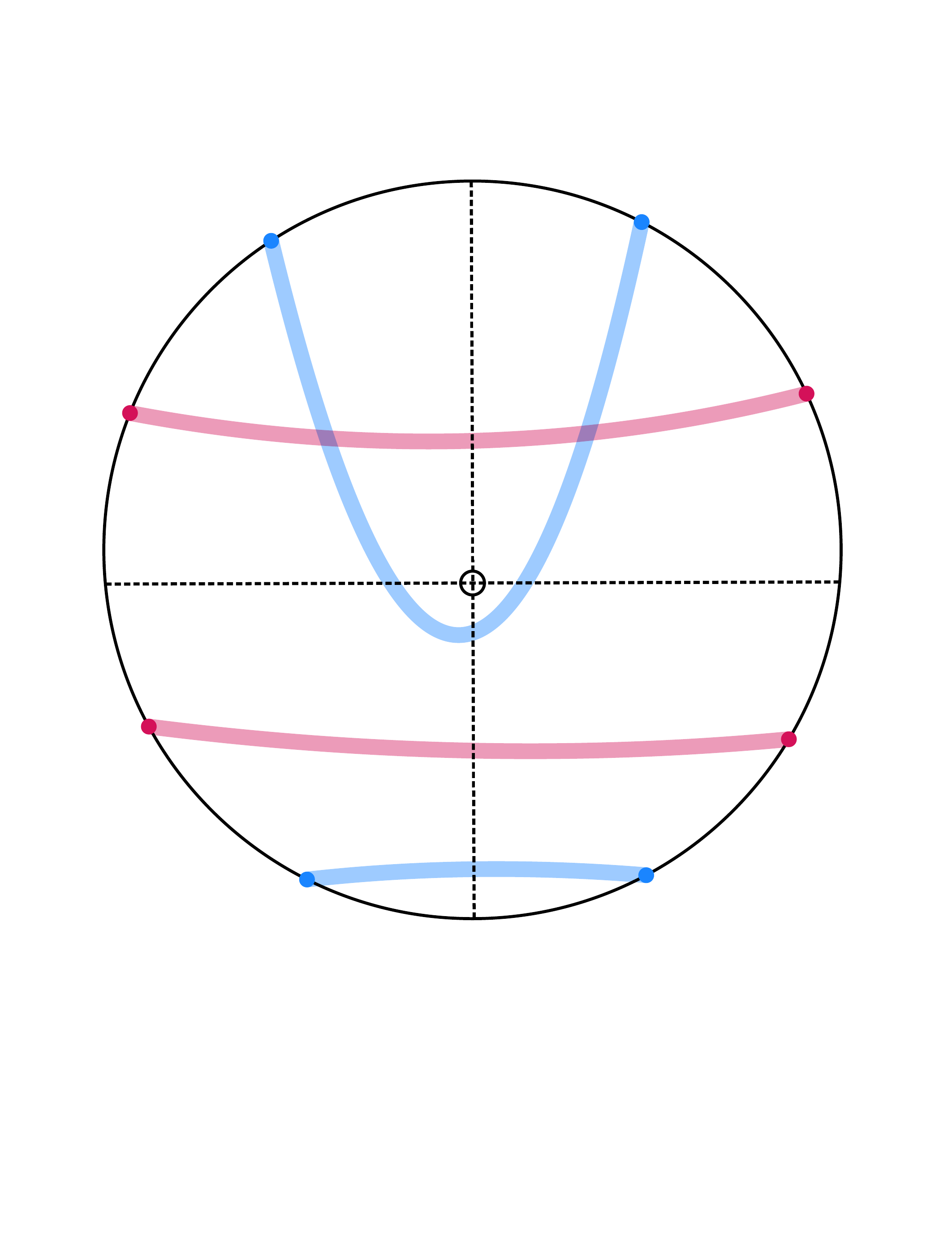}
	\end{minipage}
	\begin{minipage}[b]{0.31\linewidth}
		\centering
		\includegraphics[viewport = 0 170 610 700, scale = 0.2, clip]{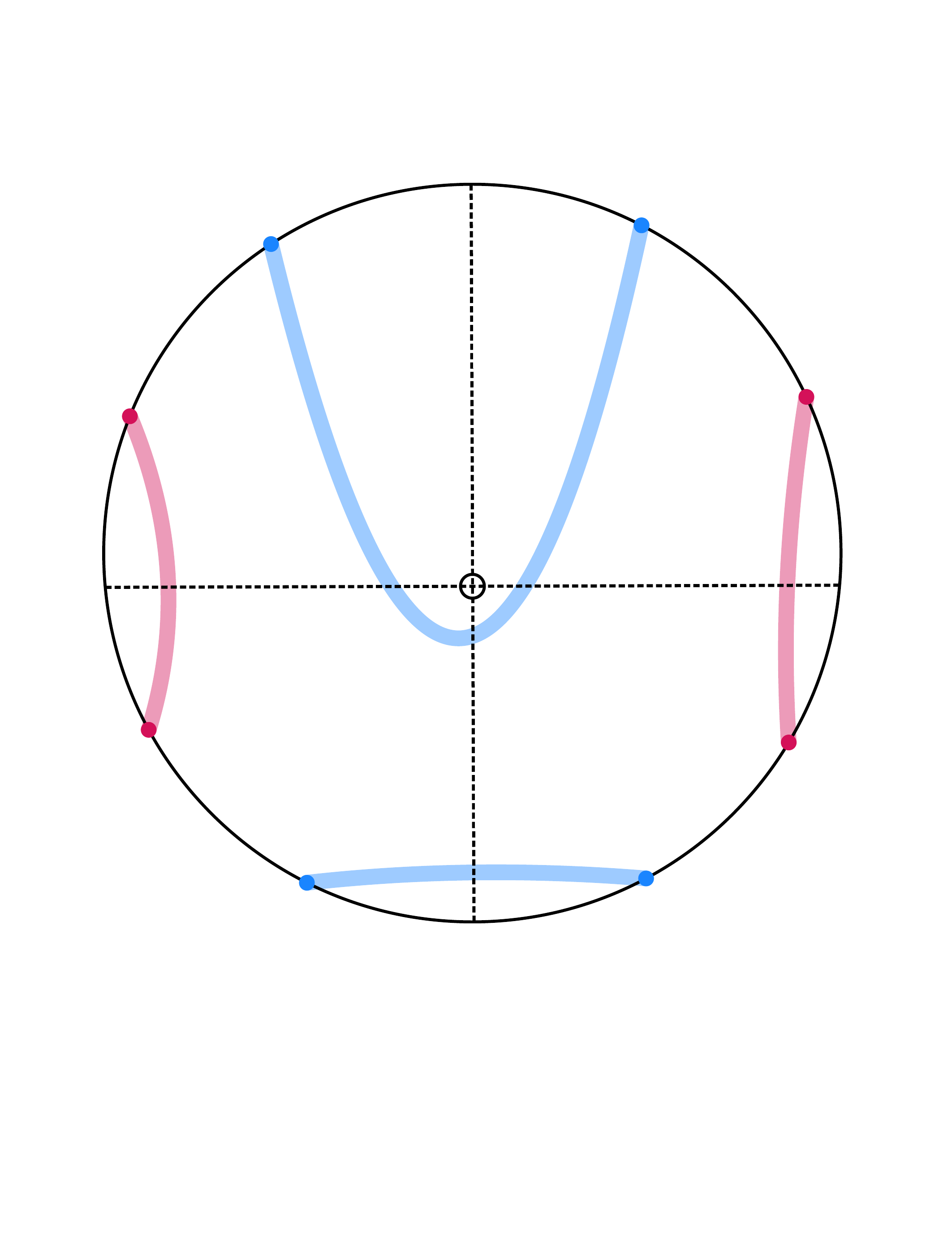}
	\end{minipage}
	\begin{minipage}[b]{0.31\linewidth}
		\centering
		\includegraphics[viewport = 0 170 610 700, scale = 0.2, clip]{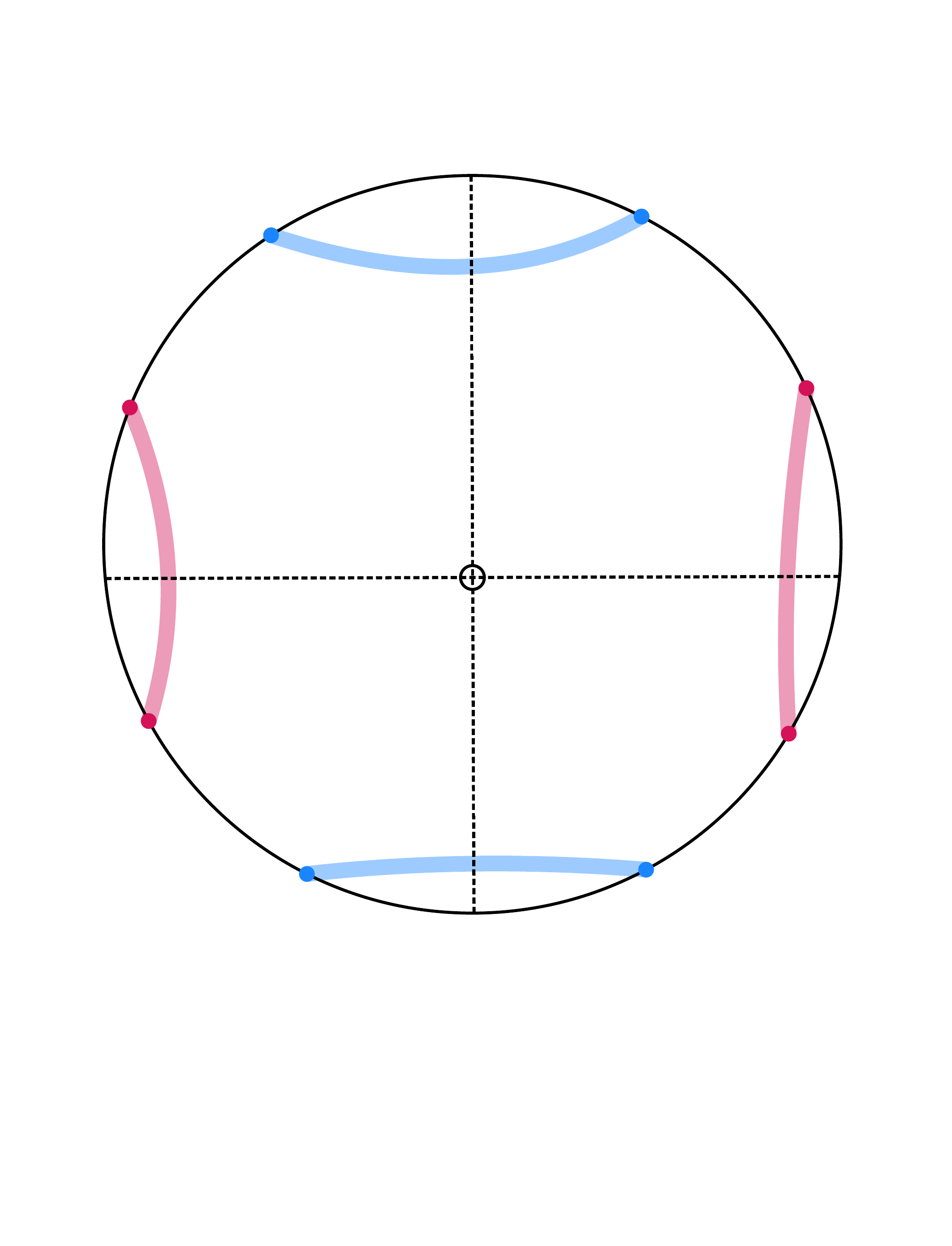}
	\end{minipage}
	\caption{The four possibilities of $D$ for $D_1$ and $D_2$ corresponding to genus 2 surfaces.}
	\label{Genus2PPCD}
\end{figure}

	The first case, shown in Figure~\ref{Genus2PPCD} (left), may be isotoped to the chord diagram shown in Figure~\ref{Genus2D2A} below. This corresponds to an isotopy of $S_2$ to have the innermost $S_C$ and resolving any intersections. We now proceed with these three cases.

\begin{figure}[h!]
	\centering
	\includegraphics[viewport = 0 160 610 700, scale = 0.28, clip]{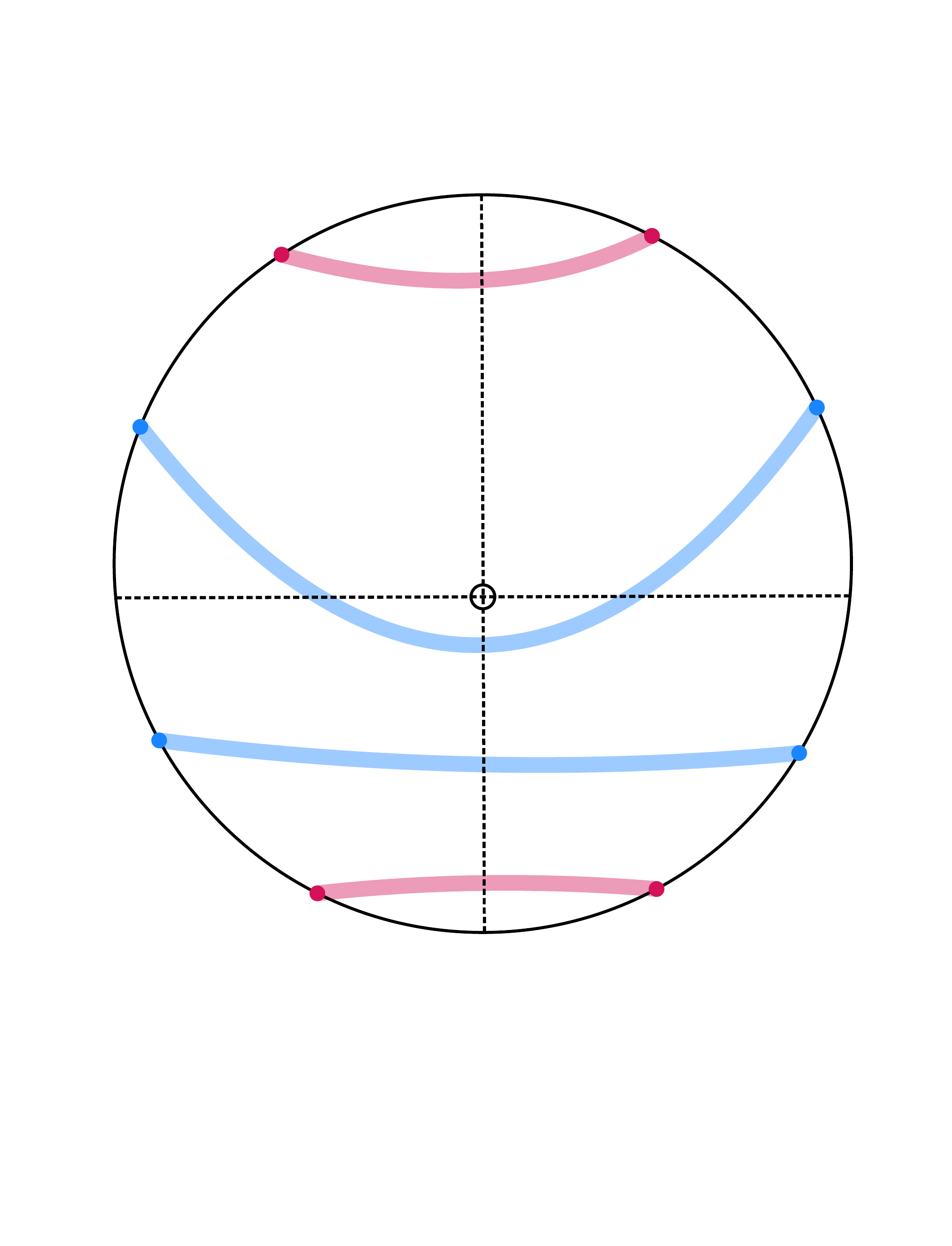}
	\caption{Resulting $D$ after isotopy.}
	\label{Genus2D2A}
\end{figure}

		In all of these cases, no chords intersect. Thus, there is only one region to consider: the region $R$ between the $S_C$'s of $S_1$ and $S_2$. Following Lemma~\ref{Genus2Case1}, take four arcs with the following properties: one base point of the arc lies on a chord of $D_1$ and the other base point of the arc lies on a chord of $D_2$ where the arc is completely contained in a single region of $D$ and parallel to a segment of the boundary circle (see Figure~\ref{CasesOfAnnuli}). Observe that two of these cases, the chords of $D_1$ and $D_2$ are disjoint. The complement of these four annuli decomposes $R$ such that one component which is bounded into two disjoint annuli of $S_1$ and two disjoint annuli of $S_2$. As in the previous case, this implies that $R$ is not a pocket region. Therefore, by Proposition~\ref{Waldhausen}, since none of these regions are pocket regions, then $S_1$ and $S_2$ are non-isotopic. 
		
		In the final case, observe that a resulting region is of type \textit{I}. By Proposition~\ref{NonIsotopicD}, this is not a pocket region. Thus, since no regions are pocket regions, $S_1$ and $S_2$ are non-isotopic, completing the proof.
	\end{proof}

\begin{figure}[h!]
\centering
	\begin{minipage}[b]{0.31\linewidth}
		\centering
		\includegraphics[viewport = 0 160 610 680, scale = 0.2, clip]{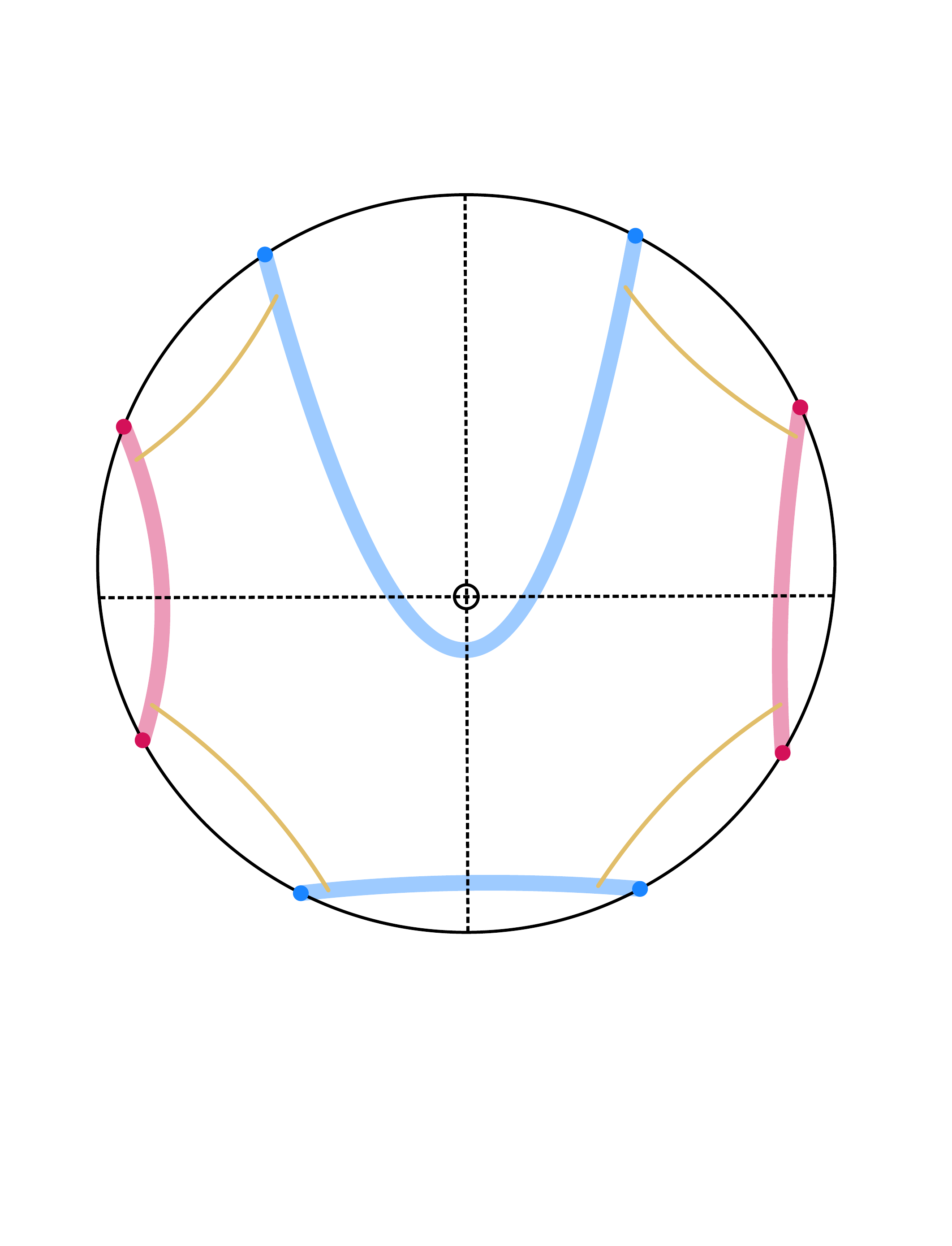}
	\end{minipage}
	\begin{minipage}[b]{0.31\linewidth}
		\centering
		\includegraphics[viewport = 0 165 610 700, scale = 0.2, clip]{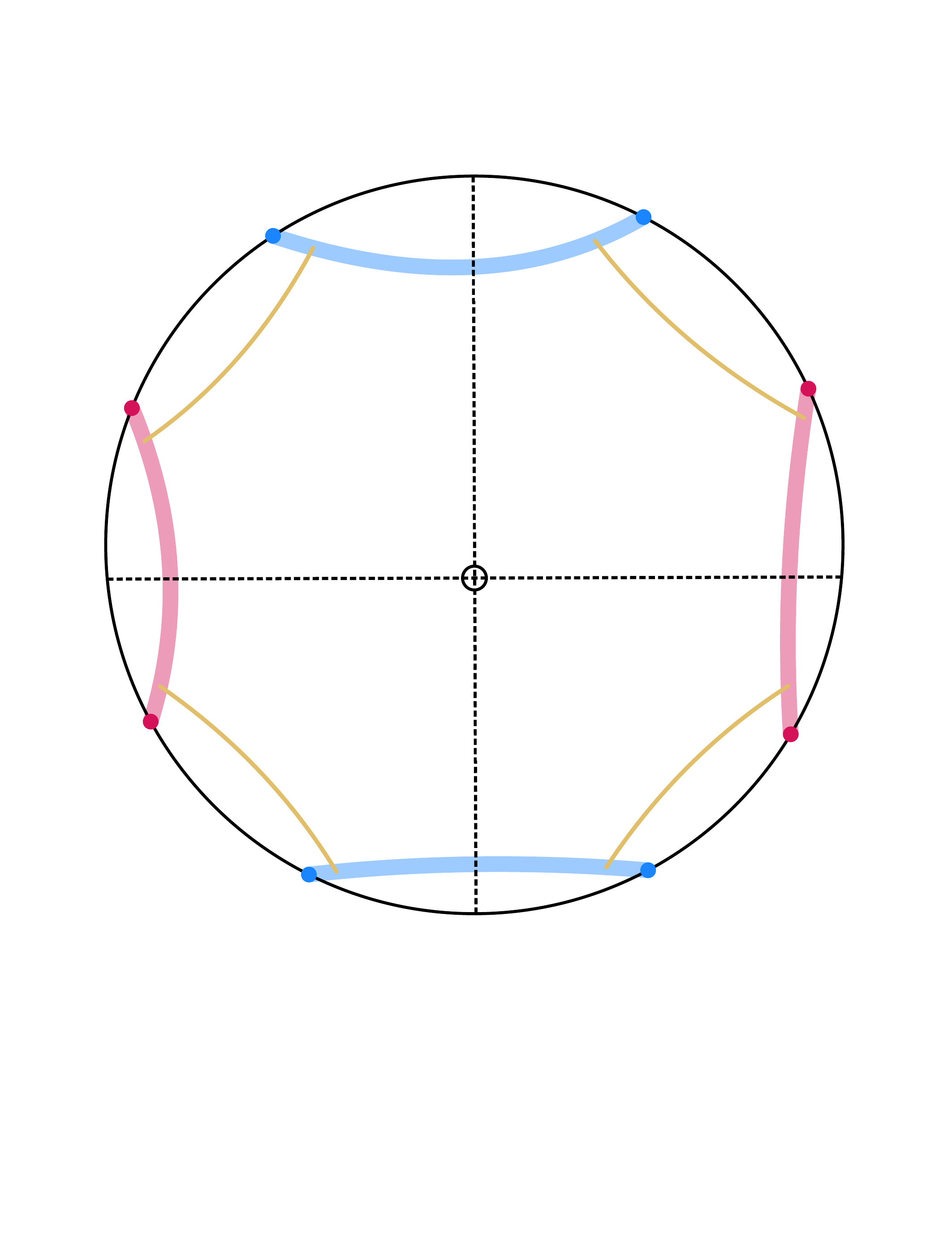}
	\end{minipage}
	\begin{minipage}[b]{0.31\linewidth}
		\centering
		\includegraphics[viewport = 0 160 610 680, scale = 0.2, clip]{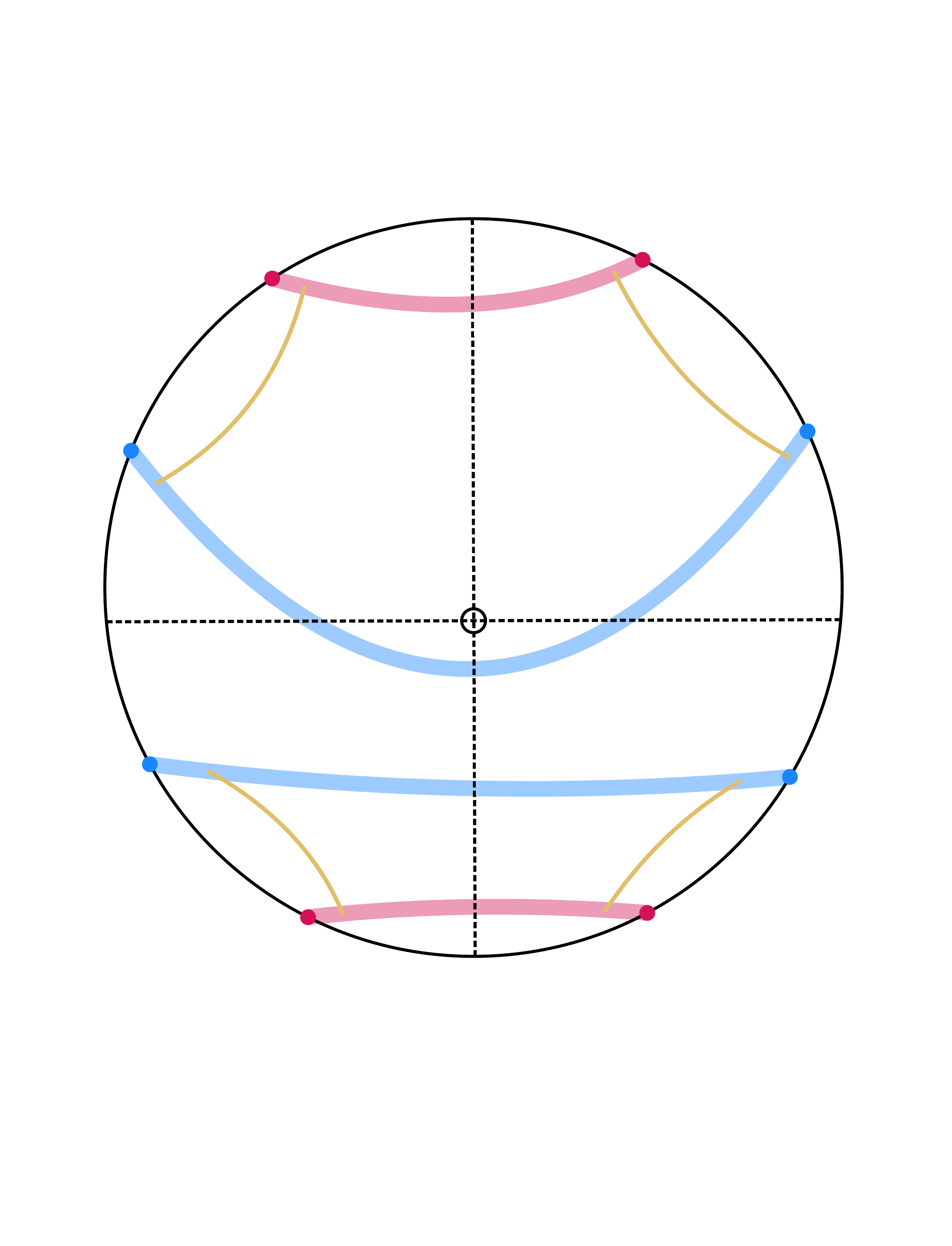}
	\end{minipage}
	\caption{The three cases of $D$ along with the annuli drawn in yellow.}
	\label{CasesOfAnnuli}
\end{figure}

We now prove Theorem~\ref{main}.

\begin{proof}[Proof of Theorem~\ref{main}]
	We only consider surfaces which are closed, connected, essential and orientable. Propositions~\ref{G2Case}, ~\ref{MaxPPCDsGeneral}, and~\ref{MaxPPCDs} shows that there are at most $12$ genus $2$ surfaces and $8\phi(g - 1)$ genus greater than $2$ surfaces, respectively. By Propositions~\ref{DistinctG2Surfaces} and~\ref{NonIsotopicD}, these surfaces are non-isotopic, proving the result. 
\end{proof}

\textit{Remark}. The results in Theorem~\ref{main} were made possible from experimental data obtained for the $(4,3,3,3)$ pretzel knot via the computer program in \cite{dunfield2020counting}. The surface counts in the complement of this knot were given by the generating function $$B_M = \frac{-2x^4 + 8x^3 - 10x^2 + 12x}{x^4 - 4x^3 + 6x^2 - 4x + 1}.$$ The sequence, $a_g$, of the number of closed, connected, essential, orientable essential surfaces of genus $g$ starting at $g = 2$, coming from $B_M$ were as follows: $$a_g = [12,8,16,16,32,16,48,32,48,32,80,32,96,48,64,64,128,48,144,64,...],$$ which helped motivate Theorem~\ref{main}.

%
%
%
\section{Acknowledgements}
I would like to thank my advisor Nathan Dunfield for this interesting question and for the guidance and stimulating discussions throughout this project. I would also like to thank Chaeryn Lee for helpful discussions, reading an early draft of this paper, and giving helpful advice to improve it. This work was partially supported under US National Science Foundation grant DMS-1811156.

\bibliographystyle{amsalpha}
\bibliography{bibliography}

\end{document}